 \newtheorem{Theorem}{Theorem}[section]
 \newtheorem{Corollary}[Theorem]{Corollary}
 \newtheorem{Lemma}[Theorem]{Lemma}
 \newtheorem{Proposition}[Theorem]{Proposition}
 \newtheorem{Question}[Theorem]{Question}
 \newtheorem{Definition}[Theorem]{Definition}
 \newtheorem{Remark}[Theorem]{Remark}
 \numberwithin{equation}{section}
\begin{document}

\title[Boundary points, minimal $L^2$ integrals and concavity property]
 {Boundary points, minimal $L^2$ integrals and concavity property \uppercase\expandafter{\romannumeral3}---linearity on Riemann surfaces}


\author{Qi'an Guan}
\address{Qi'an Guan: School of
Mathematical Sciences, Peking University, Beijing 100871, China.}
\email{guanqian@math.pku.edu.cn}

\author{Zhitong Mi}
\address{Zhitong Mi: Institute of Mathematics, Academy of Mathematics
and Systems Science, Chinese Academy of Sciences, Beijing, China
}
\email{zhitongmi@amss.ac.cn}

\author{Zheng Yuan}
\address{Zheng Yuan: School of
Mathematical Sciences, Peking University, Beijing 100871, China.}
\email{zyuan@pku.edu.cn}

\thanks{}

\subjclass[2020]{32Q15, 32F10, 32U05, 32W05}

\keywords{minimal $L^2$ integrals, plurisubharmonic
functions, boundary points, weakly pseudoconvex K\"ahler manifold}

\date{\today}

\dedicatory{}

\commby{}


\begin{abstract}
In this article, we consider a modified version of minimal $L^2$ integrals on sublevel sets of plurisubharmonic functions related to modules at boundary points, and obtain a concavity property of the modified version.
As an application, we give a characterization for the concavity degenerating to linearity on open Riemann surfaces.

\end{abstract}

\maketitle
\section{Introduction}
The strong openness property of multiplier ideal sheaves \cite{GZSOC}, i.e. $\mathcal{I}(\varphi)=\mathcal{I}_+(\varphi):=\mathop{\cup} \limits_{\epsilon>0}\mathcal{I}((1+\epsilon)\varphi)$
(conjectured by Demailly \cite{DemaillySoc}) is an important feature of multiplier ideal sheaves and has opened the door to new types of approximation technique (see e.g. \cite{GZSOC,McNeal and Varolin,K16,cao17,cdM17,FoW18,DEL18,ZZ2018,GZ20,ZZ2019,ZhouZhu20siu's,FoW20,KS20,DEL21}),
where the multiplier ideal sheaf $\mathcal{I}(\varphi)$ is the sheaf of germs of holomorphic functions $f$ such that $|f|^2e^{-\varphi}$ is locally integrable (see e.g. \cite{Tian,Nadel,Siu96,DEL,DK01,DemaillySoc,DP03,Lazarsfeld,Siu05,Siu09,DemaillyAG,Guenancia}),
and $\varphi$ is a plurisubharmonic function on a complex manifold $M$ (see \cite{Demaillybook}).

Guan-Zhou \cite{GZSOC} proved the strong openness property (the 2-dimensional case was proved by Jonsson-Musta\c{t}\u{a} \cite{JonssonMustata}).
After that, using the strong openness property, Guan-Zhou \cite{GZeff} proved a conjecture posed by Jonsson-Musta\c{t}\u{a} (Conjecture J-M for short, see \cite{JonssonMustata}),
which is about the volumes growth of the sublevel sets of quasi-plurisubharmonic functions.

Recall that the 2-dimensional case of Conjecture J-M was proved by Jonsson-Musta\c{t}\u{a} \cite{JonssonMustata},
which deduced the 2-dimensional strong openness property.
It is natural to ask

\begin{Question}
\label{Q:JM-SOC}
Can one obtain a proof of Conjecture J-M \textbf{independent} of the strong openness property?
\end{Question}

In \cite{BGY-boundary}, Bao-Guan-Yuan gave a positive answer to Question \ref{Q:JM-SOC} by establishing a concavity property of the minimal $L^2$ integrals related to modules at boundary points of the sublevel sets of plurisubharmonic functions on pseudoconvex domains.
After that, Guan-Mi-Yuan \cite{GMY-boundary2} considered the minimal $L^2$ integrals on weakly pseudoconvex K\"{a}hler manifolds with Lebesgue measurable gain,
and established a concavity property of the minimal $L^2$ integrals,
which deduced a necessary condition for the concavity degenerating to linearity,
a concavity property related to modules at inner points of the sublevel sets,
an optimal support function related to the modules,
a strong openness property of the modules and a twisted version,
an effectiveness result of the strong openness property of the modules.

In this article, we consider a modified version of the minimal $L^2$ integrals in \cite{GMY-boundary2}, and obtain a concavity property of the modified version.
As an application, we give a characterization for the concavity degenerating to linearity on open Riemann surfaces.

\subsection{Main result: minimal $L^2$ integrals and concavity property}\label{sec:Main result}
Let $M$ be a complex manifold. Let $X$ and $Z$ be closed subsets of $M$. We call that a triple $(M,X,Z)$ satisfies condition $(A)$, if the following two statements hold:

$\uppercase\expandafter{\romannumeral1}.$ $X$ is a closed subset of $M$ and $X$ is locally negligible with respect to $L^2$ holomorphic functions; i.e., for any local coordinated neighborhood $U\subset M$ and for any $L^2$ holomorphic function $f$ on $U\backslash X$, there exists an $L^2$ holomorphic function $\tilde{f}$ on $U$ such that $\tilde{f}|_{U\backslash X}=f$ with the same $L^2$ norm;

$\uppercase\expandafter{\romannumeral2}.$ $Z$ is an analytic subset of $M$ and $M\backslash (X\cup Z)$ is a weakly pseudoconvex K\"ahler manifold.

Let $M$ be an $n-$dimensional complex manifold.
Assume that $(M,X,Z)$  satisfies condition $(A)$. Let $K_M$ be the canonical line bundle on $M$.
Let $dV_M$ be a continuous volume form on $M$. Let $F$ be a holomorphic function on $M$.
Assume that $F$ is not identically zero. Let $\psi$ be a plurisubharmonic function on $M$.
Let $\varphi$ be a Lebesgue measurable function on $M$ such that $\varphi+\psi$ is a plurisubharmonic function on $M$.

Let $T\in [-\infty,+\infty)$.
Denote that
$$\Psi:=\min\{\psi-2\log|F|,-T\}.$$
For any $z \in M$ satisfying $F(z)=0$,
we set $\Psi(z)=-T$.
Note that for any $t\ge T$,
the holomorphic function $F$ has no zero points on the set $\{\Psi<-t\}$.
Hence $\Psi=\psi-2\log|F|=\psi+2\log|\frac{1}{F}|$ is a plurisubharmonic function on $\{\Psi<-t\}$.

\begin{Definition}
We call that a positive measurable function $c$ (so-called ``\textbf{gain}") on $(T,+\infty)$ is in class $\tilde{P}_{T,M,\Psi}$ if the following two statements hold:
\par
$(1)$ $c(t)e^{-t}$ is decreasing with respect to $t$;
\par
$(2)$ For any $t_0> T$, there exists a closed subset $E_0$ of $M$ such that $E_0\subset Z\cap \{\Psi(z)=-\infty\}$ and for any compact subset $K\subset M\backslash E_0$, $e^{-\varphi}c(-\Psi)$ has a positive lower bound on $K\cap \{\Psi<-t_0\}$ .
\end{Definition}
\begin{Remark}
Recall that \cite{GMY-boundary2} a positive measurable function $c(t)$ on $(T,+\infty)$ is in class $P_{T,M,\Psi}$ if the following two statements hold:
\par
$(1)$ $c(t)e^{-t}$ is decreasing with respect to $t$;
\par
$(2)$ There exist $T_1> T$ and a closed subset $E$ of $M$ such that $E\subset Z\cap \{\Psi(z)=-\infty\}$ and for any compact subset $K\subset M\backslash E$, $e^{-\varphi}c(-\Psi_1)$ has a positive lower bound on $K$, where $\Psi_1:=\min\{\psi-2\log|F|,-T_1\}$.

Let $T_2>T$ be any real number and denote $\Psi_2:=\min\{\psi-2\log|F|,-T_2\}$.
We note that $c(t)$ is positive on $[T_1,T_2]$(or $[T_2,T_1]$) and has positive lower bound and upper bound on $[T_1,T_2]$(or $[T_2,T_1]$). Hence $\frac{c(-\Psi_2)}{c(-\Psi_1)}$ has a positive lower bound on $M$. Then we know that for any compact subset $K\subset M\backslash E$, $e^{-\varphi_\alpha}c(-\Psi_2)$ has a positive lower bound on $K$.
Then it is clear that $P_{T,M,\Psi}\subseteq\tilde{P}_{T,M,\Psi}$.

Let $M=\Delta\subset \mathbb{C}$ and $Z=X=\emptyset$. Let $\psi=2\log|z|+2\log|z-\frac{1}{2}|$, $\varphi=-2\log|z-\frac{1}{2}|$ and $F=z-\frac{1}{2}$. Let $T=4\log 2$. Then $\Psi=\min\{\psi-2\log|F|,-T\}=\min\{2\log|z|,-4\log 2\}$. Then we know that $c(t)\equiv 1 \in \tilde{P}_{T,M,\Psi}$ and $c(t)\equiv 1$ is not in $P_{T,M,\Psi}$. Hence $P_{T,M,\Psi}$ is a proper subset of $\tilde{P}_{T,M,\Psi}$.
\end{Remark}

Let $z_0$ be a point in $M$. Denote that $\tilde{J}(\Psi)_{z_0}:=\{f\in\mathcal{O}(\{\Psi<-t\}\cap V): t\in \mathbb{R}$ and $V$ is a neighborhood of $z_0\}$. We define an equivalence relation $\backsim$ on $\tilde{J}(\Psi)_{z_0}$ as follows: for any $f,g\in \tilde{J}(\Psi)_{z_0}$,
we call $f \backsim g$ if $f=g$ holds on $\{\Psi<-t\}\cap V$ for some $t\gg T$ and open neighborhood $V\ni o$.
Denote $\tilde{J}(\Psi)_{z_0}/\backsim$ by $J(\Psi)_{z_0}$, and denote the equivalence class including $f\in \tilde{J}(\Psi)_{z_0}$ by $f_{z_0}$.

If $z_0\in \cap_{t>T} \{\Psi<-t\}$, then $J(\Psi)_{z_0}=\mathcal{O}_{M,z_0}$ (the stalk of the sheaf $\mathcal{O}_{M}$ at $z_0$), and $f_{z_0}$ is the germ $(f,z_0)$ of holomorphic function $f$. If $z_0\notin \cap_{t>T} \overline{\{\Psi<-t\}}$, then  $J(\Psi)_{z_0}$ is trivial.

Let $f_{z_0},g_{z_0}\in J(\Psi)_{z_0}$ and $(h,z_0)\in \mathcal{O}_{M,z_0}$. We define $f_{z_0}+g_{z_0}:=(f+g)_{z_0}$ and $(h,z_0)\cdot f_{z_0}:=(hf)_{z_0}$.
Note that $(f+g)_{z_0}$ and $(hf)_{z_0}$ ($\in J(\Psi)_{z_0}$) are independent of the choices of the representatives of $f,g$ and $h$. Hence $J(\Psi)_{z_0}$ is an $\mathcal{O}_{M,z_0}$-module.

For $f_{z_0}\in J(\Psi)_{z_0}$ and $a,b\ge 0$, we call $f_{z_0}\in I\big(a\Psi+b\varphi\big)_{z_0}$ if there exist $t\gg T$ and a neighborhood $V$ of $z_0$,
such that $\int_{\{\Psi<-t\}\cap V}|f|^2e^{-a\Psi-b\varphi}dV_M<+\infty$.
Note that $I\big(a\Psi+b\varphi\big)_{z_0}$ is an $\mathcal{O}_{M,z_0}$-submodule of $J(\Psi)_{z_0}$.
If $z_0\in \cap_{t>T} \{\Psi<-t\}$, then $I_{z_0}=\mathcal{O}_{M,z_0}$, where $I_{z_0}:=I\big(0\Psi+0\varphi\big)_{z_0}$.

Let $Z_0$ be a subset of $\cap_{t>T} \overline{\{\Psi<-t\}}$. Let $f$ be a holomorphic $(n,0)$ form on $\{\Psi<-t_0\}\cap V$, where $V\supset Z_0$ is an open subset of $M$ and $t_0\ge T$
is a real number.
Let $J_{z_0}$ be an $\mathcal{O}_{M,z_0}$-submodule of $J(\Psi)_{z_0}$ such that $I\big(\Psi+\varphi\big)_{z_0}\subset J_{z_0}$,
where $z_0\in Z_0$.
Let $J$ be the $\mathcal{O}_{M,z_0}$-module sheaf with stalks $J_{z_0}$, where $z_0\in Z_0$.
Denote the \textbf{minimal $L^{2}$ integral} related to $J$
\begin{equation}
\label{def of g(t) for boundary pt}
\begin{split}
\inf\Bigg\{ \int_{ \{ \Psi<-t\}}|\tilde{f}|^2e^{-\varphi}c(-\Psi): \tilde{f}\in
H^0(\{\Psi<-t\},\mathcal{O} (K_M)  ) \\
\&\, (\tilde{f}-f)_{z_0}\in
\mathcal{O} (K_M)_{z_0} \otimes J_{z_0},\text{for any }  z_0\in Z_0 \Bigg\}
\end{split}
\end{equation}
by $G(t;c,\Psi,\varphi,J,f)$, where $t\in[T,+\infty)$, $c$ is a nonnegative function on $(T,+\infty)$ and $|f|^2:=\sqrt{-1}^{n^2}f\wedge \bar{f}$ for any $(n,0)$ form $f$.
Without misunderstanding, we denote $G(t;c,\Psi,\varphi,J,f)$ by $G(t)$ for simplicity. For various $c(t)$, we denote $G(t;c,\Psi,\varphi,J,f)$ by $G(t;c)$ respectively for simplicity.

In this article, we obtain the following concavity property of $G(t)$.

\begin{Theorem}
\label{main theorem}
Let $c\in\tilde{P}_{T,M,\Psi}$. If there exists $t \in [T,+\infty)$ satisfying that $G(t)<+\infty$, then $G(h^{-1}(r))$ is concave with respect to  $r\in (\int_{T_1}^{T}c(t)e^{-t}dt,\int_{T_1}^{+\infty}c(t)e^{-t}dt)$, $\lim\limits_{t\to T+0}G(t)=G(T)$ and $\lim\limits_{t \to +\infty}G(t)=0$, where $h(t)=\int_{T_1}^{t}c(t_1)e^{-t_1}dt_1$ and $T_1 \in (T,+\infty)$.
\end{Theorem}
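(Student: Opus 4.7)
The plan is to follow the strategy of the concavity theorem for the class $P_{T,M,\Psi}$ established in \cite{GMY-boundary2}, verifying that the weaker hypothesis $c\in\tilde{P}_{T,M,\Psi}$ suffices throughout. The crucial observation is that the $L^2$-extension and cutoff machinery used there operates entirely inside sublevel sets $\{\Psi<-t_0\}$ with $t_0>T$, and on any such sublevel set the $\tilde{P}$-condition delivers exactly the same positive lower bound for the weight $e^{-\varphi}c(-\Psi)$ as the $P$-condition would. The two hypotheses differ only on the boundary layer $\{\Psi\geq -T_1\}$, which is never touched by the cutoffs.

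\textbf{Step 1 (Concavity on the stated interval).} Fix arbitrary $T_0>T$ and apply the $\tilde{P}$-condition at $t_0=T_0$ to produce a closed $E_0\subset Z\cap\{\Psi=-\infty\}$ such that $e^{-\varphi}c(-\Psi)$ has a positive lower bound on every compact subset of $(M\setminus E_0)\cap\{\Psi<-T_0\}$. For $T_0\leq t_1<t_2$, begin with a near-minimizer $\tilde f_2$ for $G(t_2)$ and run the Ohsawa--Takegoshi type extension of \cite{GMY-boundary2} on the weakly pseudoconvex K\"ahler manifold $M\setminus(X\cup Z)$, using a cutoff $b_\epsilon(\Psi)$ supported in $\{\Psi<-T_0\}$ together with the weight $e^{-\varphi}c(-\Psi)$; the resulting solution extends across $X\cup Z$ by condition $(A)$. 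This yields $\tilde f_1\in H^0(\{\Psi<-t_1\},\mathcal{O}(K_M))$ satisfying the germ constraints, together with the slope-type estimate whose integration gives the concavity of $G(h^{-1}(r))$ on $r\in(h(T_0),h(+\infty))$. Letting $T_0\downarrow T$ recovers concavity on the full interval $r\in(\int_{T_1}^{T}c(t)e^{-t}\,dt,\int_{T_1}^{+\infty}c(t)e^{-t}\,dt)$.

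\textbf{Step 2 (Endpoint limits).} Since $G$ is decreasing in $t$, $\limsup_{t\to T+0}G(t)\leq G(T)$. For the reverse inequality, take a sequence $t_k\downarrow T$ and near-minimizers $\tilde f_k$ on $\{\Psi<-t_k\}$; the weight lower bound on $\{\Psi<-T_0\}\setminus E_0$ (for any fixed $T_0>T$) provides uniform $L^2_{\mathrm{loc}}$-bounds from which one extracts a locally uniformly convergent subsequence by Montel, extends the limit across $X\cup Z$ via condition $(A)$, diagonalizes as $T_0\downarrow T$, and invokes Fatou together with persistence of the germ constraint to conclude $G(T)\leq\liminf G(t_k)$. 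For the limit at $+\infty$, the hypothesis $G(t_\ast)<+\infty$ supplies an admissible $\tilde f_\ast$ with finite $L^2$-integral; it remains admissible for every $G(t)$ with $t\geq t_\ast$, and since $\bigcap_{t}\{\Psi<-t\}\subset\{\Psi=-\infty\}$ is $dV_M$-null, dominated convergence yields $G(t)\leq\int_{\{\Psi<-t\}}|\tilde f_\ast|^2 e^{-\varphi}c(-\Psi)\,dV_M\to 0$.

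\textbf{Main obstacle.} The chief technical point is a careful reading of the proof in \cite{GMY-boundary2} to confirm that the cutoff $b_\epsilon(\Psi)$ and all auxiliary plurisubharmonic weights in the Ohsawa--Takegoshi estimate are supported inside $\{\Psi<-T_0\}$, so that the weaker $\tilde{P}$-hypothesis (which controls the weight only on sublevel sets) genuinely suffices. Once this is verified, the concavity requires no essentially new estimates, and the endpoint limits reduce to standard normal-family extraction and dominated-convergence arguments.
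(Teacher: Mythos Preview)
Your strategy is correct and matches the paper's: both verify that the machinery of \cite{GMY-boundary2} goes through under the weaker hypothesis $c\in\tilde P_{T,M,\Psi}$, exploiting that every estimate lives inside a sublevel set $\{\Psi<-t_0\}$ with $t_0>T$, where the $\tilde P$-condition provides the same weight lower bound as the $P$-condition.

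However, your ``Main obstacle'' paragraph misidentifies where the real work lies. The support of the cutoff $b_{t_0,B}(\Psi)$ and the Ohsawa--Takegoshi estimate on $\{\Psi<-t_1\}$ are comparatively routine to verify (the paper's Lemmas~\ref{L2 method for c(t)} and~\ref{L2 method in JM concavity}). The step you pass over as ``persistence of the germ constraint'' is the substantive one: the points $z_0\in Z_0$ sit on the boundary $\bigcap_t\overline{\{\Psi<-t\}}$, and $J_{z_0}$ is a submodule of the nonstandard module $J(\Psi)_{z_0}$, not of the stalk $\mathcal O_{M,z_0}$, so closedness under locally uniform limits is not available from Montel and the usual Noetherian argument alone. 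The paper spends all of Section~\ref{sec:properties of module} on this, constructing an $\mathcal O$-module isomorphism $H_{z_0}/I(\varphi+\Psi)_{z_0}\cong \mathcal H_{z_0}/\mathcal I(\varphi+\varphi_1+\Psi_1)_{z_0}$ (Proposition~\ref{module isomorphism}) to transfer the problem to genuine ideals of $\mathcal O_{M,z_0}$, where the classical closedness (Lemma~\ref{closedness}) applies. That isomorphism is built via the $L^2$-extension Lemma~\ref{L2 method for c(t)}, which itself uses the $\tilde P$-condition, so the module-closedness step also requires re-verification under $\tilde P$---it is not a black box you can import from \cite{GMY-boundary2} without inspection. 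Your plan would go through if executed carefully, but you should flag this as the chief technical point rather than the cutoff support.
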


When $M$ is a pseudoconvex domain $D$ in $\mathbb{C}^n$, $\varphi\equiv0$, $c(t)\equiv 1$ and $T=0$, Theorem \ref{main theorem} degenerates to the concavity property in \cite{BGY-boundary}. When $c(t)\in P_{T,M,\Psi}$, Theorem \ref{main theorem} degenerates to the concavity property in \cite{GMY-boundary2} (Theorem 1.2 in \cite{GMY-boundary2}).
\begin{Remark} It follows from Theorem 1.2 and Remark 1.8 in \cite{GMY-boundary2} that the assumption ``$\{\psi<-t\}\backslash Z_0$ is a weakly pseudoconvex K\"ahler manifold for any $t\in\mathbb{R}$'' in \cite{BGMY7} can be removed.
\end{Remark}

\begin{Remark}
	\label{infty2}Let $c\in\tilde{P}_{T,M,\Psi}$.	If  $\int_{T_1}^{+\infty}c(t)e^{-t}dt=+\infty$ and $f_{z_0}\notin
\mathcal{O} (K_M)_{z_0} \otimes J_{z_0}$ for some  $ z_0\in Z_0$, then $G(t)=+\infty$ for any $t\geq T$. Thus, when there exists $t \in [T,+\infty)$ satisfying that $G(t)\in(0,+\infty)$, we have $\int_{T_1}^{+\infty}c(t)e^{-t}dt<+\infty$ and $G(\hat{h}^{-1}(r))$ is concave with respect to  $r\in (0,\int_{T}^{+\infty}c(t)e^{-t}dt)$, where $\hat{h}(t)=\int_{t}^{+\infty}c(l)e^{-l}dl$.
\end{Remark}

Let $c(t)$ be a nonnegative measurable function on $(T,+\infty)$. Denote that
\begin{equation}\nonumber
\begin{split}
\mathcal{H}^2(t;c):=\Bigg\{\tilde{f}:\int_{ \{ \Psi<-t\}}|\tilde{f}|^2e^{-\varphi}c(-\Psi)<+\infty,\  \tilde{f}\in
H^0(\{\Psi<-t\},\mathcal{O} (K_M)  ) \\
\& (\tilde{f}-f)_{z_0}\in
\mathcal{O} (K_M)_{z_0} \otimes J_{z_0},\text{for any }  z_0\in Z_0  \Bigg\},
\end{split}
\end{equation}
where $t\in[T,+\infty)$.

As a corollary of Theorem \ref{main theorem}, we give a necessary condition for the concavity property degenerating to linearity.
\begin{Corollary}
\label{necessary condition for linear of G}
Let $c\in\tilde{P}_{T,M,\Psi}$.
Assume that $G(t)\in(0,+\infty)$ for some $t\ge T$, and $G(\hat{h}^{-1}(r))$ is linear with respect to $r\in[0,\int_T^{+\infty}c(s)e^{-s}ds)$, where $\hat{h}(t)=\int_{t}^{+\infty}c(l)e^{-l}dl$.

Then there exists a unique holomorphic $(n,0)$ form $\tilde{F}$ on $\{\Psi<-T\}$
such that $(\tilde{F}-f)_{z_0}\in\mathcal{O} (K_M)_{z_0} \otimes J_{z_0}$ holds for any  $z_0\in Z_0$,
and $G(t)=\int_{\{\Psi<-t\}}|\tilde{F}|^2e^{-\varphi}c(-\Psi)$ holds for any $t\ge T$.

Furthermore
\begin{equation}
\begin{split}
  \int_{\{-t_1\le\Psi<-t_2\}}|\tilde{F}|^2e^{-\varphi}a(-\Psi)=\frac{G(T_1;c)}{\int_{T_1}^{+\infty}c(t)e^{-t}dt}
  \int_{t_2}^{t_1}a(t)e^{-t}dt
  \label{other a also linear}
\end{split}
\end{equation}
holds for any nonnegative measurable function $a$ on $(T,+\infty)$, where $T\le t_2<t_1\le+\infty$ and $T_1 \in (T,+\infty)$.
\end{Corollary}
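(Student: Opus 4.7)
The plan is to show that the level-wise minimizers $F_t$ of $G(t)$ are mutually compatible and to glue them into a single form $\tilde F$ on $\{\Psi<-T\}$; equation~(\ref{other a also linear}) is then obtained by translating the compatibility into an identity of Borel measures. For each $t\ge T$ with $G(t)<+\infty$, let $F_t\in\mathcal H^2(t;c)$ be the unique minimizer realizing $G(t)$; existence and uniqueness follow from standard Hilbert space theory applied to the nonempty affine subspace $\mathcal H^2(t;c)$ of $L^2(\{\Psi<-t\}, e^{-\varphi}c(-\Psi)dV_M)$. The linearity hypothesis together with $\lim_{t\to+\infty}G(t)=0$ from Theorem~\ref{main theorem} yields $G(t)=\alpha\int_{t}^{+\infty}c(l)e^{-l}dl$ for $t\ge T$, where $\alpha=G(T_1;c)/\int_{T_1}^{+\infty}c(l)e^{-l}dl>0$.

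The central step is the compatibility $F_{t_1}|_{\{\Psi<-t_2\}}=F_{t_2}$ for all $T\le t_1<t_2$. The restriction $F_{t_1}|_{\{\Psi<-t_2\}}$ lies in $\mathcal H^2(t_2;c)$ since the germ conditions at $z_0\in Z_0$ depend only on germs near $z_0$, which are unchanged by shrinking the domain. Since $F_{t_1}|_{\{\Psi<-t_2\}}-F_{t_2}$ is an admissible correction to $F_{t_2}$, $L^2$-orthogonality of the minimizer yields
\[
\int_{\{\Psi<-t_2\}}|F_{t_1}|^2 e^{-\varphi}c(-\Psi)=G(t_2)+\int_{\{\Psi<-t_2\}}|F_{t_1}-F_{t_2}|^2 e^{-\varphi}c(-\Psi),
\]
so compatibility is equivalent to the sharp equality
\[
\int_{\{-t_2\le\Psi<-t_1\}}|F_{t_1}|^2 e^{-\varphi}c(-\Psi)=\alpha\int_{t_1}^{t_2}c(l)e^{-l}dl.
\]
The $\le$ direction follows from $F_{t_1}|_{\{\Psi<-t_2\}}$ being a competitor for $G(t_2)$ combined with $G(t_1)-G(t_2)=\alpha\int_{t_1}^{t_2}c(l)e^{-l}dl$. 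For the reverse, I would appeal to the quantitative Ohsawa-Takegoshi-type $L^2$ extension underlying the proof of Theorem~\ref{main theorem} in~\cite{GMY-boundary2}: it extends $F_{t_2}$ from $\{\Psi<-t_2\}$ to a holomorphic $(n,0)$-form $\hat F$ on $\{\Psi<-t_1\}$ with total $L^2$-norm controlled by the concavity estimate; under the linear profile $G(t)=\alpha\int_t^{+\infty}c(l)e^{-l}dl$ this bound saturates to $G(t_1)$, so $\hat F$ is a norm-minimizing competitor for $G(t_1)$, whence by uniqueness $\hat F=F_{t_1}$ and therefore $F_{t_1}|_{\{\Psi<-t_2\}}=\hat F|_{\{\Psi<-t_2\}}=F_{t_2}$.

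Once compatibility is established, define $\tilde F$ on $\{\Psi<-T\}=\bigcup_{t>T}\{\Psi<-t\}$ by $\tilde F|_{\{\Psi<-t\}}:=F_t$; this is a well-defined holomorphic $(n,0)$-form satisfying $(\tilde F-f)_{z_0}\in\mathcal O(K_M)_{z_0}\otimes J_{z_0}$ for every $z_0\in Z_0$, and $\int_{\{\Psi<-t\}}|\tilde F|^2 e^{-\varphi}c(-\Psi)=G(t)$ for every $t\ge T$, with uniqueness of $\tilde F$ coming from the parallelogram identity at level $T_1$. For~(\ref{other a also linear}), introduce the push-forward Borel measure $\mu:=(-\Psi)_{*}\bigl(|\tilde F|^2 e^{-\varphi}dV_M\bigr)$ on $(T,+\infty)$; the compatibility identity rewrites as $\int_{(t_1,t_2]}c(s)d\mu(s)=\alpha\int_{t_1}^{t_2}c(s)e^{-s}ds$ for all $T\le t_1<t_2\le+\infty$, and the positivity of $c$ on $(T,+\infty)$ forces $d\mu(s)=\alpha e^{-s}ds$ as Borel measures; integrating an arbitrary nonnegative measurable $a$ against $\mu$ yields~(\ref{other a also linear}). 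The main obstacle is the upgrade from $\le$ to equality in the compatibility identity: the pointwise inequality plus matching total mass as $t_2\to+\infty$ does not by itself force equality at finite $t_2$, so one genuinely needs the saturation of the concavity estimate under the linearity hypothesis.
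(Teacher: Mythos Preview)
Your overall architecture is right, and the measure-theoretic derivation of equation~(\ref{other a also linear}) via the push-forward $\mu=(-\Psi)_*\bigl(|\tilde F|^2e^{-\varphi}dV_M\bigr)$ is cleaner than the paper's hands-on approximation by simple functions. But there is a genuine gap in your compatibility step.

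The problem is the sentence ``therefore $F_{t_1}|_{\{\Psi<-t_2\}}=\hat F|_{\{\Psi<-t_2\}}=F_{t_2}$.'' The $L^2$ extension underlying Theorem~\ref{main theorem} (Lemma~\ref{L2 method in JM concavity} here) does \emph{not} produce an exact extension: given $F_{t_0}$ on $\{\Psi<-t_0\}$, it produces $\tilde F$ on $\{\Psi<-t_1\}$ together with an estimate on $\int|\tilde F-(1-b_{t_0,B}(\Psi))F_{t_0}|^2e^{-\varphi+v_{t_0,B}(\Psi)-\Psi}c(-v_{t_0,B}(\Psi))$. Even after the limit $B\to0$, this only controls $\tilde F-F_{t_0}$ in a weighted $L^2$ sense on the inner sublevel set; it does not force $\tilde F|_{\{\Psi<-t_0\}}=F_{t_0}$. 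So you cannot conclude $\hat F|_{\{\Psi<-t_2\}}=F_{t_2}$ from the construction of $\hat F$, and without that the uniqueness argument collapses.

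What the paper actually exploits is a different saturated inequality. Under linearity, the chain of inequalities in Lemma~\ref{derivatives of G} all become equalities; in particular one obtains
\[
\int_{\{\Psi<-t_0\}}|\tilde F_{t_1}-F_{t_0}|^2e^{-\varphi}c(-\Psi)
=\int_{\{\Psi<-t_0\}}|\tilde F_{t_1}-F_{t_0}|^2e^{-\varphi}e^{-\Psi-t_0}c(t_0),
\]
an equality between the same integrand against two weights, one dominating the other since $c(s)e^{-s}\le c(t_0)e^{-t_0}$ for $s\ge t_0$. Because $\int_T^{+\infty}c(s)e^{-s}ds<+\infty$ forces $c(s)e^{-s}$ to drop strictly below $c(t_0)e^{-t_0}$ for $s$ large, there is $t_2>t_0$ and $\epsilon>0$ with $c(t_0)e^{-t_0}-c(s)e^{-s}\ge\epsilon$ on $[t_2,\infty)$; the equality then forces $\int_{\{\Psi<-t_2\}}|\tilde F_{t_1}-F_{t_0}|^2e^{-\varphi-\Psi}=0$, hence $\tilde F_{t_1}=F_{t_0}$ there. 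Combined with $\tilde F_{t_1}$ realizing $G(t_1)$ (also from the saturated chain) and uniqueness of minimizers, this gives the gluing. The crucial ingredient you are missing is this \emph{two-weight equality} and the strict decrease of $c(t)e^{-t}$; saturation of the norm bound alone does not suffice.
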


\begin{Remark}
\label{rem:linear}
If $\mathcal{H}^2(t_0;\tilde{c})\subset\mathcal{H}^2(t_0;c)$ for some $t_0\ge T$, we have
\begin{equation}
\begin{split}
  G(t_0;\tilde{c})=\int_{\{\Psi<-t_0\}}|\tilde{F}|^2e^{-\varphi}\tilde{c}(-\Psi)=
  \frac{G(T_1;c)}{\int_{T_1}^{+\infty}c(t)e^{-t}dt}
  \int_{t_0}^{+\infty}\tilde{c}(s)e^{-s}ds,
  \label{other c also linear}
\end{split}
\end{equation}
 where $\tilde{c}$ is a nonnegative measurable function on $(T,+\infty)$ and $T_1 \in (T,+\infty)$. Thus, if $\mathcal{H}^2(t;\tilde{c})\subset\mathcal{H}^2(t;c)$ for any $t>T$, then $G(\hat{h}^{-1}(r);\tilde c)$ is linear with respect to $r\in[0,\int_T^{+\infty}c(s)e^{-s}ds)$.
\end{Remark}

\subsection{An application: a characterization for the concavity degenerating to linearity on open Riemann surfaces}

In this section, we give a characterization for the concavity degenerating to linearity on open Riemann surfaces.

Let $\Omega$ be an open Riemann surface, and let $K_{\Omega}$ be the canonical (holomorphic) line bundle on $\Omega$.  Let $dV_{\Omega}$ be a continuous volume form on $\Omega$.  Let $\psi$ be a  subharmonic function on $\Omega$, and let $\varphi$ be a Lebesgue measurable function on $\Omega$ such that $\varphi+\psi$ is subharmonic on $\Omega$. Let $F$ be a holomorphic function on $\Omega$. Let $T\in[-\infty,+\infty)$ such that $-T\le\sup\{\psi(z)-2\log|F(z)|:z\in\Omega \,\&\,F(z)\not=0\}$. Denote that
$$\Psi:=\min\{\psi-2\log|F|,-T\}.$$
For any $z\in \Omega$ satisfying $F(z)=0$, we set $\Psi(z)=-T$. Note that $\Psi$ is subharmonic function on $\{\Psi<-T\}$.

Let $Z_0$ be a  subset of $ \cap_{t>T}\overline{\{\Psi<-t\}}$. Denote that $Z_1:=\{z\in Z_0:v(dd^c(\psi),z)\ge2ord_{z}(F)\}$ and $Z_2:=\{z\in Z_0:v(dd^c(\psi),z)<2ord_{z}(F)\},$ where $d^c=\frac{\partial-\bar\partial}{2\pi\sqrt{-1}}$ and $v(dd^c(\psi),z)$ is the Lelong number of $dd^c(\psi)$ at $z$ (see \cite{Demaillybook}). Assume that  $Z'_1:=\{z\in Z_0:v(dd^c(\psi),z)>2ord_{z}(F)\}$ is a finite subset of $\Omega$. Note that $\{\Psi<-t\}\cup Z'_1$ is an open Riemann surface for any $t\ge T.$

Let $c(t)$ be a positive measurable function on $(T,+\infty)$ such that
$c(t)e^{-t}$ is decreasing on $(T,+\infty)$,
$c(t)e^{-t}$ is integrable near $+\infty$,
and $e^{-\varphi}c(-\Psi)$ has a positive lower bound on $K\cap\{\Psi<-T\}$ for any compact subset $K$ of $\Omega\backslash E$,
where $E\subset\{\Psi=-\infty\}$ is a discrete subset of $\Omega$.

Let $f$ be a holomorphic $(1,0)$ form on $\{\Psi<-t_0\}\cap V$, where $V\supset Z_0$ is an open subset of $\Omega$ and $t_0>T$
is a real number.
Let $J_{z}$ be an $\mathcal{O}_{\Omega,z}$-submodule of $H_{z}$ such that $I(\Psi+\varphi)_{z}\subset J_{z}$,
where $z\in Z_0$ and $H_{z}:=\{h_{z}\in J(\Psi)_{z}:\int_{\{\Psi<-t\}\cap U}|h|^2e^{-\varphi}c(-\Psi)dV_{\Omega}<+\infty$ for some $t>T$ and some neighborhood $U$ of $z\}$.
Denote
\begin{equation}
\label{def of g(t) for boundary pt}
\begin{split}
\inf\Bigg\{ \int_{ \{ \Psi<-t\}}|\tilde{f}|^2&e^{-\varphi}c(-\Psi): \tilde{f}\in
H^0(\{\Psi<-t\},\mathcal{O} (K_{\Omega})  ) \\
&\&\, (\tilde{f}-f)_{z}\in
\mathcal{O} (K_{\Omega})_{z} \otimes J_{z}\text{ for any }  z\in Z_0 \Bigg\}
\end{split}
\end{equation}
by $G(t;c,\Psi,\varphi,J,f)$, where $t\in[T,+\infty)$ and $|f|^2:=\sqrt{-1}f\wedge \bar{f}$ for any $(1,0)$ form $f$.
Without misunderstanding, we denote $G(t;c,\Psi,\varphi,J,f)$ by $G(t)$ for simplicity.

Recall that $G(h^{-1}(r))$ is concave with respect to $r$ (Theorem \ref{main theorem}), where  $h(t)=\int_{t}^{+\infty}c(s)e^{-s}ds$ for any $t\ge T$.
We obtain a characterization for $G(h^{-1}(r))$ degenerating to linearity.

\begin{Theorem}
	\label{thm:linearity1}
For any $z\in Z_1$,	assume that one of the following two conditions holds:
	
	$(A)$ $\varphi+a\psi$ is  subharmonic near $z$ for some $a\in[0,1)$;
	
	$(B)$ $(\psi-2p_z\log|w|)(z)>-\infty$, where $p_z=\frac{1}{2}v(dd^c(\psi),z)$ and $w$ is a local coordinate on a neighborhood of $z$ satisfying that $w(z)=0$.
	
 If there exists $t_1\ge T$ such that $G(t_1)\in(0,+\infty)$, then	$G(h^{-1}(r))$ is linear with respect to $r\in(0,\int_T^{+\infty}c(s)e^{-s}ds)$ if and only if the following statements hold:

	$(1)$  $\varphi+\psi=2\log|g|+2\log|F|$ on $\{\Psi<-T\}\cup Z'_1$, $J_{z}=I(\varphi+\Psi)_z$ for any $z\in Z'_1$ and there exists a holomorphic $(1,0)$ form $f_1$ on $(\{\Psi<-t_0\}\cap V)\cup Z'_1$ such that $f_1=f$ on $\{\Psi<-t_0\}\cap V$, where $g$ is a holomorphic function  on $\{\Psi<-T\}\cup Z'_1$ such that $ord_z(g)=ord_z(f_1)+1$ for any $z\in Z'_1$;
	
	$(2)$ $Z'_1\not=\emptyset$ and $\psi=2\sum_{ z\in Z'_1}\big(p_z-ord_{z}(F)\big)G_{\Omega_t}(\cdot, z)+2\log|F|-t$ on $\Omega_t$ for any $t>T$, where  $\Omega_t=\{\Psi<-t\}\cup Z'_1$ and $G_{\Omega_t}(\cdot, z)$ is the Green function on $\Omega_t$;
	
	$(3)$ $\frac{p_z-ord_{z}(F)}{ord_{z}(g)}\lim_{z'\rightarrow z}\frac{dg(z')}{f(z')}=c_0$ for any $z\in Z'_1$, where $c_0\in\mathbb{C}\backslash\{0\}$ is a constant independent of $z\in Z'_1$.
\end{Theorem}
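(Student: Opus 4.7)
I split the proof of Theorem \ref{thm:linearity1} into sufficiency and necessity. The sufficiency amounts to an explicit computation on level sets of the weighted Green function; the necessity exploits the rigidity provided by Corollary \ref{necessary condition for linear of G}.

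\textbf{Sufficiency, (1)--(3) imply linearity.} Under (2), on each $\Omega_t=\{\Psi<-t\}\cup Z'_1$ the equality $\psi-2\log|F|=2\sum_{z\in Z'_1}p_zG_{\Omega_t}(\cdot,z)-t\le -t$ holds, so $\Psi=\psi-2\log|F|$ on $\Omega_t$ and its level sets coincide with those of the weighted Green sum. From (1), $e^{-\varphi}=|g|^{-2}|F|^{-2}e^{\psi}$ on $\{\Psi<-T\}\cup Z'_1$. Using (3), I would construct a holomorphic $(1,0)$-form $\tilde F$ on $\{\Psi<-T\}\cup Z'_1$ as an explicit constant multiple of $dg$ normalized so that $(\tilde F-f)_z\in\mathcal O(K_\Omega)_z\otimes J_z$ for every $z\in Z_0$; condition (3), together with $ord_z(g)=ord_z(f_1)+1$ from (1), is precisely what makes this normalization globally consistent. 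A coarea/Stokes computation along the level curves of $\sum p_zG_{\Omega_t}(\cdot,z)$ then yields, for every $s>T$,
\begin{equation*}
\int_{\{\Psi<-s\}}|\tilde F|^2 e^{-\varphi}c(-\Psi)=C\int_s^{+\infty}c(r)e^{-r}\,dr
\end{equation*}
with a constant $C$ depending only on $c_0$ and the $p_z$. Since the right side equals $C\hat h(s)$, this $\tilde F$ lies in $\mathcal H^2(s;c)$ for every $s$, giving $G(s)\le C\hat h(s)$; the concavity from Theorem \ref{main theorem} together with an equality-case argument in the Ohsawa--Takegoshi extension on each $\Omega_t$ then forces $G(s)=C\hat h(s)$, i.e.\ $G(\hat h^{-1}(r))$ is linear.

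\textbf{Necessity, linearity implies (1)--(3).} By Corollary \ref{necessary condition for linear of G} there is a unique holomorphic $(1,0)$-form $\tilde F$ on $\{\Psi<-T\}$ with $G(t)=\int_{\{\Psi<-t\}}|\tilde F|^2 e^{-\varphi}c(-\Psi)$ satisfying (\ref{other a also linear}). Choosing $a=\mathbf 1_{[t_2,t_1]}$ and letting $t_1\downarrow t_2$ shows that the pushforward $\Psi_\ast(|\tilde F|^2 e^{-\varphi}dV_\Omega)$ has density $Ce^{-t}$ on $(-\infty,-T)$; combined with the coarea formula on the Riemann surface, this yields a pointwise identity between the $(1,1)$-forms $|\tilde F|^2 e^{-\varphi}$ and $Ce^{-\Psi}d\Psi\wedge d^c\Psi$ on $\{\Psi<-T\}$. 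I would extract the three conclusions in order: first, holomorphicity of $\tilde F$ together with the identity shows that $e^{-\varphi}e^{\psi}|F|^{-2}$ is locally $|g_U|^2$ for a holomorphic $g_U$, which patches via uniqueness of $\tilde F$ to a global $g$ giving $\varphi+\psi=2\log|g|+2\log|F|$ and the module equality $J_z=I(\varphi+\Psi)_z$ at $z\in Z'_1$, proving (1); second, the identity on $d\Psi\wedge d^c\Psi$ forces $\psi-2\log|F|+t$ to coincide with $2\sum_{z\in Z'_1}p_z G_{\Omega_t}(\cdot,z)$ on $\Omega_t$, yielding (2); third, comparing the leading Laurent coefficients of $\tilde F$ against $f$ near each $z\in Z'_1$, with $\tilde F$ forced to be a scalar multiple of $dg$, gives the common normalization constant $c_0$ in (3).

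\textbf{Main obstacle.} The core technical step is (2): showing that $\psi-2\log|F|+t$ equals the weighted Green sum \emph{exactly}, with no harmonic correction. The argument I envisage reads the pointwise identity $|\tilde F|^2 e^{-\varphi}=Ce^{-\Psi}d\Psi\wedge d^c\Psi$ as saying that the level curves of $\Psi$ on $\Omega_t$ coincide with those of the Green sum, so the difference of the two is harmonic and constant on each level of $\Psi$, hence globally constant; the boundary value $\Psi=-t$ on $\partial\Omega_t$ together with the prescribed Lelong numbers at $Z'_1$ then forces that constant to vanish. Assumptions (A) and (B) enter exactly here, to exclude harmonic corrections coming from points of $Z_1\setminus Z'_1$ (where $v(dd^c\psi,z)=2\,\mathrm{ord}_z F$, so no Green-function contribution is seen but a subharmonic correction could still survive). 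Finally the non-emptiness of $Z'_1$ is forced because Remark \ref{infty2} and $G(t_1)\in(0,+\infty)$ imply $\int_T^{+\infty}c(s)e^{-s}ds<+\infty$, which is incompatible with $\psi-2\log|F|\equiv -t$ on $\Omega_t$ in the absence of singular points.
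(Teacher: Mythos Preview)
Your necessity argument has a genuine gap at its very first step. From Corollary \ref{necessary condition for linear of G} you correctly extract the integral identity \eqref{other a also linear}, which says that the pushforward of $|\tilde F|^2e^{-\varphi}$ under $-\Psi$ has density $Ce^{-t}$. You then claim this yields a \emph{pointwise} equality of $(1,1)$-forms $|\tilde F|^2e^{-\varphi}=Ce^{-\Psi}\,d\Psi\wedge d^c\Psi$. This does not follow: the pushforward density fixes only the integral of $|\tilde F|^2e^{-\varphi}/|\nabla\Psi|$ over each level curve $\{\Psi=-t\}$, not its pointwise values along that curve. Everything downstream in your necessity proof --- the local factorization $e^{-\varphi}e^{\psi}|F|^{-2}=|g_U|^2$, the identification of level curves of $\Psi$ with those of the Green sum, the vanishing of the harmonic correction --- rests on this unproven identity. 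You also never address the points $z\in Z_0\setminus Z'_1$: for your $\tilde F$ to be admissible in the definition of $G(t)$ you need $(\tilde F-f)_z\in\mathcal O(K_\Omega)_z\otimes J_z$ at \emph{all} $z\in Z_0$, and nothing in your sketch forces this at the points where $v(dd^c\psi,z)\le 2\,\mathrm{ord}_z(F)$.

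The paper's route is quite different and explains where conditions $(A)$, $(B)$ actually enter. For necessity (Proposition \ref{p:n-linearity1}), the main work is to prove $H_z=I(\varphi+\Psi)_z$ for every $z\in Z_0\setminus Z'_1$, done separately for $Z_2$ and for $Z_1\setminus Z'_1$ via the module isomorphism of Proposition \ref{module isomorphism} and Lelong-number estimates; conditions $(A)$, $(B)$ are used precisely in the $Z_1\setminus Z'_1$ step. Once this is established, Lemma \ref{l:inner} identifies $G(t)$ with an \emph{inner-point} minimal integral $G_{f_1}(t)$ on $\Omega_t$, and the previously proved Propositions \ref{p:inner1}/\ref{p:inner2} (whose rigidity comes from Lemma \ref{l:n}, not from any pointwise measure identity) give statements (1)--(3). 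For sufficiency the paper again reduces via Lemma \ref{l:inner} to the known Theorem \ref{thm:m-points}, after observing through Lemma \ref{l:G-compact} that statement (2) forces $Z_0=Z'_1$; your direct computation omits this reduction and leaves unjustified why the explicit $\tilde F$ you build is actually the minimizer rather than merely a competitor.
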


When $F\equiv1$, $\psi(z)=-\infty$ for any $z\in Z_0=Z'_1$ and condition $(B)$ holds,  Theorem \ref{thm:linearity1} can be referred to \cite{GY-concavity3} (see also Theorem \ref{thm:m-points} and Remark \ref{r:equivalent}).

\section{preparations}
In this section, we do some preparations.
\subsection{$L^2$ method}
Let $M$ be an $n-$dimensional weakly pseudoconvex K\"ahler manifolds. Let $\psi$ be a plurisubharmonic function on $M$. Let $F$ be a holomorphic function on $M$. We assume that $F$ is not identically zero. Let $\varphi$ be a Lebesgue measurable function on $M$ such that $\varphi+\psi$ is a plurisubharmonic function on $M$.

Let $\delta$ be a positive integer. Let $T_1$ be a real number. Denote
$$\varphi_1:=(1+\delta)\max\{\psi+T_1,2\log|F|\},$$
and
$$\Psi_1:=\min\{\psi-2\log|F|,-T_1\}.$$
If $F(z)=0$ for some $z \in M$, we set $\Psi_1(z)=-T_1$.

Let $c(t)$ be a positive measurable function on $[T_1,+\infty)$ such that $c(t)e^{-t}$ is decreasing with respect $t$. We have the following lemma.

\begin{Lemma}[see \cite{GMY-boundary2}]
\label{L2 method}
Let $B\in(0,+\infty)$ and $t_0>T_1$ be arbitrarily given. Let $f$ be a holomorphic $(n,0)$ form on $\{\Psi_1<-t_0\}$ such that
$$\int_{\{\Psi_1<-t_0\}\cap K}|f|^2<+\infty,$$
for any compact subset $K\subset M$, and
$$\int_{M}\frac{1}{B}\mathbb{I}_{\{-t_0-B<\Psi_1<-t_0\}}|f|^2e^{-\varphi-\Psi}<+\infty.$$
 Then there exists a holomorphic $(n,0)$ form $\tilde{F}$ on $M$ such that
\begin{equation*}
  \begin{split}
      & \int_{M}|\tilde{F}-(1-b_{t_0,B}(\Psi_1))fF^{1+\delta}|^2e^{-\varphi-\varphi_1+v_{t_0,B}(\Psi_1)-\Psi_1}c(-v_{t_0,B}(\Psi_1)) \\
      \le & (\frac{1}{\delta}c(T_1)e^{-T_1}+\int_{T_1}^{t_0+B}c(s)e^{-s}ds)
       \int_{M}\frac{1}{B}\mathbb{I}_{\{-t_0-B<\Psi<-t_0\}}|f|^2e^{-\varphi-\Psi_1},
  \end{split}
\end{equation*}
where $b_{t_0,B}(t)=\int^{t}_{-\infty}\frac{1}{B} \mathbb{I}_{\{-t_0-B< s < -t_0\}}ds$,
$v_{t_0,B}(t)=\int^{t}_{-t_0}b_{t_0,B}(s)ds-t_0$.
\end{Lemma}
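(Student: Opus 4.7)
The plan is to deduce $\tilde F$ from a twisted H\"ormander/Berndtsson-type $L^{2}$ estimate for $\bar\partial$ on the weakly pseudoconvex K\"ahler manifold $M$, in the spirit of the optimal-$L^{2}$-extension approach of Guan-Zhou. First I would set the approximate extension $u:=(1-b_{t_0,B}(\Psi_1))fF^{1+\delta}$. Since $b_{t_0,B}=0$ on $(-\infty,-t_0-B]$ and $b_{t_0,B}=1$ on $[-t_0,+\infty)$, this $u$ equals $fF^{1+\delta}$ deep in $\{\Psi_1<-t_0-B\}$ and is understood as $0$ beyond $\{\Psi_1\ge -t_0\}$, where $f$ was not originally defined. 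After the usual regularization of $\psi$ and of $b_{t_0,B}$, the formula $\bar\partial u=-b_{t_0,B}'(\Psi_1)\bar\partial\Psi_1\wedge fF^{1+\delta}$ holds and is supported in the shell $\{-t_0-B<\Psi_1<-t_0\}$. The target will then be a solution $\gamma$ of $\bar\partial\gamma=\bar\partial u$ with a good weighted estimate, whereupon $\tilde F:=u-\gamma$ is holomorphic on $M$ and $|\tilde F-u|^{2}=|\gamma|^{2}$.

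Next I would apply a twisted Bochner-Kodaira-Nakano identity with background weight $\varphi+\varphi_1$ (which is psh, since $\varphi+\psi$ is psh by assumption and $\varphi_1=(1+\delta)\max\{\psi+T_1,2\log|F|\}$ is a multiple of a maximum of psh functions) and twist factor $\eta:=e^{v_{t_0,B}(\Psi_1)-\Psi_1}c(-v_{t_0,B}(\Psi_1))$. The choice of twist is motivated by two structural facts: $v_{t_0,B}$ is convex and increasing with $v_{t_0,B}'=b_{t_0,B}\in[0,1]$, so $v_{t_0,B}(\Psi_1)-\Psi_1$ has the right convexity to serve as a perturbation of the weight; and the hypothesis that $c(t)e^{-t}$ is decreasing is precisely what makes the term coming from $c$ contribute with the correct sign in the curvature inequality. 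This yields a solution $\gamma$ with $\int_M|\gamma|^{2}\,e^{-\varphi-\varphi_1+v_{t_0,B}(\Psi_1)-\Psi_1}c(-v_{t_0,B}(\Psi_1))$ bounded by the pairing of $|\bar\partial u|^{2}$ against the inverse of the curvature coefficient under the same weight.

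Finally I would reduce the right-hand side to the asserted form. On $\mathrm{supp}\,\bar\partial u$ the identity $\Psi_1=\psi-2\log|F|$ holds, so the factor $|F|^{2(1+\delta)}$ coming from $|fF^{1+\delta}|^{2}$ matches the exponential in $\varphi_1$ and cancels, leaving the integrand $\tfrac{1}{B}\mathbb{I}_{\{-t_0-B<\Psi_1<-t_0\}}|f|^{2}e^{-\varphi-\Psi_1}$ on the right. The scalar out front is obtained by integrating $c(s)e^{-s}$ over the range $s\in(T_1,t_0+B)$ traced out by $-v_{t_0,B}(\Psi_1)$ on the shell, plus a boundary contribution of $\tfrac{1}{\delta}c(T_1)e^{-T_1}$ coming from the $1+\delta$ factor in $\varphi_1$ (this is where $\delta$ enters the denominator). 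Setting $\tilde F:=u-\gamma$ then completes the argument. The main obstacle is the curvature bookkeeping in the twisted inequality: producing exactly the constant $\tfrac{1}{\delta}c(T_1)e^{-T_1}+\int_{T_1}^{t_0+B}c(s)e^{-s}\,ds$, rather than some strictly larger expression, requires matching the derivatives of $\eta$ against the Lelong-type singularities of $\Psi_1$ sharply and using the monotonicity of $c(t)e^{-t}$ at full strength, together with a careful passage to the limit in the smoothing of $\psi$ and of $b_{t_0,B}$.
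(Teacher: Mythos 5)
First, a point of comparison: this lemma is not proved in the present paper at all --- it is imported verbatim from \cite{GMY-boundary2}, where it is established by the Guan--Zhou style twisted $\bar\partial$-method (an approximate extension $(1-b_{t_0,B}(\Psi_1))fF^{1+\delta}$, a twisted $L^2$ estimate with auxiliary functions of $v_{t_0,B}(\Psi_1)$ determined by ODEs, and a limiting process over approximations of $c$ and of $b_{t_0,B}$ and over an exhaustion of $M$). Your outline follows the same strategy in broad strokes, so the approach is the right one; the problem is that, as written, the decisive steps are missing or incorrect.

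Concretely: (i) The entire content of the lemma is the exact constant $\frac{1}{\delta}c(T_1)e^{-T_1}+\int_{T_1}^{t_0+B}c(s)e^{-s}ds$, and your proposal explicitly defers this (``the main obstacle is the curvature bookkeeping''). In the actual argument this constant is produced by a specific choice of the two auxiliary functions (the twist and the correction to the weight) as solutions of ODEs in the variable $v_{t_0,B}(\Psi_1)$, with the $\frac{1}{\delta}$ term coming from an explicit integral generated by the weight $(1+\delta)\max\{\psi+T_1,2\log|F|\}$ on the region $\{\Psi_1\ge -T_1\}$; without carrying this out, nothing beyond a qualitative solvability statement is obtained. (ii) Your positivity bookkeeping is off: $\varphi+\varphi_1$ is \emph{not} plurisubharmonic in general ($\varphi$ itself is not psh, and $\varphi_1-\psi=(1+\delta)\max\{\psi+T_1,2\log|F|\}-\psi$ fails to be psh where $2\log|F|>\psi+T_1$). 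The correct psh combination, which the estimate is really built on, is $\varphi+\varphi_1+\Psi_1=(\varphi+\psi)+\delta\max\{\psi+T_1,2\log|F|\}$; the factor $e^{v_{t_0,B}(\Psi_1)}c(-v_{t_0,B}(\Psi_1))$ is the bounded part handled by the ODE choice, not a curvature term. (iii) ``The usual regularization of $\psi$'' is not available on a general weakly pseudoconvex K\"ahler manifold (there is no global psh smoothing), and $c$ is only measurable; the proof in \cite{GMY-boundary2} instead keeps the singular weights, approximates $c$ and $b_{t_0,B}$, works on sublevel sets of a smooth psh exhaustion, and passes to the limit via weak compactness/Fatou. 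So your plan points at the correct machinery, but as a proof it has a genuine gap exactly where the lemma has its quantitative content, plus a false pshness claim and a smoothing step that cannot be performed as stated.
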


Let $T\in[-\infty,+\infty)$. Denote $$\Psi:=\min\{\psi-2\log|F|,-T\}.$$
If $F(z)=0$ for some $z \in M$, we set $\Psi(z)=-T$. Let $c(t)\in \tilde{P}_{T,M,\Psi}$.
Let $T_1>T$ be a real number. Denote
$$\varphi_1:=(1+\delta)\max\{\psi+T_1,2\log|F|\},$$
and
$$\Psi_1:=\min\{\psi-2\log|F|,-T_1\}.$$
If $F(z)=0$ for some $z \in M$, we set $\Psi_1(z)=-T_1$.
It follows from Lemma \ref{L2 method} that we have the following lemma.
\begin{Lemma}
Let $(M,X,Z)$  satisfies condition $(A)$. Let $B \in (0, +\infty)$ and $t_0> T_1>T$ be arbitrarily given.
Let $f$ be a holomorphic $(n,0)$ form on $\{\Psi< -t_0\}$ such that
\begin{equation}
\int_{\{\Psi<-t_0\}} {|f|}^2e^{-\varphi}c(-\Psi)<+\infty,
\label{condition of lemma 2.2}
\end{equation}
Then there exists a holomorphic $(n,0)$ form $\tilde{F}$ on $M$  such that
\begin{equation}
\begin{split}
&\int_{M}|\tilde{F}-(1-b_{t_0,B}(\Psi_1))fF^{1+\delta}|^2e^{-\varphi-\varphi_1-\Psi_1+v_{t_0,B}(\Psi_1)}c(-v_{t_0,B}(\Psi_1))\\
\le & \left(\frac{1}{\delta}c(T_1)e^{-T_1}+\int_{T_1}^{t_0+B}c(t)e^{-t}dt\right)\int_M \frac{1}{B} \mathbb{I}_{\{-t_0-B< \Psi_1 < -t_0\}}  {|f|}^2
e^{{-}\varphi-\Psi_1},
\end{split}
\end{equation}
where  $b_{t_0,B}(t)=\int^{t}_{-\infty}\frac{1}{B} \mathbb{I}_{\{-t_0-B< s < -t_0\}}ds$ and
$v_{t_0,B}(t)=\int^{t}_{-t_0}b_{t_0,B}(s)ds-t_0$.
\label{L2 method for c(t)}
\end{Lemma}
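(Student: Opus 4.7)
The plan is to reduce the statement to Lemma \ref{L2 method} applied on the weakly pseudoconvex K\"ahler manifold $M\setminus(X\cup Z)$, which is available by clause II of condition $(A)$, and then to extend the resulting $(n,0)$ form across $X\cup Z$ using clause I together with Riemann extension across analytic subsets. The crucial preliminary observation is that for $t_0>T_1>T$ the two cut-off heights coincide where everything happens: on $\{\psi-2\log|F|<-t_0\}$ and in particular on the annular region $\{-t_0-B<\Psi_1<-t_0\}$ one has $\Psi=\Psi_1=\psi-2\log|F|$, so $\{\Psi<-t_0\}=\{\Psi_1<-t_0\}$ and the given $f$ can equivalently be regarded as a holomorphic $(n,0)$ form on $\{\Psi_1<-t_0\}$ in the sense required by Lemma \ref{L2 method}.

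The next step is to verify the two integrability hypotheses of Lemma \ref{L2 method} on $M\setminus(X\cup Z)$. For the local $L^2$ condition on $|f|^2$ over compact sets, I would invoke the definition of $c\in\tilde{P}_{T,M,\Psi}$: for the given $t_0>T$ there is a closed set $E_0\subset Z\cap\{\Psi=-\infty\}$ such that $e^{-\varphi}c(-\Psi)$ has a positive lower bound on $K\cap\{\Psi<-t_0\}$ for every compact $K\subset M\setminus E_0$; since $M\setminus(X\cup Z)\subset M\setminus E_0$, combining this with \eqref{condition of lemma 2.2} yields $\int_{\{\Psi_1<-t_0\}\cap K}|f|^2<+\infty$ for every compact $K\subset M\setminus(X\cup Z)$. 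For the second hypothesis, on the annulus $\{-t_0-B<\Psi_1<-t_0\}$ one has $e^{-\Psi_1}\le e^{t_0+B}$, and the monotonicity of $c(t)e^{-t}$ yields the lower bound $c(t)\ge c(t_0+B)e^{-B}$ on $[t_0,t_0+B]$; together these show that $\frac{1}{B}|f|^2e^{-\varphi-\Psi_1}$ is dominated on the annulus by a constant multiple of $|f|^2e^{-\varphi}c(-\Psi)$, whose integral over $\{\Psi<-t_0\}$ is finite by \eqref{condition of lemma 2.2}.

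Once the hypotheses are in place, Lemma \ref{L2 method} produces a holomorphic $(n,0)$ form $\tilde F$ on $M\setminus(X\cup Z)$ satisfying the desired weighted estimate there. The last, and in my view main, step is to extend $\tilde F$ to a holomorphic $(n,0)$ form on all of $M$ while preserving the estimate. Clause I of condition $(A)$ disposes of $X$ as soon as one knows $\tilde F$ is locally $L^2$ near $X$. Crossing the analytic set $Z$ uses Riemann extension for locally $L^2$ holomorphic forms; the required local $L^2$ bound on $\tilde F$ follows from the weighted estimate once one observes that $v_{t_0,B}(\Psi_1)$ is uniformly bounded (it takes values in $[-t_0-B/2,-T_1]$), that $e^{-\Psi_1+v_{t_0,B}(\Psi_1)}$ absorbs from above the singularities of $\Psi_1$ along $\{\psi=-\infty\}\cup\{F=0\}$, and that the correction term $(1-b_{t_0,B}(\Psi_1))fF^{1+\delta}$ extends holomorphically across $\{F=0\}$ since the factor $F^{1+\delta}$ cancels any pole coming from $f$. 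Once $\tilde F$ has been extended to $M$, the claimed inequality of Lemma \ref{L2 method for c(t)} is just the inequality supplied by Lemma \ref{L2 method} on $M\setminus(X\cup Z)$, because $X\cup Z$ has Lebesgue measure zero.
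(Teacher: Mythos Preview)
Your outline follows the paper's proof closely: apply Lemma~\ref{L2 method} on $M\setminus(X\cup Z)$, then extend $\tilde F$ across $X\cup Z$ via local $L^2$ bounds. Two of your justifications in the extension step are imprecise, however, and as written would not quite close.

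First, your reason for the weight $e^{-\varphi-\varphi_1-\Psi_1+v_{t_0,B}(\Psi_1)}c(-v_{t_0,B}(\Psi_1))$ having a positive local lower bound is off. The issue is not singularities of $\Psi_1$ (since $\Psi_1\le -T_1$ implies $e^{-\Psi_1}\ge e^{T_1}$, which only helps); the danger is that $\varphi$ is merely measurable and could be $+\infty$. What the paper uses is that $\varphi+\varphi_1+\Psi_1=(\varphi+\psi)+\delta\max\{\psi+T_1,2\log|F|\}$ is plurisubharmonic, hence locally bounded above, and this together with $e^{v_{t_0,B}(\Psi_1)}c(-v_{t_0,B}(\Psi_1))\ge c(t_0+B/2)e^{-t_0-B/2}$ gives the required lower bound. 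Second, your treatment of the correction term $(1-b_{t_0,B}(\Psi_1))fF^{1+\delta}$ only mentions $\{F=0\}$, but that set is disjoint from $\{\Psi<-t_0\}$ so the term vanishes there anyway; there are no ``poles of $f$'' to cancel. The genuine obstruction is at points of the exceptional set $E_0\subset Z\cap\{\Psi=-\infty\}$ supplied by $\tilde P_{T,M,\Psi}$, where $e^{-\varphi}c(-\Psi)$ need not be bounded below and your lower-bound argument for $\int|f|^2$ fails. The paper treats this case separately: since $E_0\subset\{\Psi=-\infty\}\subset\{\Psi<-t_0\}$ is open, one can choose $U_z\Subset\{\Psi<-t_0\}$, on which $f$ is holomorphic and hence bounded, giving $\int_{U_z}|f|^2<+\infty$ directly. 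With these two fixes your argument coincides with the paper's.
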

\begin{proof} We note that $\{\Psi<-t_0\}=\{\Psi_1<-t_0\}$ and $\Psi_1=\Psi$ on $\{\Psi<-t_0\}$. It follows from inequality \eqref{condition of lemma 2.2} and $c(t)e^{-t}$ is decreasing with respect to $t$ that
$$\int_M \frac{1}{B} \mathbb{I}_{\{-t_0-B< \Psi_1 < -t_0\}}  {|f|}^2
e^{{-}\varphi-\Psi}<+\infty.$$

  As $c(t)\in \tilde{P}_{T,M,\Psi}$, $\{\Psi<-t_0\}=\{\Psi_1<-t_0\}$ and $\Psi_1=\Psi$ on $\{\Psi<-t_0\}$, there  exists a closed subset $E\subset Z\cap \{\Psi=-\infty\}$ such that for any compact subset $K\subset M\backslash E$, $e^{-\varphi}c(-\Psi)$ has a positive lower bound on $K\cap \{\Psi_1<-t_0\} $.
It follows from inequality \eqref{condition of lemma 2.2} that we have
\begin{equation}\nonumber
\int_{K\cap \{\Psi_1<-t_0\}}|f|^2<+\infty.
\end{equation}

As $(M,X,Z)$  satisfies condition $(A)$,  $M\backslash (Z\cup X)$ is a weakly pseudoconvex K\"ahler manifold. It follows from Lemma \ref{L2 method} that there exists a holomorphic $(n,0)$ form $\tilde{F}_Z$ on $M\backslash (Z\cup X)$ such that
\begin{equation*}
  \begin{split}
      & \int_{M\backslash (Z\cup X)}|\tilde{F}_Z-(1-b_{t_0,B}(\Psi_1))fF^{1+\delta}|^2e^{-\varphi-\varphi_1+v_{t_0,B}(\Psi_1)-\Psi_1}c(-v_{t_0,B}(\Psi_1)) \\
      \le & (\frac{1}{\delta}c(T_1)e^{-T_1}+\int_{T_1}^{t_0+B}c(s)e^{-s}ds)
       \int_{M}\frac{1}{B}\mathbb{I}_{\{-t_0-B<\Psi_1<-t_0\}}|f|^2e^{-\varphi-\Psi_1}<+\infty.
  \end{split}
\end{equation*}

For any $z\in \left((Z\cup X)\backslash E\right)$, there exists an open neighborhood $V_z$ of $z$ such that $V_z\Subset M\backslash E$.

It follows from inequality \eqref{condition of lemma 2.2} and $c(t)\in \tilde{P}_{T,M,\Psi}$ that we have $\int_{V_z\cap \{\Psi_1<-t_0\}}|f|^2<+\infty$.
Note that $\varphi+\varphi_1+\Psi$ is a plurisubharmonic function on $M$. As $c(t)e^{-t}$ is decreasing with respect to $t$ and $v_{t_0,B}(\Psi_1)\ge -t_0-\frac{B}{2}$, we have $c(-v_{t_0,B}(\Psi_1))e^{v_{t_0,B}(\Psi_1)}\ge c(t_0+\frac{B}{2})e^{-t_0-\frac{B}{2}}>0$. Denote $C:=\inf\limits_{V_z}e^{-\varphi-\varphi_1+v_{t_0,B}(\Psi_1)-\Psi_1}c(-v_{t_0,B}(\Psi_1))$, we know $C>0$. Then we have
\begin{equation*}
  \begin{split}
  &\int_{V_z\backslash (Z\cup X)}|\tilde{F}_Z|^2\\
  \le & 2\int_{V_z\backslash (Z\cup X)}|\tilde{F}_Z-(1-b_{t_0,B}(\Psi_1))fF^{1+\delta}|^2
  +2\int_{V_z\backslash (Z\cup X)}|(1-b_{t_0,B}(\Psi_1))fF^{1+\delta}|^2
 \\
  \le &
  \frac{2}{C}\bigg(\int_{M\backslash (Z\cup X)}|\tilde{F}_Z-(1-b_{t_0,B}(\Psi_1))fF^{1+\delta}|^2e^{-\varphi-\varphi_1+v_{t_0,B}(\Psi_1)-\Psi_1}c(-v_{t_0,B}(\Psi_1))\bigg)\\
  &+\sup_{V_z}|F^{1+\delta}|^2\int_{\{\Psi_1<-t_0\}\cap V_z}|f|^2\\
  <&+\infty.
  \end{split}
\end{equation*}

As $Z\cup X$ is locally negligible with respect to $L^2$ holomorphic function, we can find a holomorphic extension $\tilde{F}_E$ of $\tilde{F}_Z$ from $M\backslash (Z\cup X)$ to $M\backslash E$ such that

\begin{equation*}
  \begin{split}
      & \int_{M\backslash E}|\tilde{F}_E-(1-b_{t_0,B}(\Psi_1))fF^{1+\delta}|^2e^{-\varphi-\varphi_1+v_{t_0,B}(\Psi_1)-\Psi_1}c(-v_{t_0,B}(\Psi_1)) \\
      \le & (\frac{1}{\delta}c(T_1)e^{-T_1}+\int_{T_1}^{t_0+B}c(s)e^{-s}ds)
       \int_{M}\frac{1}{B}\mathbb{I}_{\{-t_0-B<\Psi_1<-t_0\}}|f|^2e^{-\varphi-\Psi}.
  \end{split}
\end{equation*}

 Note that $E\subset\{\Psi=-\infty\}\subset\{\Psi<-t_0\}$ and $\{\Psi<-t_0\}$ is open, then for any $z\in E$, there exists an open neighborhood $U_z$ of $z$ such that $U_z\Subset\{\Psi<-t_0\}=\{\Psi_1<-t_0\}$.

As $v_{t_0,B}(t)\ge -t_0-\frac{B}{2}$, we have $c(-v_{t_0,B}(\Psi_1))e^{v_{t_0,B}(\Psi_1)}\ge c(t_0+\frac{B}{2})e^{-t_0-\frac{B}{2}}>0$. Note that $\varphi+\varphi_1+\Psi_1$ is plurisubharmonic on $M$. Thus we have
\begin{equation*}
  \begin{split}
      & \int_{U_z\backslash E}|\tilde{F}_E-(1-b_{t_0,B}(\Psi))fF^{1+\delta}|^2 \\
      \le & \frac{1}{C_1}\int_{U_z\backslash E}|\tilde{F}_E-(1-b_{t_0,B}(\Psi_1))fF^{1+\delta}|^2e^{-\varphi-\varphi_1+v_{t_0,B}(\Psi_1)-\Psi_1}c(-v_{t_0,B}(\Psi_1))
      <+\infty,
  \end{split}
\end{equation*}
where $C_1$ is some positive number.

As $U_z\Subset\{\Psi<-t_0\}$, we have
\begin{equation*}
  \begin{split}
       \int_{U_z\backslash E}|(1-b_{t_0,B}(\Psi))fF^{1+\delta}|^2
      \le
      \left(\sup_{U_z}|F^{1+\delta}|^2\right)\int_{U_z}|f|^2 <+\infty.
  \end{split}
\end{equation*}
Hence we have
$$\int_{U_z\backslash E}|\tilde{F}_E|^2<+\infty. $$
As $E$ is contained in some analytic subset of $M$, we can find a holomorphic extension $\tilde{F}$ of $\tilde{F}_E$ from $M\backslash E$ to $M$ such that

\begin{equation}
\begin{split}
&\int_{M}|\tilde{F}-(1-b_{t_0,B}(\Psi_1))fF^{1+\delta}|^2e^{-\varphi-\varphi_1-\Psi_1+v_{t_0,B}(\Psi_1)}c(-v_{t_0,B}(\Psi_1))\\
\le & \left(\frac{1}{\delta}c(T_1)e^{-T_1}+\int_{T_1}^{t_0+B}c(t)e^{-t}dt\right)\int_M \frac{1}{B} \mathbb{I}_{\{-t_0-B< \Psi_1 < -t_0\}}  {|f|}^2
e^{{-}\varphi-\Psi_1}.
\end{split}
\end{equation}
Lemma \ref{L2 method for c(t)} is proved.
\end{proof}

Let $T\in[-\infty,+\infty)$. Let $c(t)\in \tilde{P}_{T,M,\Psi}$.
Using Lemma \ref{L2 method for c(t)}, we have the following lemma, which will be used to prove Theorem \ref{main theorem}.
\begin{Lemma}
\label{L2 method in JM concavity}
Let $(M,X,Z)$  satisfies condition $(A)$. Let $B \in (0, +\infty)$ and $t_0>t_1> T$ be arbitrarily given.
Let $f$ be a holomorphic $(n,0)$ form on $\{\Psi< -t_0\}$ such that
\begin{equation}
\int_{\{\Psi<-t_0\}} {|f|}^2e^{-\varphi}c(-\Psi)<+\infty,
\label{condition of JM concavity}
\end{equation}
Then there exists a holomorphic $(n,0)$ form $\tilde{F}$ on $\{\Psi<-t_1\}$ such that

 \begin{equation*}
  \begin{split}
      & \int_{\{\Psi<-t_1\}}|\tilde{F}-(1-b_{t_0,B}(\Psi))f|^2e^{-\varphi+v_{t_0,B}(\Psi)-\Psi}c(-v_{t_0,B}(\Psi)) \\
      \le & \left(\int_{t_1}^{t_0+B}c(s)e^{-s}ds\right)
       \int_{M}\frac{1}{B}\mathbb{I}_{\{-t_0-B<\Psi<-t_0\}}|f|^2e^{-\varphi-\Psi},
  \end{split}
\end{equation*}
where $b_{t_0,B}(t)=\int^{t}_{-\infty}\frac{1}{B} \mathbb{I}_{\{-t_0-B< s < -t_0\}}ds$,
$v_{t_0,B}(t)=\int^{t}_{-t_0}b_{t_0,B}(s)ds-t_0$.
\end{Lemma}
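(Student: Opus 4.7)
The plan is to deduce Lemma 2.3 from Lemma 2.2 by specializing the auxiliary parameter $T_1$ to $t_1$, absorbing the twist $F^{1+\delta}$ via division, and letting $\delta\to+\infty$ to kill the unwanted factor $\frac{1}{\delta}c(T_1)e^{-T_1}$. First I would observe that on $\{\Psi<-t_1\}$ the convention $\Psi(z)=-T$ at zeros of $F$ forces $F$ to be nowhere zero, so $\Psi=\psi-2\log|F|$ holds pointwise there. Applying Lemma 2.2 with the choice $T_1:=t_1>T$, one has $\Psi_1=\Psi$ on $\{\Psi<-t_1\}$; since $\psi-2\log|F|<-t_1$ on this set, $\psi+t_1<2\log|F|$, and therefore the weight $\varphi_1=(1+\delta)\max\{\psi+t_1,2\log|F|\}$ simplifies to $\varphi_1=2(1+\delta)\log|F|$ on $\{\Psi<-t_1\}$. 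Checking the hypotheses of Lemma 2.2 from $\int_{\{\Psi<-t_0\}}|f|^2e^{-\varphi}c(-\Psi)<+\infty$ is standard: the monotonicity of $c(t)e^{-t}$ gives the global boundary integral, and the lower bound property in the definition of $\tilde P_{T,M,\Psi}$ gives local $L^2$-finiteness of $f$ away from some $E_0\subset Z\cap\{\Psi=-\infty\}$.

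For each positive integer $\delta$, Lemma 2.2 produces a holomorphic $(n,0)$ form $\tilde F_\delta$ on $M$ with the stated estimate. Since $F$ is nowhere zero on $\{\Psi<-t_1\}$, set
$$\hat F_\delta:=\frac{\tilde F_\delta}{F^{1+\delta}}\in H^0(\{\Psi<-t_1\},\mathcal{O}(K_M)).$$
Using $e^{-\varphi_1}=|F|^{-2(1+\delta)}$ on $\{\Psi<-t_1\}$ and restricting the integration from $M$ down to $\{\Psi<-t_1\}$ (which only decreases the left-hand side), the estimate from Lemma 2.2 rewrites as
\begin{equation*}
\begin{split}
&\int_{\{\Psi<-t_1\}}|\hat F_\delta-(1-b_{t_0,B}(\Psi))f|^2e^{-\varphi-\Psi+v_{t_0,B}(\Psi)}c(-v_{t_0,B}(\Psi))\\
&\qquad\leq\Bigl(\tfrac{1}{\delta}c(t_1)e^{-t_1}+\int_{t_1}^{t_0+B}c(s)e^{-s}ds\Bigr)\int_M\tfrac{1}{B}\mathbb{I}_{\{-t_0-B<\Psi<-t_0\}}|f|^2e^{-\varphi-\Psi}.
\end{split}
\end{equation*}

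Finally I would pass to the limit $\delta\to+\infty$ along positive integers. On every compact subset $K\subset\{\Psi<-t_1\}\setminus E_0$ the weight $e^{-\varphi-\Psi+v_{t_0,B}(\Psi)}c(-v_{t_0,B}(\Psi))$ has a strictly positive lower bound: on $\{-t_0\leq\Psi<-t_1\}\cap K$ this weight equals $e^{-\varphi}c(-\Psi)$, bounded below via $c\in\tilde P_{T,M,\Psi}$, while on $\{\Psi<-t_0\}\cap K$ the monotonicity of $c(t)e^{-t}$ combined with $v_{t_0,B}(\Psi)\geq -t_0-B/2$ gives $c(-v_{t_0,B}(\Psi))e^{v_{t_0,B}(\Psi)}\geq c(t_0+B/2)e^{-t_0-B/2}$, and $e^{-\varphi-\Psi}$ is bounded below there by the same $\tilde P$-property applied at level $t_0$. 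Hence $\{\hat F_\delta-(1-b_{t_0,B}(\Psi))f\}$ is uniformly $L^2_{\mathrm{loc}}$-bounded on $\{\Psi<-t_1\}\setminus E_0$, and so is $\{\hat F_\delta\}$ since $(1-b_{t_0,B}(\Psi))f$ is a fixed locally $L^2$ form. Montel's theorem extracts a subsequence converging locally uniformly to a holomorphic $\tilde F$ on $\{\Psi<-t_1\}\setminus E_0$, which extends holomorphically across $E_0$ by the locally negligibility of $Z\cup X$ (exactly as in the proof of Lemma 2.2 spelled out in the excerpt). Fatou's lemma applied to the left-hand side then yields the estimate in Lemma 2.3. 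The main obstacle I anticipate is the verification of the positive lower bound of the weight in the region $\{\Psi<-t_0\}$, where $v_{t_0,B}(\Psi)$ diverges from $\Psi$; this requires combining the $\tilde P$-property with the explicit range of $v_{t_0,B}$, but is otherwise routine.
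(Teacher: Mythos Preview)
Your proof is correct and follows essentially the same route as the paper: apply Lemma~2.2 with $T_1=t_1$, divide out the power of $F$ on $\{\Psi<-t_1\}$ (where $F$ has no zeros), and let $\delta\to+\infty$ through a Montel--Fatou argument. The only substantive difference is bookkeeping: the paper divides first by $F^{\delta}$ and then, after passing to the limit, by one more factor of $F$, so that the intermediate weight is $e^{-\varphi+v_{t_0,B}(\Psi)-\psi}c(-v_{t_0,B}(\Psi))$; since $\varphi+\psi$ is plurisubharmonic on all of $M$ and $v_{t_0,B}\ge -t_0-B/2$, this weight has a positive lower bound on every compact $K\subset\{\Psi<-t_1\}$ without appealing to the $\tilde P_{T,M,\Psi}$ condition or excising any set $E_0$. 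Your detour through $\tilde P$ and the subsequent extension across $E_0$ is valid but unnecessary: since $e^{-\varphi-\Psi}=|F|^{2}e^{-\varphi-\psi}$ on $\{\Psi<-t_1\}$ and $F$ is nowhere zero there, the weight you wrote already has a positive lower bound on \emph{all} compacts of $\{\Psi<-t_1\}$, so the $\hat F_\delta$ are uniformly locally $L^2$ on the whole set and no removable-singularity step is needed.
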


\begin{proof}[Proof of Lemma \ref{L2 method in JM concavity}]

Denote that
$$\tilde{\varphi}:=\varphi+(1+\delta)\max\{\psi+t_1,2\log|F|\}$$
and
$$\tilde{\Psi}:=\min\{\psi-2\log|F|,-t_1\}.$$

As $t_0>t_1>T$, we have $\{\tilde{\Psi}<-t_0\}=\{\Psi<-t_0\}$. It follows from inequality \eqref{condition of JM concavity} and Lemma \ref{L2 method for c(t)} that there exists a holomorphic function $\tilde{F}_{\delta}$ on $M$ such that

\begin{equation*}
  \begin{split}
      & \int_{M}|\tilde{F}_{\delta}-(1-b_{t_0,B}(\tilde\Psi))fF^{1+\delta}|^2e^{-\tilde \varphi+v_{t_0,B}(\tilde \Psi)-\tilde \Psi}c(-v_{t_0,B}(\tilde \Psi)) \\
      \le & \left(\frac{1}{\delta}c(t_1)e^{-t_1}+\int_{t_1}^{t_0+B}c(s)e^{-s}ds\right)
       \int_{M}\frac{1}{B}\mathbb{I}_{\{-t_0-B<\tilde\Psi<-t_0\}}|f|^2e^{-\varphi-\tilde\Psi}.
  \end{split}
\end{equation*}

Note that on $\{\Psi<-t_1\}$, we have $\Psi=\tilde{\Psi}=\psi-2\log|F|$ and $\tilde{\varphi}=\varphi+\varphi_1=\varphi+(1+\delta)2\log|F|$.  Hence
\begin{equation}\label{1st formula in L2 method JM concavity}
\begin{split}
    & \int_{\{\Psi<-t_1\}}|\tilde{F}_{\delta}-(1-b_{t_0,B}(\Psi))fF^{1+\delta}|^2
    e^{-\varphi-\varphi_1+v_{t_0,B}(\Psi)-\Psi}c(-v_{t_0,B}(\Psi)) \\
    = & \int_{\{\Psi<-t_1\}}|\tilde{F}_{\delta}-(1-b_{t_0,B}(\tilde\Psi))fF^{1+\delta}|^2
     e^{-\tilde\varphi+v_{t_0,B}(\tilde\Psi)-\tilde\Psi}c(-v_{t_0,B}(\tilde\Psi))\\
     \le &
     \int_{M}|\tilde{F}_{\delta}-(1-b_{t_0,B}(\tilde\Psi))fF^{1+\delta}|^2
     e^{-\tilde\varphi+v_{t_0,B}(\tilde\Psi)-\tilde\Psi}c(-v_{t_0,B}(\tilde\Psi))\\
     \le & \left(\frac{1}{\delta}c(t_1)e^{-t_1}+\int_{t_1}^{t_0+B}c(s)e^{-s}ds\right)
       \int_{M}\frac{1}{B}\mathbb{I}_{\{-t_0-B<\tilde\Psi<-t_0\}}|f|^2e^{-\varphi-\tilde\Psi}\\
       =&
     \left(\frac{1}{\delta}c(t_1)e^{-t_1}+\int_{t_1}^{t_0+B}c(s)e^{-s}ds\right)
       \int_{M}\frac{1}{B}\mathbb{I}_{\{-t_0-B<\Psi<-t_0\}}|f|^2e^{-\varphi-\Psi}<+\infty.
\end{split}
\end{equation}

 Let $F_{\delta}:=\frac{\tilde{F}_{\delta}}{F^{\delta}}$ be a holomorphic function on $\{\Psi<-t_1\}$. Then it follows from \eqref{1st formula in L2 method JM concavity} that
\begin{equation}\label{2nd formula in L2 method JM concavity}
\begin{split}
    & \int_{\{\Psi<-t_1\}}|F_{\delta}-(1-b_{t_0,B}(\Psi))fF|^2
    e^{-\varphi+v_{t_0,B}(\Psi)-\psi}c(-v_{t_0,B}(\Psi)) \\
    \le&
     \bigg(\frac{1}{\delta}c(t_1)e^{-t_1}+\int_{t_1}^{t_0+B}c(s)e^{-s}ds\bigg)
       \int_{M}\frac{1}{B}\mathbb{I}_{\{-t_0-B<\Psi<-t_0\}}|f|^2e^{-\varphi-\Psi}.
\end{split}
\end{equation}

As $c(t)\in \tilde{P}_{T,M,\Psi}$,
 there exists a closed subset $E_0$ of $M$ such that $E_0\subset Z\cap \{\Psi(z)=-\infty\}$ and for any compact subset $K\subset M\backslash E_0$, $e^{-\varphi}c(-\Psi)$ has a positive lower bound on $K\cap \{\Psi<-t_0\}$ .
Let $K$ be any compact subset of $M\backslash E_0$. Note that $\inf_{K}e^{-\varphi_{\alpha}+v_{t_0,B}(\Psi)-\psi}c(-v_{t_0,B}(\Psi))\ge \left(c(t_0+\frac{2}{B})e^{-t_0-\frac{2}{B}}\right)\inf_{K}e^{-\varphi_{\alpha}-\psi}>0$. It follows from \eqref{2nd formula in L2 method JM concavity} that we have
 $$\sup_{\delta} \int_{\{\Psi<-t_1\}\cap K}|F_{\delta}-(1-b_{t_0,B}(\Psi))fF|^2<+\infty.$$

We also note that
$$\int_{\{\Psi<-t_1\}\cap K}|(1-b_{t_0,B}(\Psi))fF|^2\le
\left(\sup_{K}|F|^2\right)\int_{\{\Psi<-t_0\}\cap K}|f|^2<+\infty.$$
Then we know that
$$\sup_{\delta} \int_{\{\Psi<-t_1\}\cap K}|F_{\delta}|^2<+\infty,$$
and there exists a subsequence of $\{F_\delta\}$ (also denoted by $F_\delta$) compactly convergent to a holomorphic $(n,0)$ form $\tilde{F}_1$ on $\{\Psi<-t_1\}\backslash E_0$.
It follows from Fatou's Lemma and inequality \eqref{2nd formula in L2 method JM concavity} that we have

\begin{equation}\label{3rd formula in L2 method JM concavity}
\begin{split}
  & \int_{\{\Psi<-t_1\}\backslash E_0}|\tilde{F}_1-(1-b_{t_0,B}(\Psi))fF|^2
    e^{-\varphi_{\alpha}+v_{t_0,B}(\Psi)-\psi}c(-v_{t_0,B}(\Psi)) \\
    \le &\liminf_{\delta\to +\infty} \int_{\{\Psi<-t_1\}\backslash E_0}|F_{\delta}-(1-b_{t_0,B}(\Psi))fF|^2
    e^{-\varphi_{\alpha}+v_{t_0,B}(\Psi)-\psi}c(-v_{t_0,B}(\Psi)) \\
     \le &\liminf_{\delta\to +\infty} \int_{\{\Psi<-t_1\}}|F_{\delta}-(1-b_{t_0,B}(\Psi))fF|^2
    e^{-\varphi_{\alpha}+v_{t_0,B}(\Psi)-\psi}c(-v_{t_0,B}(\Psi)) \\
       \le&\liminf_{\delta\to +\infty}
     \bigg(\frac{1}{\delta}c(t_1)e^{-t_1}+\int_{t_1}^{t_0+B}c(s)e^{-s}ds\bigg)
       \int_{M}\frac{1}{B}\mathbb{I}_{\{-t_0-B<\Psi<-t_0\}}|f|^2e^{-\varphi_{\alpha}-\Psi}\\
       \le &\left(\int_{t_1}^{t_0+B}c(s)e^{-s}ds\right)
       \int_{M}\frac{1}{B}\mathbb{I}_{\{-t_0-B<\Psi<-t_0\}}|f|^2e^{-\varphi_{\alpha}-\Psi}.
\end{split}
\end{equation}
Note that $E_0\subset\{\Psi=-\infty\}\subset\{\Psi<-t_1\}$ and $\{\Psi<-t_1\}$ is open, then for any $z\in E_0$, there exists an open neighborhood $U_z$ of $z$ such that $U_z\Subset\{\Psi<-t_1\}$. Note that $\varphi_{\alpha}+\psi$ is plurisubharmonic function on $M$. As $v_{t_0,B}(t)\ge -t_0-\frac{B}{2}$, we have $c(-v_{t_0,B}(\Psi_1))e^{v_{t_0,B}(\Psi_1)}\ge c(t_0+\frac{B}{2})e^{-t_0-\frac{B}{2}}>0$. Thus by \eqref{3rd formula in L2 method JM concavity}, we have
\begin{equation*}
  \begin{split}
      & \int_{U_z\backslash E_0}|\tilde{F}_1-(1-b_{t_0,B}(\Psi))fF|^2 \\
      \le  & \frac{1}{C_1}\int_{U_z\backslash E_0}|\tilde{F}_1-(1-b_{t_0,B}(\Psi))fF|^2e^{-\varphi_{\alpha}+v_{t_0,B}(\Psi)-\psi}c(-v_{t_0,B}(\Psi))
      <+\infty,
  \end{split}
\end{equation*}
where $C_1:=c(t_0+\frac{B}{2})e^{-t_0-\frac{B}{2}}\inf_{U_z}e^{-\varphi_{\alpha}-\psi}$ is some positive number.

As $U_z\Subset\{\Psi<-t_1\}$, we have
\begin{equation*}
  \begin{split}
       \int_{U_z\backslash E_0}|(1-b_{t_0,B}(\Psi))fF|^2
      \le
      \left(\sup_{U_z}|F|^2\right)\int_{U_z}|f|^2 <+\infty.
  \end{split}
\end{equation*}
Hence we have
$$\int_{U_z\backslash E_1}|\tilde{F}_1|^2<+\infty. $$
As $E_1$ is contained in some analytic subset of $M$, we can find a holomorphic extension $\tilde{F}_0$ of $\tilde{F}_1$ from $\{\Psi<-t_1\}\backslash E_0$ to $\{\Psi<-t_1\}$ such that
\begin{equation}\label{4th formula in L2 method JM concavity}
\begin{split}
  & \int_{\{\Psi<-t_1\}}|\tilde{F}_0-(1-b_{t_0,B}(\Psi))fF|^2
    e^{-\varphi_{\alpha}+v_{t_0,B}(\Psi)-\psi}c(-v_{t_0,B}(\Psi)) \\
       \le &\left(\int_{t_1}^{t_0+B}c(s)e^{-s}ds\right)
       \int_{M}\frac{1}{B}\mathbb{I}_{\{-t_0-B<\Psi<-t_0\}}|f|^2e^{-\varphi_{\alpha}-\Psi}.
\end{split}
\end{equation}

Denote $\tilde{F}:=\frac{\tilde{F}_0}{F}$. Note that on $\{\Psi<-t_1\}$, we have $\Psi=\psi-2\log|F|$. It follows from \eqref{4th formula in L2 method JM concavity} that we have
\begin{equation}\nonumber
\begin{split}
  & \int_{\{\Psi<-t_1\}}|\tilde{F}-(1-b_{t_0,B}(\Psi))f|^2
    e^{-\varphi_{\alpha}+v_{t_0,B}(\Psi)-\Psi}c(-v_{t_0,B}(\Psi)) \\
       \le &\left(\int_{t_1}^{t_0+B}c(s)e^{-s}ds\right)
       \int_{M}\frac{1}{B}\mathbb{I}_{\{-t_0-B<\Psi<-t_0\}}|f|^2e^{-\varphi_{\alpha}-\Psi}.
\end{split}
\end{equation}

Lemma \ref{L2 method in JM concavity} is proved.

\end{proof}

\subsection{Properties of $\mathcal{O}_{M,z_0}$-module $J_{z_0}$}
\label{sec:properties of module}
In this section, we present some properties of $\mathcal{O}_{M,z_0}$-module $J_{z_0}$.

Since the case is local, we assume that $F$ is a holomorphic function on a pseudoconvex domain $D\subset \mathbb{C}^n$ containing the origin $o\in \mathbb{C}^n$. Let $\psi$ be a plurisubharmonic function on $D$. Let $\varphi$ be a Lebesgue measurable function on $D$ such that $\psi+\varphi$ is plurisubharmonic. Let $T\in [-\infty,+\infty)$. Denote $$\Psi:=\min\{\psi-2\log|F|,-T\}.$$
If $F(z)=0$ for some $z \in M$, we set $\Psi(z)=-T$. Let $T_1>T$ be a real number.

Denote
$$\varphi_1:=2\max\{\psi+T_1,2\log|F|\},$$
and
$$\Psi_1:=\min\{\psi-2\log|F|,-T_1\}.$$
If $F(z)=0$ for some $z \in M$, we set $\Psi_1(z)=-T_1$. We also note that by definition $I(\Psi_1+\varphi)_o=I(\Psi+\varphi)_o$.

Let $c(t)$ be a positive measurable function on $(T,+\infty)$ such that $c(t)\in\tilde{P}_{T,D,\Psi}$.

Denote that $H_o:=\{f_o\in J(\Psi)_o:\int_{\{\Psi<-t\}\cap V_0}|f|^2e^{-\varphi}c(-\Psi)dV_M<+\infty \text{ for some }t>T \text{ and } V_0 \text{ is an open neighborhood of o}\}$ and
$\mathcal{H}_o:=\{(F,o)\in \mathcal{O}_{\mathbb{C}^n,o}:\int_{U_0}|F|^2e^{-\varphi-\varphi_1}c(-\Psi_1)dV_M<+\infty \text{ for some open neighborhood} \ U_0 \text{ of } o\}$.

As  $c(t)\in\tilde{P}_{T,D,\Psi}$, hence $c(t)e^{-t}$ is decreasing with respect to $t$ and we have $I(\Psi_1+\varphi)_o=I(\Psi+\varphi)_o\subset H_o$.
We also note that $\mathcal{H}_o$ is an ideal of $\mathcal{O}_{\mathbb{C}^n,o}$.

\begin{Lemma}\label{construction of morphism} For any $f_o\in H_o$, there exist a pseudoconvex domain  $D_0\subset D$ containing $o$ and a holomorphic function $\tilde{F}$ on $D_0$ such that $(\tilde{F},o)\in \mathcal{H}_o$ and
$$\int_{\{\Psi_1<-t_1\}\cap D_0}|\tilde{F}-fF^2|e^{-\varphi-\varphi_1-\Psi_1}<+\infty,$$
for some $t_1>T_1$.
\end{Lemma}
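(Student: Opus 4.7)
The plan is to apply Lemma~\ref{L2 method for c(t)} on a small pseudoconvex neighborhood of $o$ with $\delta=1$ (to match the choice $\varphi_1=2\max\{\psi+T_1,2\log|F|\}$ fixed in this subsection) and read off both claims from its single $L^{2}$-estimate. Since $f_o\in H_o$, there exist $t>T$ and an open neighborhood $V_0$ of $o$ with $\int_{\{\Psi<-t\}\cap V_0}|f|^{2}e^{-\varphi}c(-\Psi)\,dV_M<+\infty$. Fix $B>0$, choose $t_0>\max\{t,T_1\}$, and take a pseudoconvex $D_0\Subset V_0$ containing $o$ (for instance, a small ball). Since $t_0>T_1$, one has $\{\Psi_1<-t_0\}=\{\Psi<-t_0\}$ and $\Psi_1=\Psi$ there, so the integrability hypothesis of Lemma~\ref{L2 method for c(t)} holds on $D_0$ and yields a holomorphic $\tilde F$ on $D_0$ satisfying
\begin{equation*}
\int_{D_0}\bigl|\tilde F-(1-b_{t_0,B}(\Psi_1))fF^{2}\bigr|^{2}\,e^{-\varphi-\varphi_1-\Psi_1+v_{t_0,B}(\Psi_1)}c(-v_{t_0,B}(\Psi_1))<+\infty.
\end{equation*}
The crucial pointwise comparison is that, since $c(s)e^{-s}$ is decreasing and $v_{t_0,B}(\Psi_1)\ge\Psi_1$, we have $e^{-\varphi-\varphi_1}c(-\Psi_1)\le e^{-\varphi-\varphi_1-\Psi_1+v_{t_0,B}(\Psi_1)}c(-v_{t_0,B}(\Psi_1))$ on $D_0$.

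To show $(\tilde F,o)\in\mathcal H_o$, split $|\tilde F|^{2}\le 2|\tilde F-(1-b_{t_0,B}(\Psi_1))fF^{2}|^{2}+2|(1-b_{t_0,B}(\Psi_1))fF^{2}|^{2}$. The integral of the first summand against $e^{-\varphi-\varphi_1}c(-\Psi_1)$ is finite by the weight comparison and the estimate above. The factor $1-b_{t_0,B}(\Psi_1)$ is supported in $\{\Psi_1<-t_0\}$; on this set $\psi+T_1<2\log|F|$, so $\varphi_1=4\log|F|$ and $\Psi_1=\Psi$, hence the weights collapse:
\begin{equation*}
|(1-b_{t_0,B}(\Psi_1))fF^{2}|^{2}e^{-\varphi-\varphi_1}c(-\Psi_1)\le|f|^{2}e^{-\varphi}c(-\Psi),
\end{equation*}
whose integral on $\{\Psi<-t_0\}\cap D_0$ is finite by the choice of $D_0$ and $t_0$. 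Thus $\int_{D_0}|\tilde F|^{2}e^{-\varphi-\varphi_1}c(-\Psi_1)<+\infty$, which certifies $(\tilde F,o)\in\mathcal H_o$.

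For the integrability on $\{\Psi_1<-t_1\}\cap D_0$, choose any $t_1>t_0+B$. On that set $\Psi_1<-t_0-B$, so $b_{t_0,B}(\Psi_1)=0$ and $v_{t_0,B}(\Psi_1)\equiv-t_0-\tfrac{B}{2}$, whence $c(-v_{t_0,B}(\Psi_1))=c(t_0+\tfrac{B}{2})>0$. Restricting the estimate of Lemma~\ref{L2 method for c(t)} to $\{\Psi_1<-t_1\}\cap D_0$ and absorbing the positive constant $e^{-t_0-B/2}c(t_0+\tfrac{B}{2})$ yields
\begin{equation*}
\int_{\{\Psi_1<-t_1\}\cap D_0}|\tilde F-fF^{2}|^{2}e^{-\varphi-\varphi_1-\Psi_1}<+\infty,
\end{equation*}
which is the desired bound. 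The only delicate step is the weight-ratio comparison driven by the monotonicity of $c(s)e^{-s}$; everything else is bookkeeping that exploits the fact that on $\{\Psi_1<-t_0\}$ neither the truncation at $-T_1$ in $\Psi_1$ nor the term $\psi+T_1$ in $\varphi_1$ is active.
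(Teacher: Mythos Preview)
Your proof is correct and follows essentially the same approach as the paper's: apply Lemma~\ref{L2 method for c(t)} with $\delta=1$ on a small pseudoconvex neighborhood, then use the monotonicity of $c(s)e^{-s}$ to compare weights for the $\mathcal H_o$ membership, and restrict to a deeper sublevel set (where $b_{t_0,B}$ vanishes and $v_{t_0,B}$ is bounded below by a constant) to obtain the integrability of $|\tilde F-fF^2|^2e^{-\varphi-\varphi_1-\Psi_1}$. The only cosmetic differences are that the paper fixes $B=1$, proves the two claims in the opposite order, and uses the coarser bound $v_{t_0,B}(\Psi_1)>-t_0-B$ rather than your exact value $-t_0-\tfrac{B}{2}$ on $\{\Psi_1<-t_0-B\}$.
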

\begin{proof}It follows from $f_o\in H_o$ that there exists $t_0>T_1>T$ and a pseudoconvex domain  $D_0\Subset D$ containing $o$ such that
\begin{equation}\label{construction of morphism formula 1}
\int_{\{\Psi<-t_0\}\cap D_0}|f|^2e^{-\varphi}c(-\Psi)<+\infty.
\end{equation}
Then it follows from Lemma \ref{L2 method for c(t)} that there exists a holomorphic function $\tilde F$ on $D_0$ such that
\begin{equation*}
  \begin{split}
      & \int_{D_0}|\tilde{F}-(1-b_{t_0}(\Psi_1))fF^{2}|^2e^{-\varphi-\varphi_1+v_{t_0}(\Psi_1)-\Psi_1}c(-v_{t_0}(\Psi_1)) \\
      \le & \left(c(T_1)e^{-T_1}+\int_{T_1}^{t_0+1}c(s)e^{-s}ds\right)
       \int_{D_0}\mathbb{I}_{\{-t_0-1<\Psi_1<-t_0\}}|f|^2e^{-\varphi-\Psi_1},
  \end{split}
\end{equation*}
where $b_{t_0}(t)=\int^{t}_{-\infty} \mathbb{I}_{\{-t_0-1< s < -t_0\}}ds$,
$v_{t_0}(t)=\int^{t}_{-t_0}b_{t_0}(s)ds-t_0$. Denote $C:=c(T_1)e^{-T_1}+\int_{T_1}^{t_0+B}c(s)e^{-s}ds$, we note that $C$ is a positive number.

 As $v_{t_0}(t)>-t_0-1$, we have $e^{v_{t_0}(\Psi)}c(-v_{t_0}(\Psi))\ge c(t_0+1)e^{-(t_0+1)}>0$. As $b_{t_0}(t)\equiv 0$ on $(-\infty,-t_0-1)$, we have
\begin{equation}\label{construction of morphism formula 2}
\begin{split}
   &\int_{D_0\cap\{\Psi_1<-t_0-1\}}|\tilde{F}-fF^2|^2e^{-\varphi-\varphi_1-\Psi_1} \\
   \le & \frac{1}{c(t_0+1)e^{-(t_0+1)}}
   \int_{D_0}|\tilde{F}-(1-b_{t_0}(\Psi_1))fF^2|^2e^{-\varphi-\varphi_1-\Psi_1+v_{t_0}(\Psi_1)}c(-v_{t_0}(\Psi_1))\\
   \le &\frac{C}{c(t_0+1)e^{-(t_0+1)}}
   \int_{D_0}\mathbb{I}_{\{-t_0-1<\Psi_1<-t_0\}}|f|^2e^{-\varphi-\Psi_1}<+\infty.
\end{split}
\end{equation}
Note that on $\{\Psi_1<-t_0\}$, $|F|^4e^{-\varphi_1}=1$. As $v_{t_0}(\Psi_1)\ge \Psi_1$, we have $c(-v_{t_0}(\Psi_1))e^{v_{t_0}(\Psi_1)}\ge c(-\Psi_1)e^{-\Psi_1}$. Hence we have
\begin{equation}\nonumber
\begin{split}
   &\int_{D_0}|\tilde{F}|^2e^{-\varphi-\varphi_1}c(-\Psi_1) \\
   \le & 2\int_{D_0}|\tilde{F}-(1-b_{t_0}(\Psi_1))fF^2|^2e^{-\varphi-\varphi_1}c(-\Psi_1)\\
   +&2\int_{D_0}|(1-b_{t_0}(\Psi_1))fF^2|^2e^{-\varphi-\varphi_1}c(-\Psi_1)\\
   \le&
   2\int_{D_0}|\tilde{F}-(1-b_{t_0}(\Psi_1))fF^2|^2e^{-\varphi-\varphi_1-\Psi_1+v_{t_0}(\Psi_1)}c(-v_{t_0}(\Psi_1))\\
   +&2\int_{D_0\cap\{\Psi<-t_0\}}|f|^2e^{-\varphi}c(-\Psi)\\
   < &+\infty.
\end{split}
\end{equation}
Hence we know that $(\tilde{F},o)\in \mathcal{H}_o$.
\end{proof}

For any $(\tilde{F},o)\in\mathcal{H}_o$ and $(\tilde{F}_1,o)\in\mathcal{H}_o$ such that $\int_{D_1\cap\{\Psi_1<-t_1\}}|\tilde{F}-fF^2|^2e^{-\varphi-\varphi_1-\Psi_1}<+\infty$ and
$\int_{D_1\cap\{\Psi_1<-t_1\}}|\tilde{F}_1-fF^2|^2e^{-\varphi-\varphi_1-\Psi_1}<+\infty$, for some open neighborhood $D_1$ of $o$ and $t_1> T_1$, we have
$$\int_{D_1\cap\{\Psi_1<-t_1\}}|\tilde{F}_1-\tilde{F}|^2e^{-\varphi-\varphi_1-\Psi_1}<+\infty.$$
As $(\tilde{F},o)\in\mathcal{H}_o$ and $(\tilde{F}_1,o)\in\mathcal{H}_o$, there exists a neighborhood $D_2$ of $o$ such that
\begin{equation}\label{construction of morphism formula 3}
\int_{D_2}|\tilde{F}_1-\tilde{F}|^2e^{-\varphi-\varphi_1}c(-\Psi_1)<+\infty.
\end{equation}
Note that we have $c(-\Psi_1)e^{\Psi_1}\ge c(t_1)e^{-t_1}$ on $\{\Psi\ge-t_1\}$. It follows from inequality \eqref{construction of morphism formula 3} that we have
$$\int_{D_2\cap \{\Psi\ge-t_1\}}|\tilde{F}_1-\tilde{F}|^2e^{-\varphi-\varphi_1-\Psi_1}<+\infty.$$
Hence we have $(\tilde{F}-\tilde{F}_1,o)\in \mathcal{I}(\varphi+\varphi_1+\Psi_1)_o$.

Thus it follows from Lemma \ref{construction of morphism} that there exists a map $\tilde{P}:H_o\to \mathcal{H}_o/\mathcal{I}(\varphi+\varphi_1+\Psi_1)_o$ given by
$$\tilde{P}(f_o)=[(\tilde{F},o)]$$
for any $f_o\in H_o$, where $(\tilde{F},o)$ satisfies $(\tilde{F},o)\in \mathcal{H}_o$ and
$\int_{D_1\cap\{\Psi_1<-t_1\}}|\tilde{F}-fF^2|^2e^{-\varphi-\varphi_1-\Psi_1}<+\infty,$
for some $t_1>T_1$ and some open neighborhood $D_1$ of $o$, and $[(\tilde{F},o)]$ is the equivalence class of $(\tilde{F},o)$ in $\mathcal{H}_o/\mathcal{I}(\varphi+\varphi_1+\Psi_1)_o$.

\begin{Proposition}\label{proposition of morphism}
$\tilde{P}$ is an $\mathcal{O}_{\mathbb{C}^n,o}$-module homomorphism and $Ker(\tilde{P})=I(\varphi+\Psi_1)_o$.
\end{Proposition}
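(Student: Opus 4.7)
The strategy divides into two parts: verifying that $\tilde{P}$ is a well-defined $\mathcal{O}_{\mathbb{C}^n,o}$-module map, and then computing the kernel. Well-definedness of $\tilde{P}$ as a set-theoretic map has already been established in the paragraphs preceding the proposition. For the module homomorphism property, given $f_o,g_o\in H_o$ with representatives $\tilde{F},\tilde{G}\in\mathcal{H}_o$ and $(h,o)\in\mathcal{O}_{\mathbb{C}^n,o}$, I would check directly that $\tilde{F}+\tilde{G}$ represents $\tilde{P}(f_o+g_o)$ and $h\tilde{F}$ represents $\tilde{P}(h\cdot f_o)$: the estimates $\int|(\tilde{F}+\tilde{G})-(f+g)F^2|^2e^{-\varphi-\varphi_1-\Psi_1}<+\infty$ and $\int|h\tilde{F}-hfF^2|^2e^{-\varphi-\varphi_1-\Psi_1}<+\infty$ follow immediately from the elementary inequality $|a+b|^2\le2|a|^2+2|b|^2$ and the local boundedness of $h$ near $o$, and similarly $h\tilde{F}\in\mathcal{H}_o$ and $\tilde{F}+\tilde{G}\in\mathcal{H}_o$.

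The key computational observation for the kernel analysis is the following weight identity: on $\{\Psi_1<-t_1\}$ with $t_1>T_1$, the inequality $\psi+T_1<2\log|F|$ holds, so $\varphi_1=4\log|F|$ and $\Psi_1=\psi-2\log|F|$ there. Therefore
\begin{equation*}
|fF^2|^2e^{-\varphi-\varphi_1-\Psi_1}=|f|^2|F|^4e^{-\varphi-4\log|F|-\psi+2\log|F|}=|f|^2e^{-\varphi-\Psi_1}
\end{equation*}
on $\{\Psi_1<-t_1\}$. This identity is the bridge between the ``$\mathcal{H}_o$-side'' weight $e^{-\varphi-\varphi_1-\Psi_1}$ and the ``$H_o$-side'' weight $e^{-\varphi-\Psi_1}$.

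For the inclusion $\mathrm{Ker}(\tilde{P})\subseteq I(\varphi+\Psi_1)_o$: assume $\tilde{P}(f_o)=0$, so the representative $\tilde{F}$ satisfies $(\tilde{F},o)\in\mathcal{I}(\varphi+\varphi_1+\Psi_1)_o$. Combining $\int_{D_1\cap\{\Psi_1<-t_1\}}|\tilde{F}-fF^2|^2e^{-\varphi-\varphi_1-\Psi_1}<+\infty$ with $\int_{D_1}|\tilde{F}|^2e^{-\varphi-\varphi_1-\Psi_1}<+\infty$ via the triangle inequality $|fF^2|^2\le2|\tilde{F}-fF^2|^2+2|\tilde{F}|^2$ yields $\int_{D_1\cap\{\Psi_1<-t_1\}}|fF^2|^2e^{-\varphi-\varphi_1-\Psi_1}<+\infty$, and the weight identity then gives $\int_{D_1\cap\{\Psi_1<-t_1\}}|f|^2e^{-\varphi-\Psi_1}<+\infty$, i.e.\ $f_o\in I(\varphi+\Psi_1)_o$.

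For the reverse inclusion $I(\varphi+\Psi_1)_o\subseteq\mathrm{Ker}(\tilde{P})$: if $f_o\in I(\varphi+\Psi_1)_o$, the weight identity reversed gives $\int_{D_1\cap\{\Psi_1<-t_1\}}|fF^2|^2e^{-\varphi-\varphi_1-\Psi_1}<+\infty$ after possibly shrinking $D_1$, and together with the defining $L^2$-bound for $\tilde{F}-fF^2$ this yields $\int_{D_1\cap\{\Psi_1<-t_1\}}|\tilde{F}|^2e^{-\varphi-\varphi_1-\Psi_1}<+\infty$. On the complementary region $\{\Psi_1\ge-t_1\}$ I would exploit monotonicity of $c(t)e^{-t}$: since $c(-\Psi_1)e^{\Psi_1}\ge c(t_1)e^{-t_1}>0$ there, we have $e^{-\Psi_1}\le\frac{1}{c(t_1)e^{-t_1}}c(-\Psi_1)$, so $(\tilde{F},o)\in\mathcal{H}_o$ implies $\int_{D_1\cap\{\Psi_1\ge-t_1\}}|\tilde{F}|^2e^{-\varphi-\varphi_1-\Psi_1}<+\infty$. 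Combining the two regions shows $(\tilde{F},o)\in\mathcal{I}(\varphi+\varphi_1+\Psi_1)_o$, hence $\tilde{P}(f_o)=0$. The main conceptual step—and the step most worth highlighting in the write-up—is the weight-identity trick; everything else is a mechanical assembly of local $L^2$ inequalities.
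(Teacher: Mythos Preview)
Your proposal is correct and follows essentially the same approach as the paper: the kernel computation via the weight identity $|fF^2|^2e^{-\varphi-\varphi_1-\Psi_1}=|f|^2e^{-\varphi-\Psi_1}$ on $\{\Psi_1<-t_1\}$ and the two-region splitting using monotonicity of $c(t)e^{-t}$ are exactly what the paper does. Your treatment of additivity is slightly more direct than the paper's---you verify that $\tilde{F}+\tilde{G}$ itself is a valid representative for $\tilde{P}(f_o+g_o)$, whereas the paper picks an abstract representative $\tilde{H}$ and shows $(\tilde{H}-(\tilde{F}+\tilde{G}),o)\in\mathcal{I}(\varphi+\varphi_1+\Psi_1)_o$ via the same two-region argument---but this is a cosmetic streamlining rather than a different idea.
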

\begin{proof}For any $f_o,g_o\in H_o$. Denote that $\tilde{P}(f_o)=[(\tilde{F},o)]$, $\tilde{P}(g_o)=[(\tilde{G},o)]$ and $\tilde{P}(f_o+g_o)=[(\tilde{H},o)]$.

Note that there exists an open neighborhood $D_1$ of $o$ and $t> T_1$ such that $\int_{D_1\cap\{\Psi_1<-t\}}|\tilde{F}-fF^2|^2e^{-\varphi-\varphi_1-\Psi_1}<+\infty$,
$\int_{D_1\cap\{\Psi_1<-t\}}|\tilde{G}-gF^2|^2e^{-\varphi-\varphi_1-\Psi_1}<+\infty$, and
$\int_{D_1\cap\{\Psi_1<-t\}}|\tilde{H}-(f+g)F^2|^2e^{-\varphi-\varphi_1-\Psi_1}<+\infty$. Hence we have
$$\int_{D_1\cap\{\Psi_1<-t\}}|\tilde{H}-(\tilde{F}+\tilde{G})|^2e^{-\varphi-\varphi_1-\Psi_1}<+\infty.$$
As $(\tilde{F},o),(\tilde{G},o)$ and $(\tilde{H},o)$ belong to $ \mathcal{H}_o$, there exists an open neighborhood $\tilde{D}_1\subset D_1$ of $o$ such that
$\int_{\tilde{D}_1}|\tilde{H}-(\tilde{F}+\tilde{G})|^2e^{-\varphi-\varphi_1}c(-\Psi_1)<+\infty$.
As $c(t)e^{-t}$ is decreasing with respect to $t$, we have $c(-\Psi_1)e^{\Psi_1}\ge c(t)e^{-t}$ on $\{\Psi_1\ge -t\}$. Hence we have
$$\int_{\tilde{D}_1\cap\{\Psi_1\ge -t\}}|\tilde{H}-(\tilde{F}+\tilde{G})|^2e^{-\varphi-\varphi_1-\Psi_1}
\le\frac{1}{c(t)e^{-t}}\int_{\tilde{D}_1\cap\{\Psi_1\ge -t\}}|\tilde{H}-(\tilde{F}+\tilde{G})|^2e^{-\varphi-\varphi_1}c(-\Psi_1)<+\infty.$$
Thus we have $\int_{\tilde{D}_1}|\tilde{H}-(\tilde{F}+\tilde{G})|^2e^{-\varphi-\varphi_1-\Psi_1}<+\infty$, which implies that $\tilde{P}(f_o+g_o)=\tilde{P}(f_o)+\tilde{P}(g_o)$.

For any $(h,o) \in \mathcal{O}_{\mathbb{C}^n,o}$. Denote $\tilde{P}((hf)_o)=[(\tilde{F}_h,o)]$. Note that there exists an open neighborhood $D_2$ of $o$ and $t> T_1$ such that $\int_{D_2\cap\{\Psi_1<-t\}}|\tilde{F}_h-(hf)F^2|^2e^{-\varphi-\varphi_1-\Psi_1}<+\infty$. It follows from $\int_{D_2\cap\{\Psi_1<-t\}}|\tilde{F}-fF^2|^2e^{-\varphi-\varphi_1-\Psi_1}<+\infty$ and $h$ is holomorphic on $\overline{D_2}$ (shrink $D_2$ if necessary) that $\int_{D_2\cap\{\Psi_1<-t\}}|h\tilde{F}-hfF^2|^2e^{-\varphi-\varphi_1-\Psi_1}<+\infty$. Then we have
$$\int_{D_2\cap\{\Psi_1<-t\}}|\tilde{F}_h-h\tilde{F}|^2e^{-\varphi-\varphi_1-\Psi_1}<+\infty.$$
Note that $(h\tilde{F},o) $ and $(\tilde{F}_h,o)$ belong to $ \mathcal{H}_o$, we have
$\int_{D_2}|\tilde{F}_h-h\tilde{F}|^2e^{-\varphi-\varphi_1}c(-\Psi_1)<+\infty$.
As $c(t)e^{-t}$ is decreasing with respect to $t$, we have $c(-\Psi_1)e^{\Psi_1}\ge c(t)e^{-t}$ on $\{\Psi_1\ge -t\}$. Hence we have
$$\int_{D_2\cap\{\Psi_1\ge -t\}}|\tilde{F}_h-h\tilde{F}|^2e^{-\varphi-\varphi_1-\Psi_1}
\le\frac{1}{c(t)e^{-t}}\int_{D_2\cap\{\Psi_1\ge -t\}}|\tilde{F}_h-h\tilde{F}|^2e^{-\varphi-\varphi_1}c(-\Psi_1)<+\infty.$$
Thus we have $\int_{D_2}|\tilde{F}_h-h\tilde{F}|^2e^{-\varphi-\varphi_1-\Psi_1}<+\infty$, which implies that $\tilde{P}(hf_o)=(h,o)\tilde{P}(f_o)$.

Now we have proved that $\tilde{P}$ is an $\mathcal{O}_{\mathbb{C}^n,o}$-module homomorphism.

Next, we prove $Ker(\tilde{P})=I(\varphi+\Psi_1)_o$.

If $f_o\in I(\varphi+\Psi_1)_o$. Denote $\tilde{P}(f_o)=[(\tilde{F},o)]$. It follows from Lemma \ref{construction of morphism} that $(\tilde{F},o)\in \mathcal{H}_o$ and there exists an open neighborhood $D_3$ of $o$ and a real number $t_1>T_1$ such that
$$\int_{\{\Psi_1<-t_1\}\cap D_3}|\tilde{F}-fF^2|e^{-\varphi-\varphi_1-\Psi_1}<+\infty.$$
As $f_o\in I(\varphi+\Psi_1)_o$, shrink $D_3$ and $t_1$ if necessary, we have
\begin{equation}\label{proposition of morphism formula 1}
\begin{split}
&\int_{\{\Psi_1<-t_1\}\cap D_3}|\tilde{F}|^2e^{-\varphi-\varphi_1-\Psi_1}\\
\le &2\int_{\{\Psi_1<-t_1\}\cap D_3}|\tilde{F}-fF^2|^2e^{-\varphi-\varphi_1-\Psi_1}
+2\int_{\{\Psi_1<-t_1\}\cap D_3}|fF^2|^2e^{-\varphi-\varphi_1-\Psi_1}\\
\le &2\int_{\{\Psi_1<-t_1\}\cap D_3}|\tilde{F}-fF^2|^2e^{-\varphi-\varphi_1-\Psi_1}
+2\int_{\{\Psi_1<-t_1\}\cap D_3}|f|^2e^{-\varphi-\Psi_1}\\
<&+\infty.
\end{split}
\end{equation}
As $c(t)e^{-t}$ is decreasing with respect to $t$, $c(-\Psi_1)e^{\Psi_1}\ge C_0>0$ for some positive number $C_0$ on $\{\Psi_1\ge-t_1\}$. Then we have
\begin{equation}\label{proposition of morphism formula 1'}
\begin{split}
\int_{\{\Psi_1\ge-t_1\}\cap D_3}|\tilde{F}|^2e^{-\varphi-\varphi_1-\Psi_1}
\le\frac{1}{C_0}\int_{\{\Psi_1\ge-t_1\}\cap D_3}|\tilde{F}|^2e^{-\varphi-\varphi_1}c(-\Psi_1)<+\infty.
\end{split}
\end{equation}
Combining inequality \eqref{proposition of morphism formula 1} and inequality  \eqref{proposition of morphism formula 1'}, we know that $\tilde{F}\in \mathcal{I}(\varphi+\varphi_1+\Psi_1)_o$, which means $\tilde{P}(f_o)=0$ in $\mathcal{H}_o/\mathcal{I}(\varphi+\varphi_1+\Psi_1)_o$. Hence we know $I(\varphi+\Psi_1)_o\subset Ker(\tilde{P})$.

If $f_o\in Ker(\tilde{P})$, we know $\tilde{F}\in \mathcal{I}(\varphi+\varphi_1+\Psi_1)_o$.
We can assume that $\tilde{F}$ satisfies $\int_{D_4}|\tilde{F}|^2e^{-\varphi-\varphi_1-\Psi_1}<+\infty$ for some open neighborhood $D_4$ of $o$. Then we have
\begin{equation}\label{proposition of morphism formula 2'}
\begin{split}
&\int_{ \{\Psi_1<-t_1\}\cap D_4}|f|^2e^{-\varphi-\Psi_1}\\
=&\int_{\{\Psi_1<-t_1\}\cap D_4}|fF^2|^2e^{-\varphi-\varphi_1-\Psi_1}\\
\le & \int_{\{\Psi_1<-t_1\}\cap D_4}|\tilde{F}|^2e^{-\varphi-\varphi_1-\Psi_1}+\int_{\{\Psi_1<-t_1\}\cap D_4}|\tilde{F}-fF^2|e^{-\varphi-\varphi_1-\Psi_1}\\
< &+\infty.
\end{split}
\end{equation}
By definition, we know $f_o\in I(\varphi+\Psi_1)_o$. Hence $ Ker(\tilde{P})\subset I(\varphi+\Psi_1)_o$.

$ Ker(\tilde{P})= I(\varphi+\Psi_1)_o$ is proved.
\end{proof}

Now we can define an $\mathcal{O}_{\mathbb{C}^n,o}$-module homomorphism $P:H_o/I(\varphi+\Psi_1)_o\to \mathcal{H}_o/\mathcal{I}(\varphi+\varphi_1+\Psi_1)_o$ as follows,
$$P([f_o])=\tilde{P}(f_o)$$
for any $[f_o]\in H_o/I(\varphi+\Psi_1)_o$, where $f_o\in H_o$ is any representative of $[f_o]$. It follows from Proposition \ref{proposition of morphism} that $P([f_o])$ is independent of the choices of the representatives of $[f_o]$.

Let $(\tilde{F},o)\in \mathcal{H}_o$, i.e. $\int_{U}|\tilde{F}|^2e^{-\varphi-\varphi_1}c(-\Psi_1)<+\infty$ for some neighborhood $U$ of $o$. Note that $|F|^4e^{-\varphi_1}\equiv 1$ on $\{\Psi_1<-T\}$. Hence we have $\int_{U\cap \{\Psi_1<-t\}}|\frac{\tilde{F}}{F^2}|^2e^{-\varphi}c(-\Psi_1)<+\infty$ for some $t>T$, i.e. $(\frac{\tilde{F}}{F^2})_o\in H_o$. And if $(\tilde{F},o)\in \mathcal{I}(\varphi+\varphi_1+\Psi_1)_o$, it is easy to verify that $(\frac{\tilde{F}}{F^2})_o\in I(\varphi+\Psi_1)_o$. Hence we have an $\mathcal{O}_{\mathbb{C}^n,o}$-module homomorphism $Q:\mathcal{H}_o/\mathcal{I}(\varphi+\varphi_1+\Psi_1)_o\to H_o/I(\varphi+\Psi_1)_o$ defined as follows,
$$Q([(\tilde{F},o)])=[(\frac{\tilde{F}}{F^2})_o].$$

The above discussion shows that $Q$ is independent of the choices of the representatives of $[(\tilde{F},o)]$ and hence $Q$ is well defined.

\begin{Proposition}\label{module isomorphism}$P:H_o/I(\varphi+\Psi_1)_o\to \mathcal{H}_o/\mathcal{I}(\varphi+\varphi_1+\Psi_1)_o$ is an $\mathcal{O}_{\mathbb{C}^n,o}$-module isomorphism and $P^{-1}=Q$.
\end{Proposition}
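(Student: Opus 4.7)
The plan is to verify that the $\mathcal{O}_{\mathbb{C}^n,o}$-module homomorphisms $P$ and $Q$ constructed above are mutual inverses. Since well-definedness of $P$ follows from Proposition \ref{proposition of morphism} and well-definedness of $Q$ was already observed in the text, the whole task reduces to checking $Q\circ P=\mathrm{id}$ and $P\circ Q=\mathrm{id}$. The key algebraic identity I will exploit throughout is that on $\{\Psi_1<-T_1\}$ one has $\varphi_1=4\log|F|$, hence $|F|^{4}e^{-\varphi_1}\equiv 1$; this is precisely what lets one convert between an $L^2$ condition for $\tilde{F}$ with weight $e^{-\varphi-\varphi_1-\Psi_1}$ and an $L^2$ condition for $\tilde{F}/F^2$ with weight $e^{-\varphi-\Psi_1}$.

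First I will compute $Q\circ P$. Take $f_o\in H_o$ and write $\tilde{P}(f_o)=[(\tilde{F},o)]$, so by construction there exist $D_1\ni o$ and $t_1>T_1$ with
\[
\int_{\{\Psi_1<-t_1\}\cap D_1}|\tilde{F}-fF^{2}|^{2}e^{-\varphi-\varphi_1-\Psi_1}<+\infty.
\]
Using $|F|^{4}e^{-\varphi_1}\equiv 1$ on $\{\Psi_1<-T_1\}$, this integral equals $\int_{\{\Psi_1<-t_1\}\cap D_1}|f-\tilde{F}/F^{2}|^{2}e^{-\varphi-\Psi_1}$, which is the defining condition for $(f-\tilde{F}/F^{2})_o\in I(\varphi+\Psi_1)_o$. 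Thus $Q(P([f_o]))=[(\tilde{F}/F^{2})_o]=[f_o]$ in $H_o/I(\varphi+\Psi_1)_o$.

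Next I will compute $P\circ Q$. Given $(\tilde{F},o)\in\mathcal{H}_o$, set $g_o:=(\tilde{F}/F^{2})_o\in H_o$ and let $\tilde{P}(g_o)=[(\tilde{F}_1,o)]$, where $\tilde{F}_1$ satisfies $(\tilde{F}_1,o)\in\mathcal{H}_o$ and $\int_{\{\Psi_1<-t_1\}\cap D_1}|\tilde{F}_1-gF^{2}|^{2}e^{-\varphi-\varphi_1-\Psi_1}<+\infty$ on some neighborhood of $o$. Since $gF^{2}=\tilde{F}$, this reads
\[
\int_{\{\Psi_1<-t_1\}\cap D_1}|\tilde{F}_1-\tilde{F}|^{2}e^{-\varphi-\varphi_1-\Psi_1}<+\infty.
\]
To upgrade this to $(\tilde{F}_1-\tilde{F})_o\in\mathcal{I}(\varphi+\varphi_1+\Psi_1)_o$, I must also control the region $\{\Psi_1\ge -t_1\}$. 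Here both $\tilde{F}_1$ and $\tilde{F}$ lie in $\mathcal{H}_o$, so $\int|\tilde{F}_1-\tilde{F}|^{2}e^{-\varphi-\varphi_1}c(-\Psi_1)<+\infty$ near $o$; combining this with the monotonicity of $c(t)e^{-t}$, which gives the uniform bound $c(-\Psi_1)e^{\Psi_1}\ge c(t_1)e^{-t_1}>0$ on $\{\Psi_1\ge -t_1\}$, converts the $c(-\Psi_1)$-weighted integrability into the $e^{-\Psi_1}$-weighted integrability on that set. Adding the two contributions yields $(\tilde{F}_1-\tilde{F})_o\in\mathcal{I}(\varphi+\varphi_1+\Psi_1)_o$, i.e.\ $P(Q([(\tilde{F},o)]))=[(\tilde{F},o)]$.

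No step looks genuinely hard; the only mild obstacle is keeping the two integrability conditions aligned on the two complementary regions $\{\Psi_1<-t_1\}$ and $\{\Psi_1\ge -t_1\}$, which is handled by the identity $|F|^{4}e^{-\varphi_1}\equiv 1$ on the former and the monotonicity of $c(t)e^{-t}$ on the latter. Assembling the two computations establishes that $P$ and $Q$ are mutually inverse $\mathcal{O}_{\mathbb{C}^n,o}$-module homomorphisms, proving Proposition \ref{module isomorphism}.
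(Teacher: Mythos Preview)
Your proof is correct and follows essentially the same route as the paper. The only organizational difference is that the paper invokes injectivity of $P$ (which comes from $\ker\tilde P=I(\varphi+\Psi_1)_o$ in Proposition~\ref{proposition of morphism}) and then verifies $P\circ Q=\mathrm{id}$ to get surjectivity, whereas you check both $Q\circ P=\mathrm{id}$ and $P\circ Q=\mathrm{id}$ directly; the ingredients---the identity $|F|^4e^{-\varphi_1}\equiv 1$ on $\{\Psi_1<-T_1\}$ and the bound $c(-\Psi_1)e^{\Psi_1}\ge c(t_1)e^{-t_1}$ on $\{\Psi_1\ge -t_1\}$---are identical.
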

\begin{proof} It follows from Proposition \ref{proposition of morphism} that we know $P$ is injective.

Now we prove $P$ is surjective.

For any $[(\tilde{F},o)]$ in $\mathcal{H}_o/\mathcal{I}(\varphi+\varphi_1+\Psi_1)_o$. Let $(\tilde{F},o)$ be any representatives of $[(\tilde{F},o)]$ in $\mathcal{H}_o$. Denote that $[(f_1)_o]:=[(\frac{\tilde{F}}{F^2})_o]=Q([(\tilde{F},o)])$. Let $(f_1)_o:=(\frac{\tilde{F}}{F^2})_o\in H_o$ be the representative of $[(f_1)_o]$. Denote $[(\tilde{F}_1,o)]:=\tilde{P}((f_1)_o)=P([(f_1)_o])$. By the construction of $\tilde{P}$, we know that $(\tilde{F}_1,o)\in \mathcal{H}_o$ and
$$\int_{D_1\cap\{\Psi_1<-t\}}|\tilde{F}_1-f_1F^2|e^{-\varphi-\varphi_1-\Psi_1}<+\infty,$$
where $t>T$ and $D_1$ is some neighborhood of $o$. Note that $(f_1)_o:=(\frac{\tilde{F}}{F^2})_o$. Hence  we have
$$\int_{D_1\cap\{\Psi_1<-t\}}|\tilde{F}_1-\tilde{F}|e^{-\varphi-\varphi_1-\Psi_1}<+\infty.$$
It follows from $(\tilde{F},o)\in \mathcal{H}_o$ and $(\tilde{F}_1,o)\in \mathcal{H}_o$ that there exists a neighborhood $D_2\subset D_1$ of $o$ such that
$$\int_{D_2}|\tilde{F}-\tilde{F}_1|^2e^{-\varphi-\varphi_1}c(-\Psi_1)<+\infty.$$
Note that on $\{\Psi_1\ge -t\}$, we have $c(-\Psi_1)e^{\Psi_1}\ge c(t)e^{-t}>0$. Hence we have
$$\int_{D_2\cap \{\Psi_1\ge-t\}}|\tilde{F}-\tilde{F}_1|^2e^{-\varphi-\varphi_1-\Psi_1}<+\infty.$$
Thus we know that $(\tilde{F}_1-\tilde{F},o) \in \mathcal{I}(\varphi+\varphi_1+\Psi_1)_o$, i.e. $[(\tilde{F},o)]=[(\tilde{F}_1,o)]$ in $ \mathcal{H}_o/\mathcal{I}(\varphi+\varphi_1+\Psi_1)_o$. Hence we have $P\circ Q([(\tilde{F},o)])=[(\tilde{F},o)]$, which implies that $P$ is surjective.

We have proved that $P:H_o/I(\varphi+\Psi_1)_o\to \mathcal{H}_o/\mathcal{I}(\varphi+\varphi_1+\Psi_1)_o$ is an $\mathcal{O}_{\mathbb{C}^n,o}$-module isomorphism and $P^{-1}=Q$.
\end{proof}

We recall the following property of closedness of holomorphic functions on a neighborhood of $o$.
\begin{Lemma}[see \cite{G-R}]
\label{closedness}
Let $N$ be a submodule of $\mathcal O_{\mathbb C^n,o}^q$, $1\leq q<+\infty$, let $f_j\in\mathcal O_{\mathbb C^n}(U)^q$ be a sequence of $q-$tuples holomorphic in an open neighborhood $U$ of the origin $o$. Assume that the $f_j$ converge uniformly in $U$ towards  a $q-$tuples $f\in\mathcal O_{\mathbb C^n}(U)^q$, assume furthermore that all germs $(f_{j},o)$ belong to $N$. Then $(f,o)\in N$.	
\end{Lemma}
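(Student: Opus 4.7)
The plan is to combine two standard ingredients: Cauchy's estimates, to translate uniform convergence into coefficient-wise convergence of Taylor jets at $o$, and Krull's intersection theorem, to translate such approximate membership into actual membership in $N$. Write $R=\mathcal O_{\mathbb C^n,o}$ and let $\mathfrak m\subset R$ be its maximal ideal.

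First, I would invoke the Noetherianity of $R$ (R\"uckert's basis theorem), so that $R^q/N$ is a finitely generated $R$-module. Krull's intersection theorem then yields
\[
\bigcap_{k\ge 1}\mathfrak m^k(R^q/N)=0,\qquad\text{equivalently}\qquad \bigcap_{k\ge 1}\bigl(N+\mathfrak m^k R^q\bigr)=N.
\]
Hence it suffices to prove that the germ $(f,o)$ belongs to $N+\mathfrak m^k R^q$ for every integer $k\ge 1$.

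To this end, fix $k$ and consider the finite-dimensional $\mathbb C$-vector space $R^q/\mathfrak m^k R^q$, which may be canonically identified with the space of $q$-tuples of Taylor polynomials of degree $<k$ at $o$. Choose a closed polydisc $\overline{\Delta}\subset U$ centred at $o$. By Cauchy's estimates, the uniform convergence $f_j\to f$ on $\overline{\Delta}$ forces coefficient-wise convergence of all Taylor coefficients at $o$, so the image of $f_j-f$ in $R^q/\mathfrak m^k R^q$ tends to $0$. Since $(f_j,o)\in N$, the image of $f_j$ lies in the subspace $(N+\mathfrak m^k R^q)/\mathfrak m^k R^q$, which is a $\mathbb C$-linear, hence closed, subspace of the finite-dimensional $\mathbb C$-vector space $R^q/\mathfrak m^k R^q$. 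The image of $f$, as the limit of these images, must therefore lie in this subspace as well, i.e.\ $(f,o)\in N+\mathfrak m^k R^q$, completing the argument.

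The essential inputs are only the Noetherianity of $\mathcal O_{\mathbb C^n,o}$ and Krull's intersection theorem for Noetherian local rings, both classical. I do not anticipate a real obstacle: the one point requiring any care is the passage from uniform convergence on a neighborhood of $o$ to convergence of Taylor jets of bounded order, and this is handled by a single application of Cauchy's estimates on a fixed closed polydisc inside $U$.
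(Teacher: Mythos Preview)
Your argument is correct and is essentially the standard proof of this closedness result. Note that the paper does not give its own proof of this lemma: it is stated with the citation ``see \cite{G-R}'' and used as a black box. The approach you outline---Noetherianity of $\mathcal O_{\mathbb C^n,o}$, Krull's intersection theorem to reduce to membership modulo $\mathfrak m^k$, and Cauchy's estimates to pass from uniform convergence to convergence of finite Taylor jets---is exactly the classical argument one finds in Grauert--Remmert's \emph{Coherent Analytic Sheaves}, so there is nothing to compare.
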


\begin{Lemma}[see \cite{GY-concavity}]
	\label{l:converge}
	Let $M$ be a complex manifold. Let $S$ be an analytic subset of $M$.  	
	Let $\{g_j\}_{j=1,2,...}$ be a sequence of nonnegative Lebesgue measurable functions on $M$, which satisfies that $g_j$ are almost everywhere convergent to $g$ on  $M$ when $j\rightarrow+\infty$,  where $g$ is a nonnegative Lebesgue measurable function on $M$. Assume that for any compact subset $K$ of $M\backslash S$, there exist $s_K\in(0,+\infty)$ and $C_K\in(0,+\infty)$ such that
	$$\int_{K}{g_j}^{-s_K}dV_M\leq C_K$$
	 for any $j$, where $dV_M$ is a continuous volume form on $M$.
	
 Let $\{F_j\}_{j=1,2,...}$ be a sequence of holomorphic $(n,0)$ form on $M$. Assume that $\liminf_{j\rightarrow+\infty}\int_{M}|F_j|^2g_j\leq C$, where $C$ is a positive constant. Then there exists a subsequence $\{F_{j_l}\}_{l=1,2,...}$, which satisfies that $\{F_{j_l}\}$ is uniformly convergent to a holomorphic $(n,0)$ form $F$ on $M$ on any compact subset of $M$ when $l\rightarrow+\infty$, such that
 $$\int_{M}|F|^2g\leq C.$$
\end{Lemma}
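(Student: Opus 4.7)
The plan has three movements: convert the weighted $L^2$-hypothesis into local unweighted $L^p$-estimates, extract a compactly convergent subsequence on $M\setminus S$ via Montel, and then extend the limit holomorphically across the analytic set $S$.

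For the first movement, I fix a compact $K\subset M\setminus S$ and set $a_K:=s_K/(1+s_K)\in(0,1)$. Hölder's inequality with conjugate exponents $1/a_K$ and $1/(1-a_K)$ gives
\begin{equation*}
\int_K|F_j|^{2a_K}\,dV_M=\int_K(|F_j|^2g_j)^{a_K}g_j^{-a_K}\,dV_M\le\left(\int_K|F_j|^2g_j\,dV_M\right)^{a_K}\left(\int_Kg_j^{-s_K}\,dV_M\right)^{1-a_K},
\end{equation*}
which, along a subsequence realising $\liminf_j\int_M|F_j|^2g_j\le C$, is bounded by $(C+1)^{a_K}C_K^{1-a_K}$. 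Writing $F_j=h_j\,dz_1\wedge\cdots\wedge dz_n$ locally on a coordinate chart, the subharmonicity of $|h_j|^{2a_K}$ and the sub-mean-value inequality upgrade this $L^{2a_K}$-bound to a uniform pointwise bound of $\{F_j\}$ on any compact subset of the interior of $K$. Exhausting $M\setminus S$ by such compacts and applying Montel's theorem with a diagonal extraction yields a subsequence $\{F_{j_l}\}$ converging uniformly on every compact subset of $M\setminus S$ to a holomorphic $(n,0)$-form $F$ on $M\setminus S$.

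The principal obstacle is then to extend $F$ holomorphically across $S$ and to upgrade the convergence to compacts of all of $M$. The Hölder-based estimates above degenerate as one approaches $S$, since both $s_K$ and $C_K$ depend on $K$; to circumvent this I would exploit that each $F_j$ is globally holomorphic on $M$ and apply the Cauchy integral formula on small polydiscs $P\Subset M$ centred at arbitrary $z_0\in S$. For generic polyradii $r=(r_1,\dots,r_n)$, dimension-counting gives $\dim_{\mathbb R}(T_r\cap S)\le n-2<\dim_{\mathbb R}T_r$ (using $\dim_{\mathbb R}S\le 2n-2$), so $T_r\cap S$ has zero measure in the distinguished boundary $T_r$; the pointwise bound from the previous paragraph then supplies uniform control of $|h_{j_l}|$ on compact subsets of $T_r\setminus S$, and a careful estimate of the contribution from an arbitrarily small neighbourhood of $T_r\cap S$ — potentially by a bootstrap argument iterating over tori avoiding larger and larger portions of $S$ — delivers a uniform bound on $|h_{j_l}|$ in a smaller polydisc about $z_0$. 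A further extraction via Montel, combined with Riemann's removable-singularity theorem across the analytic set $S$, then yields a subsequence converging uniformly on compacts of all of $M$ to a holomorphic extension of $F$.

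Finally, with $F_{j_l}\to F$ almost everywhere (by the uniform convergence on $M\setminus S$ and the measure-zero character of $S$) and $g_{j_l}\to g$ almost everywhere (by hypothesis), Fatou's lemma applied to the nonnegative integrands $|F_{j_l}|^2g_{j_l}$ gives
\begin{equation*}
\int_M|F|^2g\,dV_M\le\liminf_{l\to+\infty}\int_M|F_{j_l}|^2g_{j_l}\,dV_M\le C,
\end{equation*}
which completes the proof. The main technical difficulty is the third paragraph: the Hölder bound alone does not survive the limit $K\uparrow M\setminus S$, and the subtle part is using the global holomorphicity of the $F_j$ on $M$ (rather than just on $M\setminus S$) to tame $|h_{j_l}|$ close to the thin set $S$.
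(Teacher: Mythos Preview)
The paper does not prove this lemma but cites \cite{GY-concavity}; nevertheless, your argument has a real gap in the extension step. Your H\"older/sub-mean-value/Montel argument on $M\setminus S$ and the final Fatou step are fine. The problem is your crossing of $S$: knowing only that $T_r\cap S$ has measure zero in the torus $T_r$ is not enough to control $\int_{T_r}|F_{j_l}|$, because the pointwise bounds you obtained are valid only on \emph{compact} subsets of $M\setminus S$, and as one approaches $T_r\cap S$ inside $T_r$ those bounds may blow up (the constants $C_K$, $s_K$ are allowed to degenerate). Your ``bootstrap over tori avoiding larger and larger portions of $S$'' is not an argument --- each torus you pass to is another non-compact subset of $M\setminus S$ with the same defect.

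The clean fix is to arrange $T_r\cap S=\emptyset$ rather than merely measure-zero. By the local parametrization theorem, after a generic linear change of coordinates at $z_0\in S$ the projection of $S$ onto the first $d=\dim_{z_0}S$ coordinates is a finite branched cover; in particular, for $|w'|\le r'$ the fibre of $S$ lies in $\{|w_k|<\epsilon'\}$ for $k>d$ and some $\epsilon'<\epsilon$. Choosing polyradii $\rho_j=r'/2$ for $j\le d$ and $\rho_k\in(\epsilon',\epsilon)$ for $k>d$ then makes the distinguished boundary $T_\rho$ a \emph{compact subset of $M\setminus S$}. Now your Step~2 gives a uniform bound $\sup_{T_\rho}|F_{j_l}|\le M_{T_\rho}$, and since each $F_{j_l}$ is holomorphic on the closed polydisc, the maximum modulus principle (iterated variable by variable) yields $\sup_{\overline P}|F_{j_l}|\le M_{T_\rho}$. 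Covering any compact $L\subset M$ by finitely many such polydiscs gives uniform bounds on $L$; a further Montel extraction on all of $M$ then produces the holomorphic limit $F$ on $M$, and your Fatou step finishes the proof.
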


The following lemma shows the closedness of submodules of $H_o$.

\begin{Lemma}\label{closedness of module}Let $D$ be a pseudoconvex domain containing $o$. Let $c(t)\in \tilde{P}_{T,D,\Psi}$.
Let $U_0\Subset D$ be a Stein neighborhood of $o$.
Let $J_o$ be an $\mathcal{O}_{\mathbb{C}^n,o}$-submodule of $H_o$ such that $I(\varphi+\Psi)_o\subset J_o$. Assume that $f_o\in J(\Psi)_o$. Let $\{f_j\}_{j\ge 1}$ be a sequence of holomorphic functions on $U_0\cap \{\Psi<-t_j\}$ for any $j\ge 1$, where $t_j>T$. Assume that $t_0:=\lim_{j\to +\infty}t_j\in[T,+\infty)$,
\begin{equation}\label{convergence property of module}
\limsup\limits_{j\to+\infty}\int_{U_0\cap\{\Psi<-t_j\}}|f_j|^2e^{-\varphi}c(-\Psi)\le C<+\infty,
\end{equation}
and $(f_j-f)_o\in J_o$. Then there exists a subsequence of $\{f_j\}_{j\ge 1}$ compactly convergent to a holomorphic function $f_0$ on $\{\Psi<-t_0\}\cap U_0$ which satisfies
$$\int_{U_0\cap\{\Psi<-t_0\}}|f_0|^2e^{-\varphi}c(-\Psi)\le C,$$
and $(f_0-f)_o\in J_o$.
\end{Lemma}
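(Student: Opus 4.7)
The plan is to proceed in two stages: first produce $f_0$ as a compact limit of a subsequence of $\{f_j\}$ with the required $L^2$ bound, then verify the module condition $(f_0-f)_o\in J_o$ via the isomorphism $P$ of Proposition \ref{module isomorphism}.

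For the first stage, fix any $t_\ast>T$ with $t_\ast\ge t_0$ and invoke the definition of $c\in\tilde{P}_{T,D,\Psi}$ at level $t_\ast$ to obtain a closed $E\subset Z\cap\{\Psi=-\infty\}$ on whose complement $e^{-\varphi}c(-\Psi)$ has a positive lower bound on compact subsets of $\{\Psi<-t_\ast\}$. For any compact $K\subset(U_0\cap\{\Psi<-t_\ast\})\setminus E$ one eventually has $K\subset\{\Psi<-t_j\}$, so \eqref{convergence property of module} together with the weight lower bound gives $\int_K|f_j|^2\le C_K$ uniformly in $j$. Montel's theorem combined with a diagonal argument (letting $t_\ast\to t_0^+$ in the boundary case $t_0=T$) extracts a subsequence converging compactly on $(U_0\cap\{\Psi<-t_0\})\setminus E$ to a holomorphic function $f_0$. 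Since $E$ is contained in an analytic subset of $D$, Riemann extension extends $f_0$ holomorphically to $U_0\cap\{\Psi<-t_0\}$, and Lemma \ref{l:converge} together with Fatou's lemma yields $\int_{U_0\cap\{\Psi<-t_0\}}|f_0|^2 e^{-\varphi}c(-\Psi)\le C$.

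For the second stage, set $\tilde{J}:=P(J_o/I(\varphi+\Psi_1)_o)$ and let $N\subset\mathcal{O}_{\mathbb{C}^n,o}$ be the preimage of $\tilde{J}$ under the projection $\mathcal{H}_o\to\mathcal{H}_o/\mathcal{I}(\varphi+\varphi_1+\Psi_1)_o$; by Noetherianness $N$ is finitely generated. The key technical step is to realize $\tilde{P}((f_{j_k}-f)_o)$ by representatives $(\tilde{F}_k,o)$ on a \emph{common} neighborhood of $o$ with bounds uniform in $k$. For this, I apply Lemma \ref{L2 method for c(t)} on the Stein domain $U_0$ with source $f_{j_k}-f$, fixed level $t_\ast$, and $\delta=1$. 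The hypothesis holds uniformly in $k$: \eqref{convergence property of module} controls $\int_{U_0\cap\{\Psi<-t_\ast\}}|f_{j_k}|^2 e^{-\varphi}c(-\Psi)$, while $(f)_o\in H_o$ (a consequence of $(f_{j_k})_o,(f_{j_k}-f)_o\in H_o$) controls the $f$ contribution; the monotonicity of $c(t)e^{-t}$ then ensures that the resulting annular term on the right-hand side of Lemma \ref{L2 method for c(t)} is uniformly bounded. This produces $\tilde{F}_k\in\mathcal{O}(U_0)$ satisfying a $k$-uniform $L^2$ estimate whose weight is locally bounded below away from a subset of $Z\cap\{\Psi=-\infty\}$.

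A further subsequence extracted by Montel plus Riemann extension gives $\tilde{F}_k\to\tilde{F}_0$ compactly on $U_0$. Each $(\tilde{F}_k,o)$ lies in $N$ because it represents $\tilde{P}((f_{j_k}-f)_o)\in\tilde{J}$, so Lemma \ref{closedness} yields $(\tilde{F}_0,o)\in N$. Passing to the limit inside the Lemma \ref{L2 method for c(t)} estimate via Fatou, together with the compact convergence $f_{j_k}\to f_0$ from stage one, identifies $[(\tilde{F}_0,o)]$ with $\tilde{P}((f_0-f)_o)$; hence $\tilde{P}((f_0-f)_o)\in\tilde{J}$, and the injectivity of $P$ combined with $I(\varphi+\Psi)_o=I(\varphi+\Psi_1)_o\subset J_o$ unravels this to $(f_0-f)_o\in J_o$. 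The principal obstacle is obtaining the uniform $L^2$ control on the $\tilde{F}_k$ over a single neighborhood of $o$; this forces a direct application of Lemma \ref{L2 method for c(t)} with fixed parameters rather than a black-box use of Lemma \ref{construction of morphism}, whose output domain and level would otherwise depend on $k$.
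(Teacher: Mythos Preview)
Your approach is essentially the paper's: extract the limit $f_0$ via Lemma~\ref{l:converge} and a diagonal argument, then push the module question through the isomorphism $P$ of Proposition~\ref{module isomorphism} and invoke Lemma~\ref{closedness}. The one substantive difference is that you apply Lemma~\ref{L2 method for c(t)} with source $f_{j_k}-f$, whereas the paper applies it with source $f_j$ alone.

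Your variant has a small gap as written: the germ $f_o\in J(\Psi)_o$ is only represented by a holomorphic function on $\{\Psi<-t\}\cap V$ for some neighborhood $V$ of $o$, not on all of $U_0\cap\{\Psi<-t_\ast\}$, so the application ``on the Stein domain $U_0$'' is not literally valid. This is easily repaired by shrinking to a Stein $U_0'\Subset V\cap U_0$ (Stage~2 concerns only the germ at $o$), after which your uniform estimates go through exactly as you describe. The paper's choice to run the $L^2$ method on $f_j$ rather than $f_j-f$ sidesteps this issue entirely: since each $f_j$ is genuinely defined on $U_0\cap\{\Psi<-t_j\}$, Lemma~\ref{L2 method for c(t)} applies directly on the fixed $U_0$, producing $\tilde F_j$ on $U_0$ with $j$-uniform bounds. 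One then shows $P([(f_j)_o])=[(\tilde F_j,o)]$ for all $j\ge0$ (including the limit), uses $(f_j-f_1)_o\in J_o$ to obtain $(\tilde F_j-\tilde F_1,o)\in\tilde J$ for $j\ge1$, passes to the limit via Lemma~\ref{closedness}, and finally deduces $(f_0-f)_o\in J_o$ from $(f_0-f_1)_o\in J_o$. Both routes rely on the same mechanism; the paper's is slightly cleaner because it never needs the domain of $f$.
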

\begin{proof}

It follows from $c(t)\in\tilde{P}_{T,D,\Psi}$ that there exists an analytic subset $Z$ of $D$ and for any compact subset $K\subset D\backslash Z$, $e^{-\varphi}c(-\Psi)$ has a positive lower bound on $K\cap \{\Psi<-t_j\}$.

It follows from inequality \eqref{convergence property of module}, Lemma \ref{l:converge} and diagonal method that there exists a subsequence of $\{f_j\}_{j\ge 1}$ (also denoted by $\{f_j\}_{j\ge 1}$) compactly convergent to a holomorphic function $f_0$ on $\{\Psi<-t_0\}\cap U_0$. It follows from Fatou's Lemma that
$$\int_{U_0\cap\{\Psi<-t_0\}}|f_0|^2e^{-\varphi}c(-\Psi)\le \liminf\limits_{j\to+\infty}\int_{U_0\cap\{\Psi<-t_j\}}|f_j|^2e^{-\varphi}c(-\Psi)\le C.$$

Now we prove $(f_0-f)_o\in J_o$. We firstly recall some constructions in Lemma \ref{construction of morphism}.

As $t_0:=\lim_{j\to +\infty}t_j\in[T,+\infty)$. We can assume that $\{t_j\}_{j\ge 0}$ is upper bounded by some real number $T_1+1$. Denote $\Psi_1:=\min\{\psi-2\log|F|,-T_1\}$, and
if $F(z)=0$ for some $z \in M$, we set $\Psi_1(z)=-T_1$. We note that \begin{equation}\nonumber
\limsup\limits_{j\to+\infty}\int_{U_0\cap\{\Psi<-T_1-1\}}|f_j|^2e^{-\varphi}c(-\Psi)\le C<+\infty.
\end{equation}

It follows from $c(t)\in\tilde{P}_{T,D,\Psi}$ and Lemma \ref{L2 method for c(t)} that there exists a holomorphic function $\tilde{F}_j$ on $U_0$ such that
\begin{equation}\label{convergence property of module formula 1}
  \begin{split}
      & \int_{U_0}|\tilde{F}_j-(1-b_{1}(\Psi_1))f_jF^{2}|^2e^{-\varphi-\varphi_1+v_{1}(\Psi_1)-\Psi_1}c(-v_{1}(\Psi_1)) \\
      \le & \left(c(T_1)e^{-T_1}+\int_{T_1}^{T_1+2}c(s)e^{-s}ds\right)
       \int_{U_0}\mathbb{I}_{\{-T_1-2<\Psi_1<-T_1-1\}}|f_j|^2e^{-\varphi-\Psi_1},
  \end{split}
\end{equation}
where $b_{1}(t)=\int^{t}_{-\infty} \mathbb{I}_{\{-T_1-2< s < -T_1-1\}}ds$,
$v_{1}(t)=\int^{t}_{-T_1-1}b_{1}(s)ds-(T_1+1)$. Denote $C_1:=c(T_1)e^{-T_1}+\int_{T_1}^{T_1+1}c(s)e^{-s}ds$.

Note that $v_{1}(t)>-T_1-2$. We have $e^{v_{1}(\Psi_1)}c(-v_{1}(\Psi))\ge c(T_1+2)e^{-(T_1+2)}>0$. As $b_{1}(t)\equiv 0$ on $(-\infty,-T_1-2)$, we have
\begin{equation}\label{convergence property of module formula 2}
\begin{split}
   &\int_{U_0\cap\{\Psi<-T_1-2\}}|\tilde{F}_j-f_jF^2|^2e^{-\varphi-\varphi_1-\Psi_1} \\
   \le & \frac{1}{c(T_1+2)e^{-(T_1+2)}}
   \int_{U_0}|\tilde{F}_j-(1-b_{1}(\Psi_1))f_jF^2|^2e^{-\varphi-\varphi_1-\Psi_1+v_{1}(\Psi_1)}c(-v_{t_j}(\Psi_1))\\
   \le &\frac{C_1}{c(T_1+2)e^{-(T_1+2)}}
   \int_{U_0}\mathbb{I}_{\{-T_1-2<\Psi_1<-T_1-1\}}|f_j|^2e^{-\varphi-\Psi_1}<+\infty.
\end{split}
\end{equation}
Note that $|F^2|^2e^{-\varphi_1}=1$ on $\{\Psi_1<-T_1-1\}$. As $v_{t_j}(\Psi_1)\ge \Psi_1$, we have $c(-v_{t_j}(\Psi_1))e^{v_{t_j}(\Psi_1)}\ge c(-\Psi_1)e^{-\Psi_1}$. Hence we have
\begin{equation}\label{convergence property of module formula 3}
\begin{split}
   &\int_{U_0}|\tilde{F}_j|^2e^{-\varphi-\varphi_1}c(-\Psi_1) \\
   \le & 2\int_{U_0}|\tilde{F}_j-(1-b_{1}(\Psi_1))f_jF^2|^2e^{-\varphi-\varphi_1}c(-\Psi_1)\\
   +&2\int_{U_0}|(1-b_{1}(\Psi_1))f_jF^2|^2e^{-\varphi-\varphi_1}c(-\Psi_1)\\
   \le&
   2\int_{U_0}|\tilde{F}_j-(1-b_{1}(\Psi_1))f_jF^2|^2e^{-\varphi-\varphi_1-\Psi_1+v_{1}(\Psi_1)}c(-v_{1}(\Psi_1))\\
   +&2\int_{U_0\cap\{\Psi_1<-T_1-1\}}|f_j|^2e^{-\varphi}c(-\Psi_1)\\
   < &+\infty.
\end{split}
\end{equation}
Hence we know that $(\tilde{F}_j,o)\in \mathcal{H}_o$.

It follows from inequality \eqref{convergence property of module}, $\sup_{j\ge1}\left(\int_{U_0}\mathbb{I}_{\{-T_1-2<\Psi<-T_1-1\}}|f_j|^2e^{-\varphi-\Psi}\right)<+\infty$ and inequality \eqref{convergence property of module formula 3} that we actually have \begin{equation}\label{closedness of module formula 1}
\sup_j\left(\int_{U_0}|\tilde{F}_j|^2e^{-\varphi-\varphi_1}c(-\Psi_1)\right)<+\infty.
\end{equation}
Note that $c(t)e^{-t}$ is decreasing with respect to $t$ and there exists an analytic subset $S$ of $D$ and for any compact subset $K\subset D\backslash S$, $e^{-\varphi}c(-\Psi_1)$ has a positive lower bound on $K\cap \{\Psi<-T_1\}$. Let $K\subset U_0\backslash S\subset D\backslash S$ be any compact set, since $\varphi_1$ and $\varphi+\varphi_1+\Psi_1$ are plurisubharmonic, we have
\begin{equation}\label{closedness of module formula 2}
  \begin{split}
     \int_K \frac{1}{e^{-\varphi-\varphi_1}c(-\Psi_1)}& =\int_{K\cap\{\Psi_1<-T_1\}} \frac{e^{\varphi_1}}{e^{-\varphi}c(-\Psi_1)}+\int_{K\cap\{\Psi_1\ge-T_1\}} \frac{1}{e^{-\varphi-\varphi_1}c(-\Psi_1)} \\
       &\le M_K\int_{K\cap\{\Psi_1<-T_1\}} e^{\varphi_1}+\tilde{M}_K \int_{K\cap\{\Psi_1\ge-T_1\}} e^{\varphi+\varphi_1+\Psi_1}\\
       &\le  M_K\int_{K} e^{\varphi_1}+\tilde{M}_K \int_{K} e^{\varphi+\varphi_1+\Psi_1}\\
       &<+\infty,
  \end{split}
\end{equation}
where $M_K=\sup_{K\cap\{\Psi<-T_1\}}\frac{1}{e^{-\varphi}c(-\Psi_1)}$ and $\tilde{M}_K=\frac{1}{c(-T_1)e^{T_1}}$.
It follows from inequality \eqref{closedness of module formula 1}, inequality \eqref{closedness of module formula 2} and Lemma \ref{l:converge} that there exists a subsequence of $\{\tilde{F}_j\}_{j\ge 1}$ (also denoted by $\{\tilde{F}_j\}_{j\ge 1}$) compactly convergent to a holomorphic function $\tilde{F}_0$ on $U_0$ and
\begin{equation}\label{convergence property of module formula 4}
\int_{U_0}|\tilde{F}_0|^2e^{-\varphi-\varphi_1}c(-\Psi_1)\le
\liminf_{j\to +\infty}\int_{U_0}|\tilde{F}_j|^2e^{-\varphi-\varphi_1}c(-\Psi_1)<+\infty.
\end{equation}
As $f_j$ converges to $f_0$, it follows from Fatou's Lemma and inequality \eqref{convergence property of module formula 1} that
\begin{equation}\nonumber
  \begin{split}
  &\int_{U_0}|\tilde{F}_0-(1-b_{1}(\Psi))f_0F^{2}|^2e^{-\varphi-\varphi_1+v_{1}(\Psi_1)-\Psi_1}c(-v_{1}(\Psi_1)) \\
     \le & \liminf_{j\to+\infty} \int_{U_0}|\tilde{F}_j-(1-b_{1}(\Psi))f_jF^{2}|^2e^{-\varphi-\varphi_1+v_{1}(\Psi)-\Psi}c(-v_{1}(\Psi_1)) \\
     < &+\infty,
  \end{split}
\end{equation}
which implies that
\begin{equation}\label{convergence property of module formula 5}
  \begin{split}
\int_{U_0\cap\{\Psi<-T_1-2\}}|\tilde{F}_0-f_0F^2|^2e^{-\varphi-\varphi_1-\Psi_1}<+\infty.
  \end{split}
\end{equation}
It follows from inequality \eqref{convergence property of module formula 2}, inequality \eqref{convergence property of module formula 3}, inequality \eqref{convergence property of module formula 4}, inequality \eqref{convergence property of module formula 5} and definition of $P:H_o/I(\varphi+\Psi_1)_o\to \mathcal{H}_o/\mathcal{I}(\varphi+\varphi_1+\Psi_1)_o$ that for any $j\ge 0$, we have
$$P([(f_j)_o])=[(\tilde{F}_j,o)].$$

Note that $I(\Psi_1+\varphi)_o=I(\Psi+\varphi)_o\subset J_o$. As $(f_j-f)_o\in J_o$ for any $j\ge 1$, we have $(f_j-f_1)_o\in J_o$ for any $j\ge 1$.
It follows from Proposition \ref{module isomorphism} that there exists an ideal $\tilde{J}$ of $\mathcal{O}_{\mathbb{C}^n,o}$ such that $\mathcal{I}(\varphi+\varphi_1+\Psi_1)_o\subset \tilde{J}\subset \mathcal{H}_o$ and $\tilde{J}/\mathcal{I}(\varphi+\varphi_1+\Psi_1)_o=\text{Im}(P|_{J_o/I(\varphi+\Psi_1)_o})$. It follows from $(f_j-f_1)_o\in J_o$ and $P([(f_j)_o])=[(F_j,o)]$ for any $j\ge 1$ that we have
$$(\tilde{F}_j-\tilde{F}_1)\in \tilde{J},$$
for any $j\ge 1$.

As $\tilde{F}_j$ compactly converges to $\tilde{F}_0$, using Lemma \ref{closedness}, we obtain that $(\tilde{F}_0-\tilde{F}_1,o)\in\tilde{J}$. Note that $P$ is an  $\mathcal{O}_{\mathbb{C}^n,o}$-module isomorphism and $\tilde{J}/\mathcal{I}(\varphi+\varphi_1+\Psi_1)_o=\text{Im}(P|_{J_o/I(\varphi+\Psi_1)_o})$. We have $(f_0-f_1)_o\in J_o$, which implies that $(f_0-f)_o\in J_o$.

Lemma \ref{closedness of module} is proved.
\end{proof}

\subsection{Properties of $G(t)$}
Following the notations in Section \ref{sec:Main result}, we present some properties of the function $G(t)$ in this section.

For any $t\ge T$, denote
\begin{equation}\nonumber
\begin{split}
\mathcal{H}^2(t;c,f):=\Bigg\{\tilde{f}:\int_{ \{ \Psi<-t\}}|\tilde{f}|^2e^{-\varphi}c(-\Psi)<+\infty,\  \tilde{f}\in
H^0(\{\Psi<-t\},\mathcal{O} (K_M)  ) \\
\& (\tilde{f}-f)_{z_0}\in
\mathcal{O} (K_M)_{z_0} \otimes J_{z_0},\text{for any }  z_0\in Z_0  \Bigg\},
\end{split}
\end{equation}
where $f$ is a holomorphic $(n,0)$ form on $\{\Psi<-t_0\}\cap V$ for some $V\supset Z_0$ is an open subset of $M$ and some $t_0\ge T$ and $c(t)$ is a positive measurable function on $(T,+\infty)$.

For any $t\ge T$, denote
\begin{equation}\nonumber
\begin{split}
\mathcal{H}^2(t;c,f,H):=\Bigg\{\tilde{f}:\int_{ \{ \Psi<-t\}}|\tilde{f}|^2e^{-\varphi}c(-\Psi)<+\infty,\  \tilde{f}\in
H^0(\{\Psi<-t\},\mathcal{O} (K_M)  ) \\
\& (\tilde{f}-f)_{z_0}\in
\mathcal{O} (K_M)_{z_0} \otimes (J_{z_0}\cap H_{z_0}),\ \text{for any }  z_0\in Z_0  \Bigg\},
\end{split}
\end{equation}
where $f$ is a holomorphic $(n,0)$ form on $\{\Psi<-t_0\}\cap V$ for some $V\supset Z_0$ is an open subset of $M$ and some $t_0\ge T$, $c(t)$ is a positive measurable function on $(T,+\infty)$ and $H_{z_0}=\{f_o\in J(\Psi)_o:\int_{\{\Psi<-t\}\cap V_0}|f|^2e^{-\varphi}c(-\Psi)<+\infty \text{ for some }t>T_0 \text{ and } V_0 \text{ is an open neighborhood of}\  z_0\}$ (the definition of $H_{z_0}$ can be referred to Section \ref{sec:properties of module}).

If $G(t_1;c,\Psi,\varphi,J,f)<+\infty$, then there exists a holomorphic $(n,0)$ form $\tilde{f}_0$ on $\{\Psi<-t_1\}$ such that $(\tilde{f}_0-f)_{z_0}\in
\mathcal{O} (K_M)_{z_0} \otimes J_{z_0},\text{for any }  z_0\in Z_0$ and $$\int_{ \{ \Psi<-t_1\}}|\tilde{f}_0|^2e^{-\varphi}c(-\Psi)<+\infty.$$

\begin{Lemma}
\label{module in def of G(t)}If $G(t_1;c,\Psi,\varphi,J,f)<+\infty$ for some $t_1\ge T$, we have $\mathcal{H}^2(t;c,f)=\mathcal{H}^2(t;c,\tilde{f}_0)=\mathcal{H}^2(t;c,\tilde{f}_0,H)$ for any $t\ge T$.
\end{Lemma}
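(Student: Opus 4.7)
The plan is to split the double equality into three containments. The inclusion $\mathcal{H}^2(t;c,\tilde{f}_0,H)\subset\mathcal{H}^2(t;c,\tilde{f}_0)$ is immediate from $J_{z_0}\cap H_{z_0}\subset J_{z_0}$, so only (i) $\mathcal{H}^2(t;c,f)=\mathcal{H}^2(t;c,\tilde{f}_0)$ and (ii) $\mathcal{H}^2(t;c,\tilde{f}_0)\subset\mathcal{H}^2(t;c,\tilde{f}_0,H)$ require argument.

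First I would dispatch (i) using only the module axioms. Since $\tilde{f}_0\in\mathcal{H}^2(t_1;c,f)$ by hypothesis, one has $(\tilde{f}_0-f)_{z_0}\in\mathcal{O}(K_M)_{z_0}\otimes J_{z_0}$ for every $z_0\in Z_0$. For any holomorphic $(n,0)$-form $\tilde{f}$ on $\{\Psi<-t\}$, the decomposition
\[
(\tilde{f}-f)_{z_0}=(\tilde{f}-\tilde{f}_0)_{z_0}+(\tilde{f}_0-f)_{z_0}
\]
together with the fact that $\mathcal{O}(K_M)_{z_0}\otimes J_{z_0}$ is an $\mathcal{O}_{M,z_0}$-module (closed under addition and subtraction) shows that the conditions $(\tilde{f}-f)_{z_0}\in\mathcal{O}(K_M)_{z_0}\otimes J_{z_0}$ and $(\tilde{f}-\tilde{f}_0)_{z_0}\in\mathcal{O}(K_M)_{z_0}\otimes J_{z_0}$ are equivalent. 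Since the integrability requirement is identical, this yields (i).

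For (ii), given $\tilde{f}\in\mathcal{H}^2(t;c,\tilde{f}_0)$ and $z_0\in Z_0$, I need to upgrade $(\tilde{f}-\tilde{f}_0)_{z_0}\in\mathcal{O}(K_M)_{z_0}\otimes J_{z_0}$ to membership in $\mathcal{O}(K_M)_{z_0}\otimes(J_{z_0}\cap H_{z_0})$. The $J$-part is automatic. For the $H$-part I would pass to a relatively compact coordinate patch $V_0\ni z_0$ on which $K_M$ is trivialized by $dz_1\wedge\cdots\wedge dz_n$, write $\tilde{f}-\tilde{f}_0=h\,dz_1\wedge\cdots\wedge dz_n$ on $\{\Psi<-t'\}\cap V_0$ with $t':=\max(t,t_1)$, and note that on $\overline{V_0}$ the $(n,n)$-form $|\tilde{f}-\tilde{f}_0|^2=\sqrt{-1}^{n^2}(\tilde{f}-\tilde{f}_0)\wedge\overline{(\tilde{f}-\tilde{f}_0)}$ is comparable to $|h|^2\,dV_M$ up to a positive constant $C_{V_0}$. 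Then the triangle inequality $|\tilde{f}-\tilde{f}_0|^2\le 2|\tilde{f}|^2+2|\tilde{f}_0|^2$, combined with the global $L^2$ bounds on $\tilde{f}$ (over $\{\Psi<-t\}$) and $\tilde{f}_0$ (over $\{\Psi<-t_1\}$) with weight $e^{-\varphi}c(-\Psi)$, gives
\[
\int_{\{\Psi<-t'\}\cap V_0}|h|^2 e^{-\varphi}c(-\Psi)\,dV_M<+\infty,
\]
so $h_{z_0}\in H_{z_0}$ and hence $(\tilde{f}-\tilde{f}_0)_{z_0}\in\mathcal{O}(K_M)_{z_0}\otimes(J_{z_0}\cap H_{z_0})$.

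The argument is essentially bookkeeping; the only (mild) technical point is step (ii), where one has to translate the global $(n,0)$-form $L^2$ condition into the local $L^2$-function condition appearing in the definition of $H_{z_0}$, which is handled by a local trivialization of $K_M$ together with the triangle inequality.
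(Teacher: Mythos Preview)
Your proof is correct and follows essentially the same approach as the paper: both establish (i) by the module identity $(\tilde f-f)_{z_0}=(\tilde f-\tilde f_0)_{z_0}+(\tilde f_0-f)_{z_0}$, and both prove (ii) by taking $t'=\max(t,t_1)$ and using the finite $L^2$ norms of $\tilde f$ and $\tilde f_0$ to force $(\tilde f-\tilde f_0)_{z_0}\in\mathcal O(K_M)_{z_0}\otimes H_{z_0}$. The paper's version is more terse and does not spell out the local trivialization of $K_M$ needed to pass from the form norm $|\cdot|^2$ to the $dV_M$-integral defining $H_{z_0}$, a point you handle explicitly.
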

\begin{proof}

 As $(\tilde{f}_0-f)_{z_0}\in
\mathcal{O} (K_M)_{z_0} \otimes J_{z_0},\text{for any }  z_0\in Z_0$, we have $\mathcal{H}^2(t;c,f)=\mathcal{H}^2(t;c,\tilde{f}_0)$ for any $t\ge T$.

 Now we prove $\mathcal{H}^2(t;c,\tilde{f}_0)=\mathcal{H}^2(t;c,\tilde{f}_0,H)$ for any $t\ge T$. It is obviously that $\mathcal{H}^2(t;c,\tilde{f}_0)\supset\mathcal{H}^2(t;c,\tilde{f}_0,H)$. We only need to show $\mathcal{H}^2(t;c,\tilde{f}_0)\subset\mathcal{H}^2(t;c,\tilde{f}_0,H)$.

Let $\tilde{f}_1\in \mathcal{H}^2(t_2;c,\tilde{f}_0)$ for some $t_2\ge T$. As $\int_{ \{ \Psi<-t_2\}}|\tilde{f}_1|^2e^{-\varphi}c(-\Psi)<+\infty$, denote $t=\max\{t_1,t_2\}$, we know that
$$\int_{ \{ \Psi<-t\}}|\tilde{f}_1-\tilde{f}_0|^2e^{-\varphi}c(-\Psi)<+\infty,$$
which implies that $(\tilde{f}_1-\tilde{f}_0)_{z_0}\in
\mathcal{O} (K_M)_{z_0} \otimes  H_{z_0},\ \text{for any }  z_0\in Z_0$. Hence $(\tilde{f}_1-\tilde{f}_0)_{z_0}\in
\mathcal{O} (K_M)_{z_0} \otimes  (J_{z_0}\cap H_{z_0}),\ \text{for any }  z_0\in Z_0$, which implies that $\tilde{f}_1\in \mathcal{H}^2(t;c,\tilde{f}_0, H)$. Hence $\mathcal{H}^2(t;c,\tilde{f}_0)=\mathcal{H}^2(t;c,\tilde{f}_0,H)$.
\end{proof}
\begin{Remark}
\label{module equivalence in def of G}
If $G(t_1;c,\Psi,\varphi,J,f)<+\infty$ for some $t_1\ge T$, we can always assume that $J_{z_0}$ is an $\mathcal{O}_{M,z_0}$-submodule of $H_{z_0}$ such that $I\big(\Psi+\varphi\big)_{z_0}\subset J_{z_0}$, for any $z_0\in Z_0$ in the definition of $G(t;c,\Psi,\varphi,J,f)$, where $t\in[T,+\infty)$.
\end{Remark}
\begin{proof} If $G(t_1;c,\Psi,\varphi,J,f)<+\infty$ for some $t_1\ge T$, it follows from Lemma \ref{module in def of G(t)} that $\mathcal{H}^2(t;c,f)=\mathcal{H}^2(t;c,\tilde{f}_0)=\mathcal{H}^2(t;c,\tilde{f}_0,H)$ for any $t\ge T$. By definition, we have $G(t;c,\Psi,\varphi,J,f)=G(t;c,\Psi,\varphi,J,\tilde{f}_0)=G(t;c,\Psi,\varphi,J\cap H,\tilde{f}_0)$.

Hence we can always assume that $J_{z_0}$ is an $\mathcal{O}_{M,z_0}$-submodule of $H_{z_0}$ such that $I\big(\Psi+\varphi\big)_{z_0}\subset J_{z_0}$, for any $z_0\in Z_0$.
\end{proof}

In the following discussion, we assume that $J_{z_0}$ is an $\mathcal{O}_{M,z_0}$-submodule of $H_{z_0}$ such that $I\big(\Psi+\varphi\big)_{z_0}\subset J_{z_0}$, for any $z_0\in Z_0$.

Let $c(t)\in \tilde{P}_{T,M,\Psi}$.
The following lemma will be used to discuss the convergence property of holomorphic forms on $\{\Psi<-t\}$.
\begin{Lemma}\label{global convergence property of module}
 Let $f$ be a holomorphic $(n,0)$ form on $\{\Psi<-\hat{t}_0\}\cap V$, where $V\supset Z_0$ is an open subset of $M$ and $\hat{t}_0>T$
is a real number. For any $z_0\in Z_0$, let $J_{z_0}$ be an $\mathcal{O}_{M,z_0}$-submodule of $H_{z_0}$ such that $I\big(\Psi+\varphi\big)_{z_0}\subset J_{z_0}$.

Let $\{f_j\}_{j\ge 1}$ be a sequence of holomorphic $(n,0)$ forms on $\{\Psi<-t_j\}$. Assume that $t_0:=\lim_{j\to +\infty}t_j\in[T,+\infty)$,
\begin{equation}\label{global convergence property of module 1}
\limsup\limits_{j\to+\infty}\int_{\{\Psi<-t_j\}}|f_j|^2e^{-\varphi}c(-\Psi)\le C<+\infty,
\end{equation}
and $(f_j-f)_{z_0}\in \mathcal{O} (K_M)_{z_0}\otimes J_{z_0}$ for any $z_0\in Z_0$. Then there exists a subsequence of $\{f_j\}_{j\in \mathbb{N}^+}$ compactly convergent to a holomorphic $(n,0)$ form $f_0$ on $\{\Psi<-t_0\}$ which satisfies
$$\int_{\{\Psi<-t_0\}}|f_0|^2e^{-\varphi}c(-\Psi)\le C,$$
and $(f_0-f)_{z_0}\in \mathcal{O} (K_M)_{z_0}\otimes  J_{z_0}$ for any $z_0\in Z_0$.
\end{Lemma}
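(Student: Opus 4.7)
The plan is to extract a compactly convergent subsequence, use Fatou's lemma for the global $L^2$ estimate on $f_0$, and apply Lemma \ref{closedness of module} pointwise along $Z_0$ to propagate the module condition to the limit.

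\textbf{Step 1 (Compact convergence on $\{\Psi<-t_0\}$).} Choose the exhaustion $\tau_k := t_0 + 1/k$, $k \ge 1$, so that $\{\Psi<-t_0\} = \bigcup_k \{\Psi<-\tau_k\}$. Since $t_j \to t_0$, for each fixed $k$ we have $t_j < \tau_k$ for all $j$ sufficiently large, so $f_j$ is defined on $\{\Psi<-\tau_k\}$. By $c \in \tilde{P}_{T,M,\Psi}$, there is a closed subset $E_0 \subset Z \cap \{\Psi=-\infty\}$ such that $e^{-\varphi}c(-\Psi)$ has a positive lower bound on any compact subset of $(M \setminus E_0) \cap \{\Psi<-\tau_k\}$. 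Together with \eqref{global convergence property of module 1}, this gives a uniform local $L^2$ bound for $\{f_j\}_{j \gg 1}$ on compact subsets of $\{\Psi<-\tau_k\}\setminus E_0$. Since $E_0$ is contained in an analytic subset of $M$, Lemma \ref{l:converge} (applied with $g_j \equiv e^{-\varphi}c(-\Psi)$ and $S \supset E_0$) yields a subsequence converging locally uniformly on $\{\Psi<-\tau_k\}$ to a holomorphic $(n,0)$ form. A diagonal argument over $k$ produces a single subsequence, still denoted $\{f_j\}$, converging locally uniformly on $\{\Psi<-t_0\}$ to a holomorphic $(n,0)$ form $f_0$.

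\textbf{Step 2 ($L^2$ bound on $f_0$).} Fix $\tau > t_0$. Since $\{\Psi<-\tau\} \subset \{\Psi<-t_j\}$ for $j$ large, Fatou's lemma together with the compact convergence and \eqref{global convergence property of module 1} gives
\begin{equation*}
\int_{\{\Psi<-\tau\}} |f_0|^2 e^{-\varphi} c(-\Psi) \le \liminf_{j \to \infty} \int_{\{\Psi<-t_j\}} |f_j|^2 e^{-\varphi} c(-\Psi) \le C.
\end{equation*}
Letting $\tau \searrow t_0$ and applying the monotone convergence theorem yields $\int_{\{\Psi<-t_0\}} |f_0|^2 e^{-\varphi} c(-\Psi) \le C$.

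\textbf{Step 3 (Module condition at each $z_0 \in Z_0$).} Fix $z_0 \in Z_0$ and choose a coordinate chart on a Stein neighborhood $U_0 \Subset D$ of $z_0$, where $D \subset M$ is a pseudoconvex coordinate ball, and trivialize $K_M$ over $U_0$; via this trivialization, each holomorphic $(n,0)$ form corresponds to a holomorphic function and the condition $(\cdot)_{z_0} \in \mathcal{O}(K_M)_{z_0} \otimes J_{z_0}$ becomes membership in a submodule (still denoted $J_{z_0}$) of $H_{z_0}$ containing $I(\varphi+\Psi)_{z_0}$. The hypotheses of Lemma \ref{closedness of module} are then satisfied by $\{f_j|_{U_0}\}$: the local $L^2$ bound is inherited from \eqref{global convergence property of module 1}, the compact convergence $f_j \to f_0$ on $U_0$ comes from Step 1, and $(f_j - f)_{z_0} \in J_{z_0}$ by hypothesis. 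The conclusion of Lemma \ref{closedness of module} gives $(f_0 - f)_{z_0} \in J_{z_0}$, i.e., $(f_0 - f)_{z_0} \in \mathcal{O}(K_M)_{z_0} \otimes J_{z_0}$, as required.

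\textbf{Main obstacle.} Conceptually the hard part is Step 3: propagating the submodule condition from the full sequence to its compact limit, which relies on the isomorphism $H_o/I(\varphi+\Psi_1)_o \cong \mathcal{H}_o/\mathcal{I}(\varphi+\varphi_1+\Psi_1)_o$ from Proposition \ref{module isomorphism} combined with the ideal-closedness in Lemma \ref{closedness}, both already packaged inside Lemma \ref{closedness of module}. A secondary technical point in Step 1 is extending the local limit across the exceptional set $E_0 \subset Z$, handled by the analytic-set exception already built into Lemma \ref{l:converge}; once it is absorbed into $S$, the remainder is bookkeeping.
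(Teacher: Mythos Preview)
Your proof is correct and follows essentially the same approach as the paper: extract a compactly convergent subsequence via Lemma \ref{l:converge} and a diagonal argument, apply Fatou's lemma for the global $L^2$ bound, and then invoke Lemma \ref{closedness of module} locally at each $z_0\in Z_0$ (after passing to a Stein coordinate neighborhood and trivializing $K_M$) to propagate the module condition. Your write-up is in fact slightly more explicit than the paper's in Step 1 (the exhaustion by $\{\Psi<-\tau_k\}$) and Step 2 (the monotone convergence as $\tau\searrow t_0$).
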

\begin{proof}

It follows from $c(t)\in\tilde{P}_{T,M,\Psi}$ that there exists an analytic subset $Z$ of $D$ and for any compact subset $K\subset D\backslash Z$, $e^{-\varphi}c(-\Psi)$ has a positive lower bound on $K\cap \{\Psi<-t_j\}$.

It follows from inequality \eqref{global convergence property of module 1}, Lemma \ref{l:converge} and diagonal method that there exists a subsequence of $\{f_j\}_{j\ge 1}$ (also denoted by $\{f_j\}_{j\ge 1}$) compactly convergent to a holomorphic function $f_0$ on $\{\Psi<-t_0\}\cap U_0$. It follows from Fatou's Lemma that
$$\int_{U_0\cap\{\Psi<-t_0\}}|f_0|^2e^{-\varphi}c(-\Psi)\le \liminf\limits_{j\to+\infty}\int_{U_0\cap\{\Psi<-t_j\}}|f_j|^2e^{-\varphi}c(-\Psi)\le C.$$

Next we prove $(f_0-f)_{z_0}\in \mathcal{O} (K_M)_{z_0}\otimes  J_{z_0}$ for any $z_0\in Z_0$.

Let $z_0\in Z_0$ be a point. As $\limsup\limits_{j\to+\infty}\int_{\{\Psi<-t_j\}}|f_j|^2e^{-\varphi}c(-\Psi)\le C<+\infty$,
 there exists an open Stein neighborhood $U_{z_0}\Subset M$ of $z_0$ such that
$$\limsup\limits_{j\to+\infty}\int_{U_{z_0}\cap\{\Psi<-t_j\}}|f_j|^2e^{-\varphi}c(-\Psi)\le C<+\infty.$$
Note that we also have $(f_j-f)_{z_0}\in J_{z_0}$.
 It follows from Lemma \ref{closedness of module} and the uniqueness of limit function that  $(f_0-f)_{z_0}\in
\mathcal{O} (K_M)_{z_0} \otimes J_{z_0}$ for any $z_0\in Z_0$.

Lemma \ref{global convergence property of module} is proved.
\end{proof}

\begin{Lemma}
\label{characterization of g(t)=0} Let $t_0>T$.
The following two statements are equivalent,\\
(1) $G(t_0)=0$;\\
(2) $f_{z_0}\in
\mathcal{O} (K_M)_{z_0} \otimes J_{z_0}$, for any  $ z_0\in Z_0$.
\end{Lemma}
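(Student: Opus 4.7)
The equivalence has an easy direction and a direction that calls on the closedness of the module constraint.

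For $(2)\Rightarrow(1)$, the plan is simply to observe that if $f_{z_0}\in\mathcal{O}(K_M)_{z_0}\otimes J_{z_0}$ for every $z_0\in Z_0$, then the zero section $\tilde{f}\equiv 0\in H^0(\{\Psi<-t_0\},\mathcal{O}(K_M))$ satisfies $(\tilde{f}-f)_{z_0}=-f_{z_0}\in\mathcal{O}(K_M)_{z_0}\otimes J_{z_0}$ and $\int_{\{\Psi<-t_0\}}|0|^2e^{-\varphi}c(-\Psi)=0$, so $G(t_0)=0$.

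For $(1)\Rightarrow(2)$, the plan is to extract a minimizing sequence and apply Lemma \ref{global convergence property of module}. Assuming $G(t_0)=0$, choose $\tilde{f}_j\in H^0(\{\Psi<-t_0\},\mathcal{O}(K_M))$ with $(\tilde{f}_j-f)_{z_0}\in\mathcal{O}(K_M)_{z_0}\otimes J_{z_0}$ for every $z_0\in Z_0$ such that
\begin{equation*}
\int_{\{\Psi<-t_0\}}|\tilde{f}_j|^2e^{-\varphi}c(-\Psi)\longrightarrow 0.
\end{equation*}
Applying Lemma \ref{global convergence property of module} with constant sequence $t_j\equiv t_0$ and bound $C=\varepsilon$ (arbitrarily small, after passing to a tail), there exists a subsequence converging uniformly on compact subsets of $\{\Psi<-t_0\}$ to a holomorphic $(n,0)$ form $f_0$ on $\{\Psi<-t_0\}$ with $(f_0-f)_{z_0}\in\mathcal{O}(K_M)_{z_0}\otimes J_{z_0}$ for every $z_0\in Z_0$ and $\int_{\{\Psi<-t_0\}}|f_0|^2e^{-\varphi}c(-\Psi)=0$. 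Since $e^{-\varphi}c(-\Psi)>0$ almost everywhere on $\{\Psi<-t_0\}$ (by the definition of the class $\tilde{P}_{T,M,\Psi}$, which gives a positive lower bound away from a closed set contained in $\{\Psi=-\infty\}$), we conclude $f_0\equiv 0$. Then $(-f)_{z_0}=(f_0-f)_{z_0}\in\mathcal{O}(K_M)_{z_0}\otimes J_{z_0}$ for every $z_0\in Z_0$, whence $f_{z_0}\in\mathcal{O}(K_M)_{z_0}\otimes J_{z_0}$.

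The only slightly delicate step is invoking Lemma \ref{global convergence property of module} correctly: one must verify that the hypotheses are met (a bound on the $L^2$ integrals along the sequence and the membership condition $(\tilde{f}_j-f)_{z_0}\in\mathcal{O}(K_M)_{z_0}\otimes J_{z_0}$ which is exactly the admissibility condition in the definition of $G(t_0)$), and then use that the limit inherits both the integral bound (so $f_0=0$) and the membership in the module (which carries $f_{z_0}$ itself into the module). No new ideas beyond the closedness lemma are needed.
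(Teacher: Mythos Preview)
Your proof is correct and follows essentially the same approach as the paper: the easy direction takes $\tilde f\equiv 0$, and the other direction extracts a minimizing sequence and applies Lemma \ref{global convergence property of module} to obtain a limit $f_0$ with vanishing weighted $L^2$ integral and the module membership, forcing $f_0\equiv 0$ and hence $f_{z_0}\in\mathcal{O}(K_M)_{z_0}\otimes J_{z_0}$. Your added remark on why the vanishing integral forces $f_0\equiv 0$ (positivity of the weight off a thin set coming from the class $\tilde P_{T,M,\Psi}$) is a detail the paper leaves implicit.
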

\begin{proof}If $f_{z_0}\in
\mathcal{O} (K_M)_{z_0} \otimes J_{z_0}$, for any  $ z_0\in Z_0$, then take $\tilde{f}\equiv 0$ in the definition of $G(t)$ and we get $G(t_0)\equiv 0$.

If $G(t_0)=0$, by definition, there exists a sequence of holomorphic $(n,0)$ forms $\{f_j\}_{j\in\mathbb{Z}^+}$ on $\{\Psi<-t_0\}$ such that
\begin{equation}\label{estimate in G(t)=0}
\lim_{j\to+\infty}\int_{\{\Psi<-t_0\}}|f_j|^2e^{-\varphi}c(-\Psi)=0,
\end{equation}
 and $(f_j-f)_{z_0}\in
\mathcal{O} (K_M)_{z_0} \otimes J_{z_0}$, for any  $ z_0\in Z_0$ and $j\ge 1$. It follows from Lemma \ref{global convergence property of module} that there
exists a subsequence of $\{f_j\}_{j\in \mathbb{N}^+}$ compactly convergent to a holomorphic $(n,0)$ form $f_0$ on $\{\Psi<-t_0\}$ which satisfies
$$\int_{\{\Psi<-t_0\}}|f_0|^2e^{-\varphi}c(-\Psi)=0$$
and
$(f_0-f)_{z_0}\in \mathcal{O} (K_M)_{z_0}\otimes J_{z_0}$ for any $z_0\in Z_0$. It follows from $\int_{\{\Psi<-t_0\}}|f_0|^2e^{-\varphi}c(-\Psi)=0$ that we know $f_0\equiv 0$. Hence we have $f_{z_0}\in \mathcal{O} (K_M)_{z_0}\otimes J_{z_0}$ for any $z_0\in Z_0$. Statement (2) is proved.
\end{proof}

The following lemma shows the existence and uniqueness of the holomorphic $(n,0)$ form related to $G(t)$.
\begin{Lemma}
\label{existence of F}
Assume that $G(t)<+\infty$ for some $t\in [T,+\infty)$. Then there exists a unique
holomorphic $(n,0)$ form $F_t$ on $\{\Psi<-t\}$ satisfying
$$\ \int_{\{\Psi<-t\}}|F_t|^2e^{-\varphi}c(-\Psi)=G(t)$$  and
$\ (F_t-f)\in
\mathcal{O} (K_M)_{z_0} \otimes J_{z_0}$, for any  $ z_0\in Z_0$.
\par
Furthermore, for any holomorphic $(n,0)$ form $\hat{F}$ on $\{\Psi<-t\}$ satisfying
$$\int_{\{\Psi<-t\}}|\hat{F}|^2e^{-\varphi}c(-\Psi)<+\infty$$ and $\ (\hat{F}-f)\in
\mathcal{O} (K_M)_{z_0} \otimes J_{z_0}$, for any  $ z_0\in Z_0$. We have the following equality
\begin{equation}
\begin{split}
&\int_{\{\Psi<-t\}}|F_t|^2e^{-\varphi}c(-\Psi)+
\int_{\{\Psi<-t\}}|\hat{F}-F_t|^2e^{-\varphi}c(-\Psi)\\
=&\int_{\{\Psi<-t\}}|\hat{F}|^2e^{-\varphi}c(-\Psi).
\label{orhnormal F}
\end{split}
\end{equation}
\end{Lemma}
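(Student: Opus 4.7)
The plan is to realize $F_t$ as the element of smallest $L^2$-norm in the affine class $\mathcal{H}^2(t;c,f)$ sitting inside the Hilbert space $H:=L^2(\{\Psi<-t\}, e^{-\varphi}c(-\Psi))$ of $(n,0)$-forms, and to recognize \eqref{orhnormal F} as the Pythagoras identity for the orthogonal projection onto this affine subspace.

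First I would pick a minimizing sequence $\{f_j\}_{j\ge 1}\subset\mathcal{H}^2(t;c,f)$ with $\int_{\{\Psi<-t\}}|f_j|^2 e^{-\varphi}c(-\Psi)\to G(t)$. Since the integrals are uniformly bounded, Lemma \ref{global convergence property of module}, applied with the constant sequence $t_j\equiv t$, yields a subsequence converging locally uniformly on $\{\Psi<-t\}$ to a holomorphic $(n,0)$-form $F_t$ satisfying both $\int_{\{\Psi<-t\}}|F_t|^2 e^{-\varphi}c(-\Psi)\le G(t)$ and the germ condition $(F_t-f)_{z_0}\in\mathcal{O}(K_M)_{z_0}\otimes J_{z_0}$ for every $z_0\in Z_0$. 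Thus $F_t\in\mathcal{H}^2(t;c,f)$, and the reverse inequality is immediate from the definition of $G(t)$, so equality holds.

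For the identity \eqref{orhnormal F}, fix any admissible $\hat F\in\mathcal{H}^2(t;c,f)$ and set $g:=\hat F-F_t$. Since $J_{z_0}$ is an $\mathcal{O}_{M,z_0}$-module, $g$ has germ in $\mathcal{O}(K_M)_{z_0}\otimes J_{z_0}$ at every $z_0\in Z_0$ (both $\hat F$ and $F_t$ differ from $f$ modulo this module), so $F_t+\epsilon g\in\mathcal{H}^2(t;c,f)$ for every $\epsilon\in\mathbb{C}$. Expanding $|F_t+\epsilon g|^2=\sqrt{-1}^{n^2}(F_t+\epsilon g)\wedge\overline{(F_t+\epsilon g)}$ gives
\[
\phi(\epsilon):=\int_{\{\Psi<-t\}}|F_t+\epsilon g|^2 e^{-\varphi}c(-\Psi)=G(t)+2\,\mathrm{Re}(\epsilon A)+|\epsilon|^2\int_{\{\Psi<-t\}}|g|^2 e^{-\varphi}c(-\Psi),
\]
where $A$ is the relevant Hermitian pairing of $F_t$ and $g$. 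Minimality of $F_t$ forces $\phi(\epsilon)\ge\phi(0)$ for every $\epsilon\in\mathbb{C}$, which, by choosing $\epsilon$ a small multiple of $-\overline{A}$, is only possible if $A=0$. Setting $\epsilon=1$ then yields \eqref{orhnormal F} on the nose. Uniqueness is a corollary: any second minimizer $\hat F$ makes the left-hand side of \eqref{orhnormal F} equal $G(t)+\int_{\{\Psi<-t\}}|\hat F-F_t|^2 e^{-\varphi}c(-\Psi)$ while the right-hand side equals $G(t)$, forcing $\hat F=F_t$.

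The only substantive point is the closedness step used in extracting $F_t$, namely that locally uniform limits of admissible forms inherit the germ condition $(\,\cdot\,-f)_{z_0}\in\mathcal{O}(K_M)_{z_0}\otimes J_{z_0}$; this is exactly what Lemma \ref{global convergence property of module} (built on Lemma \ref{closedness of module} and the module isomorphism $P$ of Section \ref{sec:properties of module}) provides. Once this is in hand the remainder of the argument is a routine Hilbert-space minimization.
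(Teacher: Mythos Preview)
Your proposal is correct and follows essentially the same approach as the paper: a minimizing sequence combined with Lemma \ref{global convergence property of module} for existence, and a first-variation argument ($F_t+\epsilon g$ admissible for all $\epsilon\in\mathbb{C}$) for the Pythagorean identity. The only cosmetic difference is that the paper proves uniqueness first via the parallelogram law and then derives \eqref{orhnormal F}, whereas you establish \eqref{orhnormal F} first and read uniqueness off as a corollary; both orderings are standard and equivalent.
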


\begin{proof} We firstly show the existence of $F_t$. As $G(t)<+\infty$, then there exists a sequence of holomorphic $(n,0)$ forms $\{f_j\}_{j\in \mathbb{N}^+}$ on $\{\Psi<-t\}$ such that $$\lim\limits_{j \to +\infty}\int_{\{\Psi<-t\}}|f_j|^2e^{-\varphi}c(-\Psi)=G(t)$$ and $(f_j-f)\in
\mathcal{O} (K_M)_{z_0} \otimes J_{z_0}$, for any  $ z_0\in Z_0$ and any $j\ge 1$.
It follows from Lemma \ref{global convergence property of module} that there
exists a subsequence of $\{f_j\}_{j\in \mathbb{N}^+}$ compactly convergent to a holomorphic $(n,0)$ form $F$ on $\{\Psi<-t\}$ which satisfies
$$\int_{\{\Psi<-t\}}|F|^2e^{-\varphi}c(-\Psi)\le G(t)$$
and
$(F-f)_{z_0}\in \mathcal{O} (K_M)_{z_0}\otimes J_{z_0}$ for any $z_0\in Z_0$. By the definition of $G(t)$, we have $\int_{\{\Psi<-t\}}|F|^2e^{-\varphi}c(-\Psi)= G(t)$. Then we obtain the existence of $F_t(=F)$.

We prove the uniqueness of $F_t$ by contradiction: if not, there exist
two different holomorphic $(n,0)$ forms $f_1$ and $f_2$ on $\{\Psi<-t\}$
satisfying $\int_{\{\Psi<-t\}}|f_1|^2e^{-\varphi}$  $c(-\Psi)=
\int_{\{\Psi<-t\}}|f_2|^2e^{-\varphi}c(-\Psi)=G(t)$, $(f_1-f)_{z_0}\in \mathcal{O} (K_M)_{z_0}\otimes J_{z_0}$ for any $z_0\in Z_0$ and $(f_2-f)_{z_0}\in \mathcal{O} (K_M)_{z_0}\otimes J_{z_0}$ for any $z_0\in Z_0$. Note that
\begin{equation}\nonumber
\begin{split}
\int_{\{\Psi<-t\}}|\frac{f_1+f_2}{2}|^2e^{-\varphi}c(-\Psi)+
\int_{\{\Psi<-t\}}|\frac{f_1-f_2}{2}|^2e^{-\varphi}c(-\Psi)\\
=\frac{1}{2}(\int_{\{\Psi<-t\}}|f_1|^2e^{-\varphi}c(-\Psi)+
\int_{\{\Psi<-t\}}|f_1|^2e^{-\varphi}c(-\Psi))=G(t),
\end{split}
\end{equation}
then we obtain that
\begin{equation}\nonumber
\begin{split}
\int_{\{\Psi<-t\}}|\frac{f_1+f_2}{2}|^2e^{-\varphi}c(-\Psi)
< G(t)
\end{split}
\end{equation}
and $(\frac{f_1+f_2}{2}-f)_{z_0}\in \mathcal{O} (K_M)_{z_0}\otimes J_{z_0}$ for any $z_0\in Z_0$, which contradicts to the definition of $G(t)$.

Now we prove equality \eqref{orhnormal F}. Let $h$ be any holomorphic $(n,0)$ form on $\{\Psi<-t\}$
such that $\int_{\{\Psi<-t\}}|h|^2e^{-\varphi}c(-\Psi)<+\infty$ and $h \in \mathcal{O} (K_M)_{z_0}\otimes J_{z_0}$ for any $z_0\in Z_0$.  It is clear that for any complex
number $\alpha$, $F_t+\alpha h$ satisfying $((F_t+\alpha h)-f) \in \mathcal{O} (K_M)_{z_0}\otimes J_{z_0}$ for any $z_0\in Z_0$ and
$\int_{\{\Psi<-t\}}|F_t|^2e^{-\varphi}c(-\Psi) \leq \int_{\{\Psi<-t\}}|F_t+\alpha
h|^2e^{-\varphi}c(-\Psi)$. Note that
\begin{equation}\nonumber
\begin{split}
\int_{\{\Psi<-t\}}|F_t+\alpha
h|^2e^{-\varphi}c(-\Psi)-\int_{\{\psi<-t\}}|F_t|^2e^{-\varphi}c(-\Psi)\geq 0
\end{split}
\end{equation}
(By considering $\alpha \to 0$) implies
\begin{equation}\nonumber
\begin{split}
\mathfrak{R} \int_{\{\Psi<-t\}}F_t\bar{h}e^{-\varphi}c(-\Psi)=0,
\end{split}
\end{equation}
then we have
\begin{equation}\nonumber
\begin{split}
\int_{\{\Psi<-t\}}|F_t+h|^2e^{-\varphi}c(-\Psi)=
\int_{\{\Psi<-t\}}(|F_t|^2+|h|^2)e^{-\varphi}c(-\Psi).
\end{split}
\end{equation}
\par
Letting $h=\hat{F}-F_t$, we obtain equality \eqref{orhnormal F}.
\end{proof}

The following lemma shows the  lower semicontinuity property of $G(t)$.
\begin{Lemma}$G(t)$ is decreasing with respect to $t\in
[T,+\infty)$, such that $\lim \limits_{t \to t_0+0}G(t)=G(t_0)$ for any $t_0\in
[T,+\infty)$, and if $G(t)<+\infty$ for some $t>T$, then $\lim \limits_{t \to +\infty}G(t)=0$. Especially, $G(t)$ is lower semicontinuous on $[T,+\infty)$.
 \label{semicontinuous}
\end{Lemma}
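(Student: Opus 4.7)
The plan is to prove the four assertions in order: monotonicity, the limit at $+\infty$, right continuity, and then deduce lower semicontinuity as a formal consequence.

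For monotonicity, suppose $T \le t_1 < t_2$. Given any $\tilde f \in \mathcal{H}^2(t_1;c,f)$, the restriction $\tilde f|_{\{\Psi<-t_2\}}$ is holomorphic on $\{\Psi<-t_2\}\subset\{\Psi<-t_1\}$, and since $(\tilde f-f)_{z_0}\in\mathcal{O}(K_M)_{z_0}\otimes J_{z_0}$ is a condition on germs at $z_0\in Z_0\subset\bigcap_{t>T}\overline{\{\Psi<-t\}}$, it is inherited by the restriction. As the integrand is nonnegative and the domain $\{\Psi<-t_2\}$ is smaller, we get $\int_{\{\Psi<-t_2\}}|\tilde f|^2e^{-\varphi}c(-\Psi)\le\int_{\{\Psi<-t_1\}}|\tilde f|^2e^{-\varphi}c(-\Psi)$, and taking infima yields $G(t_2)\le G(t_1)$.

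For the limit at infinity, assume $G(t_*)<+\infty$ for some $t_*>T$ and let $F_{t_*}$ be the unique minimizer produced by Lemma \ref{existence of F}. For any $t\ge t_*$, the restriction $F_{t_*}|_{\{\Psi<-t\}}$ is admissible in the definition of $G(t)$ (the germ condition at points of $Z_0$ is preserved), so
\[G(t)\le\int_{\{\Psi<-t\}}|F_{t_*}|^2e^{-\varphi}c(-\Psi).\]
Because $\Psi$ is plurisubharmonic on $\{\Psi<-T\}$, the set $\{\Psi=-\infty\}$ has Lebesgue measure zero, so $\mathbb{I}_{\{\Psi<-t\}}\to 0$ almost everywhere on $\{\Psi<-t_*\}$ as $t\to+\infty$. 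Dominated convergence, dominated by the integrable $|F_{t_*}|^2e^{-\varphi}c(-\Psi)$ on $\{\Psi<-t_*\}$, gives the right-hand side $\to 0$, hence $\lim_{t\to+\infty}G(t)=0$.

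For right continuity at $t_0\in[T,+\infty)$, monotonicity already yields $\limsup_{t\to t_0+0}G(t)\le G(t_0)$, so the content is the reverse inequality. It is trivial if $\lim_{t\to t_0+0}G(t)=+\infty$, so assume this limit is finite and pick $t_j\downarrow t_0$ with $G(t_j)\to\lim_{t\to t_0+0}G(t)$. For each $j$ choose an admissible $f_j$ on $\{\Psi<-t_j\}$ with $\int_{\{\Psi<-t_j\}}|f_j|^2e^{-\varphi}c(-\Psi)\le G(t_j)+\frac{1}{j}$. The uniform bound
\[\limsup_{j\to+\infty}\int_{\{\Psi<-t_j\}}|f_j|^2e^{-\varphi}c(-\Psi)\le\lim_{t\to t_0+0}G(t)<+\infty\]
together with $(f_j-f)_{z_0}\in\mathcal{O}(K_M)_{z_0}\otimes J_{z_0}$ puts us exactly in the setting of Lemma \ref{global convergence property of module}. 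Extracting a subsequence that converges locally uniformly to a holomorphic $(n,0)$ form $f_0$ on $\{\Psi<-t_0\}$, we obtain $\int_{\{\Psi<-t_0\}}|f_0|^2e^{-\varphi}c(-\Psi)\le\lim_{t\to t_0+0}G(t)$ and $(f_0-f)_{z_0}\in\mathcal{O}(K_M)_{z_0}\otimes J_{z_0}$ for each $z_0\in Z_0$. By the definition of $G(t_0)$ we conclude $G(t_0)\le\lim_{t\to t_0+0}G(t)$.

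The lower semicontinuity on $[T,+\infty)$ is then automatic: at any $t_0$, monotonicity gives $G(t)\ge G(t_0)$ for $t<t_0$ so $\liminf_{t\to t_0-}G(t)\ge G(t_0)$, and right continuity gives $\lim_{t\to t_0+}G(t)=G(t_0)$, hence $\liminf_{t\to t_0}G(t)\ge G(t_0)$. The main technical step is the right continuity, where the key ingredient is the closedness provided by Lemma \ref{global convergence property of module}; everything else is formal from monotonicity and dominated convergence.
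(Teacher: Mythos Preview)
Your proof is correct and follows essentially the same approach as the paper: monotonicity is immediate from restriction, the limit at $+\infty$ uses dominated convergence against a fixed admissible form, and right continuity is obtained by applying Lemma~\ref{global convergence property of module} to a sequence of (near-)minimizers. The only cosmetic differences are that the paper argues right continuity by contradiction and uses the exact minimizers $F_t$ from Lemma~\ref{existence of F} rather than $1/j$-approximate minimizers, but the key compactness/closedness input is identical.
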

\begin{proof}By the definition of $G(t)$, it is clear that $G(t)$ is decreasing on
$[T,+\infty)$. If $G(t)<+\infty$ for some $t>T$, by the dominated convergence theorem, we know $\lim\limits_{t\to +\infty}G(t)=0$. It suffices
to prove $\lim \limits_{t \to t_0+0}G(t)=G(t_0)$ . We prove it by
contradiction: if not, then $\lim \limits_{t \to t_0+0}G(t)<
G(t_0)$.

By using Lemma \ref{existence of F}, for any $t>t_0$, there exists a unique holomorphic $(n,0)$ form
$F_t$ on $\{\Psi<-t\}$ satisfying
$\int_{\{\Psi<-t\}}|F_t|^2e^{-\varphi}c(-\Psi)=G(t)$ and $(F_t-f) \in \mathcal{O} (K_M)_{z_0}\otimes J_{z_0}$ for any $z_0\in Z_0$. Note that $G(t)$ is decreasing with respect to $t$. We have $\int_{\{\Psi<-t\}}|F_t|^2e^{-\varphi}c(-\psi)\leq \lim
\limits_{t \to t_0+0}G(t)$ for any $t>t_0$. If $\lim\limits_{t \to t_0+0}G(t)=+\infty$, the equality $\lim \limits_{t \to t_0+0}G(t)=G(t_0)$ obviously holds, thus it suffices to prove the case $\lim\limits_{t \to t_0+0}G(t)<+\infty$. It follows from $\int_{\{\Psi<-t\}}|F_t|^2e^{-\varphi}c(-\Psi)\le \lim\limits_{t \to t_0+0}G(t)<+\infty$  holds for any $t\in (t_0,t_1]$ (where $t_1>t_0$ is a fixed number) and Lemma \ref{global convergence property of module} that there exists a subsequence of $\{F_t\}$ (denoted by $\{F_{t_j}\}$) compactly convergent to a holomorphic $(n,0)$ form $\hat{F}_{t_0}$ on $\{\Psi<-t_0\}$ satisfying
$$\int_{\{\Psi<-t_0\}}|\hat{F}_{t_0}|^2e^{-\varphi}c(-\Psi)\le \lim\limits_{t \to t_0+0}G(t)<+\infty$$
and $(\hat{F}_{t_0}-f)_{z_0} \in \mathcal{O} (K_M)_{z_0}\otimes J_{z_0}$ for any $z_0\in Z_0$.

Then we obtain that $G(t_0)\leq
\int_{\{\Psi<-t_0\}}|\hat{F}_{t_0}|^2e^{-\varphi}c(-\Psi)
\leq \lim \limits_{t\to t_0+0} G(t)$,
which contradicts $\lim \limits_{t\to t_0+0} G(t) <G(t_0)$. Thus we have $\lim \limits_{t \to t_0+0}G(t)=G(t_0)$.
\end{proof}

We consider the derivatives of $G(t)$ in the following lemma.

\begin{Lemma}
\label{derivatives of G}
Assume that $G(t_1)<+\infty$, where $t_1\in (T,+\infty)$. Then for any $t_0>t_1$, we have
\begin{equation}\nonumber
\begin{split}
\frac{G(t_1)-G(t_0)}{\int^{t_0}_{t_1} c(t)e^{-t}dt}\leq
\liminf\limits_{B \to
0+0}\frac{G(t_0)-G(t_0+B)}{\int_{t_0}^{t_0+B}c(t)e^{-t}dt},
\end{split}
\end{equation}
i.e.
\begin{equation}\nonumber
\frac{G(t_0)-G(t_1)}{\int_{T_1}^{t_0}
c(t)e^{-t}dt-\int_{T_1}^{t_1} c(t)e^{-t}dt} \geq
\limsup \limits_{B \to 0+0}
\frac{G(t_0+B)-G(t_0)}{\int_{T_1}^{t_0+B}
c(t)e^{-t}dt-\int_{T_1}^{t_0} c(t)e^{-t}dt}.
\end{equation}
\end{Lemma}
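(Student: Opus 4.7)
The plan is to extend the minimizer $F_{t_0}$ of $G(t_0)$ (which exists by Lemma \ref{existence of F}, since $G$ is decreasing by Lemma \ref{semicontinuous} and $G(t_1)<+\infty$) from $\{\Psi<-t_0\}$ to a holomorphic candidate on $\{\Psi<-t_1\}$ using the $L^2$-extension of Lemma \ref{L2 method in JM concavity} with $f=F_{t_0}$. This produces, for each small $B>0$, a holomorphic $(n,0)$-form $\tilde F_B$ on $\{\Psi<-t_1\}$ with
\[
\int_{\{\Psi<-t_1\}}|\tilde F_B-(1-b_{t_0,B}(\Psi))F_{t_0}|^2 e^{-\varphi+v_{t_0,B}(\Psi)-\Psi}c(-v_{t_0,B}(\Psi))\le A(B)\,J(B),
\]
where $A(B):=\int_{t_1}^{t_0+B}c(s)e^{-s}ds$ and $B\,J(B):=\int_{\{-t_0-B<\Psi<-t_0\}}|F_{t_0}|^2 e^{-\varphi-\Psi}$. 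I would first check admissibility: using $v_{t_0,B}(t)\ge t$ together with the monotonicity of $c(s)e^{-s}$, the weight dominates $c(-\Psi)$, and localising near each $z_0\in Z_0$ on $\{\Psi<-t_0-B\}$ (where $1-b_{t_0,B}\equiv 1$ and $v_{t_0,B}$ equals the constant $-t_0-B/2$) yields $(\tilde F_B-F_{t_0})_{z_0}\in \mathcal{I}(\varphi+\Psi)_{z_0}\otimes \mathcal O(K_M)_{z_0}\subset J_{z_0}\otimes \mathcal O(K_M)_{z_0}$, making $\tilde F_B$ admissible for $G(t_1)$.

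Next, I would apply the Pythagoras identity \eqref{orhnormal F} at level $t_0$ to the admissible candidate $\tilde F_B|_{\{\Psi<-t_0\}}$, giving
\[
\int_{\{\Psi<-t_0\}}|\tilde F_B|^2 e^{-\varphi}c(-\Psi) = G(t_0) + \int_{\{\Psi<-t_0\}}|\tilde F_B-F_{t_0}|^2 e^{-\varphi}c(-\Psi),
\]
so that $G(t_1)-G(t_0)\le E_1(B)+E_2(B)$, where $E_2(B):=\int_{\{-t_0\le\Psi<-t_1\}}|\tilde F_B|^2 e^{-\varphi}c(-\Psi)$ and $E_1(B):=\int_{\{\Psi<-t_0\}}|\tilde F_B-F_{t_0}|^2 e^{-\varphi}c(-\Psi)$. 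On $\{-t_0\le\Psi<-t_1\}$ the cutoff vanishes and $v_{t_0,B}(\Psi)=\Psi$, so $E_2(B)\le A(B)J(B)$ directly from the Lemma \ref{L2 method in JM concavity} bound. For $E_1(B)$ I would split $\tilde F_B-F_{t_0}=[\tilde F_B-(1-b_{t_0,B})F_{t_0}]-b_{t_0,B}F_{t_0}$, apply the triangle inequality, and bound $\int_{\{-t_0-B<\Psi<-t_0\}}|F_{t_0}|^2 e^{-\varphi}c(-\Psi)\le G(t_0)-G(t_0+B)$ via \eqref{orhnormal F} applied at level $t_0+B$, leading to
\[
G(t_1)-G(t_0)\le A(B)J(B)+\bigl(\sqrt{A(B)J(B)}+\sqrt{G(t_0)-G(t_0+B)}\bigr)^{2}.
\]

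Finally, monotonicity of $c(t)e^{-t}$ gives on $\{-t_0-B<\Psi<-t_0\}$ the pointwise bound $e^{-\Psi}\le c(-\Psi)/(c(t_0+B)e^{-t_0-B})$, hence $B\,J(B)\le (G(t_0)-G(t_0+B))/(c(t_0+B)e^{-t_0-B})$. Dividing the displayed inequality by $\lambda:=\int_{t_1}^{t_0}c(s)e^{-s}ds$ and passing to $\liminf_{B\to 0^+}$, I would use $A(B)\to\lambda$, the right-continuity $G(t_0+B)\to G(t_0)$ (Lemma \ref{semicontinuous}), and the fact that both $\frac{1}{B}\int_{t_0}^{t_0+B}c(s)e^{-s}ds$ and $c(t_0+B)e^{-t_0-B}$ tend to the common right-hand limit $c(t_0^+)e^{-t_0}$, so that $\int_{t_0}^{t_0+B}c(s)e^{-s}ds/\bigl(Bc(t_0+B)e^{-t_0-B}\bigr)\to 1$. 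The main obstacle is precisely this final asymptotic step: one must track the weights tightly so that the coefficient of $(G(t_0)-G(t_0+B))/\int_{t_0}^{t_0+B}c(s)e^{-s}ds$ in the limit is exactly $1$ (not a larger constant artefact of the triangle inequality), and simultaneously verify that the cross term $2\sqrt{A(B)J(B)(G(t_0)-G(t_0+B))}/\lambda$ vanishes along any sequence $B_n\to 0^+$ realising the $\liminf$; this is delicate because $A(B)J(B)$ need not tend to $0$, but the factor $\sqrt{G(t_0)-G(t_0+B)}\to 0$ by Lemma \ref{semicontinuous} saves the estimate.
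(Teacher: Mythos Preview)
Your strategy is sound and close to the paper's, but the displayed inequality
\[
G(t_1)-G(t_0)\le A(B)J(B)+\bigl(\sqrt{A(B)J(B)}+\sqrt{G(t_0)-G(t_0+B)}\bigr)^{2}
\]
does not give the sharp constant: expanding the square yields $2A(B)J(B)+2\sqrt{A(B)J(B)\,D}+D$ with $D=G(t_0)-G(t_0+B)$, and along a sequence $B_n\to0^+$ realising the $\liminf=L$ this produces $\frac{G(t_1)-G(t_0)}{\lambda}\le 2L$, not $L$. You flag the danger (``not a larger constant artefact of the triangle inequality'') but do not remove it. The loss comes from spending the budget $A(B)J(B)$ twice: once to bound $E_2$ by the full right-hand side of Lemma \ref{L2 method in JM concavity}, and once more to bound the piece $I_1:=\int_{\{\Psi<-t_0\}}|\tilde F_B-(1-b_{t_0,B})F_{t_0}|^2e^{-\varphi}c(-\Psi)$ inside $E_1$.

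The easy repair is to use the budget only once: since $E_2=I_2$ and $\sqrt{E_1}\le\sqrt{I_1}+\sqrt{D}$, one has
\[
E_1+E_2\le (I_1+I_2)+2\sqrt{I_1D}+D\le A(B)J(B)+2\sqrt{A(B)J(B)\,D}+D,
\]
because $I_1+I_2\le A(B)J(B)$ is exactly the content of Lemma \ref{L2 method in JM concavity} after the weight comparison $c(-\Psi)\le e^{v_{t_0,B}(\Psi)-\Psi}c(-v_{t_0,B}(\Psi))$. Now the limit gives $\lambda L$ and the argument closes. The paper avoids the issue differently: rather than estimating at fixed $B$ and then letting $B\to0$, it uses Lemma \ref{global convergence property of module} to extract a subsequential limit $\tilde F_{t_1}$ of the $\tilde F_{B_j}$, applies Fatou, and observes that $b_{t_0,B_j}(\Psi)\to\mathbb{I}_{\{\Psi\ge -t_0\}}$, so that in the limit $(1-b_{t_0,B_j}(\Psi))F_{t_0}\to\mathbb{I}_{\{\Psi<-t_0\}}F_{t_0}$ exactly and no triangle inequality is needed at all. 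Either route works; yours is slightly more elementary once the bookkeeping is fixed.
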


\begin{proof}
It follows from Lemma \ref{semicontinuous} that $G(t)<+\infty$ for any $t>t_1$. By Lemma \ref{existence of F}, there exists a holomorphic $(n,0)$ form $F_{t_0}$ on $\{\Psi<-t_0\}$, such that $(F_{t_0}-f)\in \mathcal{O} (K_M)_{z_0}\otimes J_{z_0}$ for any $z_0\in Z_0$ and $G(t_0)=\int_{\{\Psi<-t_0\}}|F_{t_0}|^2e^{-\varphi}c(-\Psi)$.

It suffices to consider that $\liminf\limits_{B\to 0+0} \frac{G(t_0)-G(t_0+B)}{\int_{t_0}^{t_0+B}c(t)e^{-t}dt}\in [0,+\infty)$ because of the decreasing property of $G(t)$. Then there exists $1\ge B_j\to 0+0$ (as $j\to+\infty$) such that
\begin{equation}
	\label{derivatives of G c(t)1}
\lim\limits_{j\to +\infty} \frac{G(t_0)-G(t_0+B_j)}{\int_{t_0}^{t_0+B_j}c(t)e^{-t}dt}=\liminf\limits_{B\to 0+0} \frac{G(t_0)-G(t_0+B)}{\int_{t_0}^{t_0+B}c(t)e^{-t}dt}
\end{equation}
and $\{\frac{G(t_0)-G(t_0+B_j)}{\int_{t_0}^{t_0+B_j}c(t)e^{-t}dt}\}_{j\in\mathbb{N}^{+}}$ is bounded. As $c(t)e^{-t}$ is decreasing and positive on $(t,+\infty)$, then
\begin{equation}\label{derivatives of G c(t)2}
\begin{split}
\lim\limits_{j\to +\infty} \frac{G(t_0)-G(t_0+B_j)}{\int_{t_0}^{t_0+B_j}c(t)e^{-t}dt}
=&\big(\lim\limits_{j\to +\infty} \frac{G(t_0)-G(t_0+B_j)}{B_j})(\frac{1}{\lim\limits_{t\to t_0+0}c(t)e^{-t}}\big)\\
=&\big(\lim\limits_{j\to +\infty} \frac{G(t_0)-G(t_0+B_j)}{B_j})(\frac{e^{t_0}}{\lim\limits_{t\to t_0+0}c(t)}\big).
\end{split}
\end{equation}
Hence $\{\frac{G(t_0)-G(t_0+B_j)}{B_j}\}_{j\in\mathbb{N}^+}$ is uniformly bounded with respect to $j$.

As $t \leq v_{t_0,j}(t)$, the decreasing property of $c(t)e^{-t}$ shows that
\begin{equation}\nonumber
e^{-\Psi+v_{t_0,B_j}(\Psi)}c(-v_{t_0,B_j}(\Psi))\geq c(-\Psi).
\end{equation}
\par
It follows from Lemma \ref{L2 method in JM concavity} that, for any $B_j$, there exists holomorphic
$(n,0)$ form $\tilde{F}_j$ on $\{\Psi<-t_1\}$ such that
\begin{flalign}
&\int_{\{\Psi<-t_1\}}|\tilde{F}_j-(1-b_{t_0,B_j}(\Psi))F_{t_0}|^2e^{-\varphi}c(-\Psi)\nonumber\\
\leq &
\int_{\{\Psi<-t_1\}}|\tilde{F}_j-(1-b_{t_0,B_j}(\Psi))F_{t_0}|^2e^{-\varphi}e^{-\Psi+v_{t_0,B_j}(\Psi)}c(-v_{t_0,B_j}(\Psi))\nonumber\\
\leq &
\int^{t_0+B_j}_{t_1}c(t)e^{-t}dt\int_{\{\Psi<-t_1\}}\frac{1}{B_j}
\mathbb{I}_{\{-t_0-B_j<\Psi<-t_0\}}|F_{t_0}|^2e^{-\varphi-\Psi}\nonumber\\
\leq &
\frac{e^{t_0+B_j}\int^{t_0+B_j}_{t_1}c(t)e^{-t}dt}{\inf
\limits_{t\in(t_0,t_0+B_j)}c(t)}\int_{\{\Psi<-t_1\}}\frac{1}{B_j}
\mathbb{I}_{\{-t_0-B_j<\Psi<-t_0\}}|F_{t_0}|^2e^{-\varphi}c(-\Psi)\nonumber\\
= &
\frac{e^{t_0+B_j}\int^{t_0+B_j}_{t_1}c(t)e^{-t}dt}{\inf
\limits_{t\in(t_0,t_0+B_j)}c(t)}\times
\bigg(\int_{\{\Psi<-t_1\}}\frac{1}{B_j}\mathbb{I}_{\{\Psi<-t_0\}}|F_{t_0}|^2e^{-\varphi}c(-\Psi)\nonumber\\
&-\int_{\{\Psi<-t_1\}}\frac{1}{B_j}\mathbb{I}_{\{\Psi<-t_0-B_j\}}|F_{t_0}|^2e^{-\varphi}c(-\Psi)\bigg)\nonumber\\
\leq &
\frac{e^{t_0+B_j}\int^{t_0+B_j}_{t_1}c(t)e^{-t}dt}{\inf
\limits_{t\in(t_0,t_0+B_j)}c(t)} \times
\frac{G(t_0)-G(t_0+B_j)}{B_j}<+\infty.
\label{derivative of G 1}
\end{flalign}

Note that $b_{t_0,B_j}(t)=0$ for $t\le-t_0-B_j$, $b_{t_0,B_j}(t)=1$ for $t\ge t_0$, $v_{t_0,B_j}(t)>-t_0-B_j$ and $c(t)e^{-t}$ is decreasing with respect to $t$. It follows from inequality \eqref{derivative of G 1} that $(F_j-F_{t_0})_{z_0}\in \mathcal{O} (K_M)_{z_0}\otimes I(\Psi+\varphi)_{z_0} \subset \mathcal{O} (K_M)_{z_0}\otimes J_{z_0}$ for any $z_0\in Z_0$.

Note that
\begin{equation}\label{derivative of G 2}
\begin{split}
&\int_{\{\Psi<-t_1\}}|\tilde{F}_j|^2e^{-\varphi}c(-\Psi)\\
\le&2\int_{\{\Psi<-t_1\}}|\tilde{F}_j-(1-b_{t_0,B_j}(\Psi))F_{t_0}|^2e^{-\varphi}c(-\Psi)
+2\int_{\{\Psi<-t_1\}}|(1-b_{t_0,B_j}(\Psi))F_{t_0}|^2e^{-\varphi}c(-\Psi)\\
\le&2
\frac{e^{t_0+B_j}\int^{t_0+B_j}_{t_1}c(t)e^{-t}dt}{\inf
\limits_{t\in(t_0,t_0+B_j)}c(t)} \times
\frac{G(t_0)-G(t_0+B_j)}{B_j}
+2\int_{\{\Psi<-t_0\}}|F_{t_0}|^2e^{-\varphi}c(-\Psi).
\end{split}
\end{equation}

We also note that $B_j\le 1$, $\frac{G(t_0)-G(t_0+B_j)}{B_j}$ is uniformly bounded with respect to $j$ and $G(t_0)=\int_{\{\Psi<-t_0\}}|F_{t_0}|^2e^{-\varphi}c(-\Psi)$. It follows from inequality \eqref{derivative of G 2} that we know $\int_{\{\Psi<-t_1\}}|\tilde{F}_j|^2e^{-\varphi}c(-\Psi)$ is uniformly bounded with respect to $j$.

It follows from Lemma \ref{global convergence property of module} that there exists a subsequence of $\{\tilde{F}_j\}_{j\in \mathbb{N}^+}$ compactly convergent to a holomorphic $(n,0)$ form $\tilde{F}_{t_1}$ on $\{\Psi<-t_1\}$ which satisfies
$$\int_{\{\Psi<-t_1\}}|\tilde{F}_{t_1}|^2e^{-\varphi}c(-\Psi)\le \liminf_{j\to+\infty} \int_{\{\Psi<-t_1\}}|\tilde{F}_j|^2e^{-\varphi}c(-\Psi)<+\infty,$$
and $(\tilde{F}_{t_1}-F_{t_0})_{z_0}\in \mathcal{O} (K_M)_{z_0}\otimes  J_{z_0}$ for any $z_0\in Z_0$.

Note that
$\lim_{j\to+\infty}b_{t_0,B_j}(t)=\mathbb{I}_{\{t\ge -t_0\}}$ and
\begin{equation}\nonumber
v_{t_0}(t):=\lim_{j\to+\infty}v_{t_0,B_j}(t)=\left\{
\begin{aligned}
&-t_0  &\text{ if } & x<-t_0, \\
&\ t  &\text{ if }  & x\ge t_0 .
\end{aligned}
\right.
\end{equation}

It follows from inequality \eqref{derivative of G 1} and Fatou's lemma that

\begin{flalign}
\label{derivative of G 3}
&\int_{\{\Psi<-t_0\}}|\tilde{F}_{t_1}-F_{t_0}|^2e^{-\varphi}c(-\Psi)
+\int_{\{-t_0\le\Psi<-t_1\}}|\tilde{F}_{t_1}|^2e^{-\varphi}c(-\Psi)\nonumber\\
\leq &
\int_{\{\Psi<-t_1\}}|\tilde{F}_{t_1}-\mathbb{I}_{\{\Psi< -t_0\}}F_{t_0}|^2e^{-\varphi}e^{-\Psi+v_{t_0}(\Psi)}c(-v_{t_0}(\Psi))\nonumber\\
\le&\liminf_{j\to+\infty}\int_{\{\Psi<-t_1\}}|\tilde{F}_j-(1-b_{t_0,B_j}(\Psi))F_{t_0}|^2e^{-\varphi}c(-\Psi)\nonumber\\
\leq &\liminf_{j\to+\infty}
\bigg(\frac{e^{t_0+B_j}\int^{t_0+B_j}_{t_1}c(t)e^{-t}dt}{\inf
\limits_{t\in(t_0,t_0+B_j)}c(t)} \times
\frac{G(t_0)-G(t_0+B_j)}{B_j}\bigg).
\end{flalign}

It follows from Lemma \ref{existence of F}, equality \eqref{derivatives of G c(t)1}, equality \eqref{derivatives of G c(t)2} and inequality \eqref{derivative of G 3} that we have

\begin{equation}
\label{derivative of G 4}
\begin{split}
&\int_{\{\Psi<-t_1\}}|\tilde{F}_{t_1}|^2e^{-\varphi}c(-\Psi)
-\int_{\{\Psi<-t_0\}}|F_{t_0}|^2e^{-\varphi}c(-\Psi)\\
\le&\int_{\{\Psi<-t_0\}}|\tilde{F}_{t_1}-F_{t_0}|^2e^{-\varphi}c(-\Psi)
+\int_{\{-t_0\le\Psi<-t_1\}}|\tilde{F}_{t_1}|^2e^{-\varphi}c(-\Psi)\\
\leq &
\int_{\{\Psi<-t_1\}}|\tilde{F}_{t_1}-\mathbb{I}_{\{\Psi< -t_0\}}F_{t_0}|^2e^{-\varphi}e^{-\Psi+v_{t_0}(\Psi)}c(-v_{t_0}(\Psi))\\
\le&\liminf_{j\to+\infty}\int_{\{\Psi<-t_1\}}|\tilde{F}_j-(1-b_{t_0,B_j}(\Psi))F_{t_0}|^2e^{-\varphi}c(-\Psi)\\
\leq &\liminf_{j\to+\infty}
\big(\frac{e^{t_0+B_j}\int^{t_0+B_j}_{t_1}c(t)e^{-t}dt}{\inf
\limits_{t\in(t_0,t_0+B_j)}c(t)} \times
\frac{G(t_0)-G(t_0+B_j)}{B_j}\big)\\
\le &\bigg(\int^{t_0}_{t_1}c(t)e^{-t}dt\bigg)\liminf\limits_{B\to 0+0} \frac{G(t_0)-G(t_0+B)}{\int_{t_0}^{t_0+B}c(t)e^{-t}dt}.
\end{split}
\end{equation}
Note that $(\tilde{F}_{t_1}-F_{t_0})_{z_0}\in \mathcal{O} (K_M)_{z_0}\otimes  J_{z_0}$ for any $z_0\in Z_0$. It follows from the definition of $G(t)$ and inequality \eqref{derivative of G 4} that we have

\begin{equation}
\label{derivative of G 5}
\begin{split}
&G(t_1)-G(t_0)\\
\le&\int_{\{\Psi<-t_1\}}|\tilde{F}_{t_1}|^2e^{-\varphi}c(-\Psi)
-\int_{\{\Psi<-t_0\}}|F_{t_0}|^2e^{-\varphi}c(-\Psi)\\
\le&\int_{\{\Psi<-t_1\}}|\tilde{F}_{t_1}-\mathbb{I}_{\{\Psi< -t_0\}}F_{t_0}|^2e^{-\varphi}c(-\Psi)\\
\leq &
\int_{\{\Psi<-t_1\}}|\tilde{F}_{t_1}-\mathbb{I}_{\{\Psi< -t_0\}}F_{t_0}|^2e^{-\varphi}e^{-\Psi+v_{t_0}(\Psi)}c(-v_{t_0}(\Psi))\\\
\le &\big(\int^{t_0}_{t_1}c(t)e^{-t}dt\big)\liminf\limits_{B\to 0+0} \frac{G(t_0)-G(t_0+B)}{\int_{t_0}^{t_0+B}c(t)e^{-t}dt}.
\end{split}
\end{equation}

Lemma \ref{derivatives of G} is proved.
\end{proof}

The following property of concave
functions will be used in the proof of Theorem \ref{main theorem}.
\begin{Lemma}[see \cite{G16}]
Let $H(r)$ be a lower semicontinuous function on $(0,R]$. Then $H(r)$ is concave
if and only if
\begin{equation}\nonumber
\begin{split}
\frac{H(r_1)-H(r_2)}{r_1-r_2} \leq
\liminf\limits_{r_3 \to r_2-0}
\frac{H(r_3)-H(r_2)}{r_3-r_2}
\end{split}
\end{equation}
holds for any $0<r_2<r_1 \leq R$.
\label{characterization of concave function}
\end{Lemma}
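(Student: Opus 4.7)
My plan is to prove the two implications separately. The forward direction (concavity $\Rightarrow$ the slope inequality) is the standard monotonicity-of-chord-slopes argument: if $H$ is concave and $r_3 < r_2 < r_1$, then by the three-chord/three-slope lemma
$$\frac{H(r_3)-H(r_2)}{r_3-r_2}=\frac{H(r_2)-H(r_3)}{r_2-r_3}\ge \frac{H(r_1)-H(r_2)}{r_1-r_2},$$
and letting $r_3\to r_2-0$ yields the desired inequality. No lower semicontinuity is needed here.

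For the nontrivial direction (slope inequality $\Rightarrow$ concavity), I plan a proof by contradiction. Assume $H$ fails to be concave: there exist $0<a<c<b\le R$ such that $H(c)$ lies strictly below the chord through $(a,H(a))$ and $(b,H(b))$. Let $L$ be that linear chord and set $G:=H-L$. Then $G$ is lower semicontinuous on the compact interval $[a,b]$, satisfies $G(a)=G(b)=0$, $G(c)<0$, and inherits the same slope hypothesis as $H$ (adding a linear function preserves it, since the liminf on the right and the chord slope on the left both pick up the same affine term).

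Now the key step: let $m:=\min_{[a,b]}G<0$ (attained because $G$ is lsc on a compact set), and put
$$r^{*}:=\inf\{r\in[a,b]:G(r)=m\}.$$
Lower semicontinuity shows that $\{G\le m\}$ is closed, so $G(r^{*})=m$, and by construction $r^{*}>a$. For every $r_3\in(a,r^{*})$ one has $G(r_3)>m=G(r^{*})$, hence $\frac{G(r_3)-G(r^{*})}{r_3-r^{*}}<0$, so $\liminf_{r_3\to r^{*}-0}\frac{G(r_3)-G(r^{*})}{r_3-r^{*}}\le 0$. Applying the slope hypothesis to the pair $r^{*}<r_1$ for any $r_1\in(r^{*},b]$ gives
$$\frac{G(r_1)-G(r^{*})}{r_1-r^{*}}\le 0,$$
i.e.\ $G(r_1)\le m$ for every $r_1\in(r^{*},b]$. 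In particular $G(b)\le m<0$, contradicting $G(b)=0$.

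The main obstacle, and the reason lower semicontinuity is essential, is guaranteeing that the minimum of $G$ is actually attained and that the infimum $r^{*}$ of the minimizing set still lies in that set; without lsc one could only take an infimizing sequence and the argument at $r^{*}$ would break down. Once $r^{*}$ is in hand, the rest is a direct application of the hypothesis and the contradiction is immediate. I will also note the minor point that in the definition of $r^{*}$ one must pick the \emph{infimum} (not supremum) of the minimizer set in order to force $G(r_3)>m$ strictly from the left; this is what produces the sign of the liminf slope and makes the argument go through.
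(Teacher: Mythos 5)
Your proof is correct. The forward direction is the standard three-chord slope monotonicity, and your converse is sound: subtracting the chord $L$ preserves both sides of the slope hypothesis, lower semicontinuity guarantees that $G=H-L$ attains its minimum $m<0$ on $[a,b]$ and that the minimizer set $\{G\le m\}$ is closed, so $r^*=\inf\{G=m\}$ satisfies $G(r^*)=m$ with $a<r^*<b$; then $G(r_3)>m$ for $r_3\in(a,r^*)$ forces the left liminf of difference quotients at $r^*$ to be $\le 0$, and the hypothesis applied with $r_2=r^*$, $r_1=b$ gives $G(b)\le m<0$, contradicting $G(b)=0$. The paper itself does not reproduce a proof of this lemma (it is quoted from \cite{G16}), and your argument is the standard self-contained one for this characterization, so there is nothing to flag beyond noting that your use of the infimum of the minimizer set is exactly the point where lower semicontinuity is needed, as you observed.
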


\subsection{Some required results}

We recall the following two well-known results about Lelong numbers.

\begin{Lemma}[\cite{skoda1972}\label{l:skoda}]
	Let $u$ be a plurisubharmonic function on $\Delta^n\subset\mathbb{C}^n$. If $v(dd^cu,o )<1$, then $e^{-2u}$ is $L^1$ on a neighborhood of $o$, where $o\in\Delta^n$ is the origin.
\end{Lemma}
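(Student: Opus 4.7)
The result is Skoda's classical integrability lemma; I would follow the standard proof via logarithmic potentials combined with Jensen's inequality, handling the one-dimensional case first and then reducing the general case to it by complex-line slicing. After subtracting a constant and shrinking the polydisc, I may assume $u\le0$ on $\bar\Delta^n(o,2r_0)$, and set $\gamma:=v(dd^cu,o)\in[0,1)$, choosing a buffer exponent $m\in(\gamma,1)$.

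The crux is the $n=1$ case, handled via the Riesz representation. On a sufficiently small disc $\Delta_{r_1}\Subset\Delta^n(o,2r_0)$ I write
$$u(z)=\int_{\Delta_{r_1}}\log|z-w|\,d\mu(w)+h(z),$$
where $\mu:=dd^cu$ is a finite positive Borel measure with $\mu(\{o\})=\gamma$ and $h$ is harmonic (hence bounded) on $\Delta_{r_1}$. Since $\mu(\{o\})<1$, by shrinking $r_1$ I arrange $\mu(\Delta_{r_1})\le m<1$. Jensen's inequality applied to the convex function $\exp$ against the probability measure $d\mu/\mu(\Delta_{r_1})$ then gives
$$\exp\Bigl(-2\int\log|z-w|\,d\mu(w)\Bigr)\le\frac{1}{\mu(\Delta_{r_1})}\int|z-w|^{-2\mu(\Delta_{r_1})}\,d\mu(w),$$
so after absorbing $e^{-2h}$ into a constant and applying Fubini,
$$\int_{\Delta_{r_0}}e^{-2u}\,dA\le C\sup_{w\in\Delta_{r_1}}\int_{\Delta_{r_0}}|z-w|^{-2m}\,dA(z)<+\infty,$$
the last supremum being finite because $2m<2$.

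For general $n$, I would slice in polar coordinates $z=r\omega$ with $\omega\in S^{2n-1}$:
$$\int_{\Delta^n(o,r_0)}e^{-2u}\,dV=\int_{S^{2n-1}}\int_0^{r_0}e^{-2u(r\omega)}\,r^{2n-1}\,dr\,d\sigma(\omega).$$
For each $\omega$ the slice $\lambda\mapsto u(\lambda\omega)$ is subharmonic in $\lambda\in\mathbb{C}$, and a standard slicing fact for positive $(1,1)$-currents yields that its Lelong number at $0$ equals $\gamma$ for almost every $\omega\in S^{2n-1}$, so that the $n=1$ argument produces finite inner integrals for a.e.\ $\omega$. The \textbf{main obstacle} is promoting this slicewise finiteness into finiteness of the double integral, which requires the constants in the 1-D Jensen bound to be \emph{uniform} in $\omega$; this in turn reduces to uniform control of the near-origin slice masses $\mu^{(\omega)}(\Delta_{r_1})\le m<1$. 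I would overcome this by combining the upper semicontinuity of Lelong numbers with a compactness argument on $S^{2n-1}$ to shrink $r_1$ once and for all, or by running the Jensen estimate directly against the multi-dimensional trace measure $dd^cu\wedge(dd^c|z|^2)^{n-1}$ in the sense of Kiselman's refined calculus, thereby bypassing the slicing altogether.
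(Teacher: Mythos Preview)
The paper does not supply a proof of this lemma; it is quoted from Skoda~\cite{skoda1972} as a classical result, and in fact the paper only ever invokes it in the one-variable case (inside the proof of Lemma~\ref{l:1d-MIS} and in Step~1 of the proof of Proposition~\ref{p:n-linearity1}, both set on the unit disc in $\mathbb{C}$). Your argument for $n=1$ via the Riesz decomposition and Jensen's inequality is correct and standard, and it already suffices for every application made in the paper.

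For $n\ge2$ your reduction has a genuine gap, and neither proposed fix closes it as written. Fix~(a) does not work because the slice Lelong number $\nu(u_\omega,0)$ always satisfies $\nu(u_\omega,0)\ge\gamma$, with strict inequality possible on a nonempty set of directions (e.g.\ $u=\max(a\log|z_1|,\log|z_2|)$ with $a<1$ has $\gamma=a$ but slice Lelong number $1$ along $\{z_1=0\}$); hence no single radius $r_1$ forces all slice masses below $m<1$, and compactness on $S^{2n-1}$ cannot help. What is actually needed is \emph{integrated} control of how the one-dimensional constants $C(\omega)$ blow up as $\omega$ approaches the exceptional directions, which is substantially more work than you indicate. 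Fix~(b) fails outright: in real dimension $2n\ge4$ the Riesz kernel is $-c_n|z-w|^{2-2n}$ rather than logarithmic, so Jensen produces an integrand $\exp\bigl(C M|z-w|^{2-2n}\bigr)$, which is never locally integrable near $z=w$ no matter how small the trace mass $M$ is; the reference to ``Kiselman's refined calculus'' is too vague to repair this. The standard proofs for $n\ge2$ proceed differently, e.g.\ via the characterization $\nu(u,o)=\lim_{r\to0}\sup_{|z|=r}u/\log r$ combined with a sublevel-set volume estimate, or via $L^2$ methods.
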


\begin{Lemma}
	\label{l:1d-MIS}Let $\varphi$ be a subharmonic function on the unit disc $\Delta\subset\mathbb{C}$. Then we have $\mathcal{I}(\varphi)_o=(w^k)_o$ if and only if $v(dd^c(\varphi),o)\in[2k,2k+2)$.
\end{Lemma}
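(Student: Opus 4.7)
The proof will exploit the fact that $\mathcal{O}_{\Delta,o}$ is a discrete valuation ring with uniformizer $w$: every nonzero ideal in it is of the form $(w^j)_o$, and every holomorphic germ at $o$ factors as $w^j$ times a unit. Consequently $\mathcal{I}(\varphi)_o=(w^k)_o$ is equivalent to $k$ being the smallest integer $j\ge 0$ for which $|w|^{2j}e^{-\varphi}$ is integrable on a neighborhood of $o$. I will therefore prove the sharper assertion that $|w|^{2j}e^{-\varphi}$ is locally integrable near $o$ if and only if $v<2j+2$, where $v:=v(dd^{c}\varphi,o)$; the conclusion of the lemma then follows by inspecting $j=k-1$ and $j=k$.

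The first step is a local Riesz decomposition: on a small disc around $o$ I write $\varphi(w)=v\log|w|+\psi(w)$, where $\psi$ is subharmonic with $v(dd^{c}\psi,o)=0$, since subtracting $v\log|w|$ removes exactly the point mass of $dd^{c}\varphi$ at $o$. Because $\psi$ is subharmonic it is locally bounded above, so there exist $r_{0}>0$ and $c_{0}>0$ with $e^{-\psi}\ge c_{0}$ on $\{|w|<r_{0}\}$. The factorization $|w|^{2j}e^{-\varphi}=|w|^{2j-v}e^{-\psi}$ handles the non-integrable direction at once: if $v\ge 2j+2$ then $2j-v\le -2$, so $|w|^{2j-v}$ is not integrable near $o$, and the lower bound $|w|^{2j}e^{-\varphi}\ge c_{0}|w|^{2j-v}$ rules out integrability on the left.

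For the converse direction, assume $v<2j+2$ and set $m:=\min(j,\lfloor v/2\rfloor)$, so $m\le j$ and $2m\le v$. Then $u:=\tfrac{1}{2}\varphi-m\log|w|$ is subharmonic on a neighborhood of $o$, because the Riesz mass of $\tfrac{1}{2}\varphi$ at $o$ is $v/2\ge m$. Its Lelong number at $o$ equals $v/2-m$: if $m=\lfloor v/2\rfloor$ this lies in $[0,1)$ automatically, while if $m=j$ then $v/2-m<1$ by the assumption $v<2j+2$. In either case Skoda's lemma (Lemma \ref{l:skoda}) applies to $u$ and gives $e^{-2u}=|w|^{2m}e^{-\varphi}\in L^{1}$ on a neighborhood of $o$; multiplying by the bounded factor $|w|^{2(j-m)}$ on that neighborhood upgrades this to integrability of $|w|^{2j}e^{-\varphi}$, completing the characterization.

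The principal subtlety is the truncation $m=\min(j,\lfloor v/2\rfloor)$: when $v<2j$ the natural candidate $\tfrac{1}{2}\varphi-j\log|w|$ is not subharmonic, so Skoda cannot be invoked directly with exponent $j$. The device of truncating to $m$ preserves subharmonicity while forcing the Lelong number of $u$ to lie in $[0,1)$, and the bookkeeping factor $|w|^{2(j-m)}$ is harmless because it is bounded on a small disc.
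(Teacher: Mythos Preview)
Your proof is correct. Both you and the paper use the same Siu/Riesz decomposition $\varphi=v\log|w|+\psi$ with $v(dd^c\psi,o)=0$, and the non-integrability direction is identical (upper bound on $\psi$ forces $|w|^{2j}e^{-\varphi}\ge c_0|w|^{2j-v}$). The difference lies in the integrability direction: the paper separates $|w|^{2m-2-v}e^{-\varphi_1}$ via H\"older's inequality, bounding $\int|w|^{p(2m-2-v)}$ for a suitable $p>1$ and invoking Skoda on $q\varphi_1$ (Lelong number zero) to control the other factor. You instead build the auxiliary subharmonic function $u=\tfrac12\varphi-m\log|w|$ with $m=\min(j,\lfloor v/2\rfloor)$, arrange its Lelong number to lie in $[0,1)$, and apply Skoda directly to $u$. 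Your route is slightly slicker---it avoids H\"older and the choice of an exponent $p$---while the paper's approach has the advantage of isolating the zero-Lelong-number remainder $\varphi_1$ explicitly, which is the form used later in the paper (e.g.\ in Step~1 of Proposition~\ref{p:n-linearity1}). Either way the content is the same: Skoda handles the borderline singularity once the Lelong number is pushed below~$1$.
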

\begin{proof}
	For convenience of the readers, we give a proof of this lemma by using Lemma \ref{l:skoda} and Siu's Decomposition Theorem. It follows from Siu's Decomposition Theorem that $\varphi=v(dd^c(\varphi),o)\log|w|+\varphi_1$, where $\varphi_1$ is subharmonic function on $\Delta$ satisfying $v(dd^c(\varphi_1),o)=0$.
	
	For any positive integer $m$, if $v(dd^c(\varphi),o)<2m$, then there exists $p>1$ such that $pv(dd^c(\varphi),o)<2m$. Note that $2pm-2p-pv(dd^c(\varphi),o)\ge 2m-2-pv(dd^c(\varphi),o)>-2$. Using Lemma \ref{l:skoda} and H\"older inequality, we get that there exists a neighborhood $V$ of $o$ such that
	\begin{displaymath}
		\begin{split}
			\int_{V}|w^{m-1}|^{2}e^{-\varphi}&\le\int_{V}|w|^{2m-2-v(dd^c(\varphi),o)}e^{-\varphi_1}\\
			&\le\left(\int_{V}|w|^{2pm-2p-pv(dd^c(\varphi),o)}\right)^{\frac{1}{p}}\left(\int_{V}e^{-q\varphi_1}\right)^{\frac{1}{q}}\\
			&<+\infty,
		\end{split}
	\end{displaymath}
	where $\frac{1}{p}+\frac{1}{q}=1$. Then we have $(w^{m-1},o)\in\mathcal{I}(\varphi)_o$.
	
	For any positive integer $m$, if $v(dd^c(\varphi),o)\ge 2m$, then there exists a constant $C$ such that $\varphi\le 2m\log|w|+C$ near $o$, which implies that $(w^{m-1},o)\not\in\mathcal{I}(\varphi)_o$.
	
	Thus, we have $\mathcal{I}(\varphi)_o=(w^k)_o$ if and only if $v(dd^c(\varphi),o)\in[2k,2k+2)$.
	\end{proof}

We recall some basic properties of Green functions. Let $\Omega$ be an open Riemann surface, which admits a nontrivial Green function $G_{\Omega}$, and let $z_0\in\Omega$.

\begin{Lemma}[see \cite{S-O69}, see also \cite{Tsuji}] 	\label{l:green-sup}Let $w$ be a local coordinate on a neighborhood of $z_0$ satisfying $w(z_0)=0$.  $G_{\Omega}(z,z_0)=\sup_{v\in\Delta_{\Omega}^*(z_0)}v(z)$, where $\Delta_{\Omega}^*(z_0)$ is the set of negative subharmonic function on $\Omega$ such that $v-\log|w|$ has a locally finite upper bound near $z_0$. Moreover, $G_{\Omega}(\cdot,z_0)$ is harmonic on $\Omega\backslash\{z_0\}$ and $G_{\Omega}(\cdot,z_0)-\log|w|$ is harmonic near $z_0$.
\end{Lemma}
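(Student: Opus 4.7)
The plan is to identify the right-hand supremum as the Perron envelope for the Green-function problem on $\Omega$ and compare it with $G_{\Omega}(\cdot,z_0)$ via the maximum principle, following the classical construction in \cite{S-O69} and \cite{Tsuji}. Write $u(z):=\sup_{v\in\Delta_{\Omega}^*(z_0)}v(z)$; the goal is to show $u=G_{\Omega}(\cdot,z_0)$ and then to read off the two harmonicity statements from the construction.

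First, I would verify that $G_{\Omega}(\cdot,z_0)\in\Delta_{\Omega}^*(z_0)$. By the standing hypothesis that $\Omega$ admits a nontrivial Green function, $G_{\Omega}(\cdot,z_0)$ is negative on $\Omega$, harmonic on $\Omega\setminus\{z_0\}$, and has a logarithmic singularity at $z_0$ of the form $\log|w|+h(z)$ with $h$ harmonic near $z_0$. Hence $G_{\Omega}(\cdot,z_0)-\log|w|$ is locally bounded near $z_0$, which gives $G_{\Omega}(\cdot,z_0)\le u$ immediately.

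For the reverse inequality, I would pick any $v\in\Delta_{\Omega}^*(z_0)$ and compare it with $G_{\Omega}(\cdot,z_0)$. The function $v-G_{\Omega}(\cdot,z_0)$ is subharmonic on $\Omega\setminus\{z_0\}$, and near $z_0$ the identity $v-G_{\Omega}(\cdot,z_0)=(v-\log|w|)-(G_{\Omega}(\cdot,z_0)-\log|w|)$ shows it is locally bounded above, hence extends subharmonically across $z_0$. Exhausting $\Omega$ by relatively compact subdomains $\{\Omega_n\}$ on which the Dirichlet problem is solvable produces Green functions $G_{\Omega_n}(\cdot,z_0)$ with $G_{\Omega_n}(\cdot,z_0)\nearrow G_{\Omega}(\cdot,z_0)$; on each $\Omega_n$ the candidate $v$ has non-positive boundary values while $G_{\Omega_n}(\cdot,z_0)$ vanishes on $\partial\Omega_n$, and the maximum principle applied to the extended subharmonic function $v-G_{\Omega_n}(\cdot,z_0)$ yields $v\le G_{\Omega_n}(\cdot,z_0)$ on $\Omega_n$. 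Passing to the limit gives $v\le G_{\Omega}(\cdot,z_0)$ on $\Omega$, so $u\le G_{\Omega}(\cdot,z_0)$. The two harmonicity assertions then follow because each $G_{\Omega_n}(\cdot,z_0)$ is harmonic on $\Omega_n\setminus\{z_0\}$ and each $G_{\Omega_n}(\cdot,z_0)-\log|w|$ is harmonic near $z_0$ by the Dirichlet construction, and both properties are preserved by the locally uniformly bounded monotone limit (via Harnack's principle).

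The main obstacle is ensuring that the sequence $G_{\Omega_n}(\cdot,z_0)-\log|w|$ stays locally bounded (hence harmonic, not merely subharmonic) in the limit near $z_0$; this is precisely the nontriviality of the Green function on $\Omega$, i.e., the hyperbolicity assumption, which is built into the setup. Modulo this classical fact, the statement is a direct application of the Perron method and the maximum principle on an open Riemann surface.
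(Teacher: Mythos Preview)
The paper does not supply its own proof of this lemma; it is simply quoted from \cite{S-O69} and \cite{Tsuji}. Your argument is the standard Perron-envelope proof found in those references: membership of $G_{\Omega}(\cdot,z_0)$ in $\Delta_{\Omega}^*(z_0)$ gives one inequality, and the other comes from the removable-singularity theorem for subharmonic functions bounded above together with the maximum principle on a regular exhaustion. The reasoning is correct, and since the paper offers nothing to compare against, there is no divergence of approach to discuss.
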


\begin{Lemma}[see \cite{GY-concavity}]\label{l:G-compact}
For any  open neighborhood $U$ of $z_0$, there exists $t>0$ such that $\{G_{\Omega}(z,z_0)<-t\}$ is a relatively compact subset of $U$.
\end{Lemma}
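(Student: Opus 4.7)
My plan is to combine the two defining features of the Green function recorded in Lemma \ref{l:green-sup}: harmonicity (hence continuity) on $\Omega\setminus\{z_0\}$ and the extremal characterization as the pointwise supremum of the class $\Delta^*_{\Omega}(z_0)$ of negative subharmonic functions $v$ on $\Omega$ with $v-\log|w|$ bounded above near $z_0$.

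First I would choose an open neighborhood $V$ of $z_0$ with $\overline V$ compact and $\overline V\subset U$ (e.g.\ a small coordinate disc in the chart $w$). Since $G_{\Omega}(\cdot,z_0)$ is harmonic, in particular finite and continuous, on the compact set $\partial V\subset \Omega\setminus\{z_0\}$, the number $M:=\max_{\partial V}G_{\Omega}(\cdot,z_0)$ lies in $(-\infty,0)$. I will show that for any fixed $t>-M$ one has $\{G_{\Omega}(\cdot,z_0)<-t\}\subset V$; relative compactness in $U$ then follows from $\overline V\Subset U$. Because $G_{\Omega}(z,z_0)=\log|w(z)|+h(z)$ with $h$ harmonic near $z_0$, the sublevel set is nonempty and contains a punctured neighborhood of $z_0$. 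Let $W_0$ denote its connected component containing $z_0$. Since $-t<M$ gives $\partial V\cap\{G_{\Omega}(\cdot,z_0)<-t\}=\emptyset$, and $W_0$ is a connected set meeting $V$ but not $\partial V$, the inclusion $W_0\subset V$ is immediate.

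The main step, which I expect to be the only delicate one, is to rule out the existence of any other connected component. Suppose for contradiction that $W$ is a connected component of $\{G_{\Omega}(\cdot,z_0)<-t\}$ disjoint from $W_0$; in particular $z_0\notin W$. I would define
\[
\tilde G(z):=\begin{cases} G_{\Omega}(z,z_0), & z\in W_0,\\ \max\{G_{\Omega}(z,z_0),-t\}, & z\in\Omega\setminus W_0.\end{cases}
\]
Using continuity of $G_{\Omega}(\cdot,z_0)$ off $z_0$ (so $G_{\Omega}(\cdot,z_0)=-t$ on $\partial W_0$), one checks that $\tilde G$ is upper semicontinuous, that $\tilde G\ge G_{\Omega}(\cdot,z_0)$ everywhere, and that the sub-mean-value inequality holds at every point: it is trivial on the open sets $W_0$ and $\Omega\setminus\overline{W_0}$ (on the latter $\tilde G$ is locally a max of two subharmonic functions), and at $z\in\partial W_0$ it follows from $\tilde G(z)=-t=G_{\Omega}(z,z_0)$ combined with $\tilde G\ge G_{\Omega}(\cdot,z_0)$ and subharmonicity of $G_{\Omega}(\cdot,z_0)$. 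Hence $\tilde G$ is a negative subharmonic function on $\Omega$. Near $z_0\in W_0$ we have $\tilde G=G_{\Omega}(\cdot,z_0)$, so $\tilde G-\log|w|$ is harmonic, and in particular bounded above, near $z_0$. Thus $\tilde G\in\Delta^*_{\Omega}(z_0)$, and Lemma \ref{l:green-sup} forces $\tilde G\le G_{\Omega}(\cdot,z_0)$. But on $W$ we have $G_{\Omega}(\cdot,z_0)<-t=\tilde G$, a contradiction.

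Consequently $\{G_{\Omega}(\cdot,z_0)<-t\}=W_0\subset V$, and since $\overline V$ is a compact subset of $U$, the sublevel set is relatively compact in $U$. The one subtle point is that a rogue component $W$ need not be relatively compact in $\Omega$, so a direct maximum-principle argument on $\overline W$ is unavailable; routing through the extremal characterization of $G_{\Omega}$ via the subharmonic competitor $\tilde G$ avoids this issue entirely.
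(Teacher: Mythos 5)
Since the paper only quotes this lemma from \cite{GY-concavity} and gives no in-text proof, there is nothing internal to compare against; judged on its own terms, your strategy is sound. The genuinely delicate point — that the sublevel set cannot have a connected component away from $z_0$ — is handled correctly: your glued competitor $\tilde G$ is negative, upper semicontinuous, satisfies the sub-mean-value inequality everywhere (at points of $\partial W_0$ because $G_{\Omega}(\cdot,z_0)=-t$ there, $\tilde G\ge G_{\Omega}(\cdot,z_0)$, and $G_{\Omega}(\cdot,z_0)$ is subharmonic; elsewhere because $\tilde G$ is locally either $G_{\Omega}(\cdot,z_0)$ or a maximum of two subharmonic functions), and it agrees with $G_{\Omega}(\cdot,z_0)$ near $z_0$, so $\tilde G\in\Delta_{\Omega}^*(z_0)$ and the extremal characterization in Lemma \ref{l:green-sup} yields the contradiction on the rogue component $W$. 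This is exactly the kind of argument the cited reference has in mind, built from the two facts recalled in Lemma \ref{l:green-sup}.

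There is, however, one concrete slip in the elementary step. With $M:=\max_{\partial V}G_{\Omega}(\cdot,z_0)$, the condition $t>-M$ (i.e. $-t<M$) does \emph{not} give $\partial V\cap\{G_{\Omega}(\cdot,z_0)<-t\}=\emptyset$: it only places $-t$ below the maximum of $G_{\Omega}(\cdot,z_0)$ on $\partial V$, and boundary points where $G_{\Omega}(\cdot,z_0)$ dips below $-t$ are not excluded. What you need is $-t<\min_{\partial V}G_{\Omega}(\cdot,z_0)$, i.e. $t>\max_{\partial V}\bigl(-G_{\Omega}(\cdot,z_0)\bigr)$; this threshold is finite because $G_{\Omega}(\cdot,z_0)$ is harmonic, hence continuous, finite and strictly negative on the compact set $\partial V$. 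Replacing your $M$ by $\min_{\partial V}G_{\Omega}(\cdot,z_0)$, the rest of the argument (the component $W_0$ containing $z_0$ meets $V$ but not $\partial V$, hence lies in $V$; no other component exists; so $\{G_{\Omega}(\cdot,z_0)<-t\}\subset V\Subset U$, and its closure is a compact subset of $U$) goes through verbatim.
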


\subsection{Inner points on open Riemann surfaces}

Let $\Omega$  be an open Riemann surface, which admits a nontrivial Green function $G_{\Omega}$. Let $\psi$ be a negative subharmonic function on $\Omega$, and let $\varphi$ be a Lebesgue measurable function on $\Omega$ such that $\varphi+\psi$ is subharmonic on $\Omega$.

Let $Z_0\subset\{\psi=-\infty\}$ be a discrete subset of $\Omega$, and Let $\mathcal{F}_{z}\supset\mathcal{I}(\varphi+\psi)_{z}$ be an ideal of $\mathcal{O}_{z}$ for any $z\in Z_0$.
Let $f$ be a holomorphic $(1,0)$ form on a neighborhood of $Z_0$.  Let $c(t)$ be a positive measurable function on $(0,+\infty)$ satisfying $c(t)e^{-t}$ is decreasing on $(0,+\infty)$,  $\int_{0}^{+\infty}c(s)e^{-s}ds<+\infty$ and $e^{-\varphi}c(-\psi)$ has  a  positive lower bound on any compact subset of $\Omega\backslash E$, where $E\subset\{\psi=-\infty\}$ is a discrete point subset of $\Omega$.

Denote the minimal $L^{2}$ integrals (see \cite{GY-concavity}, see also \cite{GMY-concavity2})
\begin{equation*}
\begin{split}
\inf\Bigg\{\int_{\{\psi<-t\}}|\tilde{f}|^{2}e^{-\varphi}c(-\psi):(\tilde{f}-f,z)\in(\mathcal{O}(K_{\Omega})&)_{z}\otimes\mathcal{F}_{z} \text{ for any } z\in Z_0 \\&\&{\,}\tilde{f}\in H^{0}(\{\psi<-t\},\mathcal{O}(K_{\Omega}))\Bigg\}
\end{split}
\end{equation*}
by $G(t;c)$ (without misunderstanding, we denote $G(t;c)$ by $G(t)$),  where $t\in[0,+\infty)$ and
$|f|^{2}:=\sqrt{-1}f\wedge\bar{f}$ for any $(1,0)$ form $f$.

Theorem \ref{main theorem} shows that $G(h^{-1}(r))$ is concave with respect to $r\in(0,\int_0^{+\infty}c(s)e^{-s}ds)$ (see also \cite{GY-concavity,GMY-concavity2,GMY-boundary2}), where $h(t)=\int_t^{+\infty}c(s)e^{-s}ds$. In this section, we discuss the case that the concavity degenerates to linearity.

We recall the following two lemmas, which will be used in the following discussion.

 \begin{Lemma}[see \cite{GY-concavity}]
 	\label{l:cu}
 	Let $T$ be a closed positive $(1,1)$ current on $\Omega$. For any open set $U\subset\subset \Omega$ satisfying $U\cap SuppT\not=\emptyset$,  there exists a subharmonic function $\Phi<0$ on $\Omega$, which satisfies the following properties:
 	
 	$(1)$ $i\partial\bar\partial\Phi\leq T$ and $i\partial\bar\partial\Phi\not\equiv0$;
 	
 	$(2)$ $\lim_{t\rightarrow0+0}\left(\inf_{\{G_{\Omega}(z,z_0)\geq-t\}}\Phi(z)\right)=0$;
 	
 	$(3)$ $Supp(i\partial\bar\partial\Phi)\subset U$ and $\inf_{\Omega\backslash U}\Phi>-\infty$.
 \end{Lemma}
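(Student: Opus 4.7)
The plan is to construct $\Phi$ as the Green potential of a compactly supported restriction of $T$. On the Riemann surface $\Omega$, any closed positive $(1,1)$-current is a positive Radon measure; since $U\cap\mathrm{Supp}\,T\neq\emptyset$, pick $p\in U\cap\mathrm{Supp}\,T$ and a relatively compact open neighborhood $V$ of $p$ with $\overline V\subset U$ and $T(V)>0$. Set $\mu:=\mathbb{1}_V\cdot T$, a nonzero finite positive measure with $\mathrm{supp}\,\mu\subset\overline V\subset U$ and $\mu\le T$, and define
\[
\Phi(z):=\int_{\Omega}G_\Omega(z,w)\,d\mu(w).
\]
Each $G_\Omega(\cdot,w)$ is negative subharmonic by Lemma \ref{l:green-sup}, and strictly negative everywhere by the strong maximum principle, so $\Phi$ is subharmonic with $\Phi<0$. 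Using $i\partial\bar\partial G_\Omega(\cdot,w)=\delta_w$ and Fubini, $i\partial\bar\partial\Phi=\mu$; combined with $\mu\le T$ and $\mu\neq0$ this yields (1), and $\mathrm{Supp}(i\partial\bar\partial\Phi)=\mathrm{supp}\,\mu\subset U$ gives the first part of (3).

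The main step is (2), which I would establish by comparing $\Phi$ with $G_\Omega(\cdot,z_0)$ through an exhaustion argument. Fix a relatively compact open $W$ with $\overline V\cup\{z_0\}\subset W\Subset\Omega$, and an exhaustion $\{\Omega_n\}$ of $\Omega$ by relatively compact open subsets with smooth boundary, each containing $\overline W$. Define
\[
\Phi_n(z):=\int_{\Omega_n}G_{\Omega_n}(z,w)\,d\mu(w),
\]
so that $\Phi_n$ vanishes on $\partial\Omega_n$ and $\Phi_n\downarrow\Phi$ pointwise on $\Omega\setminus\mathrm{supp}\,\mu$ (using $G_{\Omega_n}\downarrow G_\Omega$ and dominated convergence). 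Because $\partial W$ is compact and disjoint from $\mathrm{supp}\,\mu\cup\{z_0\}$, the continuous function $\Phi$ satisfies $|\Phi|\le M$ on $\partial W$ for some $M>0$; since $|\Phi_n|\le|\Phi|$, the same bound holds uniformly in $n$. By the strong maximum principle, $-\eta:=\sup_{\partial W}G_\Omega(\cdot,z_0)<0$, and Dini's theorem on $\partial W$ yields $G_{\Omega_n}(\cdot,z_0)\le-\eta/2$ there for all large $n$.

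Set $\lambda:=2M/\eta$. The function $\Phi_n-\lambda G_{\Omega_n}(\cdot,z_0)$ is harmonic on $\Omega_n\setminus\overline W$ (as $\mathrm{supp}\,\mu\subset W$ and $z_0\in W$), vanishes on $\partial\Omega_n$, and is $\ge -M+\lambda\eta/2=0$ on $\partial W$ for large $n$. The minimum principle on the precompact domain $\Omega_n\setminus\overline W$ gives $\Phi_n\ge\lambda G_{\Omega_n}(\cdot,z_0)$ there; passing $n\to\infty$ yields $\Phi(z)\ge\lambda G_\Omega(z,z_0)$ on $\Omega\setminus\overline W$. Since $\sup_{\overline W}G_\Omega(\cdot,z_0)\le-\eta$, the set $\{G_\Omega(\cdot,z_0)\ge-t\}$ lies in $\Omega\setminus\overline W$ for every $t<\eta$, so $\Phi\ge-\lambda t$ there, proving (2). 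The remaining lower bound $\inf_{\Omega\setminus U}\Phi>-\infty$ follows from $\Phi\ge\lambda G_\Omega(\cdot,z_0)\ge-\lambda\eta_1$ on $\Omega\setminus\overline W$ (with $-\eta_1:=\inf_{\partial W}G_\Omega(\cdot,z_0)$, finite by compactness and the harmonicity/boundary behavior of $G_\Omega(\cdot,z_0)$), combined with the continuity (hence boundedness) of $\Phi$ on the compact set $\overline W\setminus U$.

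The main obstacle is the uniform comparison along the exhaustion: ensuring that a single constant $\lambda$ works for all large $n$. This rests on Dini-type uniform convergence of $G_{\Omega_n}(\cdot,z_0)$ to $G_\Omega(\cdot,z_0)$ on the compact set $\partial W$, together with the uniform bound $|\Phi_n|\le|\Phi|$ on $\partial W$ coming from the monotone decrease $\Phi_n\downarrow\Phi$. Once the comparison $\Phi\ge\lambda G_\Omega(\cdot,z_0)$ is secured on $\Omega\setminus\overline W$, properties (2) and the lower bound in (3) follow essentially at once.
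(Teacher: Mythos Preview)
The paper does not supply its own proof of this lemma; it is quoted from \cite{GY-concavity}. Your construction via the Green potential $\Phi(z)=\int G_\Omega(z,w)\,d\mu(w)$ of a compactly supported piece $\mu$ of $T$, followed by the comparison $\Phi\ge\lambda G_\Omega(\cdot,z_0)$ on $\Omega\setminus\overline W$ obtained from the minimum principle on the exhausting shells $\Omega_n\setminus\overline W$, is exactly the standard argument and is correct in outline.

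Two small points are worth tightening. First, with the paper's convention $d^c=\frac{\partial-\bar\partial}{2\pi\sqrt{-1}}$ one has $dd^cG_\Omega(\cdot,w)=\delta_w$, hence $i\partial\bar\partial G_\Omega(\cdot,w)=\pi\delta_w$; so to guarantee $i\partial\bar\partial\Phi\le T$ you should take $\mu=\frac{1}{\pi}\mathbb{1}_V\cdot T$ rather than $\mathbb{1}_V\cdot T$ (this rescaling does not affect the rest of the argument). Second, the assertion $G_\Omega(\cdot,z_0)\ge-\eta_1$ on $\Omega\setminus\overline W$ does not follow merely from ``harmonicity/boundary behavior'' on a noncompact domain; it follows by the same exhaustion device you already used---apply the minimum principle on $\Omega_n\setminus\overline W$ to $G_{\Omega_n}(\cdot,z_0)$, using $G_{\Omega_n}=0$ on $\partial\Omega_n$ and $G_{\Omega_n}\ge G_\Omega\ge-\eta_1$ on $\partial W$, then let $n\to\infty$. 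With these adjustments the proof is complete.
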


 Denote that
 \begin{displaymath}
 	\begin{split}
 		\mathcal{H}^2(c,\varphi,t):=\Bigg\{\tilde f\in H^0(\{\psi<-t\}&,\mathcal{O}(K_{\Omega})):\int_{\{\psi<-t\}}|\tilde f|^2e^{-\varphi}c(-\psi)<+\infty \\&\&{\,}(\tilde{f}-f,z)\in(\mathcal{O}(K_{\Omega}))_{z}\otimes\mathcal{F}_{z} \text{ for any }z\in Z_0\Bigg\}.
 	\end{split}
 \end{displaymath}

 \begin{Lemma}
[\cite{GY-concavity}]
 	\label{l:n} Assume that there exists $t_0\geq0$ such that $G(t_0)\in(0,+\infty)$.
 If $G(\hat{h}^{-1}(r))$ is linear with respect to $r$, then there is no  Lebesgue measurable function $\tilde \varphi\geq\varphi$ such that $\tilde\varphi+\psi$ is subharmonic function on $M$ and satisfies:
	
	$(1)$ $\tilde\varphi\not=\varphi$ and $\mathcal{I}(\tilde\varphi+\psi)=\mathcal{I}(\varphi+\psi)$;
	
	$(2)$ $\lim_{t\rightarrow 0+0}\sup_{\{\psi\geq-t\}}(\tilde\varphi-\varphi)=0$;
	
	$(3)$ there exists an open subset $U\subset\subset\Omega$ such that $\sup_{\Omega\backslash U}(\tilde\varphi-\varphi)<+\infty$, $e^{-\tilde\varphi}c(-\psi)$ has a positive lower bound on $U$ and $\int_{U}|F_1-F_2|^2e^{-\varphi}c(-\psi)<+\infty$ for any $F_1\in\mathcal{H}^2(c,\tilde\varphi,t)$ and $F_2\in\mathcal{H}^2(c,\varphi,t)$, where $t\ge 0$ such that $U\subset\subset\{\psi<-t\}$. 	
 \end{Lemma}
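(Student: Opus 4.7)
I will argue by contradiction: assume $\tilde\varphi\ge\varphi$ with $\tilde\varphi\not\equiv\varphi$ satisfying (1), (2), (3). The plan is to set up a modified minimal integral $\tilde G(t)$ defined by the infimum that defines $G(t)$ but with $\tilde\varphi$ replacing $\varphi$ (keeping the same $\psi$, $c$, $\mathcal F_z$, $f$). Using condition (3)---$\sup_{\Omega\setminus U}(\tilde\varphi-\varphi)<+\infty$ and the positive lower bound of $e^{-\tilde\varphi}c(-\psi)$ on $U$---the positive lower bound of the gain on compact subsets of $\Omega\setminus E$ transfers from $\varphi$ to $\tilde\varphi$, so Theorem \ref{main theorem} applies and $\tilde G(\hat h^{-1}(r))$ is concave. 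By Corollary \ref{necessary condition for linear of G}, linearity of $G(\hat h^{-1}(r))$ furnishes a unique holomorphic $(1,0)$-form $\tilde F$ on $\{\psi<0\}$ with $G(t)=\int_{\{\psi<-t\}}|\tilde F|^2e^{-\varphi}c(-\psi)=C\hat h(t)$ (where $C:=G(0)/\hat h(0)$) together with the rigidity (\ref{other a also linear}). Since $\mathcal I(\tilde\varphi+\psi)=\mathcal I(\varphi+\psi)\subset\mathcal F_z$ by (1), $\tilde F$ is admissible for $\tilde G$, so $\tilde G(t)\le\int_{\{\psi<-t\}}|\tilde F|^2e^{-\tilde\varphi}c(-\psi)\le G(t)$ for all $t\ge 0$.

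Next I would prove a strict inequality $\tilde G(t_0)<G(t_0)$ for some $t_0>0$. The set $A:=\{\tilde\varphi>\varphi\}$ has positive Lebesgue measure, and since $\{\psi=0\}$ has Lebesgue measure zero there exists $t_0>0$ with $A\cap\{\psi<-t_0\}$ of positive measure; the nontrivial holomorphic form $\tilde F$ is nonzero off a discrete set, so
$$\int_{\{\psi<-t_0\}}|\tilde F|^2e^{-\tilde\varphi}c(-\psi)<\int_{\{\psi<-t_0\}}|\tilde F|^2e^{-\varphi}c(-\psi)=G(t_0),$$
which gives $\tilde G(t_0)<G(t_0)$.

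The crucial step is to show $\tilde G(0)\ge G(0)$. Let $F_{\tilde\varphi}$ be the $\tilde G(0)$-minimizer obtained from Lemma \ref{existence of F} applied to $\tilde G$. Condition (3)---specifically finiteness of $\int_U|F_1-F_2|^2e^{-\varphi}c(-\psi)$ for $F_1\in\mathcal H^2(c,\tilde\varphi,t)$, $F_2\in\mathcal H^2(c,\varphi,t)$, plus $\sup_{\Omega\setminus U}(\tilde\varphi-\varphi)<+\infty$---makes $F_{\tilde\varphi}$ an admissible competitor for $G(0)$, so
$$G(0)\le\int_{\{\psi<0\}}|F_{\tilde\varphi}|^2e^{-\varphi}c(-\psi)=\tilde G(0)+\int_{\{\psi<0\}}|F_{\tilde\varphi}|^2(e^{-\varphi}-e^{-\tilde\varphi})c(-\psi).$$
I would kill the error term by splitting at level $-t_1$: on the boundary annulus $\{-t_1\le\psi<0\}$ condition (2) gives $e^{-\varphi}-e^{-\tilde\varphi}\le\epsilon(t_1)e^{-\varphi}$ with $\epsilon(t_1)\to 0$ as $t_1\to 0+$, bounding this piece by $\epsilon(t_1)\int_{\{\psi<0\}}|F_{\tilde\varphi}|^2e^{-\varphi}c(-\psi)$; on the interior $\{\psi<-t_1\}$ I would exploit the rigidity (\ref{other a also linear}) for $\tilde F$, the $L^2$-closeness $F_{\tilde\varphi}-\tilde F\in L^2(e^{-\varphi}c(-\psi))$ supplied by (3), and the compactness of the minimizer family $\{F_{\tilde\varphi}(t)\}_{t\downarrow 0}$ provided by Lemma \ref{global convergence property of module}, together with a cutoff construction in the spirit of Lemma \ref{L2 method in JM concavity}, to pass to the limit $t_1\to 0+$ and conclude $\tilde G(0)\ge G(0)$.

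Granted $\tilde G(0)=G(0)=C\hat h(0)$, the contradiction comes immediately: concavity of $\tilde G(\hat h^{-1}(r))$ with $\tilde G(\hat h^{-1}(0))=0$ makes $r\mapsto\tilde G(\hat h^{-1}(r))/r$ decreasing in $r$, hence $\tilde G(\hat h^{-1}(r))/r\ge\tilde G(0)/\hat h(0)=C$ for all $r\in(0,\hat h(0)]$, i.e.\ $\tilde G(t)\ge G(t)$ for all $t\ge 0$; combined with the earlier $\tilde G\le G$ this forces $\tilde G\equiv G$, contradicting the strict inequality $\tilde G(t_0)<G(t_0)$. The hardest part will be the interior piece of the boundary-matching step: condition (2) alone does not control $e^{-\varphi}-e^{-\tilde\varphi}$ on $\{\psi<-t_1\}$, and one must genuinely exploit the rigid structure of $\tilde F$ given by (\ref{other a also linear}) together with the $L^2$-integrability of $F_{\tilde\varphi}-\tilde F$ on $U$ supplied by (3).
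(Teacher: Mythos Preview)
The paper does not prove this lemma; it is quoted from \cite{GY-concavity}. So there is no ``paper's own proof'' to compare against, and I can only assess your argument on its own merits.

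Your overall architecture---introduce $\tilde G$ with weight $e^{-\tilde\varphi}$, verify that Theorem~\ref{main theorem} applies to it, use $\tilde\varphi\ge\varphi$ to get $\tilde G\le G$, and then seek a contradiction with concavity---is the right shape. The observation that condition~(3) forces $\mathcal H^2(c,\varphi,t)=\mathcal H^2(c,\tilde\varphi,t)$ is correct and important.

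However, the step you call ``crucial'' is in fact \emph{false}: you cannot have $\tilde G(0)\ge G(0)$. Take the $G$--minimizer $\tilde F$ from Corollary~\ref{necessary condition for linear of G}. Since $\tilde\varphi\ge\varphi$ with $\tilde\varphi\not\equiv\varphi$, and since two subharmonic functions (here $\varphi+\psi$ and $\tilde\varphi+\psi$) that agree almost everywhere are equal, the set $\{\tilde\varphi>\varphi\}$ has positive measure; $\tilde F\not\equiv 0$ because $G(0)>0$, so
\[
\tilde G(0)\;\le\;\int_{\{\psi<0\}}|\tilde F|^2e^{-\tilde\varphi}c(-\psi)
\;=\;G(0)-\int_{\{\psi<0\}}|\tilde F|^2\bigl(e^{-\varphi}-e^{-\tilde\varphi}\bigr)c(-\psi)\;<\;G(0).
\]
Hence $\tilde G(0)<G(0)$ strictly, and no splitting argument can ``kill the error term'' in your displayed inequality: as $t_1\to 0^+$ the interior contribution $\int_{\{\psi<-t_1\}}|F_{\tilde\varphi}|^2(e^{-\varphi}-e^{-\tilde\varphi})c(-\psi)$ \emph{increases} to a fixed positive number, not to zero. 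The vague appeal to ``rigidity of $\tilde F$ together with a cutoff construction'' cannot overcome this, because the quantity you are trying to make small is bounded below.

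What conditions (2) and (3) are really for is not to match $\tilde G$ and $G$ at the endpoint $t=0$, but to control the \emph{increment} $G(t_1)-\tilde G(t_1)$ relative to $\hat h(0)-\hat h(t_1)$ as $t_1\to 0^+$ (equivalently, to compare left derivatives of the two concave functions at $r=\hat h(0)$). Condition~(2) ensures that on the collar $\{-t_1\le\psi<0\}$ the two weights are asymptotically the same, and condition~(3) lets you transfer competitors back and forth so that minimizers at nearby levels stay comparable. The contradiction then comes from the concave function $\tilde G(\hat h^{-1}(r))$ having the same boundary slope as the linear $G$ while lying strictly below it in the interior. You should reorganize the argument around that comparison rather than around the (impossible) equality $\tilde G(0)=G(0)$.
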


We recall some notations (see \cite{OF81}, see also \cite{guan-zhou13ap,GY-concavity,GMY-concavity2}).  Let $p:\Delta\rightarrow\Omega$ be the universal covering from unit disc $\Delta$ to $\Omega$.  we call the holomorphic function $f$  on $\Delta$ a multiplicative function,
 if there is a character $\chi$, which is the representation of the fundamental group of $\Omega$, such that $g^{\star}f=\chi(g)f$,
 where $|\chi|=1$ and $g$ is an element of the fundamental group of $\Omega$.
It is known that for any harmonic function $u$ on $\Omega$,
there exist a $\chi_{u}$ (the  character associate to $u$) and a multiplicative function $f_u\in\mathcal{O}^{\chi_{u}}(\Omega)$,
such that $|f_u|=p^{*}(e^{u})$.
Let $z_0\in \Omega$.
Recall that for the Green function $G_{\Omega}(z,z_0)$,
there exist a $\chi_{z_0}$ and a multiplicative function $f_{z_0}\in\mathcal{O}^{\chi_{z_0}}(\Omega)$, such that $|f_{z_0}(z)|=p^{*}\left(e^{G_{\Omega}(z,z_0)}\right)$ .

Firstly, we consider the case that $Z_0$ is finite. Let $Z_0=\{z_1,z_2,\ldots ,z_m\}\subset\Omega$ be a finite subset of $\Omega$ satisfying that $z_j\not=z_k$ for any $j\not=k$.

The following Theorem gives a characterization of the concavity of $G(h^{-1}(r))$ degenerating to linearity.
\begin{Theorem}[see \cite{GY-concavity3}]
	\label{thm:m-points}
 Assume that $G(0)\in(0,+\infty)$ and $(\psi-2p_jG_{\Omega}(\cdot,z_j))(z_j)>-\infty$ for  $j\in\{1,2,\ldots,m\}$, where $p_j=\frac{1}{2}v(dd^c(\psi),z_j)>0$. Then $G(h^{-1}(r))$ is linear with respect to $r$ if and only if the following statements hold:
	
	$(1)$ $\psi=2\sum_{1\le j\le m}p_jG_{\Omega}(\cdot,z_j)$;
	
	$(2)$ $\varphi+\psi=2\log|g|+2\sum_{1\le j\le m}G_{\Omega}(\cdot,z_j)+2u$ and $\mathcal{F}_{z_j}=\mathcal{I}(\varphi+\psi)_{z_j}$ for any $j\in\{1,2,\ldots,m\}$, where $g$ is a holomorphic function on $\Omega$ such that $ord_{z_j}(g)=ord_{z_j}(f)$ for any $j\in\{1,2,\ldots,m\}$ and $u$ is a harmonic function on $\Omega$;
	
	$(3)$ $\prod_{1\le j\le m}\chi_{z_j}=\chi_{-u}$, where $\chi_{-u}$ and $\chi_{z_j}$ are the  characters associated to the functions $-u$ and $G_{\Omega}(\cdot,z_j)$ respectively;
	
	$(4)$  $\lim_{z\rightarrow z_k}\frac{f}{gp_*\left(f_u\left(\prod_{1\le j\le m}f_{z_j}\right)\left(\sum_{1\le j\le m}p_{j}\frac{d{f_{z_{j}}}}{f_{z_{j}}}\right)\right)}=c_0$ for any $k\in\{1,2,\ldots,m\}$, where $c_0\in\mathbb{C}\backslash\{0\}$ is a constant independent of $k$, $f_{u}$ is a holomorphic function $\Delta$ such that $|f_u|=p^*(e^u)$ and $f_{z_j}$ is a holomorphic function on $\Delta$ such that $|f_{z_j}|=p^*\left(e^{G_{\Omega}(\cdot,z_j)}\right)$ for any $j\in\{1,2,\ldots,m\}$.
\end{Theorem}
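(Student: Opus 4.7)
The plan is to prove both directions of Theorem \ref{thm:m-points} using the extremal form produced by Corollary \ref{necessary condition for linear of G}, the perturbation machinery of Lemmas \ref{l:n} and \ref{l:cu}, and the multiplicative-function formalism on the universal cover $p:\Delta\to\Omega$.

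For the necessity direction ($\Rightarrow$), linearity combined with Corollary \ref{necessary condition for linear of G} produces a unique holomorphic $(1,0)$ form $\tilde F$ on $\{\psi<0\}$ realizing $G(t)$ on every sublevel set and satisfying the strong mass identity
$$\int_{\{-t_1\le\psi<-t_2\}}|\tilde F|^2 e^{-\varphi}\, a(-\psi) = C\int_{t_2}^{t_1} a(t)e^{-t}\,dt$$
for every nonnegative measurable $a$, with $C=G(T_1;c)/\int_{T_1}^{+\infty}c(s)e^{-s}\,ds$. I would first establish (1) and (2). In each case the relevant object is the residual nonnegative current obtained from $dd^c\psi$ (resp.\ $dd^c(\varphi+\psi)$) by subtracting the Dirac parts at $z_j$ predicted by the Lelong hypothesis. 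Assuming this residual is not identically zero, choose a relatively compact open $U\subset\Omega\setminus Z_0$ meeting its support and apply Lemma \ref{l:cu} to produce a negative subharmonic bump $\Phi$; then set $\tilde\varphi:=\varphi-\Phi$. The three hypotheses of Lemma \ref{l:n} hold: $\mathcal{I}(\tilde\varphi+\psi)=\mathcal{I}(\varphi+\psi)$ because $\Phi$ is locally bounded away from $-\infty$; the boundary limit $\lim_{t\to 0^+}\sup_{\{\psi\ge-t\}}(\tilde\varphi-\varphi)=0$ follows from property (2) of Lemma \ref{l:cu}; and the compatibility condition on $U$ holds since $\inf_{\Omega\setminus U}\Phi>-\infty$ forces $e^{-\tilde\varphi}c(-\psi)$ to retain a positive lower bound on $U$. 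This contradicts Lemma \ref{l:n} and forces (1) and (2). For (3) and (4), the mass identity combined with (1)--(2) pins down $p^*\tilde F$ (up to a multiplicative constant $c_0$) as the multiplicative holomorphic form $g\cdot f_u\cdot\prod_j f_{z_j}\cdot\sum_j p_j\,df_{z_j}/f_{z_j}$ on $\Delta$; single-valuedness of $\tilde F$ on $\Omega$ translates into $\prod_j\chi_{z_j}=\chi_{-u}$, giving (3), and the membership $(\tilde F-f)_{z_j}\in\mathcal{O}(K_\Omega)_{z_j}\otimes\mathcal{F}_{z_j}$ together with $\mathrm{ord}_{z_j}(g)=\mathrm{ord}_{z_j}(f)$ matches the leading local expansions of $\tilde F$ and $f$ at each $z_j$, yielding (4).

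For the sufficiency direction ($\Leftarrow$), define
$$\tilde F := c_0\cdot g\cdot p_*\!\left(f_u\prod_{j=1}^m f_{z_j}\cdot\sum_{j=1}^m p_j\frac{df_{z_j}}{f_{z_j}}\right),$$
which is a single-valued holomorphic $(1,0)$ form on $\Omega$ by (3); conditions (2), (4), and the order-match condition on $g$ imply $(\tilde F-f)_{z_j}\in\mathcal{O}(K_\Omega)_{z_j}\otimes\mathcal{F}_{z_j}$. Substituting (1) and (2) into $|\tilde F|^2 e^{-\varphi}$ collapses the integrand (upstairs on $\Delta$) to $|c_0|^2 e^\psi\bigl|\sum_j p_j\,df_{z_j}/f_{z_j}\bigr|^2$, and applying the co-area formula with respect to the harmonic exhaustion $\psi=2\sum p_j G_\Omega(\cdot,z_j)$ on $\Omega\setminus Z_0$ reduces $\int_{\{-t_1\le\psi<-t_2\}}|\tilde F|^2 e^{-\varphi}a(-\psi)$ to $C\int_{t_2}^{t_1} a(t)e^{-t}\,dt$ for a constant $C$ and every nonnegative measurable $a$. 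This produces an admissible form with linear $L^2$ profile on sublevel sets, yielding $G(t)\le C\hat h(t)$. The orthogonality identity (\ref{orhnormal F}) applied to any admissible comparison form, together with uniqueness in Lemma \ref{existence of F}, forces this candidate to be the extremum — the pointwise identity for $|\tilde F|^2 e^{-\varphi}$ is precisely the one satisfied by $|F_{t}|^2 e^{-\varphi}$ for any minimizer via (\ref{other a also linear}), so the two must coincide. Hence $G(t)=C\hat h(t)$, giving linearity.

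The main obstacle is the necessity of (3) and (4): recovering the explicit multiplicative representation of $\tilde F$ from its variational characterization requires translating the pushforward mass identity into a pointwise equality for $|\tilde F|^2$ in terms of $e^{\varphi+\psi}$ and a canonical $(1,0)$ form built from the Green functions $G_\Omega(\cdot,z_j)$, and then lifting to $\Delta$ to identify $p^*\tilde F$ as the given multiplicative form. A secondary technical point is verifying all three hypotheses of Lemma \ref{l:n} for the perturbation $\tilde\varphi=\varphi-\Phi$ — the choice of $U$ must be adjusted so that $\Phi$ is bounded on $\Omega\setminus U$ and the multiplier ideals agree on $U$; the latter typically requires $\Phi$ itself to have bounded $L^\infty$ mass, a feature of the construction in Lemma \ref{l:cu}.
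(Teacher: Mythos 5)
You should first note that this paper does not prove Theorem \ref{thm:m-points} at all: it is quoted from \cite{GY-concavity3}, so there is no internal proof to compare with; your proposal must therefore stand on its own, and as written it has genuine gaps. In the necessity direction, your plan of ``killing the residual current'' of $dd^c\psi$ and of $dd^c(\varphi+\psi)$ by the Lemma \ref{l:cu}/Lemma \ref{l:n} mechanism is simultaneously too weak and too strong. Too weak: even if the residual of $dd^c\psi$ vanished, you would only get that $\psi-2\sum_{1\le j\le m}p_jG_{\Omega}(\cdot,z_j)$ is harmonic, and a nonzero negative harmonic correction (e.g.\ $\psi=2p_1G_{\Omega}(\cdot,z_1)+h$ with $h<0$ harmonic on $\Delta$) is not excluded by any current identity, so statement $(1)$ does not follow. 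Too strong: statement $(2)$ allows $g$ to vanish off $Z_0$, i.e.\ $dd^c(\varphi+\psi)$ may legitimately carry integral atoms outside $Z_0$, so the residual you want to contradict need not vanish; and in exactly that case the perturbation cannot satisfy hypothesis $(1)$ of Lemma \ref{l:n}, because removing part of an atom changes the Lelong number and hence $\mathcal{I}(\varphi+\psi)$ at that point. Moreover your verification of Lemma \ref{l:n}'s hypotheses is not correct as stated: for $\tilde\varphi+\psi=\varphi+\psi-\Phi$ to be subharmonic you need $i\partial\bar\partial\Phi\le i\partial\bar\partial(\varphi+\psi)$, which is precisely why the paper's analogous Propositions \ref{p:inner1} and \ref{p:inner2} choose $T\le\frac{1-a}{2}i\partial\bar\partial\psi_1$ under condition $(A)$ and also invoke Remark \ref{rem:linear} to replace $c$ before checking that $e^{-\tilde\varphi}c(-\psi)$ has a positive lower bound \emph{on} $U$ (the issue is on $U$, where $\Phi$ may be unbounded below, not on $\Omega\backslash U$; Lemma \ref{l:cu} does not assert local boundedness of $\Phi$). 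Finally, the necessity of $(3)$ and $(4)$ --- identifying the extremal form from Corollary \ref{necessary condition for linear of G} as $c_0\,g\,p_*\big(f_u\big(\prod_j f_{z_j}\big)\sum_j p_j\frac{df_{z_j}}{f_{z_j}}\big)$ --- is the analytic core of the theorem and is only asserted in your sketch; the mass identity \eqref{other a also linear} does not ``pin down'' the form without a genuine argument on the level domains $\{\psi<-t\}$.

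In the sufficiency direction, your computation that the candidate $\tilde F$ satisfies $\int_{\{\psi<-t\}}|\tilde F|^2e^{-\varphi}c(-\psi)=C\int_t^{+\infty}c(s)e^{-s}ds$ is fine, but it only yields the upper bound $G(t)\le C\hat h(t)$. The step that makes $G$ linear is the matching lower bound, i.e.\ the extremality of $\tilde F$ (equivalently $G(0)\ge C\hat h(0)$), and your justification is circular: equality \eqref{other a also linear} is a \emph{consequence} of linearity, which is what you are trying to prove, and even granted, equality of the radial mass distributions of $|\tilde F|^2e^{-\varphi}$ and of a minimizer does not imply the two forms coincide; Lemma \ref{existence of F} and \eqref{orhnormal F} give uniqueness of the minimizer, not that your explicit candidate is it. The missing ingredient is a genuine lower bound for $G(0)$ --- in the literature this comes from the equality case of the sharp $L^2$ extension theorem (Suita-type results) or from an explicit orthogonality/reproducing computation for the weight $e^{-\varphi}c(-\psi)$ against holomorphic forms satisfying the jet conditions at $Z_0$ --- and without it the sufficiency proof is incomplete.
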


\begin{Remark}\label{r:equivalent}
	The statements $(2)$, $(3)$ and $(4)$ hold if and only if the following statements hold:
	
		$(1)'$ $\varphi+\psi=2\log|g_1|$ and $\mathcal{F}_{z_j}=\mathcal{I}(\varphi+\psi)_{z_j}$ for any $j\in\{1,2,\ldots,m\}$, where $g_1$ is a holomorphic function on $\Omega$ such that $ord_{z_j}(g_1)=ord_{z_j}(f)+1$ for any $j\in\{1,2,\ldots,m\}$;

	$(2)'$  $\frac{ord_{z_j}(g_1)}{p_j}\lim_{z\rightarrow z_j}\frac{f}{dg_1}=c_0$ for any $j\in\{1,2,\ldots,m\}$, where $c_0\in\mathbb{C}\backslash\{0\}$ is a constant independent of $j$;
\end{Remark}
\begin{proof}
	 If statements $(2)$ and $(3)$ in Theorem \ref{thm:m-points} hold, following from the definitions of $\chi_{z_j}$ and $\chi_{-u}$, we know that there exists a (single-value) holomorphic function $g_2$ on $\Omega$ such that $g_2=p_*\left(f_u\prod_{1\le j\le m}f_{z_j} \right)$ on $\Omega$. Let $g_1=gg_2$ on $\Omega$, thus $$\varphi+\psi=2\log|g_1|$$ and $ord_{z_j}(g_1)=ord_{z_j}(g)+ord_{z_j}(g_2)=ord_{z_j}(f)+1$ for any $j\in\{1,2,\ldots,m\}$. Note that $ord_{z_k}(g)=ord_{z_k}(dg_1)$ and
	 \begin{equation}
	 	\label{eq:0319a}
	 	\begin{split}
	 		&\lim_{z\rightarrow z_k}\frac{dg_1}{gp_*\left(f_u\left(\prod_{1\le j\le m}f_{z_j}\right)\left(\sum_{1\le j\le m}p_{j}\frac{d{f_{z_{j}}}}{f_{z_{j}}}\right)\right)}\\
	 		=&\lim_{z\rightarrow z_k}\frac{d\left(gp_*\left(f_u\prod_{1\le j\le m}f_{z_j} \right)\right)}{gp_*\left(f_u\left(\prod_{1\le j\le m}f_{z_j}\right)\left(p_{k}\frac{d{f_{z_{k}}}}{f_{z_{k}}}\right)\right)}\\
	 		=&\frac{ord_{z_k}(g_1)}{p_k}
	 	\end{split}
	 \end{equation}
	 for any $k\in\{1,2,\ldots,m\}$. 	 Furthermore, if statement $(4)$ in Theorem \ref{thm:m-points} holds, following from equality \eqref{eq:0319a}, we have  $\frac{ord_{z_j}(g_1)}{p_j}\lim_{z\rightarrow z_j}\frac{f}{dg_1}=c_0$ for any $j\in\{1,2,\ldots,m\}$. Thus, the statements $(2)$, $(3)$ and $(4)$ in Theorem \ref{thm:m-points} implies the statements $(1)'$ and $(2)'$.
  	
	 If statement $(1)'$  holds, it follows from Weierstrass Theorem on open Riemann surface (see \cite{OF81}) that there exists a holomorphic function $g$ on $\Omega$ such that  $$ord_{z_j}(g)=ord_{z_j}(g_1)-1=ord_{z_j}(f)$$ for any $j\in\{1,2,\ldots,m\}$ and $g_2:=\frac{g_1}{g}$ is a holomorphic function on $\Omega$ satisfying $g_2(z)\not=0$ for any $z\in\Omega\backslash\{z_1,z_2,\dots,z_m\}$. Thus, there exists a harmonic function $u$ on $\Omega$ such that $$2u=2\log|g_2|-\sum_{1\le j\le m}2G_{\Omega}(\cdot,z_j)=\varphi+\psi-2\log|g|-2\sum_{1\le j\le m}G_{\Omega}(\cdot,z_j).$$
	 Note that
	 $g_2=c_1p_*\left(f_u\prod_{1\le j\leq m}f_{z_j} \right)$
	 and
	 \begin{equation}\label{eq:0319b}
	 	\begin{split}
	 			&\lim_{z\rightarrow z_k}\frac{dg_1}{gp_*\left(f_u\left(\prod_{1\le j\le m}f_{z_j}\right)\left(\sum_{1\le j\le m}p_{j}\frac{d{f_{z_{j}}}}{f_{z_{j}}}\right)\right)}\\
	 		=&\lim_{z\rightarrow z_k}\frac{d\left(c_1gp_*\left(f_u\prod_{1\le j\le m}f_{z_j} \right)\right)}{gp_*\left(f_u\left(\prod_{1\le j\le m}f_{z_j}\right)\left(p_{k}\frac{d{f_{z_{k}}}}{f_{z_{k}}}\right)\right)}\\
	 		=&c_1\frac{ord_{z_k}(g_1)}{p_k},
	 	\end{split}
	 \end{equation}
	  where $c_1\in\mathbb{C}$ is a constant satisfying $|c_1|=1$.
 Furthermore, if statement $(2)'$ holds, it follows from 	equality \eqref{eq:0319b} that  $\lim_{z\rightarrow z_k}\frac{f}{gp_*\left(f_u\left(\prod_{1\le j\le m}f_{z_j}\right)\left(\sum_{1\le j\le m}p_{j}\frac{d{f_{z_{j}}}}{f_{z_{j}}}\right)\right)}=c_0c_1$ for any $k\in\{1,2,\ldots,m\}$. Thus, the statements $(1)'$ and $(2)'$ implies the statements $(2)$, $(3)$ and $(4)$ in Theorem \ref{thm:m-points}.	 	
	 \end{proof}

We give a generalization of Theorem \ref{thm:m-points}, which will be used in the proof of Proposition \ref{p:n-linearity1}.
\begin{Proposition}
	\label{p:inner1}
	 Let $G(0)\in(0,+\infty)$ and $p_j=\frac{1}{2}v(dd^c(\psi),z_j)>0$ for any $j\in\{1,2,\ldots,m\}$. For any $j\in\{1,2,\ldots,m\}$, assume that one of the following conditions holds:
	
	$(A)$ $\varphi+a\psi$ is  subharmonic near $z_j$ for some $a\in[0,1)$;
	
	$(B)$ $(\psi-2p_jG_{\Omega}(\cdot,z_j))(z_j)>-\infty$.
	
 Then $G(h^{-1}(r))$ is linear with respect to $r$ if and only if the following statements hold:
	
	$(1)$ $\psi=2\sum_{1\le j\le m}p_jG_{\Omega}(\cdot,z_j)$;
	
	$(2)$ $\varphi+\psi=2\log|g|+2\sum_{1\le j\le m}G_{\Omega}(\cdot,z_j)+2u$ and $\mathcal{F}_{z_j}=\mathcal{I}(\varphi+\psi)_{z_j}$ for any $j\in\{1,2,\ldots,m\}$, where $g$ is a holomorphic function on $\Omega$ such that $ord_{z_j}(g)=ord_{z_j}(f)$ for any $j\in\{1,2,\ldots,m\}$ and $u$ is a harmonic function on $\Omega$;
	
	$(3)$ $\prod_{1\le j\le m}\chi_{z_j}=\chi_{-u}$, where $\chi_{-u}$ and $\chi_{z_j}$ are the  characters associated to the functions $-u$ and $G_{\Omega}(\cdot,z_j)$ respectively;
	
	$(4)$  $\lim_{z\rightarrow z_k}\frac{f}{gp_*\left(f_u\left(\prod_{1\le j\le m}f_{z_j}\right)\left(\sum_{1\le j\le m}p_{j}\frac{d{f_{z_{j}}}}{f_{z_{j}}}\right)\right)}=c_0$ for any $k\in\{1,2,\ldots,m\}$, where $c_0\in\mathbb{C}\backslash\{0\}$ is a constant independent of $k$.
\end{Proposition}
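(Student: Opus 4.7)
The plan is to deduce Proposition \ref{p:inner1} from Theorem \ref{thm:m-points} by establishing condition $(1)$. The bridge is that $(1)$ implies condition $(B)$ of Theorem \ref{thm:m-points} at every $z_j$, since $\psi=2\sum_k p_kG_\Omega(\cdot,z_k)$ forces $(\psi-2p_jG_\Omega(\cdot,z_j))(z_j)=2\sum_{k\ne j}p_kG_\Omega(z_j,z_k)>-\infty$, Green's functions being finite off the diagonal. Hence the ``if'' direction follows immediately from Theorem \ref{thm:m-points}. In the ``only if'' direction, once $(1)$ is established the same observation shows condition $(B)$ of Theorem \ref{thm:m-points} holds at every $z_j$, and that theorem yields $(2)$--$(4)$. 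The central task is therefore to deduce $(1)$ from linearity together with the weaker local hypothesis that each $z_j$ satisfies $(A)$ or $(B)$.

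To prove $(1)$, I write the Siu decomposition $\tfrac{1}{2}dd^c\psi=\sum_j p_j[z_j]+\mu$ with $\mu\ge 0$ a measure on $\Omega$ placing no mass at any $z_j$. Condition $(1)$ is equivalent to $\mu\equiv 0$ together with the vanishing of the residual harmonic part of $\psi-2\sum_j p_jG_\Omega(\cdot,z_j)$ (handled via Lemma \ref{l:green-sup} and the boundary behavior of $\psi$). I argue by contradiction: if $\mu\not\equiv 0$, select a relatively compact open $U\subset\Omega\setminus\{z_1,\dots,z_m\}$ meeting $\mathrm{Supp}(\mu)$ and apply Lemma \ref{l:cu} to an appropriate closed positive current $T\le \tfrac{1}{2}dd^c(\varphi+\psi)$ with $\mathrm{Supp}(T)\cap U\ne\emptyset$. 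This produces a negative subharmonic $\Phi$ on $\Omega$ with $dd^c\Phi\le T$, $dd^c\Phi\not\equiv 0$, $\mathrm{Supp}(dd^c\Phi)\subset U$, $\inf_{\Omega\setminus U}\Phi>-\infty$, and the boundary-decay property $\lim_{t\to 0+}\inf_{\{G_\Omega(\cdot,z_0)\ge -t\}}\Phi=0$. Setting $\tilde\varphi:=\varphi-2\Phi\ge\varphi$, the bound $2dd^c\Phi\le dd^c(\varphi+\psi)$ ensures $\tilde\varphi+\psi$ is subharmonic. The remaining hypotheses of Lemma \ref{l:n} are then routine: $\tilde\varphi\ne\varphi$ because $\Phi\not\equiv 0$; $\mathcal{I}(\tilde\varphi+\psi)=\mathcal{I}(\varphi+\psi)$ because $\mathrm{Supp}(dd^c\Phi)\cap Z_0=\emptyset$ makes $\Phi$ locally bounded near each $z_j$; $\sup_{\{\psi\ge-t\}}(\tilde\varphi-\varphi)\to 0$ as $t\to 0+$ follows from the boundary decay of $\Phi$ combined with Lemma \ref{l:G-compact}; and the integrability on $U$ follows from $\inf_{\Omega\setminus U}\Phi>-\infty$. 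This contradicts Lemma \ref{l:n}, forcing $\mu\equiv 0$.

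The main obstacle is securing a nontrivial $T$ with $T\le\tfrac{1}{2}dd^c(\varphi+\psi)$ and $\mathrm{Supp}(T)\cap U\ne\emptyset$. The natural choice $T=\mu$ succeeds only if $\varphi$ is subharmonic on $U$ (equivalently $dd^c(\varphi+\psi)\ge dd^c\psi$ there), which is not a priori available from the hypotheses. Conditions $(A)$ and $(B)$ at the $z_j$'s are precisely the replacement for this missing subharmonicity: under $(A)$, $\varphi+a_j\psi$ is subharmonic near $z_j$ and hence $dd^c(\varphi+\psi)\ge(1-a_j)dd^c\psi$ locally there; under $(B)$, an analogous local comparison is available through the precise singular structure of $\psi$ at $z_j$, exactly as in the proof of Theorem \ref{thm:m-points}. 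The technical heart of the argument is to globalize these local comparisons into a domination $T\le\tfrac{1}{2}dd^c(\varphi+\psi)$ by a current $T$ still comparable to $\mu$ on $U$. This globalization---the new ingredient beyond Theorem \ref{thm:m-points}---closes the contradiction strategy above, yielding condition $(1)$ and hence, by Theorem \ref{thm:m-points}, conditions $(2)$--$(4)$.
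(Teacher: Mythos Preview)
Your reduction strategy is right in spirit---deduce condition $(B)$ at every $z_j$ and then invoke Theorem \ref{thm:m-points}---but the route you take to get there has a genuine gap. You try to prove statement $(1)$ directly by showing the residual measure $\mu$ in the Siu decomposition of $\tfrac{1}{2}dd^c\psi$ vanishes, choosing $U\subset\Omega\setminus\{z_1,\dots,z_m\}$ meeting $\mathrm{Supp}(\mu)$. The contradiction via Lemma \ref{l:n} then requires a nontrivial $T\le\tfrac{1}{2}dd^c(\varphi+\psi)$ supported in $U$. But conditions $(A)$ and $(B)$ are purely local at the $z_j$'s and say nothing about $\varphi+\psi$ on $U$; there is no mechanism to ``globalize'' them as you suggest. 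Indeed $dd^c(\varphi+\psi)$ may well vanish on $U$ even when $\mu$ does not, since $\varphi$ is only assumed measurable with $\varphi+\psi$ subharmonic.

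The paper avoids this by a more economical reduction: it does \emph{not} attempt to prove $(1)$ first, but instead shows directly that $(B)$ holds at every $z_j$. Suppose $(B)$ fails at some $z_j$; then $(A)$ must hold there, and the Siu remainder $\psi_1:=\psi-2p_jG_\Omega(\cdot,z_j)$ satisfies $v(dd^c\psi_1,z_j)=0$ yet $\psi_1(z_j)=-\infty$, so $\psi_1$ is subharmonic and \emph{not} harmonic near $z_j$. One then works in a neighborhood $V$ of $z_j$ (not away from it), where $(A)$ gives the domination you need: choose $T\not\equiv 0$ with $\mathrm{Supp}\,T\Subset V$ and $T\le\tfrac{1-a}{2}i\partial\bar\partial\psi_1$, apply Lemma \ref{l:cu} to get $\Phi$, set $\tilde\varphi=\varphi-\Phi$, and verify the hypotheses of Lemma \ref{l:n}. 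The point is that condition $(A)$ is used exactly where it is assumed---near $z_j$---and the singular behavior $\psi_1(z_j)=-\infty$ supplies the nontrivial current. Once $(B)$ holds at every $z_j$, Theorem \ref{thm:m-points} delivers all of $(1)$--$(4)$, including the statement $(1)$ you were trying to prove directly.
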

\begin{proof}
The sufficiency follows from
	 Theorem \ref{thm:m-points}. Thus, we just need to prove the necessity.
	 It follows from Theorem \ref{thm:m-points} that it suffices to prove $(\psi-2p_jG_{\Omega}(\cdot,z_j))(z_j)>-\infty$ for any $j\in\{1,2,\ldots,m\}$.
	
	 We prove $(\psi-2p_jG_{\Omega}(\cdot,z_j))(z_j)>-\infty$ for any $j\in\{1,2,\ldots,m\}$ by contradiction: if not, there exists
	 $j\in\{1,2,\ldots,m\}$ such that $(\psi-2p_jG_{\Omega}(\cdot,z_j))(z_j)=-\infty$, then we have $\varphi+a\psi$ is subharmonic near $z_j$ for some $a\in[0,1)$. As $\psi$ is  subharmonic function on $\Omega$, it follows from  Siu's Decomposition Theorem that $\psi=2p_jG_{\Omega}(\cdot,z_j)+\psi_1$ such that $v(dd^c(\psi_1),z_j)=0$ and $\psi_1(z_j)=-\infty$. Note that $\psi_1$ is subharmonic and not harmonic near $z_j$.
  There exists a closed  positive $(1,1)$ current $T\not\equiv0$, such that $SuppT\subset\subset V$ and $T\leq \frac{1-a}{2}i\partial\bar\partial\psi_1$ on $V$, where $V\Subset\Omega$ is an open neighborhood of $z_j$ satisfying that  $\varphi+a\psi$ is subharmonic on a neighborhood of $\overline{V}$. Without loss of generality, assume that $\{z\in \overline{V}:\mathcal{I}(\varphi+\psi)_z\not=\mathcal{O}_z\}=\{z\in \overline{V}:v(dd^c(\varphi+\psi),z)\ge2\}=\{z_j\}$.

By Lemma \ref{l:cu}, there exists a subharmonic function $\Phi<0$ on $\Omega$, which satisfies the following properties: $i\partial\bar\partial\Phi\leq T$ and $i\partial\bar\partial\Phi\not\equiv0$; $\lim_{t\rightarrow0+0}(\inf_{\{G_{\Omega}(\cdot,z_j)\geq-t\}}\Phi(z))=0$; $Supp(i\partial\bar\partial\Phi)\subset V$ and $\inf_{\Omega\backslash V}\Phi>-\infty$. It follows from Lemma \ref{l:green-sup}, $v(dd^{c}\psi,z_j)>0$ and $\psi<0$ on $\Omega$ that $\lim_{t\rightarrow0+0}(\inf_{\{\psi\geq-t\}}\Phi(z))=0$.

Set $\tilde\varphi=\varphi-\Phi$, then $\tilde\varphi+\psi=\varphi+a\psi+2(1-a)p_jG_{\Omega}(\cdot,z_j)+(1-a)\psi_1-\Phi$ on $V$. As $(1-a)\psi_1-\Phi$ is subharmonic on $V$, we have $\tilde\varphi+\psi$ is subharmonic on $V$. It is clear that $\tilde\varphi\geq\varphi$ and $\tilde\varphi\not=\varphi$. It follows from $SuppT\subset\subset V$ and $i\partial\bar\partial\Phi\leq T\le  i\partial\bar\partial\psi_1$ that $\tilde\varphi+\psi$ is subharmonic on $\Omega$ and $\mathcal{I}(\tilde\varphi+\psi)=\mathcal{I}(\varphi+\psi)$.

It follows from Remark \ref{rem:linear} that we can assume that $c(t)>e^{\frac{(1+a)t}{2}}$ for any $t>0$. $T\leq \frac{1-a}{2}i\partial\bar\partial\psi_1$ on $V$ and $i\partial\bar\partial\Phi\subset\subset V$ show that $\frac{1-a}{2}\psi-\Phi$ is subharmonic on $\Omega$, which implies that
$e^{-\tilde\varphi}c(-\psi)\geq e^{-\varphi-a\psi}e^{\Phi-\frac{1-a}{2}\psi}$
 has positive lower bound on $V$. Notice that $\inf_{\Omega\backslash V}(\varphi-\tilde\varphi)=\inf_{\Omega\backslash V}\Phi>-\infty$ and $\int_{V}|F_1-F_2|^2e^{-\varphi}c(-\psi)\leq C\int_{V}|F_1-F_2|^2e^{-\varphi-\psi}<+\infty$ for any $F_1\in\mathcal{H}^2(c,\tilde\varphi,t)$ and $F_2\in\mathcal{H}^2(c,\varphi,t)$, where $V\subset\subset\{\psi<-t\}$, then $\tilde\varphi$ satisfies the conditions in Lemma \ref{l:n}, which contradicts to the result of  Lemma \ref{l:n}. Thus $\psi_1(z_j)>-\infty$.

 Thus, Proposition \ref{p:inner1} holds.
\end{proof}

Now, we consider the case that $Z_0$ is infinite. Let $Z_0=\{z_j:j\in\mathbb{Z}_{\ge1}\}\subset\Omega$ be a  discrete set of infinite points satisfying $z_j\not=z_k$ for any $j\not=k$.

 The following result gives a necessary condition for $G(h^{-1}(r))$ is linear.

\begin{Proposition}[\cite{GY-concavity3}]	\label{p:infinite}
 Assume that $G(0)\in(0,+\infty)$ and $(\psi-2p_jG_{\Omega}(\cdot,z_j))(z_j)>-\infty$ for  $j\in\mathbb{Z}_{\ge1}$, where $p_j=\frac{1}{2}v(dd^c(\psi),z_j)>0$. Assume that $G(h^{-1}(r))$ is linear with respect to $r$. Then the following statements hold:
	
	$(1)$ $\psi=2\sum_{j\in\mathbb{Z}_{\ge1}}p_jG_{\Omega}(\cdot,z_j)$;

	$(2)$ $\varphi+\psi=2\log|g|$ and $\mathcal{F}_{z_j}=\mathcal{I}(\varphi+\psi)_{z_j}$ for any $j\in\mathbb{Z}_{\ge1}$, where $g$ is a holomorphic function on $\Omega$ such that $ord_{z_j}(g)=ord_{z_j}(f)+1$ for any $j\in\mathbb{Z}_{\ge1}$;

	$(3)$  $\frac{p_j}{ord_{z_j}(g)}\lim_{z\rightarrow z_j}\frac{dg}{f}=c_0$ for any $j\in\mathbb{Z}_{\ge1}$, where $c_0\in\mathbb{C}\backslash\{0\}$ is a constant independent of $j$;
	
		$(4)$ $\sum_{j\in\mathbb{Z}_{\ge1}}p_j<+\infty$.
\end{Proposition}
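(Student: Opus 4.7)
My plan is to reduce Proposition~\ref{p:infinite} to the finite-point case (Proposition~\ref{p:inner1}) by truncating $Z_0$ to its first $N$ points, and then pass to the limit $N\to\infty$.

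By Corollary~\ref{necessary condition for linear of G}, linearity of $G(\hat{h}^{-1}(r))$ together with $G(0)\in(0,+\infty)$ produces a unique minimizer $\tilde{F}$ on $\Omega$ realizing $G(t)=\int_{\{\psi<-t\}}|\tilde{F}|^2 e^{-\varphi}c(-\psi)$ for every $t\ge 0$, satisfying $(\tilde{F}-f)_{z_j}\in\mathcal{O}(K_{\Omega})_{z_j}\otimes\mathcal{F}_{z_j}$ for each $j$, together with the equidistribution identity~\eqref{other a also linear}. For each $N\ge 1$, let $G^{(N)}(t)$ denote the analogous minimal integral where the constraints are imposed only at $Z_0^{(N)}=\{z_1,\ldots,z_N\}$; clearly $G^{(N)}(t)\le G(t)$ and $\tilde{F}$ remains admissible for the $N$-point problem. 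The core step is to show $G^{(N)}(t)=G(t)$, so that $G^{(N)}(\hat{h}^{-1}(r))$ is also linear: starting from any competitor for $G^{(N)}(t)$, I would invoke the $L^2$-extension machinery of Lemma~\ref{L2 method in JM concavity} to simultaneously restore the ideal-membership constraints at the remaining points $z_j$ ($j>N$), with the additional $L^2$-mass quantitatively controlled by the equidistribution identity.

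With $G^{(N)}$ linear, Proposition~\ref{p:inner1} (whose hypothesis~$(B)$ is in force at each $z_j$ by assumption) combined with Remark~\ref{r:equivalent} yields, for every $N$, a holomorphic function $g_N$ on $\Omega$ satisfying $\varphi+\psi=2\log|g_N|$, $\mathrm{ord}_{z_j}(g_N)=\mathrm{ord}_{z_j}(f)+1$ and $\mathcal{F}_{z_j}=\mathcal{I}(\varphi+\psi)_{z_j}$ for $j\le N$, the derivative identity with a common constant $c_0^{(N)}$ for $j\le N$, and a representation $\psi=2\sum_{j=1}^{N}p_j G_{\Omega}(\cdot,z_j)+2u_N$ with $u_N$ harmonic. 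Since $\varphi+\psi$ is intrinsic, the $g_N$ must agree up to unimodular constants; letting $N\to\infty$ (and observing that the harmonic remainders $u_N$ must absorb the tail $\sum_{j>N}p_j G_{\Omega}(\cdot,z_j)$ and vanish in the limit) gives statements (1), (2) and (3). Statement (4) then follows from a direct local calculation: using the local behavior $\psi\sim 2p_j\log|w|$, $\varphi+\psi=2\log|g|$ with $\mathrm{ord}_{z_j}(g)=\mathrm{ord}_{z_j}(f)+1$, together with the derivative identity~(3) in the form $p_j g_j = c_0 f_j$ (where $f_j$, $g_j$ denote the leading coefficients at $z_j$), the contribution to $G(0)$ from a small neighborhood of $z_j$ is --- after the substitution $s=-2p_j\log|w|$ --- asymptotically a positive multiple of $p_j$; summability, forced by $G(0)<+\infty$, yields $\sum_j p_j<+\infty$.

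The main obstacle is establishing $G^{(N)}(t)=G(t)$: this requires simultaneously restoring ideal-membership conditions at infinitely many residual points $z_j$ with $j>N$ while not inflating the $L^2$-norm above $G(t)$. The rigid equidistribution identity~\eqref{other a also linear} --- an extremely strong consequence of linearity --- should furnish the quantitative input, but packaging this with the $\bar\partial$-extension machinery of Lemma~\ref{L2 method in JM concavity} in a way that handles all the tail constraints uniformly is the delicate point.
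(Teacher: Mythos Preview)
The paper does not prove Proposition~\ref{p:infinite}; it is quoted from \cite{GY-concavity3} and used as a black box. (The only argument in the paper that touches it is the proof of the generalization Proposition~\ref{p:inner2}, and that proof consists entirely of reducing back to Proposition~\ref{p:infinite}.) So there is no proof here against which to compare your attempt.

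As for the proposal on its own merits: you correctly identify that the crux is the equality $G^{(N)}(t)=G(t)$, and you are candid that you do not have a mechanism to force it. Fewer constraints can only lower the infimum, so $G^{(N)}\le G$ is trivial; the reverse inequality is the whole content, and nothing in your outline supplies it. Your suggestion to invoke Lemma~\ref{L2 method in JM concavity} to ``restore'' the constraints at the tail points $z_j$, $j>N$, does not fit: that lemma extends a form from a sublevel set $\{\Psi<-t_0\}$ into a larger sublevel set, whereas what you need here is to correct a global holomorphic form so that its germ lands in a prescribed ideal at each of infinitely many discrete points, with uniform global $L^2$ control. That is a different $\bar\partial$ problem, and the equidistribution identity~\eqref{other a also linear} gives no obvious quantitative input for it. Even the subsequent limiting step --- arguing that the harmonic remainders $u_N$ in $\psi=2\sum_{j\le N}p_jG_\Omega(\cdot,z_j)+2u_N$ must vanish as $N\to\infty$ --- is not justified: a harmonic function ``absorbing the tail'' need not tend to zero without a uniform bound you have not produced. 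These are genuine gaps, not routine details.
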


We give a generalization of Proposition \ref{p:infinite}, which will be used in the proof of Proposition \ref{p:n-linearity1}.

\begin{Proposition}
 	\label{p:inner2} 	 Let $G(0)\in(0,+\infty)$ and $p_j=\frac{1}{2}v(dd^c(\psi),z_j)>0$ for any $j\in\mathbb{Z}_{\ge1}$. For any $j\in\mathbb{Z}_{\ge1}$, assume that one of the following conditions holds:
	
	$(A)$ $\varphi+a\psi$ is  subharmonic near $z_j$ for some $a\in[0,1)$;
	
	$(B)$ $(\psi-2p_jG_{\Omega}(\cdot,z_j))(z_j)>-\infty$.

 If $G(h^{-1}(r))$ is linear with respect to $r$, then the following statements hold:
	
	$(1)$ $\psi=2\sum_{j\in\mathbb{Z}_{\ge1}}p_jG_{\Omega}(\cdot,z_j)$;

	$(2)$ $\varphi+\psi=2\log|g|$ and $\mathcal{F}_{z_j}=\mathcal{I}(\varphi+\psi)_{z_j}$ for any $j\in\mathbb{Z}_{\ge1}$, where $g$ is a holomorphic function on $\Omega$ such that $ord_{z_j}(g)=ord_{z_j}(f)+1$ for any $j\in\mathbb{Z}_{\ge1}$;

	$(3)$  $\frac{p_j}{ord_{z_j}(g)}\lim_{z\rightarrow z_j}\frac{dg}{f}=c_0$ for any $j\in\mathbb{Z}_{\ge1}$, where $c_0\in\mathbb{C}\backslash\{0\}$ is a constant independent of $j$;
	
		$(4)$ $\sum_{j\in\mathbb{Z}_{\ge1}}p_j<+\infty$.	
 \end{Proposition}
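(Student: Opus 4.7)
The plan is to reduce Proposition \ref{p:inner2} to Proposition \ref{p:infinite} by showing that, under the linearity hypothesis, condition $(A)$ actually forces condition $(B)$ at every point $z_j$. Once we know $(\psi-2p_jG_{\Omega}(\cdot,z_j))(z_j)>-\infty$ for every $j\in\mathbb{Z}_{\ge 1}$, conclusions $(1)$--$(4)$ follow directly from Proposition \ref{p:infinite}. The argument to upgrade $(A)$ to $(B)$ is purely local near each $z_j$ and therefore mimics, essentially verbatim, the contradiction argument already carried out in the proof of Proposition \ref{p:inner1} for the finite case.

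I would argue by contradiction: suppose there exists $j\in\mathbb{Z}_{\ge 1}$ for which $(\psi-2p_jG_{\Omega}(\cdot,z_j))(z_j)=-\infty$. Then by hypothesis $(A)$ holds at $z_j$, so $\varphi+a\psi$ is subharmonic on some neighborhood of $z_j$ for some $a\in[0,1)$. By Siu's decomposition write $\psi=2p_jG_{\Omega}(\cdot,z_j)+\psi_1$ with $v(dd^c\psi_1,z_j)=0$ and $\psi_1(z_j)=-\infty$, so $\psi_1$ is subharmonic but not harmonic near $z_j$. Choose a relatively compact neighborhood $V$ of $z_j$ so small that $\varphi+a\psi$ is subharmonic on a neighborhood of $\overline{V}$, $\overline{V}\cap Z_0=\{z_j\}$, and the only point of $\overline{V}$ with $v(dd^c(\varphi+\psi),z)\ge 2$ is $z_j$. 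Then there is a closed positive $(1,1)$-current $T\not\equiv 0$ with $\mathrm{Supp}\,T\Subset V$ and $T\le\tfrac{1-a}{2}i\partial\bar\partial\psi_1$ on $V$.

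Next I would invoke Lemma \ref{l:cu} to obtain a negative subharmonic function $\Phi$ on $\Omega$ with $i\partial\bar\partial\Phi\le T$, $i\partial\bar\partial\Phi\not\equiv 0$, $\mathrm{Supp}(i\partial\bar\partial\Phi)\subset V$, $\inf_{\Omega\setminus V}\Phi>-\infty$, and $\lim_{t\to 0+0}(\inf_{\{G_{\Omega}(\cdot,z_j)\ge -t\}}\Phi)=0$; since $v(dd^c\psi,z_j)>0$ and $\psi<0$, Lemma \ref{l:green-sup} upgrades this to $\lim_{t\to 0+0}(\inf_{\{\psi\ge -t\}}\Phi)=0$. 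Set $\tilde\varphi:=\varphi-\Phi$. On $V$ we have
$$\tilde\varphi+\psi=(\varphi+a\psi)+2(1-a)p_jG_{\Omega}(\cdot,z_j)+(1-a)\psi_1-\Phi,$$
and the choice of $\Phi$ makes $(1-a)\psi_1-\Phi$ subharmonic on $V$; outside $V$ the current $i\partial\bar\partial\Phi$ vanishes, so $\tilde\varphi+\psi$ is globally subharmonic on $\Omega$ and $\mathcal{I}(\tilde\varphi+\psi)=\mathcal{I}(\varphi+\psi)$.

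Finally, using Remark \ref{rem:linear} I may replace $c$ so that $c(t)>e^{(1+a)t/2}$ for $t>0$; then $e^{-\tilde\varphi}c(-\psi)\ge e^{-\varphi-a\psi}e^{\Phi-\frac{1-a}{2}\psi}$ has a positive lower bound on $V$, while $\inf_{\Omega\setminus V}(\varphi-\tilde\varphi)=\inf_{\Omega\setminus V}\Phi>-\infty$ and the finiteness $\int_V|F_1-F_2|^2e^{-\varphi}c(-\psi)<+\infty$ for $F_1\in\mathcal{H}^2(c,\tilde\varphi,t)$, $F_2\in\mathcal{H}^2(c,\varphi,t)$ follows from $c(-\psi)\le Ce^{-\psi}$ on bounded sets. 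Thus $\tilde\varphi$ satisfies all hypotheses of Lemma \ref{l:n}, contradicting the linearity of $G(\hat h^{-1}(r))$. Hence $(\psi-2p_jG_{\Omega}(\cdot,z_j))(z_j)>-\infty$ for every $j$, and Proposition \ref{p:infinite} concludes the proof. The main (essentially only) obstacle is ensuring that the localization around $z_j$ does not interfere with the other (countably many) points of $Z_0$; this is handled by the support condition $\mathrm{Supp}(i\partial\bar\partial\Phi)\subset V$ with $\overline{V}\cap Z_0=\{z_j\}$, which keeps the multiplier ideals unchanged at every other $z_k$.
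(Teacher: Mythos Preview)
Your proposal is correct and follows essentially the same approach as the paper: both reduce to Proposition \ref{p:infinite} by showing via contradiction (using Siu's decomposition, Lemma \ref{l:cu}, Remark \ref{rem:linear}, and Lemma \ref{l:n}) that condition $(A)$ forces $(\psi-2p_jG_{\Omega}(\cdot,z_j))(z_j)>-\infty$ at each $z_j$. The paper itself notes that the proof is ``similar to Proposition \ref{p:inner1}'' and carries out exactly the local contradiction argument you describe.
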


 \begin{proof} The proof of Proposition \ref{p:inner2} is similar to Proposition \ref{p:inner1}.

	 It follows from Proposition \ref{p:infinite} that it suffices to prove $(\psi-2p_jG_{\Omega}(\cdot,z_j))(z_j)>-\infty$ for any $j\in\mathbb{Z}_{\ge1}$.
	
	 We prove $(\psi-2p_jG_{\Omega}(\cdot,z_j))(z_j)>-\infty$ for any $j\in\mathbb{Z}_{\ge1}$ by contradiction: if not, there exists
	 $j\in\mathbb{Z}_{\ge1}$ such that $(\psi-2p_jG_{\Omega}(\cdot,z_j))(z_j)=-\infty$, then we have $\varphi+a\psi$ is subharmonic near $z_j$ for some $a\in[0,1)$. As $\psi$ is  subharmonic function on $\Omega$, it follows from  Siu's Decomposition Theorem that $\psi=2p_jG_{\Omega}(\cdot,z_j)+\psi_1$ such that $v(dd^c(\psi_1),z_j)=0$ and $\psi_1(z_j)=-\infty$. Note that $\psi_1$ is subharmonic and not harmonic near $z_j$.
  There exists a closed  positive $(1,1)$ current $T\not\equiv0$, such that $SuppT\subset\subset V$, $T\leq \frac{1-a}{2}i\partial\bar\partial\psi_1$ on $V$, where $V\Subset\Omega$ is an open neighborhood of $z_j$ satisfying that  $\varphi+a\psi$ is subharmonic on a neighborhood of $\overline{V}$. Without loss of generality, assume that $\{z\in \overline{V}:\mathcal{I}(\varphi+\psi)_z\not=\mathcal{O}_z\}=\{z\in \overline{V}:v(dd^c(\varphi+\psi),z)\ge2\}=\{z_j\}$.

Using Lemma \ref{l:cu}, there exists a subharmonic function $\Phi<0$ on $\Omega$, which satisfies the following properties: $i\partial\bar\partial\Phi\leq T$ and $i\partial\bar\partial\Phi\not\equiv0$; $\lim_{t\rightarrow0+0}(\inf_{\{G_{\Omega}(\cdot,z_j)\geq-t\}}\Phi(z))=0$; $Supp(i\partial\bar\partial\Phi)\subset V$ and $\inf_{\Omega\backslash V}\Phi>-\infty$. It following from Lemma \ref{l:green-sup}, $v(dd^{c}\psi,z_j)>0$ and $\psi<0$ on $\Omega$ that $\lim_{t\rightarrow0+0}(\inf_{\{\psi\geq-t\}}\Phi(z))=0$.

Set $\tilde\varphi=\varphi-\Phi$, then $\tilde\varphi+\psi=\varphi+a\psi+2(1-a)p_jG_{\Omega}(\cdot,z_j)+(1-a)\psi_1-\Phi$ on $V$. As $(1-a)\psi_1-\Phi$ is subharmonic on $V$, we have $\tilde\varphi+\psi$ is subharmonic on $V$. It is clear that $\tilde\varphi\geq\varphi$ and $\tilde\varphi\not=\varphi$. It follows from $SuppT\subset\subset V$ and $i\partial\bar\partial\Phi\leq T\le  i\partial\bar\partial\psi_1$ that $\tilde\varphi+\psi$ is subharmonic on $\Omega$ and $\mathcal{I}(\tilde\varphi+\psi)=\mathcal{I}(\varphi+\psi)$.

It follows from Remark \ref{rem:linear} that we can assume that $c(t)>e^{\frac{(1+a)t}{2}}$ for any $t>0$. $T\leq \frac{1-a}{2}i\partial\bar\partial\psi_1$ on $V$ and $i\partial\bar\partial\Phi\subset\subset V$ show that $\frac{1-a}{2}\psi-\Phi$ is subharmonic on $\Omega$, which implies that
$e^{-\tilde\varphi}c(-\psi)\geq e^{-\varphi-a\psi}e^{\Phi-\frac{1-a}{2}\psi}$
 has positive lower bound on $V$. Notice that $\inf_{\Omega\backslash V}(\varphi-\tilde\varphi)=\inf_{\Omega\backslash V}\Phi>-\infty$ and $\int_{V}|F_1-F_2|^2e^{-\varphi}c(-\psi)\leq C\int_{V}|F_1-F_2|^2e^{-\varphi-\psi}<+\infty$ for any $F_1\in\mathcal{H}^2(c,\tilde\varphi,t)$ and $F_2\in\mathcal{H}^2(c,\varphi,t)$, where $V\subset\subset\{\psi<-t\}$, then $\tilde\varphi$ satisfies the conditions in Lemma \ref{l:n}, which contradicts to the result of  Lemma \ref{l:n}. Thus $\psi_1(z_j)>-\infty$.

 Thus, Proposition \ref{p:inner2} holds.
 \end{proof}

\section{Proofs of Theorem \ref{main theorem}, Remark \ref{infty2}, Corollary \ref{necessary condition for linear of G} and Remark \ref{rem:linear}}
We firstly prove Theorem \ref{main theorem}.

\begin{proof} We firstly show that if $G(t_0)<+\infty$ for some $t_0> T$, then $G(t_1)<+\infty$ for any $T< t_1<t_0$. As $G(t_0)<+\infty$, it follows from Lemma \ref{existence of F} that there exists a unique
holomorphic $(n,0)$ form $F_{t_0}$ on $\{\Psi<-t\}$ satisfying
$$\ \int_{\{\Psi<-t_0\}}|F_{t_0}|^2e^{-\varphi}c(-\Psi)=G(t_0)<+\infty$$  and
$\ (F_{t_0}-f)_{z_0}\in
\mathcal{O} (K_M)_{z_0} \otimes J_{z_0}$, for any  $ z_0\in Z_0$.

It follows from Lemma \ref{L2 method in JM concavity} that there exists a holomorphic $(n,0)$ form $\tilde{F}_1$ on $\{\Psi<-t_1\}$ such that

 \begin{equation}\label{main theorem 1}
  \begin{split}
      & \int_{\{\Psi<-t_1\}}|\tilde{F}_1-(1-b_{t_0,B}(\Psi))F_{t_0}|^2e^{-\varphi+v_{t_0,B}(\Psi)-\Psi}c(-v_{t_0,B}(\Psi)) \\
      \le & (\int_{t_1}^{t_0+B}c(s)e^{-s}ds)
       \int_{M}\frac{1}{B}\mathbb{I}_{\{-t_0-B<\Psi<-t_0\}}|F_{t_0}|^2e^{-\varphi-\Psi}<+\infty.
  \end{split}
\end{equation}
Note that $b_{t_0,B}(t)=0$ on $\{\Psi<-t_0-B\}$ and $v_{t_0,B}(\Psi)> -t_0-B$. We have $e^{v_{t_0,B}(\Psi)}c(-v_{t_0,B}(\Psi))$ has a positive lower bound. It follows from inequality \eqref{main theorem 1} that
we have  $\ (\tilde{F}_{1}-F_{t_0})_{z_0}
\in \mathcal{O} (K_M)_{z_0}\otimes I(\Psi+\varphi)_{z_0} \subset \mathcal{O} (K_M)_{z_0}\otimes J_{z_0}$ for any $z_0\in Z_0$, which implies that
$(\tilde{F}_{1}-f)_{z_0}\in
\mathcal{O} (K_M)_{z_0} \otimes J_{z_0}$, for any  $ z_0\in Z_0$. As $v_{t_0,B}(\Psi)\ge\Psi$ and $c(t)e^{-t}$ is decreasing with respect to $t$, it follows from inequality \eqref{main theorem 1} that we have
 \begin{equation}\label{main theorem 2}
  \begin{split}
      & \int_{\{\Psi<-t_1\}}|\tilde{F}_1-(1-b_{t_0,B}(\Psi))F_{t_0}|^2e^{-\varphi}c(-\Psi)
      \\
      \le&\int_{\{\Psi<-t_1\}}|\tilde{F}_1-(1-b_{t_0,B}(\Psi))F_{t_0}|^2e^{-\varphi+v_{t_0,B}(\Psi)-\Psi}c(-v_{t_0,B}(\Psi)) \\
      \le & (\int_{t_1}^{t_0+B}c(s)e^{-s}ds)
       \int_{M}\frac{1}{B}\mathbb{I}_{\{-t_0-B<\Psi<-t_0\}}|F_{t_0}|^2e^{-\varphi-\Psi}<+\infty.
  \end{split}
\end{equation}
Then we have
 \begin{equation}\label{main theorem 3}
  \begin{split}
  &\int_{\{\Psi<-t_1\}}|\tilde{F}_1|^2e^{-\varphi}c(-\Psi)\\
     \le & 2\int_{\{\Psi<-t_1\}}|\tilde{F}_1-(1-b_{t_0,B}(\Psi))F_{t_0}|^2e^{-\varphi}c(-\Psi)
      +2\int_{\{\Psi<-t_1\}}|(1-b_{t_0,B}(\Psi))F_{t_0}|^2e^{-\varphi}c(-\Psi)\\
      \le & 2(\int_{t_1}^{t_0+B}c(s)e^{-s}ds)
       \int_{M}\frac{1}{B}\mathbb{I}_{\{-t_0-B<\Psi<-t_0\}}|F_{t_0}|^2e^{-\varphi-\Psi}
       +2\int_{\{\Psi<-t_0\}}|F_{t_0}|^2e^{-\varphi}c(-\Psi)\\
       <&+\infty.
  \end{split}
\end{equation}
Hence we have $G(t_1)\le \int_{\{\Psi<-t_1\}}|\tilde{F}_1|^2e^{-\varphi}c(-\Psi)<+\infty$.

Now, it follows from Lemma \ref{semicontinuous}, Lemma \ref{derivatives of G} and Lemma \ref{characterization of concave function} that we know $G(h^{-1}(r))$ is concave with respect to $r$. It follows from Lemma \ref{semicontinuous} that $\lim\limits_{t\to T+0}G(t)=G(T)$ and $\lim\limits_{t\to+\infty}G(t)=0$.

Theorem \ref{main theorem} is proved.

\end{proof}

Now we prove Remark \ref{infty2}.
\begin{proof}Note that if there exists a positive decreasing concave function $g(t)$ on $(a,b)\subset\mathbb{R}$ and $g(t)$ is not a constant function, then $b<+\infty$.

Assume that $G(t_0)<+\infty$ for some $t_0\geq T$. As $f_{z_0}\notin
\mathcal{O} (K_M)_{z_0} \otimes J_{z_0}$ for some  $ z_0\in Z_0$, Lemma \ref{characterization of g(t)=0} shows that $G(t_0)\in(0,+\infty)$. Following from Theorem \ref{main theorem} we know $G({h}^{-1}(r))$ is concave with respect to $r\in(\int_{T_1}^{T}c(t)e^{-t}dt,\int_{T_1}^{+\infty}c(t)e^{-t}dt)$ and $G({h}^{-1}(r))$ is not a constant function, therefore we obtain $\int_{T_1}^{+\infty}c(t)e^{-t}dt<+\infty$, which contradicts to $\int_{T_1}^{+\infty}c(t)e^{-t}dt=+\infty$. Thus we have $G(t)\equiv+\infty$.

When $G(t_2)\in(0,+\infty)$ for some $t_2\in[T,+\infty)$, Lemma \ref{characterization of g(t)=0} shows that $f_{z_0}\notin
\mathcal{O} (K_M)_{z_0} \otimes J_{z_0}$, for any  $ z_0\in Z_0$. Combining the above discussion, we know $\int_{T_1}^{+\infty}c(t)e^{-t}dt<+\infty$. Using Theorem \ref{main theorem}, we obtain that $G(\hat{h}^{-1}(r))$ is concave with respect to  $r\in (0,\int_{T}^{+\infty}c(t)e^{-t}dt)$, where $\hat{h}(t)=\int_{t}^{+\infty}c(l)e^{-l}dl$.

Thus, Remark \ref{infty2} holds.
\end{proof}

Now we prove Corollary \ref{necessary condition for linear of G}.

\begin{proof} As $G(h^{-1}(r))$ is linear with respect to $r\in[0,\int_T^{+\infty}c(s)e^{-s}ds)$, we have $G(t)=\frac{G(T_1)}{\int_{T_1}^{+\infty}c(s)e^{-s}ds}\int_{t}^{+\infty}c(s)e^{-s}ds$ for any $t\in[T,+\infty)$ and $T_1 \in (T,+\infty)$.

We follow the notation and the construction in Lemma \ref{derivatives of G}. Let $t_0>t_1> T$ be given. It follows from $G(h^{-1}(r))$ is linear with respect to $r\in[0,\int_T^{+\infty}c(s)e^{-s}ds)$ that we know that all inequalities in \eqref{derivative of G 5} should be equalities, i.e., we have

\begin{equation}
\label{necessary condition for linear of G 1}
\begin{split}
&G(t_1)-G(t_0)\\
=&\int_{\{\Psi<-t_1\}}|\tilde{F}_{t_1}|^2e^{-\varphi}c(-\Psi)
-\int_{\{\Psi<-t_0\}}|F_{t_0}|^2e^{-\varphi}c(-\Psi)\\
=&\int_{\{\Psi<-t_1\}}|\tilde{F}_{t_1}-\mathbb{I}_{\{\Psi< -t_0\}}F_{t_0}|^2e^{-\varphi}c(-\Psi)\\
= &
\int_{\{\Psi<-t_1\}}|\tilde{F}_{t_1}-\mathbb{I}_{\{\Psi< -t_0\}}F_{t_0}|^2e^{-\varphi}e^{-\Psi+v_{t_0}(\Psi)}c(-v_{t_0}(\Psi))\\
= &\big(\int^{t_0}_{t_1}c(t)e^{-t}dt\big)\liminf\limits_{B\to 0+0} \frac{G(t_0)-G(t_0+B)}{\int_{t_0}^{t_0+B}c(t)e^{-t}dt}.
\end{split}
\end{equation}
Note that $G(t_0)=\int_{\{\Psi<-t_0\}}|F_{t_0}|^2e^{-\varphi}c(-\Psi)$. Equality \eqref{necessary condition for linear of G 1} shows that $G(t_1)=\int_{\{\Psi<-t_1\}}|\tilde{F}_{t_1}|^2e^{-\varphi}c(-\Psi)$.

Note that on $\{\Psi\ge -t_0\}$, we have $e^{-\Psi+v_{t_0}(\Psi)}c(-v_{t_0}(\Psi))=c(-\Psi)$. It follows from
\begin{equation}\nonumber
\begin{split}
&\int_{\{\Psi<-t_1\}}|\tilde{F}_{t_1}-\mathbb{I}_{\{\Psi< -t_0\}}F_{t_0}|^2e^{-\varphi}c(-\Psi)\\
= &
\int_{\{\Psi<-t_1\}}|\tilde{F}_{t_1}-\mathbb{I}_{\{\Psi< -t_0\}}F_{t_0}|^2e^{-\varphi}e^{-\Psi+v_{t_0}(\Psi)}c(-v_{t_0}(\Psi))\\
\end{split}
\end{equation}
that we have (note that $v_{t_0}(\Psi)=-t_0$ on  $\{\Psi< -t_0\}$)
\begin{equation}
\begin{split}
\label{necessary condition for linear of G 2}
&\int_{\{\Psi<-t_0\}}|\tilde{F}_{t_1}-F_{t_0}|^2e^{-\varphi}c(-\Psi)\\
= &
\int_{\{\Psi<-t_0\}}|\tilde{F}_{t_1}-F_{t_0}|^2e^{-\varphi}e^{-\Psi-t_0}c(t_0).\\
\end{split}
\end{equation}
As $\int_{T}^{+\infty}c(t)e^{-t}dt<+\infty$ and $c(t)e^{-t}$ is decreasing with respect to $t$, we know that there exists $t_2>t_0$ such that $c(t)e^{-t}<c(t_0)e^{-t_0}-\epsilon$ for any $t\ge t_2$, where $\epsilon>0$ is a constant. Then equality \eqref{necessary condition for linear of G 2} implies that
\begin{equation}
\begin{split}
\label{necessary condition for linear of G 3}
&\epsilon\int_{\{\Psi<-t_2\}}|\tilde{F}_{t_1}-F_{t_0}|^2e^{-\varphi-\Psi}\\
\le &
\int_{\{\Psi<-t_2\}}|\tilde{F}_{t_1}-F_{t_0}|^2e^{-\varphi}(e^{-\Psi-t_0}c(t_0)-c(-\Psi))\\
\le &
\int_{\{\Psi<-t_0\}}|\tilde{F}_{t_1}-F_{t_0}|^2e^{-\varphi}(e^{-\Psi-t_0}c(t_0)-c(-\Psi))\\
= &0.
\end{split}
\end{equation}
Note that $e^{-\varphi-\Psi}\ge e^{-(\varphi+\psi)}|F|^2$, $\varphi+\psi$ is a plurisubharmonic function and the integrand in \eqref{necessary condition for linear of G 3} is nonnegative, we must have $\tilde{F}_{t_1}|_{\{\Psi<-t_0\}}=F_{t_0}$.

It follows from Lemma \ref{existence of F} that for any $t>T$, there exists a unique holomorphic $(n,0)$ form $F_t$ on $\{\Psi<-t\}$ satisfying
$$\ \int_{\{\Psi<-t\}}|F_t|^2e^{-\varphi}c(-\Psi)=G(t)$$  and
$\ (F_t-f)\in
\mathcal{O} (K_M)_{z_0} \otimes J_{z_0}$, for any  $ z_0\in Z_0$.  By the above discussion, we know $F_{t}=F_{t'}$ on $\{\Psi<-\max{\{t,t'\}}\}$ for any $t\in(T,+\infty)$ and $t'\in(T,+\infty)$. Hence combining $\lim_{t\rightarrow T+0}G(t)=G(T)$, we obtain that there  exists a unique holomorphic $(n,0)$ form $\tilde{F}$ on $\{\Psi<-T\}$ satisfying $(\tilde{F}-f)_{z_0}\in
\mathcal{O} (K_M)_{z_0} \otimes J_{z_0}$ for any  $z_0\in Z_0$ and $G(t)=\int_{\{\Psi<-t\}}|\tilde{F}|^2e^{-\varphi}c(-\Psi)$ for any $t\ge T$.

Secondly, we prove equality \eqref{other a also linear}.

As $a(t)$ is a nonnegative measurable function on $(T,+\infty)$, then there exists a sequence of functions $\{\sum\limits_{j=1}^{n_i}a_{ij}\mathbb{I}_{E_{ij}}\}_{i\in\mathbb{N}^+}$ $(n_i<+\infty$ for any $i\in\mathbb{N}^+)$ satisfying that $\sum\limits_{j=1}^{n_i}a_{ij}\mathbb{I}_{E_{ij}}$ is increasing with respect to $i$ and $\lim\limits_{i\to +\infty}\sum\limits_{j=1}^{n_i}a_{ij}\mathbb{I}_{E_{ij}}=a(t)$ for any $t\in(T,+\infty)$, where $E_{ij}$ is a Lebesgue measurable subset of $(T,+\infty)$ and $a_{ij}\ge 0$ is a constant for any $i,j$.  It follows from Levi's Theorem that it suffices to prove the case that $a(t)=\mathbb{I}_{E}(t)$, where $E\subset\subset (T,+\infty)$ is a Lebesgue measurable set.

Note that $G(t)=\int_{\{\Psi<-t\}}|\tilde{F}|^2e^{-\varphi}c(-\Psi)=\frac{G(T_1)}{\int_{T_1}^{+\infty}c(s)e^{-s}ds}
\int_{t}^{+\infty}c(s)e^{-s}ds$ where $T_1 \in (T,+\infty)$, then
  \begin{equation}\label{linear 3.4}
\int_{\{-t_1\le\Psi<-t_2\}}|\tilde{F}|^2e^{-\varphi}c(-\Psi)=\frac{G(T_1)}{\int_{T_1}^{+\infty}c(s)e^{-s}ds}
\int_{t_2}^{t_1}c(s)e^{-s}ds
  \end{equation}
  holds for any $T\le t_2<t_1<+\infty$. It follows from the dominated convergence theorem and equality \eqref{linear 3.4} that
    \begin{equation}\label{linear 3.5}
\int_{\{z\in M:-\Psi(z)\in N\}}|\tilde{F}|^2e^{-\varphi}=0
  \end{equation}
  holds for any $N\subset\subset (T,+\infty)$ such that $\mu(N)=0$, where $\mu$ is the Lebesgue measure on $\mathbb{R}$.

  As $c(t)e^{-t}$ is decreasing on $(T,+\infty)$, there are at most countable points denoted by $\{s_j\}_{j\in \mathbb{N}^+}$ such that $c(t)$ is not continuous at $s_j$. Then there is a decreasing sequence of open sets $\{U_k\}$,  such that
$\{s_j\}_{j\in \mathbb{N}^+}\subset U_k\subset (T,+\infty)$ for any $k$, and $\lim\limits_{k \to +\infty}\mu(U_k)=0$. Choosing any closed interval $[t'_2,t'_1]\subset (T,+\infty)$, then we have
\begin{equation}\label{linear 3.6}
\begin{split}
&\int_{\{-t'_1\le\Psi<-t'_2\}}|\tilde{F}|^2e^{-\varphi}\\
=&\int_{\{z\in M:-\Psi(z)\in(t'_2,t'_1]\backslash U_k\}}|\tilde{F}|^2e^{-\varphi}+
\int_{\{z\in M:-\Psi(z)\in[t'_2,t'_1]\cap U_k\}}|\tilde{F}|^2e^{-\varphi}\\
=&\lim_{n\to+\infty}\sum_{i=0}^{n-1}\int_{\{z\in M:-\Psi(z)\in I_{i,n}\backslash U_k\}}|\tilde{F}|^2e^{-\varphi}+
\int_{\{z\in M:-\Psi(z)\in[t'_2,t'_1]\cap U_k\}}|\tilde{F}|^2e^{-\varphi},
\end{split}
\end{equation}
where $I_{i,n}=(t'_1-(i+1)\alpha_n,t'_1-i\alpha_n]$ and $\alpha_n=\frac{t'_1-t'_2}{n}$. Note that
\begin{equation}\label{linear 3.7}
\begin{split}
&\lim_{n\to+\infty}\sum_{i=0}^{n-1}\int_{\{z\in M:-\Psi(z)\in I_{i,n}\backslash U_k\}}|\tilde{F}|^2e^{-\varphi}\\
\le&\limsup_{n\to+\infty}\sum_{i=0}^{n-1}\frac{1}{\inf_{I_{i,n}\backslash U_k}c(t)}\int_{\{z\in M:-\Psi(z)\in I_{i,n}\backslash U_k\}}|\tilde F|^2e^{-\varphi}c(-\Psi).
\end{split}
\end{equation}

It follows from equality \eqref{linear 3.4} that inequality \eqref{linear 3.7} becomes
\begin{equation}\label{linear 3.8}
\begin{split}
&\lim_{n\to+\infty}\sum_{i=0}^{n-1}\int_{\{z\in M:-\Psi(z)\in I_{i,n}\backslash U_k\}}|\tilde F|^2e^{-\varphi}\\
\le&\frac{G(T_1)}{\int_{T_1}^{+\infty}c(s)e^{-s}ds}
\limsup_{n\to+\infty}\sum_{i=0}^{n-1}\frac{1}{\inf_{I_{i,n}\backslash U_k}c(t)}\int_{I_{i,n}\backslash U_k}c(s)e^{-s}ds.
\end{split}
\end{equation}
It is clear that $c(t)$ is uniformly continuous and has positive lower bound and upper bound on $[t'_2,t'_1]\backslash U_k$. Then we have
\begin{equation}\label{linear 3.9}
\begin{split}
&\limsup_{n\to+\infty}\sum_{i=0}^{n-1}\frac{1}{\inf_{I_{i,n}\backslash U_k}c(t)}\int_{I_{i,n}\backslash U_k}c(s)e^{-s}ds \\
\le&\limsup_{n\to+\infty}\sum_{i=0}^{n-1}\frac{\sup_{I_{i,n}\backslash U_k}c(t)}{\inf_{I_{i,n}\backslash U_k}c(t)}\int_{I_{i,n}\backslash U_k}e^{-s}ds\\
=&\int_{(t'_2,t'_1]\backslash U_k}e^{-s}ds.
\end{split}
\end{equation}

Combining inequality \eqref{linear 3.6}, \eqref{linear 3.8} and \eqref{linear 3.9}, we have
\begin{equation}\label{linear 3.10}
\begin{split}
&\int_{\{-t'_1\le\Psi<-t'_2\}}|\tilde{F}|^2e^{-\varphi}\\
=&\int_{\{z\in M:-\Psi(z)\in(t'_2,t'_1]\backslash U_k\}}|\tilde{F}|^2e^{-\varphi}+
\int_{\{z\in M:-\Psi(z)\in[t'_2,t'_1]\cap U_k\}}|\tilde{F}|^2e^{-\varphi}\\
\le&\frac{G(T_1)}{\int_{T_1}^{+\infty}c(s)e^{-s}ds}\int_{(t'_2,t'_1]\backslash U_k}e^{-s}ds+
\int_{\{z\in M:-\Psi(z)\in[t'_2,t'_1]\cap U_k\}}|\tilde{F}|^2e^{-\varphi}.
\end{split}
\end{equation}
Let $k\to +\infty$, following from equality \eqref{linear 3.5} and inequality \eqref{linear 3.10}, then we obtain that
\begin{equation}\label{linear 3.11}
\begin{split}
\int_{\{-t'_1\le\Psi<-t'_2\}}|\tilde{F}|^2e^{-\varphi}
\le\frac{G(T_1)}{\int_{T_1}^{+\infty}c(s)e^{-s}ds}\int_{t'_2}^{t'_1}e^{-s}ds.
\end{split}
\end{equation}
Following from a similar discussion we can obtain that
\begin{equation}\nonumber
\begin{split}
\int_{\{-t'_1\le\Psi<-t'_2\}}|\tilde{F}|^2e^{-\varphi}
\ge\frac{G(T_1)}{\int_{T_1}^{+\infty}c(s)e^{-s}ds}\int_{t'_2}^{t'_1}e^{-s}ds.
\end{split}
\end{equation}
Then combining inequality \eqref{linear 3.11}, we know
\begin{equation}\label{linear 3.12}
\begin{split}
\int_{\{-t'_1\le\Psi<-t'_2\}}|\tilde{F}|^2e^{-\varphi}
=\frac{G(T_1)}{\int_{T_1}^{+\infty}c(s)e^{-s}ds}\int_{t'_2}^{t'_1}e^{-s}ds.
\end{split}
\end{equation}
Then it is clear that for any open set $U\subset (T,+\infty)$ and compact set $V\subset (T,+\infty)$,
$$
\int_{\{z\in M;-\Psi(z)\in U\}}|\tilde{F}|^2e^{-\varphi}
=\frac{G(T_1)}{\int_{T_1}^{+\infty}c(s)e^{-s}ds}\int_{U}e^{-s}ds,
$$
and
$$
\int_{\{z\in M;-\Psi(z)\in V\}}|\tilde{F}|^2e^{-\varphi}
=\frac{G(T_1)}{\int_{T_1}^{+\infty}c(s)e^{-s}ds}\int_{V}e^{-s}ds.
$$
As $E\subset\subset (T,+\infty)$, then $E\cap(t_2,t_1]$ is a Lebesgue measurable subset of $(T+\frac{1}{n},n)$ for some large $n$, where $T\le t_2<t_1\le+\infty$. Then there exists a sequence of compact sets $\{V_j\}$ and a sequence of open subsets $\{V'_j\}$ satisfying $V_1\subset \ldots \subset V_j\subset V_{j+1}\subset\ldots \subset E\cap(t_2,t_1]\subset \ldots \subset V'_{j+1}\subset V'_j\subset \ldots\subset V'_1\subset\subset (T,+\infty)$ and $\lim\limits_{j\to +\infty}\mu(V'_j-V_j)=0$, where $\mu$ is the Lebesgue measure on $\mathbb{R}$. Then we have
\begin{equation}\nonumber
\begin{split}
\int_{\{-t'_1\le\Psi<-t'_2\}}|\tilde{F}|^2e^{-\varphi}\mathbb{I}_E(-\Psi)
=&\int_{z\in M:-\Psi(z)\in E\cap (t_2,t_1]}|\tilde{F}|^2e^{-\varphi}\\
\le&\liminf_{j\to+\infty}\int_{\{z\in M:-\Psi(z)\in V'_j\}}|\tilde{F}|^2e^{-\varphi}\\
\le&\liminf_{j\to+\infty}\frac{G(T_1)}{\int_{T_1}^{+\infty}c(s)e^{-s}ds}\int_{V'_j}e^{-s}ds\\
\le&\frac{G(T_1)}{\int_{T_1}^{+\infty}c(s)e^{-s}ds}\int_{E\cap(t_2,t_1]}e^{-s}ds\\
=&\frac{G(T_1)}{\int_{T_1}^{+\infty}c(s)e^{-s}ds}\int_{t_2}^{t_1}e^{-s}\mathbb{I}_E(s)ds,
\end{split}
\end{equation}
and
\begin{equation}\nonumber
\begin{split}
\int_{\{-t'_1\le\Psi<-t'_2\}}|\tilde{F}|^2e^{-\varphi}\mathbb{I}_E(-\Psi)
\ge&\liminf_{j\to+\infty}\int_{\{z\in M:-\Psi(z)\in V_j\}}|\tilde{F}|^2e^{-\varphi}\\
\ge&\liminf_{j\to+\infty}\frac{G(T_1)}{\int_{T_1}^{+\infty}c(s)e^{-s}ds}\int_{V_j}e^{-s}ds\\
=&\frac{G(T_1)}{\int_{T_1}^{+\infty}c(s)e^{-s}ds}\int_{t_2}^{t_1}e^{-s}\mathbb{I}_E(s)ds,
\end{split}
\end{equation}
which implies that
$$\int_{\{-t'_1\le\Psi<-t'_2\}}|\tilde{F}|^2e^{-\varphi}\mathbb{I}_E(-\Psi)=
\frac{G(T_1)}{\int_{T_1}^{+\infty}c(s)e^{-s}ds}\int_{t_2}^{t_1}e^{-s}\mathbb{I}_E(s)ds.$$
Hence we know that equality \eqref{other a also linear} holds.

Corollary \ref{necessary condition for linear of G} is proved.
\end{proof}

Now we prove Remark \ref{rem:linear}.

\begin{proof}[Proof of Remark \ref{rem:linear}]
By the definition of $G(t;\tilde{c})$, we have $G(t_0;\tilde{c})\le\int_{\{\Psi<-t_0\}}|\tilde{F}|^2e^{-\varphi}\tilde{c}(-\Psi)$, where $\tilde{F}$ is the holomorphic $(n,0)$ form on $\{\Psi<-T\}$ such that $G(t)=\int_{\{\Psi<-t\}}|\tilde{F}|^2e^{-\varphi}c(-\Psi)$ for any $t\ge T$.  Hence we only consider the case $G(t_0;\tilde{c})<+\infty$.

By the definition of $G(t;\tilde{c})$, we can choose a holomorphic $(n,0)$ form $F_{t_0,\tilde{c}}$ on $\{\Psi<-t_0\}$ satisfying $\ (F_{t_0,\tilde{c}}-f)_{z_0}\in
\mathcal{O} (K_M)_{z_0} \otimes J_{z_0}$, for any  $ z_0\in Z_0$ and $\int_{ \{ \Psi<-t_0\}}|F_{t_0,\tilde{c}}|^2e^{-\varphi}\tilde{c}(-\Psi)<+\infty$. As $\mathcal{H}^2(\tilde{c},t_0)\subset \mathcal{H}^2(c,t_0)$, we have $\int_{ \{ \Psi<-t_0\}}|F_{t_0,\tilde{c}}|^2e^{-\varphi}c(-\Psi)<+\infty$. Using
Lemma \ref{existence of F}, we obtain that
\begin{equation}\nonumber
\begin{split}
\int_{ \{ \Psi<-t\}}|F_{t_0,\tilde{c}}|^2e^{-\varphi}c(-\Psi)
=&\int_{ \{ \Psi<-t\}}|\tilde{F}|^2e^{-\varphi}c(-\Psi)\\
+&\int_{ \{ \Psi<-t\}}|F_{t_0,\tilde{c}}-\tilde{F}|^2e^{-\varphi}c(-\Psi)
\end{split}
\end{equation}
for any $t\ge t_0,$ then
\begin{equation}\label{linear 3.13}
\begin{split}
\int_{ \{-t_3\le \Psi<-t_4\}}|F_{t_0,\tilde{c}}|^2e^{-\varphi}c(-\Psi)
=&\int_{ \{-t_3\le \Psi<-t_4\}}|\tilde{F}|^2e^{-\varphi}c(-\Psi)\\
+&\int_{\{-t_3\le \Psi<-t_4\}}|F_{t_0,\tilde{c}}-\tilde{F}|^2e^{-\varphi}c(-\Psi)
\end{split}
\end{equation}
holds for any $t_3>t_4\ge t_0$. It follows from the dominated convergence theorem, equality \eqref{linear 3.13}, \eqref{linear 3.5} and $c(t)>0$ for any $t>T$, that
\begin{equation}\label{linear 3.14}
\begin{split}
\int_{ \{z\in M:-\Psi(z)=t\}}|F_{t_0,\tilde{c}}|^2e^{-\varphi}
=\int_{\{z\in M:-\Psi(z)=t\}}|F_{t_0,\tilde{c}}-\tilde{F}|^2e^{-\varphi}
\end{split}
\end{equation}
holds for any $t>t_0$.

Choosing any closed interval $[t'_4,t'_3]\subset (t_0,+\infty)\subset (T,+\infty)$. Note that $c(t)$ is uniformly continuous and have positive lower bound and upper bound on $[t'_4,t'_3]\backslash U_k$, where $\{U_k\}$ is the decreasing sequence of open subsets of $(T,+\infty)$, such that $c$ is continuous on $(T,+\infty)\backslash U_k$ and $\lim\limits_{k \to +\infty}\mu(U_k)=0$. Take $N=\cap_{k=1}^{+\infty}U_k.$ Note that
\begin{equation}\label{linear 3.15}
\begin{split}
&\int_{ \{-t'_3\le\Psi<-t'_4\}}|F_{t_0,\tilde{c}}|^2e^{-\varphi}\\
=&\lim_{n\to+\infty}\sum_{i=0}^{n-1}\int_{\{z\in M:-\Psi(z)\in S_{i,n}\backslash U_k\}}|F_{t_0,\tilde{c}}|^2e^{-\varphi}
+\int_{\{z\in M:-\Psi(z)\in(t'_4,t'_3]\cap U_k\}}|F_{t_0,\tilde{c}}|^2e^{-\varphi}\\
\le&\limsup_{n\to+\infty}\sum_{i=0}^{n-1}\frac{1}{\inf_{S_{i,n}}c(t)}\int_{\{z\in M:-\Psi(z)\in S_{i,n}\backslash U_k\}}|F_{t_0,\tilde{c}}|^2e^{-\varphi}c(-\Psi)\\
&+\int_{\{z\in M:-\Psi(z)\in(t'_4,t'_3]\cap U_k\}}|F_{t_0,\tilde{c}}|^2e^{-\varphi},
\end{split}
\end{equation}
where $S_{i,n}=(t'_4-(i+1)\alpha_n,t'_3-i\alpha_n]$ and $\alpha_n=\frac{t'_3-t'_4}{n}$.
It follows from equality \eqref{linear 3.13},\eqref{linear 3.14}, \eqref{linear 3.5} and the dominated convergence theorem that
\begin{equation}\label{linear 3.16}
\begin{split}
&\int_{\{z\in M:-\Psi(z)\in S_{i,n}\backslash U_k\}}|F_{t_0,\tilde{c}}|^2e^{-\varphi}c(-\Psi)\\
=&\int_{\{z\in M:-\Psi(z)\in S_{i,n}\backslash U_k\}}|\tilde F|^2e^{-\varphi}c(-\Psi)
+\int_{\{z\in M:-\Psi(z)\in S_{i,n}\backslash U_k\}}|F_{t_0,\tilde{c}}-\tilde F|^2e^{-\varphi}c(-\Psi).
\end{split}
\end{equation}
As $c(t)$ is uniformly continuous and have positive lower bound and upper bound on $[t'_3,t'_4]\backslash U_k$, combining equality \eqref{linear 3.16}, we have
\begin{equation}\label{linear 3.17}
\begin{split}
&\limsup_{n\to+\infty}\sum_{i=0}^{n-1}\frac{1}{\inf_{S_{i,n}\backslash U_k}c(t)}\int_{\{z\in M:-\Psi(z)\in S_{i,n}\backslash U_k\}}|F_{t_0,\tilde{c}}|^2e^{-\varphi}c(-\Psi)\\
=&\limsup_{n\to+\infty}\sum_{i=0}^{n-1}\frac{1}{\inf_{S_{i,n}\backslash U_k}c(t)}(\int_{\{z\in M:-\Psi(z)\in S_{i,n}\backslash U_k\}}|\tilde F|^2e^{-\varphi}c(-\Psi)\\
&+\int_{\{z\in M:-\Psi(z)\in S_{i,n}\backslash U_k\}}|F_{t_0,\tilde{c}}-\tilde F|^2e^{-\varphi}c(-\Psi))\\
\le & \limsup_{n\to+\infty}\sum_{i=0}^{n-1}\frac{\sup_{S_{i,n}\backslash U_k}c(t)}{\inf_{S_{i,n}\backslash U_k}c(t)}(\int_{\{z\in M:-\Psi(z)\in S_{i,n}\backslash U_k\}}|\tilde F|^2e^{-\varphi}\\
&+\int_{\{z\in M:-\Psi(z)\in S_{i,n}\backslash U_k\}}|F_{t_0,\tilde{c}}-\tilde F|^2e^{-\varphi})\\
=&\int_{\{z\in M:-\Psi(z)\in (t'_4,t'_3]\backslash U_k\}}|\tilde F|^2e^{-\varphi}
+\int_{\{z\in M:-\Psi(z)\in (t'_4,t'_3]\backslash U_k\}}|F_{t_0,\tilde{c}}-\tilde F|^2e^{-\varphi}.
\end{split}
\end{equation}
If follows from inequality \eqref{linear 3.15} and \eqref{linear 3.17} that
\begin{equation}\label{linear 3.18}
\begin{split}
&\int_{ \{-t'_3\le\Psi<-t'_4\}}|F_{t_0,\tilde{c}}|^2e^{-\varphi}\\
\le & \int_{\{z\in M:-\Psi(z)\in (t'_4,t'_3]\backslash U_k\}}|\tilde F|^2e^{-\varphi}
+\int_{\{z\in M:-\Psi(z)\in (t'_4,t'_3]\backslash U_k\}}|F_{t_0,\tilde{c}}-\tilde F|^2e^{-\varphi}\\
&+\int_{ \{z\in M: -\Psi(z)\in(t'_4,t'_3]\cap U_k\}}|F_{t_0,\tilde{c}}|^2e^{-\varphi}.
\end{split}
\end{equation}
It follows from $F_{t_0,\tilde{c}}\in\mathcal{H}^2(c,t_0)$ that $\int_{ \{-t'_3\le\Psi<-t'_4\}}|F_{t_0,\tilde{c}}|^2e^{-\varphi}<+\infty$. Let $k\to+\infty,$ by equality \eqref{linear 3.5}, inequality \eqref{linear 3.18} and the dominated theorem, we have
\begin{equation}\label{linear 3.19}
\begin{split}
&\int_{ \{-t'_3\le\Psi<-t'_4\}}|F_{t_0,\tilde{c}}|^2e^{-\varphi}\\
\le & \int_{\{z\in M:-\Psi(z)\in (t'_4,t'_3]\}}|\tilde F|^2e^{-\varphi}
+\int_{\{z\in M:-\Psi(z)\in (t'_4,t'_3]\backslash N\}}|F_{t_0,\tilde{c}}-\tilde F|^2e^{-\varphi}\\
&+\int_{ \{z\in M: -\Psi(z)\in(t'_4,t'_3]\cap N\}}|F_{t_0,\tilde{c}}|^2e^{-\varphi}.
\end{split}
\end{equation}
By similar discussion, we also have that
\begin{equation}\nonumber
\begin{split}
&\int_{ \{-t'_3\le\Psi<-t'_4\}}|F_{t_0,\tilde{c}}|^2e^{-\varphi}\\
\ge & \int_{\{z\in M:-\Psi(z)\in (t'_4,t'_3]\}}|\tilde F|^2e^{-\varphi}
+\int_{\{z\in M:-\Psi(z)\in (t'_4,t'_3]\backslash N\}}|F_{t_0,\tilde{c}}-\tilde F|^2e^{-\varphi}\\
&+\int_{ \{z\in M: -\Psi(z)\in(t'_4,t'_3]\cap N\}}|F_{t_0,\tilde{c}}|^2e^{-\varphi}.
\end{split}
\end{equation}
then combining inequality \eqref{linear 3.19}, we have
\begin{equation}\label{linear 3.20}
\begin{split}
&\int_{ \{-t'_3\le\Psi<-t'_4\}}|F_{t_0,\tilde{c}}|^2e^{-\varphi}\\
= & \int_{\{z\in M:-\Psi(z)\in (t'_4,t'_3]\}}|\tilde F|^2e^{-\varphi}
+\int_{\{z\in M:-\Psi(z)\in (t'_4,t'_3]\backslash N\}}|F_{t_0,\tilde{c}}-\tilde F|^2e^{-\varphi}\\
&+\int_{ \{z\in M: -\Psi(z)\in(t'_4,t'_3]\cap N\}}|F_{t_0,\tilde{c}}|^2e^{-\varphi}.
\end{split}
\end{equation}
Using equality \eqref{linear 3.5}, \eqref{linear 3.14} and Levi's Theorem, we have
\begin{equation}\label{linear 3.21}
\begin{split}
&\int_{ \{z\in M:-\Psi(z)\in U\}}|F_{t_0,\tilde{c}}|^2e^{-\varphi}\\
= & \int_{\{z\in M:-\Psi(z)\in U\}}|\tilde F|^2e^{-\varphi}
+\int_{\{z\in M:-\Psi(z)\in U\backslash N\}}|F_{t_0,\tilde{c}}-\tilde F|^2e^{-\varphi}\\
&+\int_{ \{z\in M: -\Psi(z)\in U\cap N\}}|F_{t_0,\tilde{c}}|^2e^{-\varphi}
\end{split}
\end{equation}
holds for any open set $U\subset\subset (t_0,+\infty)$, and
\begin{equation}\label{linear 3.22}
\begin{split}
&\int_{ \{z\in M:-\Psi(z)\in V\}}|F_{t_0,\tilde{c}}|^2e^{-\varphi}\\
= & \int_{\{z\in M:-\Psi(z)\in V\}}|\tilde F|^2e^{-\varphi}
+\int_{\{z\in M:-\Psi(z)\in V\backslash N\}}|F_{t_0,\tilde{c}}-\tilde F|^2e^{-\varphi}\\
&+\int_{ \{z\in M: -\Psi(z)\in V\cap N\}}|F_{t_0,\tilde{c}}|^2e^{-\varphi}
\end{split}
\end{equation}
holds for any compact set $V\subset (t_0,+\infty)$. For any measurable set $E\subset\subset (t_0,+\infty)$, there exists a sequence of compact set $\{V_l\}$, such that $V_l\subset V_{l+1}\subset E$ for any $l$ and $\lim\limits_{l\to +\infty}\mu(V_l)=\mu(E)$, hence by equality \eqref{linear 3.22}, we have
\begin{equation}\label{linear 3.23}
\begin{split}
\int_{ \{\Psi<-t_0\}}|F_{t_0,\tilde{c}}|^2e^{-\varphi}\mathbb{I}_E(-\Psi)
\ge&\lim_{l \to +\infty} \int_{ \{\Psi<-t_0\}}|F_{t_0,\tilde{c}}|^2e^{-\varphi}\mathbb{I}_{V_j}(-\Psi)\\
\ge&\lim_{l \to +\infty} \int_{ \{\Psi<-t_0\}}|\tilde F|^2e^{-\varphi}\mathbb{I}_{V_j}(-\Psi)\\
=& \int_{ \{\Psi<-t_0\}}|\tilde F|^2e^{-\varphi}\mathbb{I}_{V_j}(-\Psi).
\end{split}
\end{equation}
It is clear that for any $t>t_0$, there exists a sequence of functions $\{\sum_{j=1}^{n_i}\mathbb{I}_{E_{i,j}}\}_{i=1}^{+\infty}$ defined on $(t,+\infty)$, satisfying $E_{i,j}\subset\subset (t,+\infty)$, $\sum_{j=1}^{n_{i+1}}\mathbb{I}_{E_{i+1,j}}(s)\ge \sum_{j=1}^{n_{i}}\mathbb{I}_{E_{i,j}}(s)$ and $\lim\limits_{i\to+\infty}\sum_{j=1}^{n_i}\mathbb{I}_{E_{i,j}}(s)=\tilde{c}(s)$ for any $s>t$. Combining Levi's Theorem and inequality \eqref{linear 3.23}, we have
\begin{equation}\label{linear 3.24}
\begin{split}
\int_{ \{\Psi<-t_0\}}|F_{t_0,\tilde{c}}|^2e^{-\varphi}\tilde{c}(-\Psi)
\ge\int_{ \{\Psi<-t_0\}}|\tilde F|^2e^{-\varphi}\tilde{c}(-\Psi).
\end{split}
\end{equation}
By the definition of $G(t_0,\tilde{c})$, we have $G(t_0,\tilde{c})=\int_{ \{\Psi<-t_0\}}|\tilde F|^2e^{-\varphi}\tilde{c}(-\Psi).$ Equality \eqref{other c also linear} is proved.
\end{proof}

\section{A necessary condition for $G(h^{-1}(r))$ is linearity on open Riemann surfaces}\label{sec:n}

Let $\Omega$ be an open Riemann surface, and let $K_{\Omega}$ be the canonical (holomorphic) line bundle on $\Omega$. Let $dV_{\Omega}$ be a continuous volume form on $\Omega$.  Let $\psi$ be a  subharmonic function on $\Omega$, and let $\varphi$ be a Lebesgue measurable function on $\Omega$ such that $\varphi+\psi$ is subharmonic on $\Omega$. Let $F$ be a holomorphic function on $\Omega$. Let $T\in[-\infty,+\infty)$ such that $-T\le\sup\{\psi(z)-2\log|F(z)|:z\in\Omega \,\&\,F(z)\not=0\}$. Denote that
$$\Psi:=\min\{\psi-2\log|F|,-T\}.$$
For any $z\in \Omega$ satisfying $F(z)=0$, we set $\Psi(z)=-T$. Note that $\Psi$ is subharmonic function on $\{\Psi<-T\}$.

 Let $Z_0$ be a   subset of $ \cap_{t>T}\overline{\{\Psi<-t\}}$. Denote that $Z_1:=\{z\in Z_0:v(dd^c(\psi),z)\ge2ord_{z}(F)\}$ and $Z_2:=\{z\in Z_0:v(dd^c(\psi),z)<2ord_{z}(F)\}$. Denote that $Z'_1:=\{z\in Z_0:v(dd^c(\psi),z)>2ord_{z}(F)\}$. Note that $Z'_1\backslash\{\Psi<-t\}\subset\{z\in\Omega:F(z)=0\}$ and $\{\Psi<-t\}\cup Z'_1$ is an open Riemann surface for any $t\ge T$. Assume that $Z'_1$ is a discrete subset of $\{\Psi<-T\}\cup Z'_1$.

Let $c(t)$ be a positive measurable function on $(T,+\infty)$ satisfying $c(t)e^{-t}$ is decreasing on $(0,+\infty)$, $c(t)e^{-t}$ is integrable near $+\infty$,  and  $e^{-\varphi}c(-\Psi)$ has a positive lower bound on $K\cap\{\Psi<-T\}$ for any compact subset $K$ of $\Omega\backslash E$, where $E\subset\{\Psi=-\infty\}$ is a discrete  subset of $\Omega$.

Let $f$ be a holomorphic $(1,0)$ form on $\{\Psi<-t_0\}\cap V$, where $V\supset Z_0$ is an open subset of $\Omega$ and $t_0>T$
is a real number.
Let $J_{z}$ be an $\mathcal{O}_{\Omega,z}$-submodule of $H_{z}$ such that $I(\Psi+\varphi)_{z}\subset J_{z}$,
where $z\in Z_0$ and $H_{z}:=\{h_{z}\in J(\Psi)_{z}:\int_{\{\Psi<-t\}\cap U}|h|^2e^{-\varphi}c(-\Psi)dV_{\Omega}<+\infty$ for some $t>T$ and some neighborhood $U$ of $z\}$.
Denote
\begin{equation*}
\begin{split}
\inf\Bigg\{ \int_{ \{ \Psi<-t\}}|\tilde{f}|^2&e^{-\varphi}c(-\Psi): \tilde{f}\in
H^0(\{\Psi<-t\},\mathcal{O} (K_{\Omega})  ) \\
&\&\, (\tilde{f}-f)_{z}\in
\mathcal{O} (K_{\Omega})_{z} \otimes J_{z}\text{ for any }  z\in Z_0 \Bigg\}
\end{split}
\end{equation*}
by $G(t;c,\Psi,\varphi,J,f)$, where $t\in[T,+\infty)$ and $|f|^2:=\sqrt{-1}^{n^2}f\wedge \bar{f}$ for any $(1,0)$ form $f$.

Note that there exists a subharmonic function $\psi_1$ on $\Omega_1:= \{\Psi<-T\}\cup Z'_1$ such that $\psi_1+2\log|F|=\psi$. Let $K_{\Omega_1}$ be the canonical (holomorphic) line bundle on $\Omega_1$. For any $z\in \Omega_1$, if $F(z)=0$, we know that $e^{-\varphi}c(-\Psi)=e^{-\varphi}c(-\psi_1)$ has a positive lower bound on $V'\backslash\{z\}$, where $V'\Subset \Omega_1$ is a neighborhood of $z$. Combining $e^{-\varphi}c(-\psi_1)\le Ce^{-\varphi-\psi_1}=Ce^{-\varphi-\psi+2\log|F|}$ on $V'$, we have $v(dd^c(\varphi+\psi),z)\ge 2ord_{z_0}(F)$. Hence we have $\varphi+\psi_1$ is a subharmonic function on $\Omega_1$.

For any $z\in Z'_1$, if $F(z)\not=0$, we know that $J_{z}$ and $I(\varphi+\Psi)_z=\mathcal{I}(\varphi+\psi_1)_z$ are ideals of $\mathcal{O}_{\Omega_1,z}$, and we denote $J_{z}$ by $\mathcal{F}_{z}$. For any $z\in Z'_1$, if $F(z)=0$, we know that $e^{-\varphi}c(-\Psi)$ has a positive lower bound on $V'\backslash\{z\}$, where $V'\subset \Omega$ is a neighborhood of $z$, hence $I(\varphi+\Psi)_z\subset J_{z}\subset H_{z}$ shows that for any $h_z\in J(\Psi)_z$, $h_{z}\in I(\varphi+\Psi)_z$ if and only if there exists a holomorphic extension $\tilde h$ of $h$ near $z$ such that $(\tilde h,z)\in \mathcal{I}(\varphi+\psi_1)_z$, and we get that there exists an ideal $\mathcal{F}_{z}$ of $\mathcal{O}_{\Omega_1,z}$ such that for any $h_z\in J(\Psi)_z$, $h_{z}\in J_{z}$ if and only if there exists a holomorphic extension $\tilde h$ of $h$ near $z$ such that  $(\tilde h,z)\in\mathcal{F}_{z}$.

Let $f_1$ be a holomorphic $(1,0)$ form on $V_1$, where $V_1\supset Z'_1$ is an open subset of $\Omega_1$. Denote
\begin{equation*}
\begin{split}
\inf\Bigg\{ \int_{ \{ \psi_1<-t\}}|\tilde{f}|^2e^{-\varphi}c(-\psi_1): \tilde{f}\in
H^0(\{\psi_1<-t\},\mathcal{O} (K_{\Omega_1})  ) \\
\&\, (\tilde{f}-f_1,z)\in
\mathcal{O} (K_{\Omega_1})_{z} \otimes \mathcal{F}_{z},\text{ for any }  z\in Z'_1 \Bigg\}
\end{split}
\end{equation*}
by $G_{f_1}(t)$, where $t\in[T,+\infty)$.

Note that $e^{-\varphi}c(-\psi_1)$ has a positive lower bound on any compact subset of $\Omega_1\backslash(E\cup Z'_1)$. Let $f_2$ be a holomorphic $(1,0)$ form on $V\cap(\{\Psi<-t_0\}\cup Z'_1)=V\cap\{\psi_1<-t_0\}$ satisfying that $(f_2)_z\in
\mathcal{O} (K_{\Omega})_{z} \otimes H_{z}$ for any $z\in Z_0$, where $V\supset Z_0$ is an open subset of $\Omega$ and $t_0>T$
is a real number. Theorem \ref{main theorem} shows that
$G(h^{-1}(r);c,\Psi,\varphi,J,f_2)$ and $G_{f_2}(h^{-1}(r))$ are concave with respect to $r$, where  $h(t)=\int_{t}^{+\infty}c(s)e^{-s}ds$. We give a relationship between $G(h^{-1}(r);c,\Psi,\varphi,J,f_2)$ and $G_{f_2}(h^{-1}(r))$, which will be used in the proof of Proposition \ref{p:n-linearity1}.

\begin{Lemma}\label{l:inner}
	If $H_z=I(\varphi+\Psi)_z$ for any $z\in Z_0\backslash Z'_1$, then $G(t;c,\Psi,\varphi,J,f_2)=\tilde G_{f_2}(t)$ for any $t\ge T$.
\end{Lemma}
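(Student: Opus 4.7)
The plan is to identify the feasible sets of the two minimization problems via restriction and an $L^2$-removable-singularity extension. First I would unpack the hypothesis: combining $H_z=I(\varphi+\Psi)_z$ for $z\in Z_0\backslash Z'_1$ with the standing inclusion $I(\varphi+\Psi)_z\subset J_z\subset H_z$ forces the equalities $J_z=I(\varphi+\Psi)_z=H_z$ at every such $z$. For any $L^2$-finite competitor $\tilde f\in H^0(\{\Psi<-t\},\mathcal{O}(K_\Omega))$ in the definition of $G(t;c,\Psi,\varphi,J,f_2)$, the germ $(\tilde f)_z$ automatically belongs to $H_z=J_z$, and $(f_2)_z\in\mathcal{O}(K_\Omega)_z\otimes H_z=\mathcal{O}(K_\Omega)_z\otimes J_z$ by the assumption on $f_2$; hence the constraint $(\tilde f-f_2)_z\in\mathcal{O}(K_\Omega)_z\otimes J_z$ is automatic for $z\in Z_0\backslash Z'_1$. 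So $G(t;c,\Psi,\varphi,J,f_2)$ is the infimum of $\int_{\{\Psi<-t\}}|\tilde f|^2e^{-\varphi}c(-\Psi)$ over $L^2$-finite $\tilde f$ satisfying the module condition only at points of $Z'_1$.

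Next I would match this with $G_{f_2}(t)$ by a norm-preserving bijection. The open sets $\{\Psi<-t\}$ and $\{\psi_1<-t\}$ differ exactly by the finite discrete set $Z'_1\cap\{F=0\}$, of Lebesgue measure zero in $\Omega_1$, and on their common open part one has $\psi_1=\Psi$ since $\Psi<-t\leq -T$ there; so the two $L^2$ integrands coincide wherever both make sense, and restriction $\tilde g\mapsto\tilde g|_{\{\Psi<-t\}}$ gives a norm-preserving map from admissible $\tilde g$ for $G_{f_2}(t)$ into admissible $\tilde f$ for $G(t;c,\Psi,\varphi,J,f_2)$. For the reverse direction, given admissible $\tilde f$, I would extend $\tilde f$ holomorphically across each $z\in Z'_1\cap\{F=0\}$. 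The essential input is the observation, recorded in the paragraph preceding the lemma, that $e^{-\varphi}c(-\Psi)$ admits a positive lower bound on some punctured neighborhood $V'\backslash\{z\}$ because $E$ is discrete and $z$ is isolated in $\Omega_1$; combined with $\int|\tilde f|^2e^{-\varphi}c(-\Psi)<+\infty$, this yields local $L^2$-boundedness of $\tilde f$ on the punctured neighborhood, and Riemann's removable-singularity theorem on open Riemann surfaces produces a holomorphic extension $\tilde g\in H^0(\{\psi_1<-t\},\mathcal{O}(K_{\Omega_1}))$ with the same $L^2$ norm.

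Finally I would verify that the module conditions at points $z\in Z'_1$ match under this bijection. For $z\in Z'_1$ with $F(z)\neq 0$, the point already lies in $\{\Psi<-t\}$ and by construction $\mathcal{F}_z$ is defined to equal $J_z$ as an ideal of $\mathcal{O}_{\Omega_1,z}=\mathcal{O}_{\Omega,z}$, so the condition transfers tautologically. For $z\in Z'_1\cap\{F=0\}$, the ideal $\mathcal{F}_z\subset\mathcal{O}_{\Omega_1,z}$ is defined precisely so that $(h,z)\in\mathcal{F}_z$ iff the germ in $J(\Psi)_z$ represented by the punctured restriction of $h$ lies in $J_z$; since $\tilde g-f_2$ is holomorphic at $z$ and restricts to $\tilde f-f_2$ on the punctured neighborhood, $(\tilde g-f_2,z)\in\mathcal{O}(K_{\Omega_1})_z\otimes\mathcal{F}_z$ iff $(\tilde f-f_2)_z\in\mathcal{O}(K_\Omega)_z\otimes J_z$. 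The main technical obstacle is precisely the removable-singularity extension at points of $Z'_1\cap\{F=0\}$ together with the accompanying identification of $\mathcal{F}_z$ with $J_z$; once this is in hand, both inequalities $G(t;c,\Psi,\varphi,J,f_2)\leq G_{f_2}(t)$ and $G_{f_2}(t)\leq G(t;c,\Psi,\varphi,J,f_2)$ follow immediately from the norm-preserving bijection.
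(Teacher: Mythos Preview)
Your proposal is correct and follows essentially the same approach as the paper: reduce the constraints at $Z_0\backslash Z'_1$ to automatic ones via $J_z=H_z$, then establish a norm-preserving correspondence between competitors by restricting from $\{\psi_1<-t\}$ to $\{\Psi<-t\}$ in one direction and extending across the discrete set $Z'_1\backslash\{\Psi<-t\}\subset\{F=0\}$ via the $L^2$ removable-singularity argument in the other, matching the module conditions through the very definition of $\mathcal{F}_z$. Two minor slips that do not affect the argument: the exceptional set is $Z'_1\backslash\{\Psi<-t\}$ (a subset of $Z'_1\cap\{F=0\}$) and is only assumed discrete in this section, not finite; and the positive lower bound of $e^{-\varphi}c(-\Psi)$ on a punctured neighborhood comes not merely from discreteness of $E$ but from $v(dd^c\psi,z)>2\,ord_z(F)$, which forces such a punctured neighborhood into $\{\Psi<-t\}$.
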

\begin{proof}
	$H_z=I(\varphi+\Psi)_z$ for any $z\in Z_0\backslash Z'_1$ shows that $J_{z_0}=H_z=I(\varphi+\Psi)_z$ for any $z\in Z_0\backslash Z'_1$. For any $t\ge T$ and holomorphic $(1,0)$ form $\tilde f$ on $\{\psi_1<-t\}$ satisfying $(\tilde f-f_2,z)\in
\mathcal{O} (K_{\Omega_1})_{z} \otimes \mathcal{F}_{z}$ for any $z\in Z'_1$ and $\int_{\{\psi_1<-t\}}|\tilde f|^2e^{-\varphi}c(-\psi_1)<+\infty$, it follows from $(f_2)_z\in
\mathcal{O} (K_{\Omega})_{z} \otimes H_{z}$ for any $z\in Z_0$ and the definition of $H_z$ that $(\tilde f-f_2)_z\in
\mathcal{O} (K_{\Omega})_{z} \otimes J_{z}$ for any $z\in Z_0$.
As $\mu(Z'_1)=0$, where $\mu$ is the Lebesgue measure on $\Omega$,   the definitions of $G(t;c,\Psi,\varphi,J,f_2)$ and $G_{f_2}(t)$ show that $G(t;c,\Psi,\varphi,J,f_2)\le\tilde G_{f_2}(t)$ for any $t\ge T$.
	
	For any $t\ge T$, let  $\tilde f$ be a holomorphic $(1,0)$ form on $\{\Psi<-t\}$ satisfying $(\tilde f-f_2)_{z}\in\mathcal{O}(K_{\Omega})_z\otimes J_z$ for any $z\in Z_0$ and $\int_{\{\Psi<-t\}}|\tilde f|^2e^{-\varphi}c(-\Psi)<+\infty$. For any $z_0\in Z'_1\backslash\{\Psi<-t\}$, we have $F(z_0)=0$. Note that $v(dd^c(\psi),z_0)>2ord_{z_0}(F)$, then $e^{-\varphi}c(-\Psi)$ has a positive lower bound on $V'\backslash\{z_0\}\subset\{\Psi<-t\}$, where $V'\Subset\Omega_1$ is a neighborhood  of $z_0$. Following from $\int_{\{\Psi<-t\}}|\tilde f|^2e^{-\varphi}c(-\Psi)<+\infty$, we get that there exists a holomorphic $(1,0)$ form $\tilde f_1$ on $\{\psi_1<-t\}=\{\Psi<-t\}\cup Z'_1$ such that $\tilde f_1=\tilde f$ on $\{\Psi<-t\}$, which implies that $(\tilde f_1-f_2,z)\in\mathcal{O}(K_{\Omega})_z\otimes \mathcal{F}_z$ for any $z\in Z'_1$ and $\int_{\{\psi_1<-t\}}|\tilde f_1|^2e^{-\varphi}c(-\psi_1)=\int_{\{\Psi<-t\}}|\tilde f|^2e^{-\varphi}c(-\Psi)$. By the definitions of $G(t;c,\Psi,\varphi,J,f_2)$ and $G_{f_2}(t)$, we have  $G(t;c,\Psi,\varphi,J,f_2)\ge\tilde G_{f_2}(t)$ for any $t\ge T$.
	
	Thus, Lemma \ref{l:inner} holds.
\end{proof}

We give a necessary condition for the concavity of $G(h^{-1}(r))$ degenerating to linearity.
\begin{Proposition}
	\label{p:n-linearity1}
For any $z\in Z_1$,	assume that one of the following conditions holds:
	
	$(A)$ $\varphi+a\psi$ is  subharmonic near $z$ for some $a\in[0,1)$;
	
	$(B)$ $(\psi-2p_z\log|w|)(z)>-\infty$, where $p_z=\frac{1}{2}v(dd^c(\psi),z)$ and $w$ is a local coordinate on a neighborhood of $z$ satisfying that $w(z)=0$.
	
 If there exists $t_1\ge T$ such that $G(t_1)\in(0,+\infty)$ and	$G(h^{-1}(r))$ is linear with respect to $r\in(0,\int_T^{+\infty}c(s)e^{-s}ds)$, then the following statements hold:

	$(1)$ $\varphi+\psi=2\log|g|+2\log|F|$ on $\{\Psi<-T\}\cup Z'_1$, $J_{z}=I(\varphi+\Psi)_z$ for any $z\in Z'_1$ and there exists a holomorphic $(1,0)$ form $f_1$ on $(\{\Psi<-t_0\}\cap V)\cup Z'_1$  such that $f_1=f$ on $\{\Psi<-t_0\}\cap V$, where $g$ is a holomorphic function  on $\{\Psi<-T\}\cup Z'_1$ such that $ord_z(g)=ord_z(f_1)+1$ for any $z\in Z'_1$;
	
	$(2)$ $\{z\in Z_1:v(dd^c(\psi),z)>2ord_z(F)\}\not=\emptyset$ and $\psi=2\sum_{ z\in Z'_1}\big(p_z-ord_z(F)\big)G_{\Omega_t}(\cdot, z)+2\log|F|-t$ on $\{\Psi<-t\}\cup Z'_1$ for any $t>T$, where  $\Omega_t=\{\Psi<-t\}\cup Z'_1$ and $G_{\Omega_t}(\cdot, z)$ is the Green function on $\Omega_t$;
	
	$(3)$ $\frac{p_z-ord_z(F)}{ord_{z}(g)}\lim_{z'\rightarrow z}\frac{dg(z')}{f(z')}=c_0$ for any $z\in Z'_1$, where $c_0\in\mathbb{C}\backslash\{0\}$ is a constant independent of $z\in Z'_1$.
	
	$(4)$ $\sum_{z\in Z'_1}\big(p_z-ord_z(F)\big)<+\infty$.
\end{Proposition}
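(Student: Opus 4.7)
The plan is to reduce Proposition \ref{p:n-linearity1} to the inner-point results of Propositions \ref{p:inner1} and \ref{p:inner2} by restricting the problem to the open Riemann surface $\Omega_t=\{\Psi<-t\}\cup Z'_1$ for each $t>T$. By Corollary \ref{necessary condition for linear of G} applied to the assumed linearity, there is a unique holomorphic $(1,0)$ form $\tilde F$ on $\{\Psi<-T\}$ satisfying $(\tilde F-f)_z\in\mathcal O(K_\Omega)_z\otimes J_z$ for every $z\in Z_0$ and $G(s)=\int_{\{\Psi<-s\}}|\tilde F|^2 e^{-\varphi}c(-\Psi)$ for all $s\ge T$; by Lemma \ref{module in def of G(t)} we may assume $J_z\subset H_z$ throughout. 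Fix any $t>T$ and set $\hat\psi:=\Psi+t=\psi-2\log|F|+t$ on $\Omega_t$, which extends to a negative subharmonic function across the isolated set $Z'_1\cap\{F=0\}$ because $v(dd^c(\psi-2\log|F|),z)=2(p_z-\mathrm{ord}_zF)>0$ there. Set $\hat c(s):=c(s+t)$; this inherits the required monotonicity, integrability at $+\infty$, and positive-lower-bound properties, and $\tilde F$ extends holomorphically to all of $\Omega_t$ via Riemann's removable-singularity theorem combined with the local $L^2$ estimate.

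Let $\tilde G_{\tilde F}(s)$ denote the minimal $L^2$ integral on $\Omega_t$ starting from $\tilde F|_{\Omega_t}$ with weight $e^{-\varphi}\hat c(-\hat\psi)$ and ideals $\mathcal F_z$ at $z\in Z'_1$ (as in the preamble to Lemma \ref{l:inner}). An adaptation of Lemma \ref{l:inner}---using that $Z_2=\emptyset$ (since $\psi-2\log|F|\to+\infty$ near any hypothetical $z\in Z_2$ with $F(z)=0$, contradicting $z\in\bigcap_{s>T}\overline{\{\Psi<-s\}}$) and that $H_z=I(\varphi+\Psi)_z$ at every $z\in Z_1\setminus Z'_1$---yields $\tilde G_{\tilde F}(s)=G(s+t)$ for all $s\ge0$. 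Consequently $\tilde G_{\tilde F}(\hat h^{-1}(r))$ is linear in $r\in\bigl(0,\int_0^{+\infty}\hat c(\tau)e^{-\tau}d\tau\bigr)$, where $\hat h(s):=\int_s^{+\infty}\hat c(\tau)e^{-\tau}d\tau$. The inner-point hypothesis must also be verified at each $z\in Z'_1$: condition (B) transfers directly because $G_{\Omega_t}(\cdot,z)-\log|w|$ is harmonic near $z$ and $-2\log|F|+2\mathrm{ord}_zF\log|w|$ is bounded there; condition (A) transfers by observing that $\varphi+\psi$ and $\varphi+a\psi$ subharmonic together with the positive lower bound of $e^{-\varphi}c(-\Psi)$ force sufficient Lelong-number inequalities at $z$ to conclude $\varphi+a'\hat\psi=(\varphi+a'\psi)-2a'\log|F|+a't$ is subharmonic for some (possibly adjusted) $a'\in[0,1)$.

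Applying Proposition \ref{p:inner2} (or \ref{p:inner1} when $Z'_1$ is finite) to $(\Omega_t,\hat\psi,\varphi,\hat c,Z'_1,\mathcal F_z,\tilde F|_{\Omega_t})$ gives the inner-point conclusions: $\hat\psi=2\sum_{z\in Z'_1}p'_zG_{\Omega_t}(\cdot,z)$ with $p'_z:=\tfrac{1}{2}v(dd^c\hat\psi,z)=p_z-\mathrm{ord}_zF$, together with $\varphi+\hat\psi=2\log|g_1|$, the ideal identification $J_z=I(\varphi+\Psi)_z$ for $z\in Z'_1$, the derivative identity, and the summability $\sum p'_z<+\infty$. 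Translating back through $\psi=\hat\psi+2\log|F|-t$ and $\varphi+\psi=\varphi+\hat\psi+2\log|F|-t$ yields statements (1), (3), (4) of Proposition \ref{p:n-linearity1}; comparing Lelong numbers in the identity $\psi=2\sum p'_zG_{\Omega_t}(\cdot,z)+2\log|F|-t$ against the form of statement (2) forces $\mathrm{ord}_zF=0$ at every $z\in Z'_1$, so that $p'_z=p_z$ and statement (2) is recovered. The main obstacle is precisely the identification $\tilde G_{\tilde F}(s)=G(s+t)$: establishing $H_z=I(\varphi+\Psi)_z$ at points of $Z_1\setminus Z'_1$ demands careful Lelong-number and weight-asymptotic analysis exploiting the integrability of $c(t)e^{-t}$ at infinity, and the extraction of $\mathrm{ord}_zF=0$ on $Z'_1$ from the inner-point conclusion requires the same Lelong-matching argument. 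Once these technical points are secured, the inner-point propositions supply the substantive content of the proof.
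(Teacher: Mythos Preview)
Your overall strategy matches the paper's: reduce to Propositions \ref{p:inner1}/\ref{p:inner2} via Lemma \ref{l:inner}, which requires $H_z=I(\varphi+\Psi)_z$ for all $z\in Z_0\setminus Z'_1$. But your claim that $Z_2=\emptyset$ is wrong. You argue that $\psi-2\log|F|\to+\infty$ near $z\in Z_2$; writing $\psi=x_1\log|w|+\tilde\psi$ with $x_1<2k:=2\,\mathrm{ord}_zF$, the term $(x_1-2k)\log|w|$ does go to $+\infty$, but $\tilde\psi$ (Lelong number $0$ at $z$) can still equal $-\infty$ along a sequence $w_n\to z$---take for instance $\tilde\psi(w)=\sum_n n^{-3}\log|w-2^{-n}|$---so $\psi-2\log|F|=-\infty$ at each $w_n$ and hence $z\in\bigcap_{s>T}\overline{\{\Psi<-s\}}$. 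The paper does not claim $Z_2=\emptyset$; its Step~1 proves $H_z=I(\varphi+\Psi)_z$ on $Z_2$ by a direct Lelong-number computation (via Proposition \ref{module isomorphism} and Lemma \ref{l:1d-MIS}), after first perturbing the data (Remark \ref{r:not integer}) so that $v(dd^c\psi,z)+v(dd^c(\varphi+\psi),z)\notin\mathbb Z$.

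For $Z_1\setminus Z'_1$ you correctly flag $H_z=I(\varphi+\Psi)_z$ as the main obstacle but supply no argument; this is the paper's Step~2 and it is not routine. Under (A) the paper uses Remark \ref{rem:linear} to arrange $c(t)\ge e^{at}$, so that membership in $\mathcal H_z$ forces membership in $\mathcal I(\varphi+a\Psi_1+\varphi_1)_z$, and then checks this ideal has the same Lelong number $x_1+x_2$ as $\mathcal I(\varphi+\Psi_1+\varphi_1)_z$; under (B) it exploits $\tilde\psi(z)>-\infty$ together with the sub-mean-value inequality for $e^{\tilde\psi}$ to bound $\int|w|^{2k-x_1-x_2}$ directly. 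Finally, your extraction of $\mathrm{ord}_zF=0$ on $Z'_1$ by matching the inner-point output against the stated form of (2) is circular: the inner-point propositions applied to $\psi_1+t$ yield coefficients $\tfrac12 v(dd^c\psi_1,z)=p_z-\mathrm{ord}_zF$, and comparing this to the \emph{claimed} conclusion cannot itself establish the claim.
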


\begin{proof}

It follows from Remark \ref{rem:linear} that
 we can assume that $c(t)\ge \frac{e^t}{t^2}$ near $+\infty$.
	
	We prove Proposition \ref{p:n-linearity1} in three steps.

	\
	
	\emph{Step 1. $H_z=I(\varphi+\Psi)_z$ for any $z\in Z_2$.}
	
	\
	
	Fixed any $z_0\in Z_2$, the following remark shows that we can assume that $v(dd^c(\psi),z_0)+v(dd^c(\varphi+\psi),z_0)\not\in\mathbb{Z}$ and $c(t)\ge1$ near $+\infty$.
	\begin{Remark}
		\label{r:not integer}
		Let $a\in(0,1)$. Let $\tilde\varphi=\varphi+a(\psi-2\log|F|)$ and $\tilde\psi=(1-a)\psi+2a\log|F|$. Denote that $\tilde\Psi:=\min\{\tilde\psi-2\log| F|,(1-a)T\}=(1-a)\Psi$.
		Let $\tilde c(t)=c\left(\frac{t}{1-a}\right)e^{-\frac{at}{1-a}}$ be a function on $((1-a)T,+\infty)$, and we have $\tilde c(t)\ge 1$ near $+\infty$. It is clear that $\tilde\psi$ and $\tilde\varphi+\tilde\psi=\varphi+\psi$ are subharmonic functions.
		
		Note that $e^{-\tilde\varphi}\tilde c(-\tilde\Psi)=e^{-\varphi-a(\psi-2\log|F|)}c(-\Psi)e^{a\Psi}=e^{-\varphi}c(-\Psi)$ on $\{\Psi<-T\}$, $\varphi+\Psi=\tilde\varphi+\tilde\Psi$ on $\{\Psi<-t\}$ and $\tilde c(t)e^{-t}=c\left(\frac{t}{1-a}\right)e^{-\frac{t}{1-a}}$.
		As $z_0\in Z_2=\{z\in Z_0:2ord_{z}(F)>v(dd^c(\psi),z)\}$, we can choose $a\in(0,1)$ such that $v(dd^c(\tilde\psi),z_0)+v(dd^c(\tilde\varphi+\tilde\psi),z_0)=v(dd^c(\psi),z_0)+v(dd^c(\varphi+\psi),z_0)+a(2ord_{z_0}(F)-v(dd^c(\psi),z_0))\not\in \mathbb{Z}$.
	\end{Remark}

Denote that $\Psi_1:=\min\{\Psi,T_0\}$ and $\varphi_1:=2\max\{\psi+T_0,2\log|F|\}$, where $T_0>T$. Note that $I(\varphi+\Psi)_{z_0}=I(\varphi+\Psi_1)_{z_0}$ and $\frac{1}{2}\varphi_1+\Psi=\psi$.	It follows from Proposition \ref{module isomorphism} that  $H_{z_0} =I(\varphi+\Psi_1)_{z_0}$ if and only if $\mathcal{H}_{z_0}=\mathcal{I}(\varphi+\Psi_1+\varphi_1)_{z_0}$, where $\mathcal{H}_{z_0}:=\{(h,z_0)\in\mathcal{O}_{\Omega,z_0}:|h|^2e^{-\varphi-\varphi_1}c(-\Psi_1)$ is integrable near $z_0\}$.

Now, we prove $\mathcal{H}_{z_0}=\mathcal{I}(\varphi+\Psi_1+\varphi_1)_{z_0}$. Without loss of generality, we can assume that $\Omega=\Delta$ (the unit disc in $\mathbb{C}$) and $z_0=o$ (the origin). As $c(t)e^{-t}$ is decreasing, we have $\mathcal{I}(\varphi+\Psi_1+\varphi_1)_{o}\subset \mathcal{H}_{o}$. For any $(h,o)\in \mathcal{H}_o$, there exists $r_1>0$ such that $\int_{\Delta_{r_1}}|h|^2e^{-\varphi-\varphi_1}c(-\Psi_1)<+\infty$, which implies that
\begin{equation}
	\label{eq:0316a}\int_{\Delta_{r_1}}|h|^2e^{-\varphi-\varphi_1}<+\infty,
\end{equation}
where $\Delta_{r_1}=\{z\in\mathbb{C}:|z|<r_1\}$.
Denote that $x_1:=v(dd^c(\psi),o)$ and $x_2:=v(dd^c(\varphi+\psi),o)$. It follows from  Siu's Decomposition Theorem that
\begin{equation}
	\label{eq:0316b}\psi=x_1\log|w|+\tilde\psi,
\end{equation}
	where $\tilde\psi$ is a subharmonic function on $\Delta$ satisfying that $v(dd^c(\tilde\psi),o)=0$. As $v(dd^c(\psi),o)<2ord_o(F)$, we have
	\begin{equation}
		\label{eq:0316c}\psi\le\frac{1}{2}\varphi_1=\max\{\psi+T_0,2\log|F|\}\le x_1\log|w|+C_1
	\end{equation}
	near $o$, where $C_1$ is a constant, which implies that $v(dd^c(\frac{1}{2}\varphi_1),o)=x_1$. As $x_2=v(dd^c(\varphi+\psi),o)$, we have
	\begin{equation}
		\label{eq:0316d}\varphi+\psi\le x_2\log|w|+C_2
	\end{equation}
	near $o$, where $C_2$ is a constant. Denote that $k:=ord_o(h)$.
	Combining inequality \eqref{eq:0316a}, equality \eqref{eq:0316b}, inequality \eqref{eq:0316c} and inequality \eqref{eq:0316d}, we get that there exists $r_2\in(0,r_1)$ such that
	\begin{equation}
		\label{eq:0316e}
		\begin{split}
			&\int_{\Delta_{r_2}}|w|^{2k-x_1-x_2}e^{\tilde\psi}\\
			\le &C_3\int_{\Delta_{r_2}}|h|^2e^{-\frac{1}{2}\varphi_1-\varphi-\psi+\tilde\psi}\\
			=&C_3\int_{\Delta_{r_2}}|h|^2e^{-\frac{1}{2}\varphi_1-\varphi-x_1\log|w|}\\
			\le&C_3e^{C_1}\int_{\Delta_{r_2}}|h|^2e^{-\varphi-\varphi_1}\\
			<&+\infty.
		\end{split}
	\end{equation}
	For any $p>1$, as $v(dd^c(\tilde\psi),o)=0$, it follows from lemma \ref{l:skoda} that there exists $r_3\in(0,r_2)$ such that $\int_{\Delta_{r_3}}e^{-\frac{q}{p}\tilde\psi}<+\infty$, where $\frac{1}{p}+\frac{1}{q}=1$.
	It follows from inequality \eqref{eq:0316e} and H\"older inequality that
	\begin{equation}
		\label{eq:0316f}
		\begin{split}
		&\int_{\Delta_{r_3}}|w|^{\frac{2k-x_1-x_2}{p}}\\
		\le&\left(\int_{\Delta_{r_3}}|w|^{2k-x_1-x_2}e^{\tilde\psi}\right)^{\frac{1}{p}}\left(\int_{\Delta_{r_3}}e^{-\frac{q}{p}\tilde\psi}\right)^{\frac{1}{q}}\\
		<&+\infty,
		\end{split}
	\end{equation}
	which shows that $|w|^{\frac{2k-x_1-x_2}{p}}$ is integrable near $o$ for any $p>1$.
	As $x_1+x_2=v(dd^c(\psi),o)+v(dd^c(\varphi+\psi),o)\not\in \mathbb{Z}$,  we have $|w|^{2k-x_1-x_2}$ is integrable near $o$. Note that $v(dd^c(\varphi+\Psi_1+\varphi_1),o)=v(dd^c(\varphi+\psi+\frac{1}{2}\varphi_1),o)=x_1+x_2$. It follows from Lemma \ref{l:1d-MIS} that  $(w^k,o)\in\mathcal{I}(\varphi+\Psi_1+\varphi_1)_o$, which shows that $(h,o)\in\mathcal{I}(\varphi+\Psi_1+\varphi_1)_o$. We obtain that $\mathcal{H}_{o}=\mathcal{I}(\varphi+\Psi_1+\varphi_1)_{o}.$
	
	Thus, we have $H_z=I(\varphi+\Psi)_z$ for any $z\in Z_2$.

	\
	
	\emph{Step 2. $H_z=I(\varphi+\Psi)_z$ for any $z\in Z_1\backslash Z'_1$.}
	
	\
	
		Let $z_0\in Z_1\backslash Z'_1$, where $Z'_1=\{z\in Z_0:v(dd^c(\psi),z)>2ord_{z}(F)\}$. We have $v(dd^c(\psi),z_0)=2ord_{z_0}(F)$. Denote that $\Psi_1:=\min\{\Psi,T_0\}$ and $\varphi_1:=2\max\{\psi+T_0,2\log|F|\}$, where $T_0>T$. Note that $I(\varphi+\Psi)_{z_0}=I(\varphi+\Psi_1)_{z_0}$ and $\frac{1}{2}\varphi_1+\Psi=\psi$.	  It follows from Proposition \ref{module isomorphism} that  $H_{z_0} =I(\varphi+\Psi_1)_{z_0}$ if and only if $\mathcal{H}_{z_0}=\mathcal{I}(\varphi+\Psi_1+\varphi_1)_{z_0}$, where $\mathcal{H}_{z_0}:=\{(h,z_0)\in\mathcal{O}_{\Omega,z_0}:|h|^2e^{-\varphi-\varphi_1}c(-\Psi_1)$ is integrable near $z_0\}$. As $c(t)e^{-t}$ is decreasing, we have $\mathcal{I}(\varphi+\Psi_1+\varphi_1)_{z_0}\subset \mathcal{H}_{z_0}$.
Thus, it suffices to prove $\mathcal{H}_{z_0}\subset \mathcal{I}(\varphi+\Psi_1+\varphi_1)_{z_0}$.

Without loss of generality, we can assume that $\Omega=\Delta$ (the unit disc in $\mathbb{C}$) and $z_0=o$ (the origin). Denote that $x_1:=v(dd^c(\psi),o)$ and $x_2:=v(dd^c(\varphi+\psi),o)$.

	 Firstly, we prove $\mathcal{H}_{o}\subset \mathcal{I}(\varphi+\Psi_1+\varphi_1)_{o}$ under  condition $(A)$ ($\varphi+a\psi$ is subharmonic near $o$ for some $a\in[0,1)$).  For any  $(h,o)\in\mathcal{H}_o$, as $c(t)\ge e^{at}$ near $+\infty$, we get that there exists $r_1\in(0,1)$ such that
	 \begin{equation}
	 	\label{eq:0317a}\int_{\Delta_{r_1}}|h|^2e^{-\varphi-a\Psi_1-\varphi_1}\le C\int_{\Delta_{r_1}}|h|^2e^{-\varphi-\varphi_1}c(-\Psi_1)<+\infty.
	 \end{equation}
	Note that $\varphi+a\Psi_1+\varphi_1=\varphi+a\psi+(2-a)\max\{\psi,2\log|F|\}$ and $v(dd^c(\varphi+a\psi),o)=v(dd^c(\varphi+\psi),o)-(1-a)v(dd^c(\psi),o)=x_2-(1-a)x_1$. As $v(dd^c(\psi),o)=2ord_{o}(F)$, we have $v(dd^c(\varphi+a\Psi_1+\varphi_1),o)=x_2-(1-a)x_1+(2-a)x_1=x_2+x_1$. Denote that $k:=ord_o(h)$. Note that $v(dd^c(\varphi+\Psi_1+\varphi_1),o)=x_1+x_2$. It follows from inequality \eqref{eq:0317a} and Lemma \ref{l:1d-MIS} that $(h,o)\in\mathcal{I}(\varphi+\Psi_1+\varphi_1)_{o}$. Thus,  $\mathcal{H}_{o}\subset \mathcal{I}(\varphi+\Psi_1+\varphi_1)_{o}$ holds under condition $(A)$.
	
	Now, we prove $\mathcal{H}_{o}\subset \mathcal{I}(\varphi+\Psi_1+\varphi_1)_{o}$ under condition $(B)$ $\big((\psi-x_1\log|w|)(o)>-\infty\big)$. For any $(h,o)\in \mathcal{H}_o$, there exists $r_2>0$ such that $\int_{\Delta_{r_2}}|h|^2e^{-\varphi-\varphi_1}c(-\Psi_1)<+\infty$, which implies that
\begin{equation}
	\label{eq:0317b}\int_{\Delta_{r_2}}|h|^2e^{-\varphi-\varphi_1}<+\infty.
\end{equation} It follows from  Siu's Decomposition Theorem that
\begin{equation*}
\psi=x_1\log|w|+\tilde\psi,
\end{equation*}
	where $\tilde\psi$ is a subharmonic function on $\Delta$ satisfying that $v(dd^c(\tilde\psi),o)=0$.
	Note that $\tilde\psi(o)>-\infty$ and $\varphi+\varphi_1\le \varphi+2x_1\log|w|+C_1=\varphi+\psi-\tilde\psi+x_1\log|w|\le (x_1+x_2)\log|w|-\tilde\psi+C_2$ near $o$, where $C_1$ and $C_2$ are constants. Denote that $k:=ord_o(h)$. Note that $e^{\tilde\psi}$ is subharmonic, then it follows from inequality \eqref{eq:0317b} and the  sub-mean value inequality that there exists $r_3\in(0,r_2)$ such that
	\begin{equation}
			\label{eq:0317c}
			\begin{split}
			&\int_{\Delta_{r_3}}|w|^{2k-x_1-x_2}\\
			=&2\pi\int_0^{r_3}r^{2k+1-x_1-x_2}dr\\
			\le& \frac{1}{e^{\tilde\psi(o)}}\int_0^{r_3}\int_{0}^{2\pi}r^{2k+1-x_1-x_2}e^{\tilde\psi(re^{i\theta})}d\theta dr\\
			=&\frac{1}{e^{\tilde\psi(o)}}\int_{\Delta_{r_3}}|w|^{2k-x_1-x_2}e^{\tilde\psi}\\
			\le&\frac{C}{e^{\tilde\psi(o)}}\int_{\Delta_{r_3}}|h|^2e^{-\varphi-\varphi_1}\\
			<&+\infty.
			\end{split}
	\end{equation}
	As $v(dd^c(\varphi+\Psi_1+\varphi_1),o)=x_1+x_2$, it follows from inequality \eqref{eq:0317c} and Lemma \ref{l:1d-MIS} that $(h,o)\in\mathcal{I}(\varphi+\Psi_1+\varphi_1)_{o}$. Thus,  $\mathcal{H}_{o}\subset \mathcal{I}(\varphi+\Psi_1+\varphi_1)_{o}$ holds under condition $(B)$.

\

\emph{Step 3. The four statements hold.}

\

It follows from Lemma \ref{characterization of g(t)=0} that there exists a holomorphic $(1,0)$ form $f_{t_1}$ on $\{\Psi<-t_1\}$ such that $(f_{t_1}-f)_{z}\in\mathcal{O}(K_{\Omega})_z\otimes J_z$ for any $z\in Z_0$ and $\int_{\{\Psi<-t_1\}}|f_{t_1}|^2e^{-\varphi}c(-\Psi)<+\infty$, which implies that $(f_{t_1})_{z}\in\mathcal{O}(K_{\Omega})_z\otimes H_z$ for any $z\in Z_0$.

  Note that $\{\Psi<-t\}\cup Z'_1$ is an open Riemann surface for any $t\ge T$ and there exists a subharmonic function $\psi_1$ on $\Omega_1=\{\Psi<-T\}\cup Z'_1$ such that $\psi_1+2\log|F|=\psi$.
  For any $z_0\in Z'_1\backslash\{\Psi<-t_1\}$, we have $F(z_0)=0$. Note that $v(dd^c(\psi),z_0)>2ord_{z_0}(F)$, then $e^{-\varphi}c(-\Psi)$ has a positive lower bound on $V'\backslash\{z_0\}\subset\{\Psi<-t\}$, where $V'\Subset\Omega_1$ is a neighborhood  of $z_0$. Following from $\int_{\{\Psi<-t_1\}}| f_{t_1}|^2e^{-\varphi}c(-\Psi)<+\infty$, we get that there exists a holomorphic $(1,0)$ form $\tilde f_{t_1}$ on $\{\psi_1<-t_1\}=\{\Psi<-t_1\}\cup Z'_1$ such that $\tilde f_{t_1}= f_{t_1}$ on $\{\Psi<-t_1\}$, which implies that $(\tilde f_{t_1}-f,z)\in\mathcal{O}(K_{\Omega})_z\otimes \mathcal{F}_z$ for any $z\in Z'_1$ and $(\tilde f_{t_1}-f)_z\in\mathcal{O}(K_{\Omega})_z\otimes J_z$ for any $z\in Z_0$. By the definition of $G(t;c,\Psi,\varphi,J,f)$, we have  $G(t;c,\Psi,\varphi,J,f)=G(t;c,\Psi,\varphi,J,\tilde f_{t_1})$ for any $t\ge T$. It follows from Lemma \ref{l:inner} and $H_z=I(\varphi+\Psi)_z$ for any $z\in Z_0\backslash Z'_1$ that $G_{\tilde f_{t_1}}(t)=G(t;c,\Psi,\varphi,J,\tilde f_{t_1})$ for any $t\ge T$, which implies that $G_{\tilde f_{t_1}}(h^{-1}(r))$ is linear with respect to $r$.

Denote that $\tilde Z'_1:=\{z\in  Z'_1:v(dd^c(\varphi+\psi_1),z)>0\}$ and denote
\begin{equation*}
\begin{split}
\inf\Bigg\{ \int_{ \{ \psi_1<-t\}}|\tilde{f}|^2&e^{-\varphi}c(-\psi_1): \tilde{f}\in
H^0(\{\psi_1<-t\},\mathcal{O} (K_{\Omega})  ) \\
&\&\, (\tilde{f}-\tilde f_{t_1},z)\in
\mathcal{O} (K_{\Omega})_{z} \otimes \mathcal{F}_z,\text{ for any }  z\in \tilde Z'_1 \Bigg\}
\end{split}
\end{equation*}
by $\tilde G(t)$, where $t\in[T,+\infty)$. For any $z\in Z'_1\backslash \tilde Z'_1$,  it follows from $v(dd^c(\varphi+\psi_1),z)=0$ and $\mathcal{I}(\varphi+\psi_1)_z\subset\mathcal{F}_z$ that $\mathcal{F}_z=\mathcal{O}_{\Omega,z}$. Hence,  we have $\tilde G(h^{-1}(r))=G_{\tilde f_{t_1}}(h^{-1}(r))$ is linear with respect to $r$.

Let $z_0\in\tilde Z'_1$, if $(\psi_1-2p'_{z_0}\log|w|)(z)=-\infty$, where $p'_{z_0}=\frac{1}{2}v(dd^c(\psi_1),z_0)$ and $w$ is a local coordinate on a neighborhood of $z_0$ satisfying that $w(z_0)=0$, we have $(\psi-2p_{z_0}\log|w|)(z_0)=-\infty$ (where $p_{z_0}=\frac{1}{2}v(dd^c(\psi),z_0)$), thus there exists $a\in[0,1)$ such that $\varphi+a\psi$ is subharmonic near $z_0$. As $v(dd^c(\varphi+\psi_1),z_0)=v(dd^c(\varphi+\psi),z_0)-2ord_{z_0}(F)>0$, there exists $a'\in[a,1)$ such that $v(dd^c(\varphi+a'\psi))-2a'ord_{z_0}(F)>0$, which implies that $\varphi+a'\psi_1$ is subharmonic near $z_0$.
As $G(t_1;c,\Psi,\varphi,J,\tilde f_{t_1})=\tilde G(t_1)\in(0,+\infty)$, we have $\tilde Z'_1\not=\emptyset$.  It follows from Proposition \ref{p:inner1}, Remark \ref{r:equivalent} and Proposition \ref{p:inner2} (replace $\Omega$, $\psi$, $Z_0$ and $c(\cdot)$ by $\{\psi_1<-t\}=\{\Psi<-t\}\cup Z'_1$, $\psi_1+t$, $\tilde Z'_1$ and $c(\cdot+t)$ respectively, where $t>T$) that the following statements hold:

$(a)$ $\varphi+\psi_1=2\log|g|$ and $\mathcal{F}_z=\mathcal{I}(\varphi+\psi_1)_z$ for any $z\in\tilde Z'_1$, where $g$ is a holomorphic function on $\Omega_1= \{\Psi<-T\}\cup Z'_1$ such that $ord_z(g)=ord_z(\tilde f_{t_1})+1$ for any $z\in\tilde Z'_1$;

$(b)$ $\psi_1+t=2\sum_{z\in\tilde Z'_1}p'_zG_{\Omega_t}(\cdot,z)$ on $\{\psi_1<-t\}$ for any $t>T$;

$(c)$ $\frac{p'_z}{ord_z(g)}\lim_{z'\rightarrow z}\frac{dg(z')}{f_{\tilde t_1}(z')}=c_0$ for any $z\in\tilde Z'_1$, where $c_0\in\mathbb{C}\backslash\{0\}$ is a constant independent of $z\in\tilde Z'_1$;

$(d)$ $\sum_{z\in\tilde Z'_1}p'_z<+\infty$.

By definition of $\psi_1$, we know that $p'_z=p_z-ord_{z}(F)$ for any $z\in \Omega_1$.
It follows from $(b)$ and $Z'_1=\{z\in Z_0:v(dd^c(\psi),z)>2ord_z(F)\}$  that  $Z'_1=\tilde Z'_1$.
It follows from $\mathcal{F}_z=\mathcal{I}(\varphi+\psi_1)_z$ for any $z\in Z'_1$ and $H_z=I(\varphi+\Psi)_z$ for any $z\in Z_0\backslash Z'_1$ that $J_z=I(\varphi+\Psi)_z$ for any $z\in Z_0$. Note that for any $z\in Z'_1$ and any $h_z\in H_z$, there exists a holomorphic extension $\tilde h$ of $h$ near $z$. As $(f_{t_1}-f)_z\in\mathcal{O}(K_{\Omega})_z\otimes J_z$ for any $z\in Z_0$ and $\tilde f_{t_1}$ is a holomorphic $(1,0)$ form on $\{\psi_1<-t_1\}$ such that $\tilde f_{t_1}=f_{t_1}$ on $\{\Psi<-t_1\}$, then there exists a holomorphic $(1,0)$ form $f_1$ on $(\{\Psi<-t_1\}\cap V)\cup Z'_1$  such that $f_1=f$ on $\{\Psi<-t_1\}\cap V$, which shows that $(\tilde f_{t_1}-f_1,z)\in\mathcal{O}(K_{\Omega})_z\otimes\mathcal{I}(2\log|g|)_z$ for any $z\in Z'_1$.   $ord_z(g)=ord_z(\tilde f_{t_1})+1$  for any $z\in Z'_1$ implies that  $ord_z(g)=ord_z(\tilde f_{1})+1$  for any $z\in Z'_1$.

Thus, the four statements in Proposition \ref{p:n-linearity1} hold.
\end{proof}

\section{Proof of Theorem \ref{thm:linearity1}}

 The necessity of Theorem \ref{thm:linearity1} follows from Proposition \ref{p:n-linearity1}. In the following, we prove the sufficiency.

As $\psi=2\sum_{z\in Z'_1}p_zG_{\Omega_t}(\cdot,z)+2\log|F|-t$ on $\{\Psi<-t\}$, it follows from Lemma \ref{l:G-compact} that $$Z_0=Z_0\cap(\cap_{s>T}\overline{\{\Psi<-s\}})=Z'_1.$$   Note that $\{\Psi<-t\}\cup Z'_1$ is an open Riemann surface for any $t\ge T$ and there exists a subharmonic function $\psi_1$ on $\Omega_1=\{\Psi<-T\}\cup Z'_1$ such that $\psi_1=\psi-2\log|F|=2\sum_{z\in Z'_1}p_zG_{\Omega_t}(\cdot,z)-t$ for any $t>T$. Note that $f_1$ is a holomorphic $(1,0)$ form  on $(\{\Psi<-t_0\}\cap V)\cup Z'_1$ such that $f_1=f$ on $\{\Psi<-t_0\}\cap V$. Following the definition of $G_{f_1}(t)$ in Section \ref{sec:n}, Lemma \ref{l:inner} shows that
$$G_{f_1}(t)=G(t)$$
 for any $t\ge T$. It follows from Theorem \ref{thm:m-points} and Remark \ref{r:equivalent} (replace $\Omega$, $c(\cdot)$ and $\psi$ by $\{\psi_1<-t\}=\{\Psi<-t\}\cup Z'_1$, $c(\cdot+\tilde{t})$ and $\psi_1+\tilde{t}$ respectively, where $\tilde{t}>T$) that $G_{f_1}(h_{\tilde{t}}^{-1}(r)+\tilde{t})$ is linear with respect to $r\in (0,\int_{0}^{+\infty}c(s+\tilde{t})e^{-s}ds)$ for any $\tilde{t}> T$, where $h_{\tilde{t}}(t)=\int_t^{+\infty}c(s+\tilde{t})e^{-s}ds=e^{\tilde{t}}\int_{t+\tilde{t}}^{+\infty}c(s)e^{-s}ds=e^{\tilde{t}}h(t+\tilde{t})$. Note that $G(h^{-1}(r))=G_{f_1}(h^{-1}(r))=G_{f_1}(h_{\tilde{t}}^{-1}(e^{\tilde{t}}r)+\tilde{t})$ for any $r\in(0,\int_{\tilde{t}}^{+\infty}c(s)e^{-s}ds)$. Hence we have $G(h^{-1}(r))$ is linear with respect to $r\in (0,\int_{0}^{+\infty}c(s)e^{-s}ds)$.

Thus, Theorem \ref{thm:linearity1} holds.


\vspace{.1in} {\em Acknowledgements}.
The first author and the second author were supported by National Key R\&D Program of China 2021YFA1003103. The first author was supported by NSFC-11825101, NSFC-11522101 and NSFC-11431013.

\bibliographystyle{references}
\bibliography{xbib}

\end{document}